\newtheorem{lemma}{Lemma}[section]
\newtheorem{theorem}[lemma]{Theorem}
\newtheorem{fact}[lemma]{Fact}
\newtheorem{observation}[lemma]{Observation}
\newtheorem{corollary}[lemma]{Corollary}
\theoremstyle{remark}
\newtheorem{remark}{Remark}
\theoremstyle{definition}
\newtheorem{definition}[lemma]{Definition}
\DeclareMathOperator{\Pp}{{\mathbf{P}}}
\def\E{\mathbb{E}}
\def\var{\text{Var}}
\newcommand{\ord}{{\ensuremath{\mathsf{ord}}}}
\newcommand{\dis}{{\ensuremath{\mathsf{dis}}}}
\renewcommand{\epsilon}{\varepsilon}
\newcommand{\betau}{\beta_{\mathsf{u}}}
\newcommand{\betac}{\beta_{\mathsf{c}}}
\newcommand{\betas}{\beta_{\mathsf{s}}}
\newcommand{\TV}{\textsc{tv}}
\renewcommand{\Pr}{\mathbb{P}}
\newcommand{\thetas}{\theta_{\mathsf{s}}}
\newcommand{\thetar}{\theta_{\mathsf{r}}}
\newcommand{\mr}{m_{\mathsf{r}}}
\newcommand{\ka}{k_{\mathsf{a}}}
\newcommand{\kia}{k_{\mathsf{ia}}}
\newcommand{\Wa}{W_{\mathsf{a}}}
\newcommand{\Wia}{W_{\mathsf{ia}}}
\title{Mean-field Potts and random-cluster dynamics from high-entropy initializations}
\date{}
\author{Antonio Blanca \thanks{Department of CSE, Pennsylvania State University, ablanca@cse.psu.edu} \and Reza Gheissari \thanks{Department of Mathematics, Northwestern University, gheissari@northwestern.edu} \and Xusheng Zhang \thanks{Department of CSE, Pennsylvania State University, xushengz@psu.edu}}
\begin{document}

\maketitle

\begin{abstract}
    A common obstruction to efficient sampling from high-dimensional distributions with Markov chains is the multimodality of the target distribution because they may get trapped far from stationarity. Still, one hopes that this is only a barrier to the mixing of Markov chains from \emph{worst-case} initializations and can be overcome by choosing high-entropy initializations, e.g., a product or weakly correlated distribution. Ideally, from such initializations, the dynamics would escape from the saddle points separating modes quickly and spread its mass between the dominant modes with the correct probabilities.   
    
    In this paper, we study convergence from high-entropy initializations for the random-cluster and Potts models on the complete graph---two extensively studied high-dimensional landscapes that pose many complexities like discontinuous phase transitions and asymmetric metastable modes. 
    We study the Chayes--Machta and Swendsen--Wang dynamics for the mean-field random-cluster model and the Glauber dynamics for the Potts model. We sharply characterize the set of product measure initializations from which these Markov chains mix rapidly, even though their mixing times from worst-case initializations are exponentially slow. Our proofs require careful approximations of projections of high-dimensional Markov chains (which are not themselves Markovian) by tractable 1-dimensional random processes, followed by analysis of the latter's escape from saddle points separating stable modes. 
\end{abstract}

\thispagestyle{empty}

\setcounter{page}{1}

\section{Introduction}
A ubiquitous and generically hard computational task is to minimize a high-dimensional function $f$ over a discrete space $\{1,...,q\}^n$;
closely connected is the problem of 
sampling from the probability distribution with mass proportional to $e^{ - \beta f}$ for $\beta$ large. 
 The function $f$ is often viewed as an energy landscape in statistical physics or as a loss function in machine learning, and the $\beta$-large setting is referred to as the low-temperature regime. 
The difficulty is induced by the possibility of $f$ having several minima with large basins of attraction (or, equivalently, by the multimodality of the induced distribution); this poses a barrier to traditional optimization/sampling algorithms like gradient descent and Markov chain sampling, at least when initialized from a worst-case state, e.g., in a sub-optimal mode.

Still, a black box approach to these tasks would select the initial state randomly from a product measure or more general high-entropy distribution and would hope that this can circumvent (some of) the bottlenecks in the space. 
When we say high-entropy initialization, we mean distributions that are well-spread over the probability space, in contrast to, e.g., worst-case initializations, or initializations in some extremal energy state. 
Ideally, a high-entropy initialization distributes its mass across the space in such a way that dynamics are primarily driven by diffusion away from the saddles separating dominant modes, 
picking the basins to fall into with the correct probability. (Here and throughout we are using the terminology ``saddle point" informally, by analogy to critical points of landscapes in continuous spaces.) However, rigorous studies of convergence rates to stationarity from high-entropy initializations are severely lacking and generically difficult.

We focus on the theoretical study of high-entropy initializations in the context of Markov chains for spin system models, such as the Ising, Potts, and random-cluster models. In the statistical physics literature, questions of dynamics from high-entropy initializations have a long (empirical) history. A paradigmatic version of this is in the Ising model on the integer lattice graph $\mathbb Z^d$ at low temperatures, where it is widely expected (though entirely open to prove) that the Glauber dynamics (the natural reversible local Markov chain) initialized from a uniform-at-random assignment of $\pm 1$ mixes in polynomial time: see, e.g., the review paper~\cite{Bray-phase-ordered-kinetics} for the rich physics literature about this process. 
Indeed, this question requires understanding the motion of interfaces separating regions where plus and minus respectively dominate, and is likely even harder than the notorious problem of showing a polynomial bound for the worst-case initialization mixing time of Ising Glauber dynamics in a box with plus boundary conditions~\cite{Martinelli-phase-coexistence,MaTo,LMST}.

Even in other geometries, for instance on trees and random graphs, bounding the speed of convergence of the low-temperature Ising Glauber dynamics initialized 
uniformly at random from $\{\pm 1\}^n$
seems to be mathematically very challenging. (See, e.g.,~\cite{KaMo11,SahSawhney-majority-dynamics,Fountoulakis-majority-dynamics,Tran-Vu-majority-dynamics,GNS-zero-temp-Ising-RG} for recent progress on zero-temperature $\beta = \infty$ versions of this problem, and~\cite{CaMa06} for analysis on the tree initialized from a biased product measure.) In the special case of the Ising Glauber dynamics on the complete graph, known as the mean-field model, the process reduces perfectly to a 1-dimensional birth-death Markov chain. Here~\cite{LLP,DLP-censored-Glauber} showed 
the escape from the saddle corresponding to balanced configurations at low temperatures is fast, implying $O(n \log n)$ mixing when initialized from a product of fair coin tosses, despite slow mixing from worst-case. 

When one generalizes from the Ising model to the $q$-state Potts model, the above questions become significantly more complicated due to the presence of a higher-dimensional space of spin counts, the possibility of slow mixing at intermediate temperatures, and the lack of symmetry between the modes. The closely related random-cluster model faces similar difficulties, as well as the non-locality of its interactions and update rules. Together, these yield a rich class of models for investigating the above-described expected benefits and possible limitations for high-entropy (e.g., product) initializations to overcome the slow mixing of standard Markov chains.

Let us note that purely from an approximate sampling perspective, it has been known that sampling from the ferromagnetic Ising model is tractable since~\cite{JSIsing,RanWil}, but general sampling in the ferromagnetic Potts model is \#BIS hard~\cite{GoldbergJerrum,GSVY}. This has led to much recent work towards finding general criteria (on $q$, the underlying graph, and the temperature) under which sampling is algorithmically tractable, e.g.,~\cite{BhRa04-simulated-tempering,HPR-Algorithmic-Pirogov-Sinai,BCHPT-Potts-all-temp,carlson,JKP,CGGPS,HeJePe20,Huijben-Patel-Regts,BlGh23}. 
Of particular relevance to our work is a series of recent results using special initializations of Markov chains to overcome bottlenecks, specifically using a priori knowledge of the global minimizers of the energy landscape to initialize near them (e.g., initializing an Ising model in the all-plus or all-minus states with probability $1/2$ each), and showing that mixing locally in those modes is rapid: see~\cite{GS22,GS23,GGS23,BDE-ERGM-24} for recent works in this direction for the Ising, random-cluster, and exponential random graph models. 

By contrast, the interest in initializations from product measures (and similar high-entropy distributions), is towards understanding success/failure of black-box approaches that do not require prior knowledge of the target distribution's modes. In particular, such initializations do not rely on, nor hope for, local convexity of the landscape near the initialization, instead relying on the non-convexity to allow access to multiple modes, making the analysis significantly more challenging. We also note the close connections between  high-entropy initializations and simulated annealing schemes, for which we obtain new results: see Section~\ref{subsec:simulated}. 

In this paper, we study the convergence of Markov chains in spin systems in the presence of metastability and phase coexistence from high-entropy initializations (predominantly, product measures). By
phase coexistence, we mean the presence of multiple modes of roughly equal weights, and by metastability, we mean the presence of multiple modes of vastly different weights; in both situations, Markov chains get trapped for exponential time scales far from stationarity.
We study two canonical Markov chains,
the Chayes--Machta (CM) dynamics for the mean-field random-cluster model and the Glauber dynamics for the mean-field Potts model; our results also extend to the well-studied Swendsen-Wang (SW) dynamics. These chains are known to be slow mixing from worst-case initializations for ranges of intermediate, near-critical, and low temperatures. We study their equilibration times in their slow-mixing regimes from product measure initializations and sharply characterize the families of product initializations that lead to optimally fast mixing. 

Already on the complete graph, the understanding of mixing from such initializations poses significant challenges.
Our analysis requires a careful understanding of low-dimensional projections of the Markov chains near unstable saddle points of the landscape separating its dominant modes; namely, in regions where the fluctuations of the projected chain compete on the same scale with its drift as well as regions of strict non-convexity.  
Unlike the case of the mean-field Ising Glauber dynamics where the projection onto the count of plus spins is itself a 1-dimensional birth-and-death Markov chain, in our setting, these projections are not themselves Markov chains. Instead, we approximate the projections of truly high-dimensional Markov chains on a ``good'' set of configurations by more tractable 1-dimensional chains whose diffusion away from saddle points separating modes we then study. 
Altogether, this amounts to a significantly more refined analytic control of the Markov chains than previous works on these dynamics.
See Section~\ref{subsec:proof-outlines} for more on the proof ideas.

\subsection{Main results}

The (ferromagnetic) Potts model on the $n$-vertex complete graph $G = ([n],\binom{[n]}{2})$ at inverse temperature $\beta>0$ is the probability distribution $\pi_\beta$ over spin assignments $\sigma \in \{1,\dots, q\}^{n}$ to vertices of $G$ such that 
\begin{align}\label{eq:Potts-measure}
    \pi_{\beta}(\sigma) = \frac{1}{Z_{\beta,q}} \cdot \exp\Big( \frac{\beta}{n} \sum_{i \neq j} \mathbf{1}\{\sigma_i = \sigma_j\}\Big)\,,    
\end{align}
where $Z_{\beta,q}$ is the normalizing constant known as the partition function and $\sigma_i \in \{1,\dots ,q\}$ denotes the spin of vertex $i$. 

Closely related is the random-cluster model on $G$ with parameters $q$, now allowed to be in $(0,\infty)$, and $p\in [0,1]$. This is a model 
that assigns to edge-subsets $A$ of $G$ probability
\begin{align}\label{eq:RC-dynamics}
    \mu_{p}(A) = \frac{1}{Z_{p,q}} \cdot \Big(\frac{p}{1-p}\Big)^{|A|} q^{k([n],A)}\,,
\end{align}
where $k([n],A)$ is the number of connected components in the subgraph induced by $A$, and $Z_{p,q}$ is the corresponding partition function. Note that when $q=1$, the mean-field random-cluster model corresponds exactly to the Erd\H{o}s--R\'{e}nyi random graph model, but when $q\ne 1$ the cluster weighting can change the phenomenology significantly. When $q\ge 2$ is integer and $p=1-e^{-\beta/n}$, the random-cluster model is closely linked to the Potts model; in particular, if one assigns spins to the components of $A \sim \mu_\beta$ independently and uniformly at random among $\{1,...,q\}$, the result is a sample from the Potts distribution at inverse temperature $\beta$. To unify the discussion, via the reparametrization $p=1-e^{-\beta/n} \approx \beta/n$, we henceforth only work with a temperature parameter $\beta$, even when discussing the random-cluster model, and we write $\mu_{\beta}$ for $\mu_{p}$.

The two canonical Markov chains we consider are the Glauber dynamics for the Potts model and the Chayes--Machta (CM) dynamics for the random-cluster model. 
The Glauber dynamics is the Markov chain which at each time-step picks a vertex $i$ uniformly at random among $[n]$ and resamples its spin conditionally on the remainder of the configuration; namely, resamples it to take spin $k\in \{1,...,q\}$ with probability proportional to $\exp(\frac{\beta}{n} \sum_{j} \mathbf{1}\{\sigma_j = k\})$. 
The Potts Glauber dynamics is typically exponentially slow to equilibrate at low temperatures due to bottlenecks between spin configurations where each of the $q$ colors dominates. 
The CM dynamics is an appealing alternative that uses the connection between the Potts and random-cluster models
to overcome the low-temperature bottlenecks of the Glauber dynamics. More precisely, the CM dynamics is the following Markov chain defined for general $q \ge 1$ as follows: from an edge-subset $X_t$ generate $X_{t+1}$ via 
\begin{enumerate}
    \item \emph{Activation step}: Independently for each connected component $\mathcal C$ of $X_t$, with probability $\frac{1}{q}$ label all vertices of $\mathcal C$ \emph{active} and otherwise label all vertices of $\mathcal C$ \emph{inactive}. 
    \item \emph{Percolation step}: Independently for each edge $e$ both of whose endpoints are active, include $e$ in $X_{t+1}$ with probability $\beta/n$. For all other edges $e$, let $X_{t+1}(e) = X_t(e)$. 
\end{enumerate}
The CM dynamics is a generalization to non-integer values of $q$ of the famous Swendsen--Wang (SW) dynamics for the Potts model. Indeed for integer $q$, if the activation step of the CM dynamics is performed by coloring the components of $X_t$ independently among $[q]$, and activating one of the color classes, then this produces a Markov chain on Potts configurations which is basically equivalent to the SW dynamics. (Technically, the SW chain treats each color class as an activated set and does percolation steps within each of them simultaneously before recoloring.) 
As far as speeds of convergence are concerned, the CM and SW are thus closely related~\cite{BS15, FGW23}; our results all also apply to the SW dynamics without significant modifications; see~Remark~\ref{rem:SW}.

The standard way to quantify the speed of convergence of a Markov chain is the mixing time, i.e., the time it takes to reach total-variation distance $\varepsilon$ to stationarity, either from a prescribed initialization, or from a worst-case initialization. For the worst-case mixing, as soon as $\varepsilon<1/2$, the TV-distance decays exponentially fast, so one usually takes as a convention $\varepsilon = 1/4$. In the context of this paper, we are considering mixing times from a specified initial distribution, and often in settings where the worst-case initialization mixing time is exponentially large. This does not satisfy the same boosting property (i.e., exponential decay of TV-distance once it is $<1/2$) as the worst-case mixing time does, so when we refer to bounds on mixing time from an initial distribution, we mean that in that time, we can attain any TV-distance $\epsilon>0$ independent of $n$. Note that when we discuss mixing from an initial distribution, the law of the process is over both the initialization and the run of the Markov chain. 

To describe what is known about the mixing time behavior of the CM and Glauber dynamics, we first recall the static phase diagram of the mean-field Potts and random-cluster models. 
These have been extensively analyzed 
and
are controlled by three threshold values $\betau\le  \betac \le \betas$ of $\beta$. The middle one, $\betac$, is the critical point marking the distinction between order (a dominant spin class in the Potts setting or, equivalently, a ``giant'' connected component in the random-cluster) and disorder (balanced spin classes or small connected components) in a typical sample from the distribution. 
When $q>2$, $\betau$ and $\betas$
are two additional critical points marking the onset of metastability for the ordered phases and the end of metastability of the disordered phase, respectively (see Figure~\ref{fig:random-cluster-phase_structure}, Figure~\ref{fig:Potts-phase_structure} and~\cite{BGJ-mean-field-random-cluster,costeniuc2005complete, LL06}.)

\begin{figure}[t]
    \centering
    \begin{tikzpicture}[scale=0.8]
    \draw (0, 0) node[inner sep=0] {\includegraphics[scale=0.4]{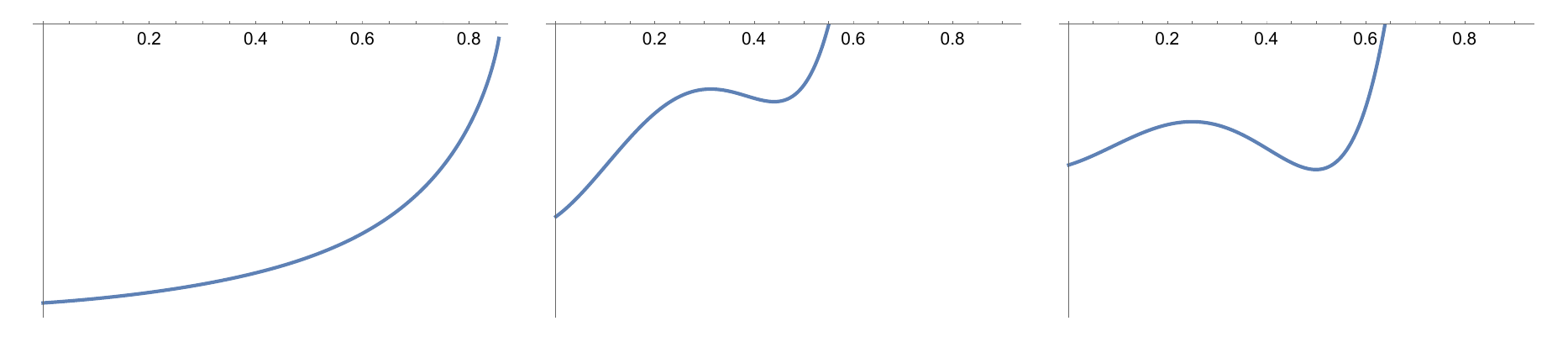}};
    \draw (-4.8, -1.7) node {\tiny{$\beta < \betau$}};
    \draw (0.31, -1.7) node {\tiny{$\beta \in (\betau,\betac)$}};
    \draw (5.36, -1.7) node {\tiny{$\beta = \betac$}};
    \draw (0, -4) node[inner sep=0] {\includegraphics[scale=0.4]{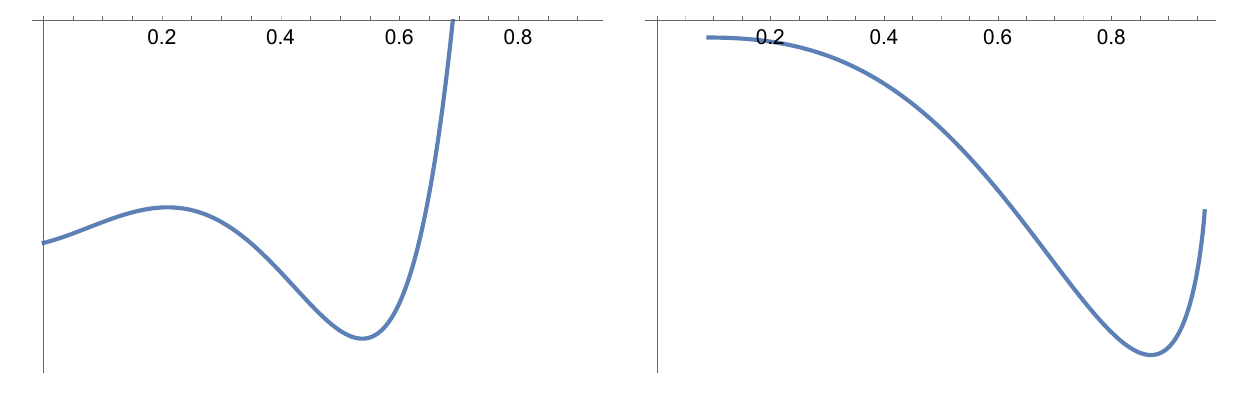}};
    \draw (-2.35, -5.7) node {\tiny{$\beta \in (\betac,\betas)$}};
    \draw (2.8, -5.7) node {\tiny{$\beta > \betas$}};

     \filldraw[color=orange, fill=orange](-0.825,0.82) circle (0.04);
      \filldraw[color=orange, fill=orange](4.13,0.502) circle (0.04);
       \filldraw[color=orange, fill=orange](-3.83,-4.09) circle (0.04);
    
    \end{tikzpicture}
    \caption{The negative log probability (i.e., restricted free energy) of the largest connected component's size in the $n$-vertex mean-field random-cluster model with $q=3$ (for large $n$). The $x$-axes are the fraction of vertices in the largest component, and the $y$-axes represent the logarithm of the total weight of the configurations with such largest component sizes divided by $n$. We emphasize that the landscape is a function of all the component sizes and can exhibit more complexity in the other directions, but our results show this 1-dimensional projection approximately governs the dynamics from product initializations. The orange-marked dots are the saddle points with respect to which the fast initializations for Chayes--Machta dynamics are characterized in Theorem~\ref{thm:SWCM-dynamics}}
    \label{fig:random-cluster-phase_structure}
\end{figure}

The above static description of these mean-field models is closely related to the worst-case initialization mixing time of 
its dynamics. For instance, the Potts Glauber dynamics transitions from fast $O(n\log n)$ convergence when $\beta<\betau$ to exponentially slow as soon as $\beta>\betau$; first, when $\beta \in (\betau,\betac)$, due to the metastability of $q$ basins corresponding to ordered configurations (one for each spin) preventing equilibration to the disordered phase, then when $\beta \ge \betac$ due to the coexistence of the $q$ ordered phases: see~\cite{CDLLPS-mean-field-Potts-Glauber}. The CM dynamics is similarly fast when $\beta<\betau$, and still exponentially slow in the window $(\betau,\betas)$. This is because, even though the $q$ ordered phases are equivalent as far as the CM dynamics is concerned, there is still metastability between the ordered or disordered phases. The benefit of CM dynamics is seen once $\beta>\betas$, where it
becomes fast again: see~\cite{LNNP11, BS15,GSV,ABthesis,GLP,BSZ} which together give the worst-case mixing behavior of the mean-field CM and SW dynamics.

We start by stating our main results for the CM dynamics demonstrating the sharp families of product initializations  
that circumvent the exponential slowdown throughout the metastability regime $\beta \in (\betau,\betas)$. 
Let $\bigotimes\text{Ber}(\lambda_0/n)$ be the product distribution where each edge is included with probability $\lambda_0/n$. (Note that $\bigotimes\text{Ber}(\lambda_0/n)$ corresponds to the classical Erd\H{o}s--R\'enyi $G(n,\lambda_0/n)$ random graph model.)

\begin{theorem}\label{thm:SWCM-dynamics}
	For every $q>2$ and $\beta\in (\betau,\betas)$, there exist $\lambda_*(\beta,q)$ and $c_*(q)$ such that  
 the CM dynamics initialized from $\bigotimes\text{Ber}(\lambda_0/n)$ mixes in $O(\log n)$ steps 
 whenever 
	\begin{enumerate}
	\item $\beta\in (\betau,\betac)$ and $\lambda_0 < \lambda_*(\beta,q) - \omega(n^{-1/2})$, \label{item:thm-CM-1}
	\item $\beta = \betac$ and $\lambda_0 = \lambda_*(\beta,q) + c_*(q) n^{-1/2} + o(n^{-1/2})$, \label{item:thm-CM-2}
	\item $\beta \in (\betac,\betas)$ and $\lambda_0 > \lambda_*(\beta,q) + \omega(n^{-1/2})$. \label{item:thm-CM-3}
\end{enumerate}
Conversely, if $\beta \in (\betau,\betas)$ and the CM dynamics is initialized from $\bigotimes\text{Ber}(\lambda_0/n)$ for $\lambda_0$ outside the corresponding regime of~\ref{item:thm-CM-1}--\ref{item:thm-CM-3} (meaning, e.g., for item 1., that $\lambda_0 \ge \lambda_*(\beta,q) - O(n^{-1/2})$), then it takes $\exp(\Omega(n))$ time to attain any $o(1)$ total-variation distance.\footnote{By this we mean that there exists an $\epsilon_0$ and a $C_0$ such that attaining any $\epsilon<\epsilon_0$ total-variation distance takes time $\exp( C_0 n)$.}
\end{theorem}
Recall that the CM dynamics is fast from every initialization when $\beta\notin (\betau,\betas)$ per~\cite{BS15,ABthesis}, so this covers all temperatures where the worst-case mixing time is slow. The constant $\lambda_*$ is explicit and is such that the expected size of the giant in an Erd\H{o}s--R\'enyi$(n,\lambda_*/n)$ is $\theta_*n$ where $\theta_*$ is the saddle point in the landscape of the random-cluster model projected onto its giant component size, separating the ordered and disordered phases: see Figure~\ref{fig:random-cluster-phase_structure}. This is the reason for the sharpness of the thresholds of Theorem~\ref{thm:SWCM-dynamics}, as our proof shows that for nice initializations, whichever side of the saddle at $\lambda_*$ the giant size starts on, it quasi-equilibrates to the corresponding phase, and then takes exponential time to leave that phase. E.g., if $\beta \in (\betac,\betas)$ and $\lambda_0< \lambda_* + O(n^{-1/2})$ then the CM dynamics has an $\Omega(1)$ chance of quasi-equilibrating to the disordered phase. This implies slow mixing when combined with the fact that for $\beta\in (\betac,\betas)$, initialized from the disordered phase, mixing is exponentially slow. The slowness from bad choices of $\lambda_0$ demonstrates that even from the perspective of product measure initializations, it is still important that some parameter of the product measure should be tuned according to the model from which one is sampling.

The most delicate part of the analysis is the $\beta=\beta_c$ case, where the coexistence of ordered and disordered phases necessitates pinning down the relative exit probabilities to the right and left of the saddle from initializations  whose giant component size is within order of the standard deviation from (in particular, microscopically near) the saddle point. 
This is the source of the constant $c_*$, which is defined in terms of left vs.\ right exit probabilities for an explicit approximating 1-dimensional Gaussian process. This is also in contrast to sampling schemes that use mixtures of extremal initializations~\cite{GS22,GGS23,GS23} where one starts a priori by matching the initialization weights with the weights of the different phases at stationarity.
This is fleshed out in a detailed proof overview in Section~\ref{subsec:proof-outlines}.

\begin{remark}\label{rem:SW}
    When $q\ge 3$ is integer, the results of Theorem~\ref{thm:SWCM-dynamics} apply without change to the SW dynamics, in which instead of one activated set per step, there are $q$ activated sets corresponding to each of the $q$ color classes and independent percolation steps within each. Because of this distinction, all components are updated in every step, and this strictly simplifies the proofs in the SW case, not having to distinguish the analyses between situations where the giant component is activated or not. Interestingly, the $c_*$ parameter in the initialization at $\beta = \beta_c$ might differ between the SW and CM dynamics. This is because the 1-dimensional Gaussian process approximations have different variances.  
\end{remark}

We now turn to our results for the Potts Glauber dynamics that show the sharp families of high-entropy initializations that overcome the exponential bottlenecks in the metastable and phase-coexistence regimes. Let $\hat{\nu}^{\otimes}(m_0)$ denote the distribution over $\{1,...,q\}^{[n]}$, where first, a dominant spin $s$ is selected uniformly at random from $\{1,\dots,q\}$, and then, independently, each vertex is assigned spin $s$ with probability $m_0 \ge 1/q$ and each of the remaining $q-1$ spins with probability $\frac{1-m_0}{q-1}$.

\begin{theorem}\label{thm:Potts-Glauber}
For every $\beta>\betau$, there exist $m_*(\beta,q)$ and $\hat c_*(q)$ such that  
the Potts Glauber dynamics initialized from $\hat{\nu}^{\otimes}(m_0)$ mixes in $O( n \log n)$ steps whenever 
\begin{enumerate}
	\item $\beta\in (\betau,\betac)$ and $m_0 < m_{*}(\beta,q) - \omega(n^{-1/2})$, \label{item:thm-Potts-1}
	\item $\beta = \betac$ and $m_0 = m_{*}(\beta,q) + \hat c_*(q)n^{-1/2} + o(n^{-1/2})$,\label{item:thm-Potts-2}
	\item $\beta \in (\betac,\betas]$ and $m_0 > m_{*}(\beta,q) + \omega(n^{-1/2})$,\label{item:thm-Potts-3}
        \item $\beta >\betas$ and $m_0 \ge 1/q$ arbitrary. 
\end{enumerate}
Conversely, if $\beta \in (\betau,\betas)$ and the Potts Glauber dynamics is initialized from $\hat{\nu}^\otimes(m_0)$ for $m_0$ outside the corresponding regime of \ref{item:thm-Potts-1}--\ref{item:thm-Potts-3} (meaning, e.g., for item 1., that $m_0 \ge m_*(\beta,q) - O(n^{-1/2})$), then it takes $\exp(\Omega(n))$ time to attain any $o(1)$ total-variation distance.
\end{theorem}

When $\beta\le \betau$ the mixing time is fast from arbitrary initializations per~\cite{CDLLPS-mean-field-Potts-Glauber}, so the above covers all temperature regimes where the worst-case mixing time is slow. The constant $m_*$ is explicit and corresponds to initializing exactly at the saddle point(s) separating the $q$ ordered phases from the disordered phase (see Figure~\ref{fig:Potts-phase_structure}). This again is the source of the sharpness of the classification. 
We note that in the low-temperature $\beta>\betas$ regime, our result includes the most delicate and important case $m_0 = 1/q$, which is the fully uniform-at-random assignment of spins to vertices; this requires understanding the subtle competition between multiple simultaneously unstable directions around the saddle. 

\begin{remark}
     The critical initialization parameters $c_*, \hat c_*$ are defined by the points at which the approximating 1D Markov chains have certain (explicit in terms of $q$) left and right exit probabilities. If one is interested in computationally finding $c_*, \hat c_*$ this can be done up to the requisite precision $o(1/\sqrt{n})$ as follows. Since the exit probabilities are smooth and monotone in the initialization (see proofs of Lemma~\ref{lem:monotonicity-of-Z-process} and  Lemma~\ref{lem:existcstar}), perform binary search on the potential value for $c_*, \hat c_*$, with $\tilde O(n)$ many runs of the 1D Markov chain to estimate the exit probabilities at each search point. This leads to a complexity of $\tilde O(n)$ for estimating the parameters.
\end{remark}
\begin{remark}
    A lower bound of $\Omega(n\log n)$ on the mixing time of the Glauber Potts dynamics will generally hold, except for the potentially delicate case, corresponding to the one special value of $m_0$ where the initialization puts the fractional size of the largest color class exactly at the stable fixed point $m_r$. For other values of $m_0$, an $\Omega(n\log n)$ lower bound follows from the concentration estimates and bounds on the derivative of the drift function as it approaches the fixed point found in Section~\ref{sec:Potts-dynamics}. We predict that the lower bound exactly at the $m_r$ initialization is instead $\Omega(n)$, but it is somewhat subtle. 
(Similarly, an $\Omega(\log n)$ lower bound holds for the CM dynamics except potentially at the one special value of $\lambda_0$ where it places the fractional giant size exactly at $\theta_r$.)   
\end{remark}

\begin{figure}[t]
    \centering

     \begin{tikzpicture}[scale=0.75]
    \draw (0, 0) node[inner sep=0] {\includegraphics[scale=0.29,trim={0 8cm 0 4cm}]{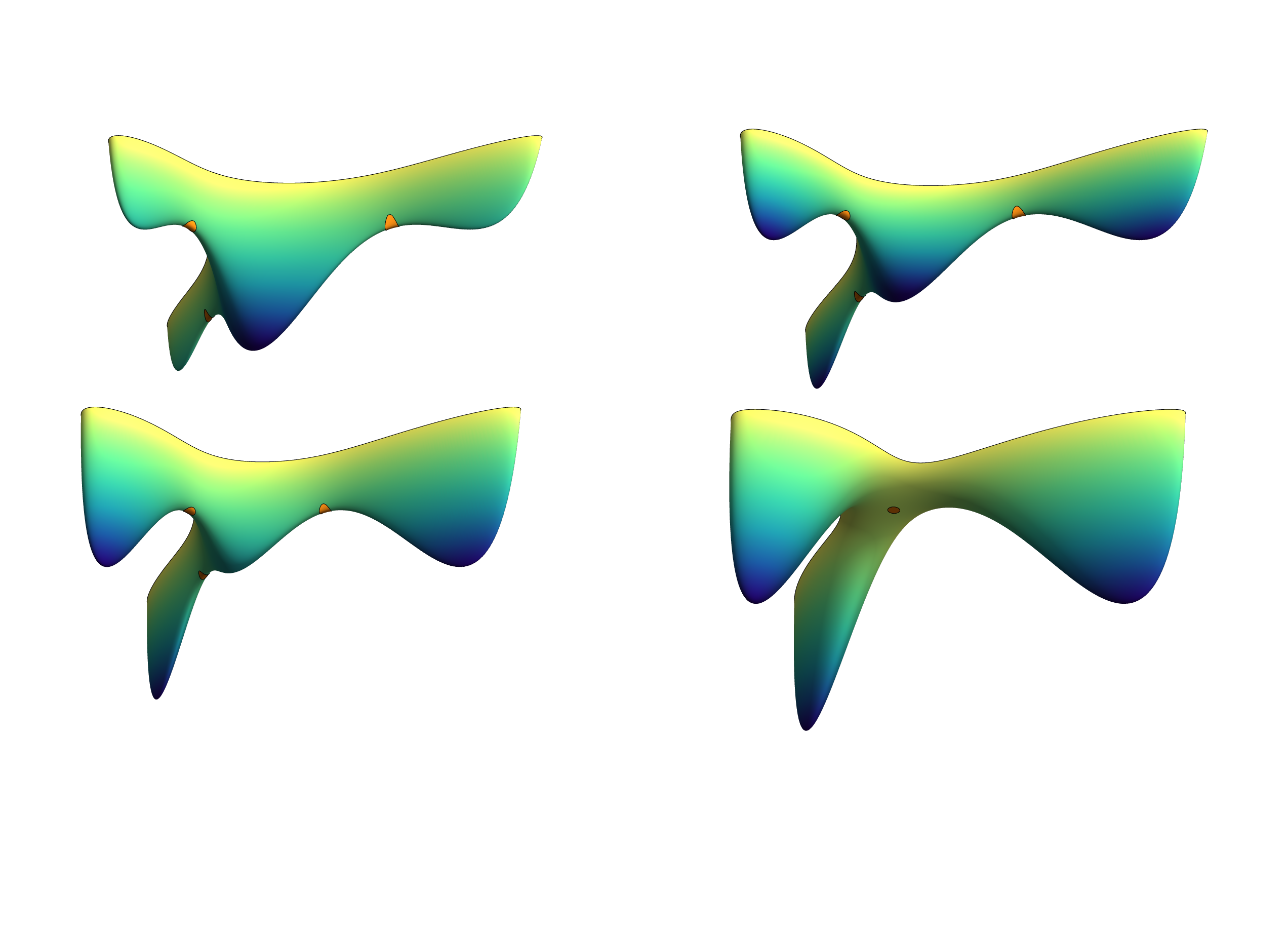}};
    \draw (-8, 3.5) node {\tiny{$\beta \in (\betau,\betac)$}};
    \draw (0.75, 3.5) node {\tiny{$\beta = \betac$}};
    \draw (-8, 0.25) node {\tiny{$\beta \in (\betac,\betas)$}};
    \draw (0.75, 0.2) node {\tiny{$\beta > \betas$}};
    \end{tikzpicture}

    \caption{The negative log probability (i.e., free energy) of the $n$-vertex mean-field Potts model with $q=3$, projected down onto the fractional spin counts; i.e., the $xy$-axes are the fraction of vertices assigned spin $1$ and $2$ respectively (which also determines the fraction of vertices with spin $3$), and the $z$-axis is the negative logarithm of the configuration weights divided by $n$. 
    The orange-marked dots are the saddle points with respect to which the fast initializations for Glauber dynamics are characterized in Theorem~\ref{thm:Potts-Glauber}.
    }
    \label{fig:Potts-phase_structure}
\end{figure}

\subsection{Initializations at other temperatures and relations to simulated annealing}\label{subsec:simulated}

Another central purpose of understanding mixing times of high-entropy initializations for Markov chains is that it is closely related to understanding simulated annealing schemes. Simulated annealing, introduced in~\cite{Kirkpatrick-simulated-annealing} (and variants like simulated tempering~\cite{Marinari-simulated-tempering}) lowers the temperature over the run of the Markov chain, so that when run at low temperatures, one is effectively initializing from the stationary measure at a nearby, but a higher-temperature (and therefore higher-entropy) distribution. This is one of the most commonly implemented modifications to vanilla MCMC to make it better able to sample from multimodal low-temperature landscapes. 
Simulated annealing and simulated tempering have been analyzed in the context of mean-field Potts dynamics by e.g.,~\cite{BhRa04-simulated-tempering,Woodard-Schmidler-Huber-tempering}, and it is known that the discontinuous phase transition at $\betac$ presents a serious obstruction to fast mixing from such schemes, for instance because the high-temperature initialization is metastable when $\beta \in (\betac, \betas)$. 

Still, our results lend insight into the approach by precisely classifying the set of temperatures $\beta_0$ for which initialization from $\mu_{\beta_0}$ or $\pi_{\beta_0}$ would be fast for the CM and Potts Glauber dynamics.

\begin{theorem}\label{thm:simulated-annealing-CM}
    For every $\beta\in (\betau,\betas)$, there exists explicit $b_*(\beta,q)\ge \betac(q)$ such that:
    \begin{enumerate}
    \item If $\beta\in (\betau, \betac)$, the CM dynamics initialized from $\mu_{\beta_0}$ for $\beta_0<b_*(\beta,q)$ will mix in $O(\log n)$ steps, but will take at least $\exp(\Omega(n))$ steps to attain $o(1)$ TV-distance for any $\beta_0 >b_*(\beta,q)$. 
    \item If $\beta = \betac$, the CM dynamics initialized from $\mu_{\beta_0}$ will take at least $\exp(\Omega(n))$ steps to attain $o(1)$ TV-distance for any $\beta_0 \ne \betac$.
    \item If $\beta \in (\betac, \betas)$, the CM dynamics initialized from $\mu_{\beta_0}$ mixes in $O(\log n)$ steps for every $\beta_0 >b_*(\beta,q)$ but will take at least $\exp(\Omega(n))$ steps to attain $o(1)$ TV-distance for any $\beta_0<b_*(\beta,q)$. 
\end{enumerate}
\end{theorem}
\begin{theorem}\label{thm:simulated-annealing-Potts}
For every $\beta\ge \betau$, there exists explicit $d_*(\beta,q)\ge \betac(q)$ such that:
\begin{enumerate}
    \item If $\beta\in (\betau, \betac)$, the Potts Glauber dynamics initialized from $\pi_{\beta_0}$ for $\beta_0<d_*(\beta,q)$ will mix in $O( n \log n)$ steps, but take at least $\exp(\Omega(n))$ steps to attain $o(1)$ TV-distance for any $\beta_0>d_*(\beta,q)$. 
    \item If $\beta = \betac$, the Potts Glauber dynamics initialized from $\pi_{\beta_0}$ will take at least $\exp(\Omega(n))$ steps to attain any $o(1)$ TV-distance for every $\beta_0 \ne \betac$.
    \item If $\beta \in (\betac, \betas)$, the Potts Glauber dynamics initialized from $\pi_{\beta_0}$ mixes in $O(n \log n)$ steps for every $\beta_0 >d_*(\beta,q)$ but will take at least $\exp(\Omega(n))$ steps to attain $o(1)$ TV-distance for any $\beta_0<d_*(\beta,q)$. 
    \item If $\beta>\betas$, the Potts Glauber dynamics initialized from $\pi_{\beta_0}$ mixes in $O(n\log n)$ steps for \emph{every} $\beta_0\ge 0$.
\end{enumerate}

\end{theorem}

\subsection{Proof outlines}\label{subsec:proof-outlines}
We focus our proof overview on the CM dynamics at the critical point $\betac$, where the equilibrium measure is roughly a $(1-\xi,\xi)$-mixture of an ordered and a disordered phase. This regime is the most technically involved part of the proof and contains most of the ideas used in other parameter regimes as well. 

In what follows, let $f(\theta) = f_{\beta,q}(\theta)$ be the expected drift for the size of the giant component in one step of CM dynamics. When $\beta \in (\betau,\betas)$, let $\theta_*$ be its unstable fixed point separating the two stable fixed points of $0$ (the disordered phase) and $\thetar$ (the ordered phase); see Figure~\ref{fig:random-cluster-drift-function} and Section~\ref{subsec:CM-drift-function} for the precise definitions. For intuition, when comparing to Figure~\ref{fig:random-cluster-phase_structure}, the drift is roughly the derivative of the log-probability landscape: a giant component of size $\theta_* n$ corresponds to the saddle point separating the modes whose minima are at no giant component, and at a giant component of size $\thetar n$.  

\begin{figure}[t]
    \centering

     \begin{tikzpicture}
    \draw (0, 0) node[inner sep=0] {\includegraphics[scale=0.3]{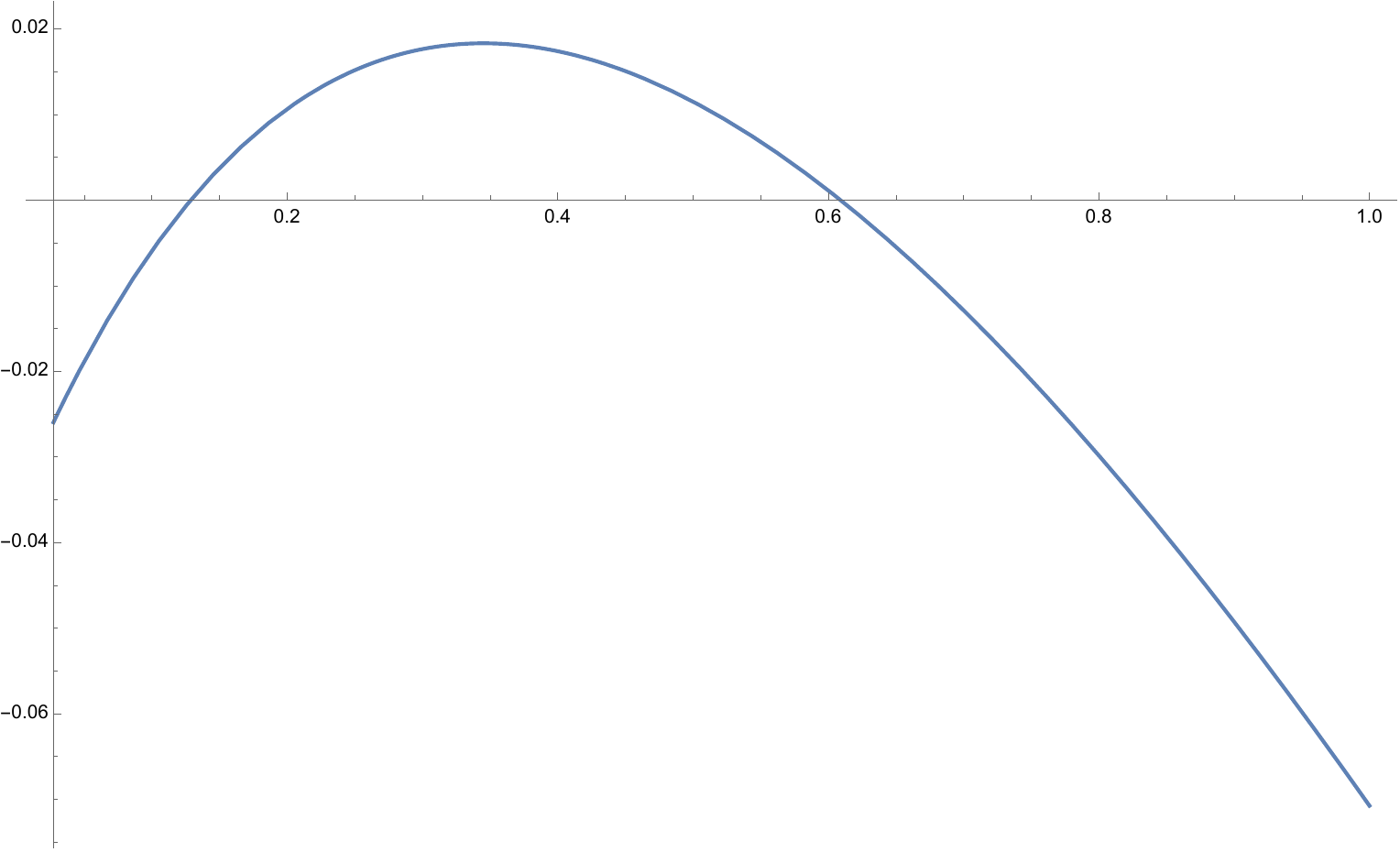}};



    \draw (0.85,1.42) node {\tiny{$\theta_r$}};
    \draw (-2.945,1.42) node {\tiny{$\theta_*$}};

    \filldraw[color=black, fill=black](-2.82,1.25) circle (0.04);
    \filldraw[color=black, fill=black](0.78,1.25) circle (0.04);


    
    \end{tikzpicture}

    \caption{The drift function $f(\theta)$ for the fractional size of the giant component under the mean-field CM dynamics in $\beta \in (\betau,\betas)$ with its 
    repulsive and attractive fixed points $\theta_*$ and $\thetar$, respectively, marked.}
    \label{fig:random-cluster-drift-function}
\end{figure}

The overall strategy of the proof can be described in the following two step manner. 

\begin{enumerate}
    \item \emph{Escape from a local neighborhood of the unstable fixed point}: We show that in an $O(\sqrt n)$ window around the unstable fixed point $\theta_*n$, the evolution of the size of its giant component, though not literally a Markov process, is well-approximated by a 1-dimensional Gaussian Markov chain (so long as the dynamics maintains a certain typical structure). 
    
    The 1-dimensional Gaussian Markov chain is monotone in the initialization, and for each $\xi$ has a unique initialization $c_\xi$ such that in $O(1)$ time, it exits the unstable fixed point to the right (towards the ordered phase) with probability $1-\xi$, and to the left (towards the disordered phase) with probability $\xi$.
    This behavior transfers to the CM dynamics by the approximation, and by picking the initialization's edge-probability $\lambda_0/n$ so that the initial giant component size has mean $\theta_* n + c_\xi \sqrt{n} + o(\sqrt{n})$. 
    
    \item \emph{Convergence to the stable fixed point}: Once the giant component size is $\omega(\sqrt{n})$ away from $\theta_* n$ (say to the right), its drift towards the stable fixed point $\thetar$, dominates the fluctuations. By a dyadic argument, we show that in a further $O(\log n)$ steps, the CM dynamics gets $\Omega(n)$ away from $\theta_* n$. From there, the landscape is effectively strictly convex and there is a macroscopic drift towards the stable fixed point. At this point, up to small modifications to handle the unlikely event that the dynamics leaves the region of convexity, the fast quasi-equilibration to the ordered phase follows from arguments very similar to those employed in~\cite{BS15}. 
\end{enumerate}
We outline the above two items in more detail in what follows. 

\subsubsection{Escaping the unstable fixed point separating phases}
Typical fast mixing arguments in the analysis of SW and CM chains rely on a uniform drift (dominating the fluctuations) to take them to a local neighborhood of a stable fixed point. In our setting, near the unstable fixed point at $\theta_* n$ for the giant component, the drift and fluctuations are on the same order, and together give constant probabilities to exiting the neighborhood of $\theta_* n$ to the right (and subsequently quasi-equilibrating to the ordered phase), or to the left (and subsequently quasi-equilibrating to the disordered phase). Understanding this competition to pinpoint the right initialization for fast mixing to occur requires pinpointing not just the order, but the variance, and even the exact distribution of the fluctuations. 

Towards this, we show that the evolution of the giant component, which we denote by $|\mathcal L_1(X_t)|$, is well approximated near its unstable fixed point $\theta_* n$, by an explicit 1-dimensional Gaussian Markov chain. To be precise, there exist explicit constants $a, (b^2_{t})_{t\ge 0} > 0$ such that if we define 
\begin{align}\label{eq:Zbar-process-intro}
    \bar Z_{t+1} - \bar Z_{t}= \begin{cases}  a \bar Z_t + \mathcal N(0, b^2_t) & \text{with \ prob., $1/q$} \\ 
    0 & \text{otherwise}
    \end{cases}\,,
\end{align}
then the giant component, centered by $\theta_* n$ and divided by $\sqrt{n}$, tracks this process closely.  I.e., if we let $\bar L_t = n^{-1/2}(|\mathcal L_1(X_t)| - \theta_* n)$, then by Theorem~\ref{thm:1dapproxZ} there exists a coupling $\mathbf{P}$ such that for every $T = O(1)$, 
\begin{align}\label{eq:1D-approximation}
    \mathbf{P} \Big( \sup_{t\le T} \big|\bar L_t - \bar Z_t\big|\ge O\big(\frac{\log n}{\sqrt{n}}\big)\Big) = o(1)\,,
\end{align}
where the initializations are $X_0 \sim G(n, \lambda_0/n)$ for $\lambda_0 = \lambda_* + {c_*}n^{-1/2} + o(n^{-1/2})$, and $\bar Z_0 \sim \mathcal N(0, F(c_*))$ for an explicit increasing function $F$. 

To show this, we first argue that $(X_t)_{t\in [0,T]}$ stays in some good set of configurations $\mathcal G_T$, consisting of having a specific (time-dependent) sum-of-squares of its non-giant component sizes, and a sufficient number of singleton components. Lemma~\ref{lem:Gt} shows that with high probability, $(X_t)_t \in \mathcal G_T$ for $T= O(1)$ times when initialized from a supercritical Erd\H{o}s--R\'enyi, but we emphasize that if these a priori regularity estimates do not hold for the configuration, the approximation by the 1-dimensional chain can fail completely. 
Once on the set $\mathcal G_T$, the approximation of the evolution of the giant by a Gaussian Markov chain boils down to concentration estimates for the set of vertices not activated in the past $k$ steps for each $k$, and local limit theorems both for the activation and percolation steps of the CM dynamics. 

With~\eqref{eq:1D-approximation} in hand, we translate exit probabilities for $\bar Z_t$ to the right and left of an interval $[-\gamma,\gamma]$ with large $\gamma = O(1)$, to exit probabilities on $\bar L_t$ with only $o(1)$ error. A subtle technical point is that, although the variances of $\bar Z_t - \bar Z_{t-1}$'s steps are time-dependent, they are notably neither $n$-dependent, nor $\bar Z_{t-1}$-dependent, so this is a tractable and monotone chain on $\mathbb R$. 

\subsubsection{Mixing within a phase away from the fixed-point}
From the above we deduce that for a well-chosen initialization parameter $\lambda_0$, after $O(1)$ many steps (depending on $\gamma$), $|\mathcal L_1(X_t)|$ has exited $[\theta_* n - \gamma \sqrt{n}, \theta_* n + \gamma \sqrt{n}]$ to the right with probability $1-\xi+o(1)$ and to the left with probability $\xi-o(1)$. The next step is to show that if it exited to the right, with probability $1-O(\gamma^{-2})$, in a further $O(\log n)$ steps, it quasi-equilibrates from there to the ordered phase (the random-cluster measure conditioned on having a giant), and if it exited to the left similarly to the disordered phase. Without loss of generality, let us discuss the first setting of exiting to the right. 

This step can be broken into three parts: 
\begin{enumerate}[(i)]
    \item initially, the drift away from the saddle for $|\mathcal L_1(X_t)|$ is proportional to its distance to $\theta_* n$, and this can be used to get it from $\theta_* n +  \gamma \sqrt{n}$ to $\theta_* n + \Omega(n)$ in $O(\log n)$ steps, except with probability $1-O(\gamma^{-2})$, which covers the possibility it goes back to the saddle in which case quasi-equilibration fails. 
    \item from $\theta_* n + \Omega(n)$, the distance to $\thetar n$ contracts exponentially fast, and in  $O(\log n)$ steps, we have $|\mathcal L_1(X_t) - \thetar n|  = O(\sqrt{n})$; 
    \item any two configurations having $|\mathcal L_1(X_t) - \thetar n|  = O(\sqrt{n})$ and a few other easy-to-maintain properties can be coupled with probability $1-\epsilon$ in $O_\epsilon(\log n)$ further steps; since the ordered phase measure also satisfies these properties, we can couple $X_t$ to a sample from the ordered phase with high probability. 
    \end{enumerate}
    The last two stages here are not so different from those arguments carried out in~\cite{BS15}. However, we emphasize a point of care in all these arguments is that unlike mixing guarantees from worst-case initializations, couplings cannot be restarted arbitrarily if they fail, and a single bad move (which can be correlated with failure of the coupling) could destroy mixing. 

\subsubsection{The Potts Glauber dynamics proof}
At a very high level, the proofs of the Potts Glauber dynamics results in Theorem~\ref{thm:Potts-Glauber} for $\beta \in (\betau,\betas)$ follow a similar strategy to the CM dynamics proof. We first argue a priori that from the initializations considered in the theorem, all color counts except the one dominant color stay within $\tilde O(\sqrt{n})$ of one another for long times: this plays the analogue of the good set of configurations ${\mathcal G}_T$ on which a 1-dimensional chain approximating the evolution of the dominant color class, is sufficient. Within this good set of configurations, we first bound the time to escape the local neighborhood of the unstable fixed point, and then show that whichever side the dynamics exits out of, in $O(n \log n)$ steps, it quasi-equilibrates to the corresponding phase (disordered, or dominated by one of the $q$ colors). 
The technical tools in these proofs,
in addition to those appearing in the CM proofs, are martingale concentration estimates adapted from~\cite{CDLLPS-mean-field-Potts-Glauber}. 
This allows us to roughly treat every $n$ steps of the Potts Glauber updates like a single step of the CM dynamics. 

Some notable further challenges arise in the low-temperature regime of $\beta>\betas$ that are not present in the CM analysis, due to the fact that the saddle at $(1/q,...,1/q)$ proportions vector is unstable in all $q$ directions simultaneously, while when $\beta <\betas$, the saddles we encounter are stable in all but one direction. To handle these difficulties, we obtain a better quantitative understanding of the $q$-dimensional drift landscape (for general $q$), and use refined approximations of the behavior of the Glauber dynamics' proportions vector away from its fixed points by a $q$-dimensional gradient dynamical system. For the reader only interested in the Potts Glauber dynamics, this is found in Section~\ref{sec:Potts-dynamics}, and its presentation is effectively self-contained.

\subsection*{Acknowledgments}
The authors thank anonymous referees for their careful reading and helpful comments. 
The research of R.G.\ is supported in part by NSF grant DMS-2246780. The research of A.B.\ and X.Z.\ is supported in part by NSF grant CCF-2143762.

\section{Preliminaries}
\label{sec:prelim}

In this section, we introduce notations and gather known facts about Erd\H{o}s--R\'enyi random graphs, the mean-field random-cluster measure, and the mean-field Potts measure that we will appeal to in our proofs.

Let us begin with some discussion of the notation used throughout the paper. We will always think of $\beta,q$ fixed, and therefore may drop their dependencies from subscripts. All our results should be thought of as applying for $n$ sufficiently large. When we use big-$O$, little-$o$, and $\Theta$ notation, the implicit constants may depend on $\beta,q$ but not on $n$. 
The threshold $\gamma$ will 
ultimately be taken to be a sufficiently large constant depending on the $\varepsilon$ total variation distance to the stationary measure we are aiming for. Therefore, we mostly treat $\gamma=O(1)$ except when it is important to ensure that large enough $\gamma$ suffices to achieve $\varepsilon$ total variation distance.
We use the notation $\tilde{O}$ to suppress logarithmic factors.

Throughout, for a random-cluster configuration (i.e., a subset of the edges of the complete graph) $X$, we use $\mathcal{L}_i(X)$ to denote the $i$-the largest component (breaking ties according to an arbitrary ordering on components). We write $|\mathcal{L}_i(X)|$ for the number of vertices in  $\mathcal{L}_i(X)$. Furthermore, we define the following: 
\begin{itemize}
    \item  $R_k(X) := \sum_{i\ge 1} \mathcal |\mathcal L_i(X)|^k$ is the sum of $k$'th powers of component sizes; and
    \item  $R_k^- (X) := \sum_{i\ge 2} \mathcal |\mathcal L_i(X)|^k$ is like $R_k(X)$ but excludes the largest component of $X$. 
\end{itemize}

\subsection{Structure of the mean-field Potts and random-cluster equilibrium measures}
We begin by recapping the typical behavior of the mean-field Potts and random-cluster measures, describing their phase transitions as the inverse-temperature parameter $\beta$ varies. As described in the introduction, when $q>2$, 
there are three critical points $\betau<\betac<\betas$ of relevance, given by 
\[
\betau:=\sup\left\{\beta\ge 0: \Big(1+(q-1)e^{\beta\cdot \frac{1-qx}{q-1}}\Big)^{-1} - x\neq 0,~~ \forall x\in (1/q, 1)\right\},
\]
\[
\betac:=\frac{2(q-1)}{q-2} \log (q-1), \qquad\text{and} \qquad \betas:=q.
\]

To characterize the giant component appearing at $\beta \ge \betac$, we define
$\thetar=\thetar(\beta,q)$ as the largest $x>0$ satisfying the equation
\begin{align}\label{eq:thetar-implicit-equation}
e^{\beta x} = \frac{1-x}{1+(q-1)x}.
\end{align}
The following lemma characterizes the mean-field random-cluster distribution at each~$\beta$.
To state the lemma, including the form of coexistence at $\beta = \betac$, we define
    the constant $\xi = \xi(q)$, as 
    \begin{align}\label{eq:xi}
        \xi &:= \frac{1}{1+\xi'}, \qquad \text{where}\\ 
        \xi' &:= \frac{1}{q-1}  \Big( \frac{q- \betac/(q-1)}{q-\betac} \Big)^{(2-q)/2} 
        \exp{\Big( \frac{\betac^2(q-2)(q^2-4q+2)}{4q(q-1)^2} \Big)}. \notag
    \end{align}
\begin{lemma}\label{lem:equilibrium-estimates}
Let $q>2$ and consider the mean-field random-cluster model at fixed $\beta$. 
 Let $X\sim \mu_{\beta}$.
 \begin{itemize}
     \item If $\beta <\betac$, with probability $1-o(1)$, it has 
     \begin{align*}
         |\mathcal L_1(X)| = O(\log n)\,, \qquad \text{and} \qquad R_2(X)  = O(n).
     \end{align*}
     \item If $\beta  = \betac$, then for $\xi$ defined as in \eqref{eq:xi}, with probability $\xi-o(1)$, it has 
     \begin{align*}
         |\mathcal L_1(X)| = O(\log n) \qquad \text{and} \qquad R_2(X) = O(n)\,,
     \end{align*}
     and with probability $(1-\xi) - o(1)$, it has 
     \begin{align*}
         ||\mathcal L_1(X)| -  \thetar n| = o(n)\,, \qquad   |\mathcal L_2(X)| = O(\log n)\,, \qquad   \text{and} \qquad R_2^-(X) = O(n)\,.
     \end{align*}
     \item If $\beta > \betac$, with probability $1-o(1)$, it has 
     \begin{align*}
                  ||\mathcal L_1(X)| -  \thetar n| = o(n)\,, \qquad   |\mathcal L_2(X)| = O(\log n)\,,  \qquad \text{and} \qquad R_2^-(X) = O(n)\,.
     \end{align*}
 \end{itemize}
\end{lemma}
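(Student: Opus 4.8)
\textbf{Proof proposal for Lemma~\ref{lem:equilibrium-estimates}.}
The plan is to reduce all three cases to a one-dimensional large-deviations analysis of the fractional size $a:=|\mathcal L_1(X)|/n$ of the largest component, together with the classical structure theory of subcritical Erd\H{o}s--R\'enyi graphs recalled in Section~\ref{sec:prelim}. Write $p = 1-e^{-\beta/n}$, so $\lambda := np \to \beta$. A configuration whose largest component has vertex set a fixed $S$ with $|S| = an$ decomposes as: a connected graph on $S$; a graph on $V\setminus S$ with no component larger than $|S|$; no edges across the cut $(S,V\setminus S)$; and there are $\binom{n}{an}$ choices of $S$. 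Weighting by $(p/(1-p))^{|A|}q^{k(A)}$ and using $k(A) = 1 + k(A|_{V\setminus S})$ together with the asymptotics of $\binom{n}{an}$ and of the probability that $G(m,\lambda/m)$ is connected, one sees that the law of $a$ under $\mu_\beta$ obeys a large deviation principle $\mu_\beta(a\approx x)\asymp e^{-n\Phi_\beta(x)+o(n)}$ for an explicit rate function $\Phi_\beta$. This is exactly the variational picture of~\cite{BGJ-mean-field-random-cluster, LL06} (see also~\cite{costeniuc2005complete}), from which I would import that $\Phi_\beta$ is minimized only at $x=0$ for $\beta<\betac$; only at $x=\thetar$, the largest root of~\eqref{eq:thetar-implicit-equation}, for $\beta>\betac$; and at both $0$ and $\thetar$ with equal value for $\beta = \betac$ --- this last equality being precisely what fixes $\betac = \frac{2(q-1)}{q-2}\log(q-1)$.

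Next I would show each well induces an essentially Erd\H{o}s--R\'enyi structure. In the disordered well ($a=o(1)$), the decomposition shows the configuration is, up to negligible reweighting, a subcritical $G(n,\lambda^\sharp/n)$ with $\lambda^\sharp<1$: the factor $q^{k(A)}$ deflates the effective edge density by $\approx q$ on a near-forest, and $\betac<\betas = q$. The structural consequences then follow from the standard subcritical-ER facts of Section~\ref{sec:prelim} --- $|\mathcal L_1(X)| = O(\log n)$ with high probability (w.h.p.), and, since $R_2(X) = \sum_v |\mathcal C(v)|$ with $\mathcal C(v)$ the component of $v$, one has $\E_{\mu_\beta}[R_2\mid\text{disordered}] = O(n)$ so $R_2(X) = O(n)$ w.h.p.\ by Markov --- and are in any case contained in~\cite{BGJ-mean-field-random-cluster, LL06}. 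For the ordered well I condition on the vertex set $S$ of the giant: inside $S$ the graph is connected, so $\mathcal L_1(X) = S$ and $|\mathcal L_1(X)| = an = \thetar n + o(n)$, the concentration of $a$ coming from $\Phi_\beta$ (or $\Phi_{\betac}$ near $\thetar$) having a unique nondegenerate minimum there; outside $S$, the configuration on the remaining $(1-\thetar)n$ vertices is again an essentially Erd\H{o}s--R\'enyi graph that is moreover \emph{subcritical} --- this is where one uses that $\thetar$ is the \emph{largest} root of~\eqref{eq:thetar-implicit-equation}, which forces the effective density of the remainder below $1$ --- so $|\mathcal L_2(X)| = O(\log n)$ and $R_2^-(X) = O(n)$ w.h.p.\ by the disordered-well argument applied to $V\setminus S$.

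It remains to pin down the split probabilities at $\beta = \betac$, where the first-order LDP only says $a$ is near $0$ or near $\thetar$. For this I would carry out the sharp Laplace/saddle-point expansion of the partition function, computing the Gaussian \emph{prefactors} of the contributions of a neighborhood of each of the two equal-depth minima: the disordered contribution carries the fluctuation prefactors of $a$ and of the subcritical bulk, while the ordered contribution carries the Stirling prefactor of $\binom{n}{\thetar n}$, the sharp asymptotics of the connectivity probability of $G(\thetar n,\lambda/(\thetar n))$, and the Gaussian fluctuations of the giant's size. Their ratio is exactly $\xi'$ of~\eqref{eq:xi}, with the factor $\big((q-\betac/(q-1))/(q-\betac)\big)^{(2-q)/2}$ coming from the ratio of Hessian determinants and the factor $\exp\big(\betac^2(q-2)(q^2-4q+2)/(4q(q-1)^2)\big)$ from the subleading constants in these expansions; thus $\Pr[\text{disordered}] = 1/(1+\xi') = \xi$ and $\Pr[\text{ordered}] = 1-\xi$.

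I expect this last step to be the main obstacle: it requires second-order (prefactor-level), not merely exponential-order, asymptotics, it is sensitive to the sharp connectivity-probability asymptotics of dense-ish ER graphs, and it requires correctly separating the connectivity constraint that defines the giant (a prefactor-level contribution) from the much softer ``no larger component elsewhere'' and ``no crossing edges'' constraints. A secondary subtlety is that the maximal-component control $|\mathcal L_1| = O(\log n)$ in the disordered well concerns a merely polynomially rare deviation, so one needs a comparison to subcritical ER that is finer than an $e^{o(n)}$ reweighting. Since the first-order variational content --- the location and depth of the minima of $\Phi_\beta$ --- and the basic structural statements are available in~\cite{BGJ-mean-field-random-cluster, LL06, costeniuc2005complete}, in practice I would cite those and carry out only the prefactor computation for $\xi$ in house.
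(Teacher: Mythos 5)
Your proposal is correct and tracks the paper's (very brief) proof, which simply cites \cite{LL06} and \cite{BGJ-mean-field-random-cluster} for the asymptotics of $|\mathcal L_1(X)|$ (including the $\xi,1-\xi$ split at $\betac$) and cites \cite{JL08} plus the transfer machinery of \cite{BGJ-mean-field-random-cluster} for the $|\mathcal L_2|$, $R_2$, and $R_2^-$ bounds. Your from-scratch plan (one-dimensional LDP for the giant, conditional Erd\H{o}s--R\'enyi structure inside each well, prefactor-level saddle-point expansion for $\xi$) is a faithful sketch of what those cited works establish, and your closing remark that the variational content should be imported from \cite{BGJ-mean-field-random-cluster,LL06,costeniuc2005complete} is precisely what the paper does; one small note is that the $\xi$ prefactor is itself already available in that literature rather than needing to be computed in house, and the $R_2=O(n)$ claim should use Chebyshev (as in Lemma~\ref{lemma:Skmoment}) rather than Markov alone to get $1-o(1)$ for a fixed constant.
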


The bounds on $|\mathcal L_1(X)|$ in Lemma~\ref{lem:equilibrium-estimates} appeared in \cite{LL06} and \cite{BGJ-mean-field-random-cluster}, whereas the bounds for $|\mathcal L_2(X)|$ and 
$R_2^-(X)$ follow from 
the analogous bound for sub-critical random graphs (see, e.g.,~\cite{JL08}) and the machinery from \cite{BGJ-mean-field-random-cluster} to transfer such results to the random-cluster model.

To describe the corresponding phase transition for the Potts measure, we let $S(\sigma)$ be its proportions vector, i.e., 
\begin{align*}
    S(\sigma) = \Big(\frac{1}{n} \sum_{v\in [n]}\mathbf{1}\{\sigma(v)=1\},...,\frac{1}{n} \sum_{v\in [n]}\mathbf{1}\{\sigma(v)=q\}\Big)\,.
\end{align*}
We sometimes consider $S$ as close to another vector even if it is only so up to permutation of the $q$ coordinates in the vector; in this case we say ``up to a permutation of the $q$ spins'', noting that the measure, and the law of the Glauber dynamics are invariant under such permutations. In the analysis of the Potts Glauber dynamics, also, we typically consider $S$ permuted so that its largest count is always its first coordinate. 
By Lemma~\ref{lem:equilibrium-estimates} and the coupling between the random-cluster model and the ferromagnetic Potts model (coloring components independently), we obtain the following. 

\begin{corollary}
    \label{cor:equilibrium-estimates-Potts}
     Let $q>2$ and consider the mean-field Potts model at fixed $\beta$. 
     Let $\sigma \sim \pi_{G,\beta}$.
 \begin{itemize}
     \item If $\beta <\betac$, with probability $1-o(1)$, it has 
     \begin{align*}
        \big\|S(\sigma) - (\tfrac{1}{q}, \dots, \tfrac{1}{q} ) \big\|_1 =O(\sqrt{n})\,.
     \end{align*}
     \item If $\beta  = \betac$, then for $\xi$ defined as in \eqref{eq:xi}, with probability $\xi-o(1)$, it has 
     \begin{align*}
        \big\|S(\sigma) - (\tfrac{1}{q}, \dots, \tfrac{1}{q} ) \big\|_1 =O(\sqrt{n})\,.
    \end{align*}
     With probability $(1-\xi) - o(1)$, (up to a permutation of the $q$ spins) it has 
     \begin{align*}
        \big\|S(\sigma) - (\tfrac{\thetar(q-1) + 1}{q},\tfrac{1-\thetar }{q},  \dots, \tfrac{1-\thetar }{q} ) \big\|_1 =O(\sqrt{n})\,.
     \end{align*}
     \item If $\beta > \betac$, with probability $1-o(1)$, (up to a permutation of the $q$ spins) it has 
     \begin{align*}
        \big\|S(\sigma) - (\tfrac{\thetar(q-1) + 1}{q}, \tfrac{1-\thetar }{q}, \dots, \tfrac{1-\thetar }{q} ) \big\|_1 =O(\sqrt{n})\,.
     \end{align*}
 \end{itemize}
\end{corollary}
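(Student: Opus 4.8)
The plan is to read the corollary off from Lemma~\ref{lem:equilibrium-estimates} through the standard edge--spin coupling between the random-cluster and ferromagnetic Potts measures already recalled above: a sample $\sigma\sim\pi_{G,\beta}$ can be generated by first drawing $X\sim\mu_\beta$ and then assigning to each connected component of $X$ an independent spin chosen uniformly from $\{1,\dots,q\}$. Writing $c_i\in\{1,\dots,q\}$ for the (random) spin of $\mathcal L_i(X)$, the $k$-th coordinate of the proportions vector is the linear statistic $n\,S_k(\sigma)=\sum_{i\ge 1}|\mathcal L_i(X)|\,\mathbf 1\{c_i=k\}$, a function of the independent labels $(c_i)_i$ whose value changes by at most $|\mathcal L_i(X)|$ when $c_i$ is altered. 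Conditionally on $X$, the bounded-differences (McDiarmid) inequality therefore gives, for every $k$ and $t>0$,
\[
\Pr\big(\,|n\,S_k(\sigma)-\E[n\,S_k(\sigma)\mid X]|\ge t \,\mid\, X\,\big)\le 2\exp\big(-2t^2/R_2(X)\big),
\]
and the same bound with $R_2(X)$ replaced by $R_2^-(X)$ once the spin of the giant component (when there is one) has been revealed, since that component's contribution to $n\,S_k$ is then deterministic. On the good events of Lemma~\ref{lem:equilibrium-estimates} one has $R_2(X)=O(n)$, resp.\ $R_2^-(X)=O(n)$, so this localizes each coordinate to within $O(\sqrt n)$ of its conditional mean; a union bound over the $q$ spins then controls $\big\|n\,S(\sigma)-(\cdot)\big\|_1$ against the relevant target vector.

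It remains to compute the conditional means in the three regimes, which is exactly where Lemma~\ref{lem:equilibrium-estimates} enters. When $\beta<\betac$, and on the probability-$\xi$ event when $\beta=\betac$, every component of $X$ has size $O(\log n)$, so $\E[n\,S_k(\sigma)\mid X]=\tfrac1q\sum_i|\mathcal L_i(X)|=n/q$ and $S(\sigma)$ concentrates around $(\tfrac1q,\dots,\tfrac1q)$ at scale $\sqrt n$. On the events where $X$ has a giant, one first reveals the spin $s:=c_1$ of $\mathcal L_1(X)$; this $s$ is uniform on $\{1,\dots,q\}$, which produces the ``up to a permutation of the $q$ spins'' clause. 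Conditionally on $c_1=s$ (and on $X$), $\E[n\,S_s(\sigma)]=|\mathcal L_1(X)|+\tfrac1q(n-|\mathcal L_1(X)|)$ and $\E[n\,S_k(\sigma)]=\tfrac1q(n-|\mathcal L_1(X)|)$ for $k\ne s$, with the fluctuation now governed by $R_2^-(X)=O(n)$. Substituting $|\mathcal L_1(X)|=\thetar n+O(\sqrt n)$ turns these into $\tfrac{(q-1)\thetar+1}{q}\,n$ and $\tfrac{1-\thetar}{q}\,n$ respectively, i.e.\ exactly the target vectors; adding the $O(\sqrt n)$ coloring fluctuation and summing over the $q$ coordinates gives the stated $\ell^1$ bound, and the case $\beta=\betac$ is simply the disjunction of the two pictures with weights $\xi$ and $1-\xi$.

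The one point I would treat most carefully, and the only genuine obstacle, is the scale $\sqrt n$. The bound $||\mathcal L_1(X)|-\thetar n|=o(n)$ literally quoted in Lemma~\ref{lem:equilibrium-estimates} is too weak to yield an $O(\sqrt n)$-scale conclusion for $S(\sigma)$; to bridge this one should invoke the sharper, central-limit-scale concentration of the random-cluster giant, $||\mathcal L_1(X)|-\thetar n|=O(\sqrt n)$ with the indicated probability, which is available from the analysis in~\cite{BGJ-mean-field-random-cluster} (the companion estimates $|\mathcal L_2(X)|=O(\log n)$ and $R_2^-(X)=O(n)$ are already at the right scale). Everything else is routine: the edge--spin coupling is an exact identity in law, and the coloring concentration is a one-line bounded-differences estimate. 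Finally, since the $O(\sqrt n)$ fluctuations live on the count scale, the displayed $\ell^1$ bounds are to be read for $n\,S(\sigma)$, equivalently as $O(n^{-1/2})$ bounds on $S(\sigma)$ itself.
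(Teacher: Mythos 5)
Your proof takes exactly the same route the paper implicitly uses, namely the random-cluster-to-Potts coupling (independent uniform coloring of the components of $X\sim\mu_\beta$) followed by concentration of the resulting color counts; the paper dispatches the corollary with a one-sentence reference to Lemma~\ref{lem:equilibrium-estimates} and this coupling, and your write-up is a correct expansion of that sentence, with the bounded-differences bound controlled by $R_2(X)$ or $R_2^-(X)$ being the right tool. Two points you raise are genuinely relevant and you handle them correctly: first, since $S(\sigma)$ is normalized to the simplex, the stated $O(\sqrt n)$ has to be read at the count scale $nS(\sigma)$, equivalently as $O(n^{-1/2})$ for $S(\sigma)$; second, the literal statement of Lemma~\ref{lem:equilibrium-estimates} only gives $\big||\mathcal L_1(X)|-\thetar n\big|=o(n)$, which would yield only an $o(1)$ bound on the proportions vector, so to reach scale $\sqrt n$ one must invoke the central-limit-scale concentration of the random-cluster giant from~\cite{BGJ-mean-field-random-cluster}, as you note. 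With that strengthening in hand, your argument is complete.
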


\subsection{Random graph preliminaries}
Due to the percolation step in the definition of the CM dynamics, precise random graph estimates are essential to careful understanding of the CM dynamics. We begin with central limit and local limit theorems for the size of the giant in a super-critical Erd\H{o}s--Renyi random graph. 
(Recall that a $G(n,\lambda/n)$ random graph is said to be \emph{sub-critical} when $\lambda < 1$ and is called \emph{super-critical} when $\lambda >1$.)

\subsubsection{Limit theorems for the size of the giant component}
The expected size of the giant component in a $G(n,\lambda/n)$ random graph is roughly $\alpha(\lambda) n$ where
\begin{equation}
    \label{eq:beta}
   \alpha(\lambda) := \begin{cases} \max\{x>0:  e^{-\lambda x} = 1-x\} & \lambda>1 \\ 0 & \lambda \le 1 \end{cases}\,.
\end{equation}
In particular, Theorem 5 of~\cite{CMS04} showed the following: if $\lambda>1$ uniformly in $n$, and $G\sim G(n,\lambda/n)$, then 
\begin{equation}
    \label{eq:expectedgiantgnp}
        |\mathbb E[|\mathcal L_1(G)|] - \alpha(\lambda)n|\le \tilde{O}(1)\,.
\end{equation}

It is easy to check that the $e^{-\lambda x} - 1+x$ has strictly positive $x$-derivative at $(\lambda,\alpha(\lambda))$ as long as $\lambda >1$, and moreover it is analytic in $x$. By the analytic implicit function theorem, this implies $\alpha(\lambda)$ is analytic in $\lambda$ for all $\lambda>1$.  
Moreover, the size is concentrated about $\alpha(\lambda) n$ with variance approximately $\sigma^2 (\lambda) n$, where 
\begin{equation}
    \label{eq:sigmag}
    \sigma^2(\lambda) := \frac{\alpha(\lambda)(1-\alpha(\lambda))}{\left( 1 - \lambda(1-\alpha(\lambda)) \right)^2}.
\end{equation}
In fact, it is known to satisfy the following central and local limit theorems.

\begin{theorem}
[\cite{PW05}]
    \label{thm:giantLLT}
    Let $G(n,\lambda/n)$ with $\lambda>1$. Let $\alpha = \alpha(\lambda)$ and $\sigma =\sigma(\lambda)$.
    For any compact interval $J\subset \mathbbm{R}$ and any $\delta > 0$, for all large $n$ and any integer $k\in \mathbbm{N}$ satisfying $\frac{k-\alpha n}{\sigma \sqrt{n}} \in J$, we have
    \begin{equation}
        \frac{1-\delta}{\sigma\sqrt{2\pi n}} \exp{\left( -\frac{(k-\alpha n)^2}{2\sigma^2 n} \right)} 
        \le \Pr(|\mathcal L_1(G)| = k) 
        \le \frac{1+\delta}{\sigma\sqrt{2\pi n}} \exp{\left( -\frac{(k-\alpha n)^2}{2\sigma^2 n} \right)}.
    \end{equation}
\end{theorem}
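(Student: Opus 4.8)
The final statement is the local limit theorem of Pittel and Wormald \cite{PW05}; here is the route I would take. The natural starting point is the \emph{duality decomposition}. For any target size $k$, the $\binom nk$ events ``$S$ is a connected component of $G$ and no other component beats $S$ in the (tie-broken) ordering'' are pairwise disjoint and, being determined by three disjoint blocks of potential edges (inside $S$, between $S$ and $S^c$, inside $S^c$), they factorize, giving
\[
\Pr\big(|\mathcal L_1(G)| = k\big) = \binom{n}{k}\, c_k(p)\,(1-p)^{k(n-k)}\, \rho_{n-k}(k), \qquad p=\lambda/n,
\]
where $c_k(p) = \Pr\big(G(k,p)\text{ is connected}\big)$ and $\rho_{n-k}(k)=\Pr\big(G(n-k,p)\text{ has no component that beats a fixed }k\text{-set}\big)$. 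For $k$ in the window $\alpha n + x\sigma\sqrt n$ with $x\in J$, the complement $G(n-k,p)$ has edge density $\lambda(1-\alpha)+o(1) < 1$, so it is strictly subcritical, its largest component is $O(\log n)\ll k$ with probability $1-o(1)$ (see, e.g., \cite{JL08}), hence $\rho_{n-k}(k)=1-o(1)$ uniformly; standard supercritical facts (uniqueness of the giant, all other components $O(\log n)$) guarantee the decomposition captures essentially all the mass. So the whole problem reduces to sharp asymptotics for $c_k(\lambda/n)$ as $k$ ranges over an $O(\sqrt n)$-window around $\alpha n$.

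That connectivity estimate is the technical heart, and the step where I expect the real difficulty. Writing $c_k(p) = (1-p)^{\binom k2} C_k\!\big(\tfrac{p}{1-p}\big)$ with $C_k(w)=\sum_{\text{connected }H\text{ on }[k]} w^{e(H)}$, one has the exponential-generating-function identity $\sum_k \tfrac{x^k}{k!}C_k(w) = \log\big(\sum_j \tfrac{x^j}{j!}(1+w)^{\binom j2}\big)$, and the task is to extract the $k$-th coefficient in the sparse regime $w=\lambda/n\,(1+o(1))$, $k=\Theta(n)$; equivalently, to count connected graphs on $k$ vertices with $k-1+\ell$ edges precisely enough and sum over the excess $\ell$. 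This is exactly the ``inside-out'' enumeration of \cite{PW05} (building on Wright's count of sparse connected graphs and the $2$-core/kernel analysis of Janson--Knuth--\L{}uczak--Pittel), and I would simply invoke it. What one needs out of it is that $\log c_k(\lambda/n) = \psi_n(k) + o(1)$ uniformly over the window, for an explicit smooth function $\psi_n$ of $k/n$, together with a local CLT for $\ell$.

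Given such an input, the proof finishes by a Laplace-type expansion. Set $g(k) := \log\binom nk + \psi_n(k) + k(n-k)\log(1-p)$, so that $\Pr(|\mathcal L_1(G)|=k) = e^{g(k)+o(1)}$ uniformly over the window. One checks that the three $\Theta(\sqrt n)$ linear-in-$(k-\alpha n)$ contributions to the exponent cancel precisely when $e^{-\lambda\alpha}=1-\alpha$, i.e.\ $g'(k)=0$ at $k=\alpha n + O(1)$; that $g''(\alpha n) = -\tfrac{1}{\sigma^2(\lambda)\,n}(1+o(1))$ with $\sigma^2(\lambda)$ as in \eqref{eq:sigmag}; and that the cubic and higher terms contribute $o(1)$ across an $O(\sqrt n)$-window. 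Hence $g(k) = g(\alpha n) - \tfrac{(k-\alpha n)^2}{2\sigma^2 n} + o(1)$, and the prefactor $e^{g(\alpha n)}$ is pinned to $\tfrac{1+o(1)}{\sigma\sqrt{2\pi n}}$ either by summing over the window against $1-o(1)$ or by invoking the already-known central limit theorem for $|\mathcal L_1(G)|$; the compact interval $J$ and the $\delta$ in the statement are absorbed into the uniform $o(1)$ errors. The single genuine obstacle is the sparse connected-graph enumeration behind $\psi_n$ — everything else is bookkeeping. (A more probabilistic alternative avoids explicit enumeration by analyzing the breadth-first exploration from a uniform vertex, a time-inhomogeneous walk with binomial increments, and proving a local limit theorem for its first return to $0$ conditioned to be macroscopic, whose mean and variance reproduce $\alpha n$ and $\sigma^2 n$; there the inhomogeneity of the increments is the principal difficulty.)
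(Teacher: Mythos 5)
The statement you were asked about is not proved in the paper at all: Theorem~\ref{thm:giantLLT} is simply cited to Pittel and Wormald \cite{PW05}, and the authors use it as a black box. So there is no ``paper's proof'' to compare against. What you have written is a roadmap through the kind of argument that actually appears in \cite{PW05} (and in related work of Bollob\'as--Riordan): a duality/transfer decomposition
\[
\Pr(|\mathcal L_1(G)| = k) = \binom{n}{k}\, c_k(p)\,(1-p)^{k(n-k)}\, \rho_{n-k}(k),
\]
followed by a Laplace expansion of $\log$ of the right side around $k = \alpha n$, with the stationarity condition recovering $e^{-\lambda\alpha} = 1-\alpha$ and the second derivative recovering $\sigma^2(\lambda)$. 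That outline is structurally sound, and your observation that the subcritical $\rho_{n-k}(k)$ factor is $1-o(1)$ uniformly over the window is correct; the pairwise disjointness and factorization of the $\binom{n}{k}$ events is also fine (modulo the whp-unique-giant caveat for ties, which you implicitly handle).

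The caveat you should be explicit about, however, is that you have not actually discharged the ``single genuine obstacle'' you identify: the sharp uniform asymptotics $\log c_k(\lambda/n) = \psi_n(k) + o(1)$ over a $\Theta(\sqrt n)$ window, obtained by counting sparse connected graphs by excess. You ``simply invoke'' \cite{PW05} for it. Since the required enumeration is itself the main technical content of \cite{PW05} and is precisely what makes the local limit theorem go through, your argument as written is closer to an annotated reading of the reference than an independent proof; if presented as a self-contained derivation it would be circular. The alternative BFS-exploration route you sketch at the end is a genuine second approach (a local CLT for the first return of a time-inhomogeneous random walk conditioned to be macroscopic), but it carries its own nontrivial difficulty --- the inhomogeneity of the binomial increments --- which you also flag without resolving. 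In short: correct high-level skeleton and accurate identification of the bottleneck, but no new proof of the bottleneck, and none is needed here since the paper treats this as a citation.
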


\begin{theorem}[\cite{BR12}]
\label{thm:CLT}
 Let $G(n,\lambda/n)$ with a fixed $\lambda = O(1)$ with $(\lambda-1)^3 n \to \infty$ as $n\to\infty$.
    Let $|\mathcal L_1(G)|$ denote the number of vertices in the largest component of $~G\sim G(n,\lambda/n)$.
    We have as $n\to\infty$, that 
    \begin{equation}
        \frac{|\mathcal L_1(G)| - \alpha(\lambda) n}{\sigma(\lambda)\sqrt{n}} \xrightarrow[D]{} \mathcal N(0,1),
    \end{equation}
    where $\xrightarrow[D]{}$ denotes convergence in distribution, and $\mathcal N(0,1)$ is a standard normal.
\end{theorem}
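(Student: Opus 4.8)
To prove Theorem~\ref{thm:CLT}, the plan is to encode the component sizes of $G\sim G(n,\lambda/n)$ through its breadth-first \emph{exploration walk} and to read off $|\mathcal{L}_1(G)|$ as a first-passage time of that walk; when $\lambda-1$ is bounded away from $0$ the statement already follows from the local limit theorem in Theorem~\ref{thm:giantLLT}, so the real point is the barely-supercritical regime $\lambda\to1$ with $(\lambda-1)^3n\to\infty$. Fix any vertex ordering and run the usual BFS with \emph{active}/\emph{explored}/\emph{neutral} labels: at each step explore the oldest active vertex (or, if none is active, promote the first neutral vertex) and reveal its edges to the current neutral set. Writing $\eta_t$ for the number of vertices discovered at step $t$ --- conditionally on the history $\mathcal{F}_{t-1}$ one has $\eta_t\sim\mathrm{Bin}(N_{t-1},\lambda/n)$ with $N_{t-1}$ the number of neutral vertices --- the walk $S_t:=\sum_{i\le t}(\eta_i-1)$ has the property that the components of $G$ are exactly the excursions of $S$ above its running minimum, and $|\mathcal{L}_1(G)|$ is the length of the longest excursion. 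A preliminary input, from standard subcritical/near-critical component estimates (e.g.\ \cite{JL08} and the barely-supercritical facts behind Lemma~\ref{lem:equilibrium-estimates}), is that w.h.p.\ there is a single excursion of length $\Omega((\lambda-1)n)$ and every other excursion has length $O_P((\lambda-1)^{-2}\log n)$; in particular the vertices consumed by components explored before the macroscopic one, together with the running minimum reached by then, total $O_P((\lambda-1)^{-2})$, which is $o_P(\sigma(\lambda)\sqrt n)$ \emph{precisely} when $(\lambda-1)^3n\to\infty$. So the starting level of the giant excursion is negligible on scale $\sigma(\lambda)\sqrt n$, and it suffices to analyze the first return of $S$ to that level during the giant excursion.

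First I would set up the fluid limit. Using $N_{t-1}=n-(t-1)-a_{t-1}$, where $a_{t-1}$ (the number of active vertices) equals $S_{t-1}$ minus the running minimum, a routine Azuma argument gives $S_{\lfloor xn\rfloor}/n\to s(x):=(1-e^{-\lambda x})-x$ uniformly over $x\in[0,1]$, where $s$ solves $s'=\lambda e^{-\lambda x}-1$, $s(0)=0$. Since $\lambda>1$, $s>0$ on $(0,\alpha)$ with $\alpha=\alpha(\lambda)$ as in \eqref{eq:beta}, and $s(\alpha)=0$ transversally with $s'(\alpha)=\lambda(1-\alpha)-1=:-\kappa<0$ (supercriticality gives $\lambda(1-\alpha)<1$). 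Hence w.h.p.\ the giant excursion ends near $\alpha n$, so $|\mathcal{L}_1(G)|=\alpha(\lambda)n+o_P(n)$. To pin down the $\sqrt n$-fluctuation I would linearize at the transversal zero: for $t=\alpha n+r$ with $|r|\le K\sqrt n$ one shows $S_t=S_{\alpha n}-\kappa r+o_P(\sqrt n)$ uniformly in $r$, so the first-return time satisfies
\[
|\mathcal{L}_1(G)|=\alpha(\lambda)n+\kappa^{-1}S_{\alpha n}+o_P(\sigma(\lambda)\sqrt n),
\]
and it remains to prove a CLT for $S_{\alpha n}$ (suitably centered) with the correct variance.

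For that I would use the Doob decomposition $S_t=A_t+M_t$, with predictable part $A_t=\sum_{i\le t}(\lambda N_{i-1}/n-1)$ and martingale $M_t$ satisfying $\mathbb{E}[(\Delta M_i)^2\mid\mathcal{F}_{i-1}]=\mathrm{Var}(\eta_i\mid\mathcal{F}_{i-1})=\tfrac{\lambda N_{i-1}}{n}\big(1-\tfrac{\lambda}{n}\big)$. The key mechanism is the feedback through vertex depletion: inside the giant excursion $N_{i-1}=n-(i-1)-S_{i-1}+o_P(\sqrt n)$, whence
\[
\Delta S_i=\Big(\tfrac{\lambda(n-i+1)}{n}-1\Big)-\tfrac{\lambda}{n}\,S_{i-1}+\Delta M_i+o_P(n^{-1/2}),
\]
a linear recursion with per-step contraction factor $1-\lambda/n$. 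Solving it, the centered $S_{\alpha n}$ equals $\sum_{i\le\alpha n}(1-\lambda/n)^{\alpha n-i}\Delta M_i+o_P(\sqrt n)$, whose conditional variance converges to $n\int_0^\alpha e^{-2\lambda(\alpha-y)}\lambda e^{-\lambda y}\,dy=n\big(e^{-\lambda\alpha}-e^{-2\lambda\alpha}\big)=n\,\alpha(1-\alpha)$, using $e^{-\lambda\alpha}=1-\alpha$. A martingale central limit theorem --- whose Lindeberg condition is trivial since $\Delta M_i$ has Poissonian tails --- then gives $S_{\alpha n}/\sqrt n\Rightarrow\mathcal{N}(0,\alpha(1-\alpha))$, hence $(|\mathcal{L}_1(G)|-\alpha(\lambda)n)/\sqrt n\Rightarrow\mathcal{N}(0,\alpha(1-\alpha)/\kappa^2)$; and $\alpha(1-\alpha)/\kappa^2=\sigma^2(\lambda)$ by \eqref{eq:sigmag} since $\kappa=1-\lambda(1-\alpha)$. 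Dividing by $\sigma(\lambda)$ finishes the proof.

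The hardest part will be carrying out all of this \emph{uniformly} and with explicit $\lambda-1$ dependence in the barely-supercritical window, where $\kappa=|s'(\alpha)|\to0$ is the delicate small parameter. Three points deserve care. (i) Converting the fixed-time CLT for $S_{\alpha n}$ into the first-return statement requires a uniform bound on $\sup_{|r|\le K\sqrt n}\big|S_{\alpha n+r}-S_{\alpha n}+\kappa r\big|$, which rests on the transversality of the zero of $s$ and on fluctuation estimates that degrade as $\kappa\to0$. (ii) The deterministic inputs ($\alpha(\lambda)$, $\kappa$, the fluid limit, the subcritical-component bounds) must be quantified as $\lambda\to1$, and the hypothesis $(\lambda-1)^3n\to\infty$ enters in exactly the two places flagged above: it makes the pre-giant excursions $o_P(\sigma\sqrt n)$, and it makes the fluctuation $\sigma(\lambda)\sqrt n=\Theta(\sqrt{n/(\lambda-1)})$ smaller than the mean $\alpha(\lambda)n=\Theta((\lambda-1)n)$, i.e.\ it is exactly the edge of the scaling window. (iii) The depletion-feedback bookkeeping in the third step must be precise enough to land on the factor $\alpha(1-\alpha)$ rather than the naive $\alpha$ one would get by ignoring the contraction, since this factor is precisely what distinguishes $\sigma^2(\lambda)$ from $\alpha/\kappa^2$.
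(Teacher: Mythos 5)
The paper does not prove Theorem~\ref{thm:CLT}; it imports it as a black box from the cited reference \cite{BR12}, whose proof is precisely an exploration-walk argument of the kind you outline (BFS walk, fluid limit, linearization at the transversal zero, Doob decomposition and martingale CLT with depletion-corrected variance). Your sketch therefore matches the reference's method rather than anything in this paper, and the bookkeeping checks out: $\kappa=1-\lambda(1-\alpha)$, limiting martingale variance $\alpha(1-\alpha)$, hence $\alpha(1-\alpha)/\kappa^2=\sigma^2(\lambda)$ by~\eqref{eq:sigmag}, and $(\lambda-1)^3n\to\infty$ is correctly identified as what renders the pre-giant excursions $o_P(\sigma(\lambda)\sqrt n)$.
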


\subsubsection{Discrete duality}
We also want sharp understanding of other statistics of the component counts for $G(n,\lambda/n)$ random graphs in different regimes of $\lambda$. A well-known tool to translate bounds between sub-critical and super-critical $\lambda$  is the discrete duality principle.  We start with an observation using the definition of total-variation distance. 

\begin{observation}
    \label{obs:tv-distance-conditioning}
    If $\mu(A) \ge 1-\epsilon$, and $\nu = \mu (\cdot \mid A)$, then $\|\mu - \nu\|_{\TV} \le \frac{2\epsilon}{1-\epsilon}$.
\end{observation}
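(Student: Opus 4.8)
The plan is a one-line computation from the definition of total-variation distance, $\|\mu-\nu\|_{\TV} = \sup_{B} \big(\mu(B)-\nu(B)\big)$, so no probabilistic machinery is needed. Fix an arbitrary event $B$ and write $\nu(B) = \mu(B\cap A)/\mu(A)$. Splitting $\mu(B) = \mu(B\cap A) + \mu(B\cap A^c)$ gives
\begin{align*}
\mu(B) - \nu(B) = \mu(B\cap A)\Big(1 - \tfrac{1}{\mu(A)}\Big) + \mu(B\cap A^c)\,.
\end{align*}
The first summand is nonpositive and, since $\mu(B\cap A)\le 1$ and $x\mapsto (1-x)/x$ is decreasing on $(0,1]$, it is at least $-\tfrac{1-\mu(A)}{\mu(A)} \ge -\tfrac{\epsilon}{1-\epsilon}$ using the hypothesis $\mu(A)\ge 1-\epsilon$. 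The second summand lies in $[0,\mu(A^c)]\subseteq[0,\epsilon]$. Hence $|\mu(B)-\nu(B)| \le \tfrac{\epsilon}{1-\epsilon} + \epsilon = \tfrac{2\epsilon - \epsilon^2}{1-\epsilon} \le \tfrac{2\epsilon}{1-\epsilon}$, and taking the supremum over $B$ completes the proof.

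There is essentially no obstacle here; the only point requiring a moment's care is bounding the two summands on the correct side (the first from below, the second from above) before combining them, and noting that the supremum over events is attained with $|\cdot|$ by also considering complements. One could alternatively give a short coupling proof: since $\mu = \mu(A)\,\nu + \mu|_{A^c}$, sample $Y\sim\nu$, set $X=Y$ with probability $\mu(A)$, and otherwise draw $X$ from $\mu|_{A^c}/\mu(A^c)$ independently; this exhibits a coupling with $\Pr(X\ne Y)\le 1-\mu(A)\le\epsilon$, which in fact yields the sharper bound $\|\mu-\nu\|_{\TV}\le\epsilon$. Either way the statement will be used only as a convenient black box for transferring random-graph estimates between subcritical and conditioned-supercritical regimes via discrete duality.
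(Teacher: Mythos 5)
Your argument is correct and gives the claimed bound. The paper states Observation~2.4 without proof, so there is no official argument to compare against, but yours is the natural one-liner one would expect. Two small remarks. First, you are giving away a bit in the final step: since the first summand $\mu(B\cap A)\bigl(1-\tfrac{1}{\mu(A)}\bigr)$ is nonpositive and the second summand $\mu(B\cap A^c)$ is nonnegative, their sum lies in $\bigl[-\tfrac{1-\mu(A)}{\mu(A)},\,\mu(A^c)\bigr]$, so $|\mu(B)-\nu(B)|$ is bounded by the \emph{maximum} of the two, not their sum. That already gives $\max\bigl(\tfrac{\epsilon}{1-\epsilon},\epsilon\bigr)=\tfrac{\epsilon}{1-\epsilon}$, and in fact a marginally tighter bookkeeping ($\mu(B\cap A)\le\mu(A)$ rather than $\le 1$) yields $|\mu(B)-\nu(B)|\le 1-\mu(A)\le\epsilon$, which is sharp: for discrete spaces one even has equality $\|\mu-\nu\|_\TV = 1-\mu(A)$. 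Second, your closing coupling remark, writing $\mu = \mu(A)\,\nu + \mu|_{A^c}$, is exactly this tight version, so either argument delivers something better than the $\tfrac{2\epsilon}{1-\epsilon}$ actually stated; the paper's constant is just loose for convenience and your proof comfortably meets it.
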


\begin{lemma}\label{lemma:duality}
    Suppose $\lambda>1$ uniformly in $n$ and consider $X\sim G(n,{\lambda}/{n})$. For any $U: \left| |U|- \alpha(\lambda) n \right| = o(n)$, conditional on $\{\mathcal L_1(X)=U\}$, the law of the graph induced by $X$ on $U^c$ has total-variation distance at most $e^{- \Omega(n)}$ to $G(n-|U|,\lambda/n)$.
\end{lemma}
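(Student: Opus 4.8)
The plan is to realize the conditioned law of $X$ restricted to $U^c$ as a \emph{subcritical} Erd\H{o}s--R\'enyi graph conditioned on an event of overwhelming probability, and then invoke Observation~\ref{obs:tv-distance-conditioning}.

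First I would decompose the edge set of the complete graph on $[n]$ into three disjoint blocks: the pairs inside $U$, the pairs between $U$ and $U^c$, and the pairs inside $U^c$; the corresponding restrictions of $X\sim G(n,\lambda/n)$ are mutually independent. The event $\{\mathcal L_1(X) = U\}$ is the intersection of (a) $X[U]$ is connected, (b) there are no $U$--$U^c$ edges, and (c) every component of $X[U^c]$ is ranked below $U$ in the ordering on components (which is implied by all such components having size strictly less than $|U|$, and implies they all have size at most $|U|$, the two bracketings differing only on an event we will see has probability $e^{-\Omega(n)}$). Since (a), (b), (c) depend on the three disjoint edge-blocks respectively, conditioning on $\{\mathcal L_1(X)=U\}$ leaves the law of $X[U^c]$ equal to that of $G(n-|U|,\lambda/n)$ conditioned on event (c) alone.

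Next I would argue that (c) holds with probability $1 - e^{-\Omega(n)}$ under $G(n-|U|,\lambda/n)$. Writing $m = n-|U| = (1-\alpha(\lambda))n + o(n)$, the effective mean degree is $m\cdot\lambda/n = \lambda(1-\alpha(\lambda)) + o(1)$, and the duality inequality $\lambda(1-\alpha(\lambda))<1$ holds: using $1-\alpha(\lambda)=e^{-\lambda\alpha(\lambda)}$ one checks $\lambda(1-\alpha(\lambda))\,e^{-\lambda(1-\alpha(\lambda))} = \lambda e^{-\lambda}$, and since $x\mapsto xe^{-x}$ is unimodal with maximum at $x=1$, its two distinct preimages of $\lambda e^{-\lambda}$ are $\lambda>1$ and a value strictly below $1$, which must be $\lambda(1-\alpha(\lambda))$. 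Hence for all large $n$ the graph $G(m,\lambda/n)$ is subcritical with mean degree bounded away from $1$, so the standard large-deviation estimate for the largest component of a subcritical random graph (an exploration/branching-process domination argument, cf.~\cite{JL08}) gives $\Pr\big(|\mathcal L_1(G(m,\lambda/n))|\ge \delta m\big)\le e^{-\Omega(m)}$ for a fixed $\delta>0$; choosing $\delta$ small enough that $\delta m\le |U|$ for all large $n$ yields $\Pr((c))\ge 1-e^{-\Omega(n)}$.

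Finally, Observation~\ref{obs:tv-distance-conditioning} applied with $\epsilon = e^{-\Omega(n)}$ shows that conditioning $G(n-|U|,\lambda/n)$ on (c) perturbs it by at most $2\epsilon/(1-\epsilon) = e^{-\Omega(n)}$ in total variation, which combined with the previous paragraph gives the claimed bound. The only genuinely delicate point is obtaining the exponential rate rather than a polynomial or $o(1)$ rate in the third step, which is why I single out the linear-size-component large-deviation estimate (as opposed to the softer ``largest component is $O(\log n)$ whp'' statement) as the crux; the remaining ingredients are the bookkeeping around the factorization of the conditioning event and the elementary duality identity above.
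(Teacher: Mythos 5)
Your proposal is correct and follows essentially the same route as the paper: factor the conditioning event $\{\mathcal L_1(X)=U\}$ into parts measurable on the three disjoint edge-blocks, observe that only the $U^c$-internal condition affects the law on $U^c$, use the duality inequality $\lambda(1-\alpha(\lambda))<1$ to deduce uniform subcriticality and hence that this condition has probability $1-e^{-\Omega(n)}$, then apply Observation~\ref{obs:tv-distance-conditioning}. You fill in two details the paper treats as ``standard''---the unimodality argument via $xe^{-x}$ for the duality inequality, and the careful bracketing of the tie-breaking in the component ordering---but the structure and key ideas are identical.
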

\begin{proof}
    Reveal the edge-set on $U$. Conditioning on $\mathcal L_1(X) = U$ is equivalent to conditioning on all edges between $U$ and $U^c$ being absent, the vertices of $U$ being connected within $E(U)$, and the event $\mathcal E_{|U|}$ that the subgraph induced on $U^c$ has no component larger $|U|$.   
    The first two parts are measurable with respect to edges in $E\setminus E(U^c)$, so the law induced on $E(U^c)$ is exactly that of $G(n-|U|,\lambda/n)$ conditioned on $\mathcal E_{|U|}$.

    By a standard calculation, if $|U| = \alpha(\lambda) n +o(n)$, then $\frac{n-|U|}{n} \lambda <1$ is uniformly bounded away from $1$, so that the resulting graph is uniformly sub-critical and the probability of $\mathcal E_{|U|}$ 
    is $1-\exp(-\Omega(n))$. The result follows from Observation~\ref{obs:tv-distance-conditioning}.  
\end{proof}

\subsubsection{Refined random graph statistics}
We now summarize estimates for $R_2, R_3$ and $R_2^-$ and $R_3^-$ for sub and super-critical random graphs. 

\begin{lemma}
    [\cite{JL08}]
    \label{lemma:Skmoment}
    Let $G\sim G(n,\lambda/n)$.
    For $n\ge 1$ and $\lambda < 1$, 
    \begin{align}
        \label{eq:ER2R3}
        \E [R_2(G)] &= \frac{n}{1-\lambda} \cdot \Big( 1 + O( n^{-1}(1-\lambda)^{-3}) \Big)\,,
        \\
        \E [R_3(G)] &= \frac{n}{(1-\lambda)^3} \cdot\Big( 1 + O( n^{-1}(1-\lambda)^{-3}) \Big)\,, \notag
    \end{align}
    and 
    \begin{equation}
        \var (R_2(G)) = O\Big( \frac{n}{(1-\lambda)^{5}} \Big), \qquad         \var (R_3(G)) = O\Big( \frac{n}{(1-\lambda)^{9}} \Big).
    \end{equation}
\end{lemma}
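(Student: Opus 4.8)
The plan is to reduce every estimate to a moment of the size of the component $\mathcal{C}_v$ of a fixed vertex $v$ in $G\sim G(n,\lambda/n)$ via the pointwise identities $R_2(G) = \sum_{v} |\mathcal{C}_v|$, $R_3(G) = \sum_v |\mathcal{C}_v|^2$, $R_4(G) = \sum_v |\mathcal{C}_v|^3$, and $R_6(G) = \sum_v |\mathcal{C}_v|^5$ (a component of size $\ell$ contains $\ell$ vertices, each of which sees a component of size $\ell$), so that by vertex symmetry $\E[R_2(G)] = n\,\E[|\mathcal{C}_1|]$ and $\E[R_3(G)] = n\,\E[|\mathcal{C}_1|^2]$. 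To bound $\E[|\mathcal{C}_1|^k]$ I would run the breadth-first exploration of $\mathcal{C}_1$: at each step the number of newly revealed vertices is $\mathrm{Bin}\big(n-(\text{revealed so far}),\lambda/n\big)$, which is stochastically dominated by $\mathrm{Bin}(n,\lambda/n)$, so $|\mathcal{C}_1|$ is stochastically dominated by the total progeny $T^+$ of a Galton--Watson tree with $\mathrm{Bin}(n,\lambda/n)$ offspring, whose mean is exactly $\lambda<1$. The classical total-progeny moment identities for a subcritical offspring law of mean $\mu$ with bounded factorial moments give $\E[T^+] = (1-\lambda)^{-1}$ and, inductively from $\phi(s)=s\,g(\phi(s))$, that $\E[(T^+)^k] = \Theta_k\big((1-\lambda)^{-(2k-1)}\big)$; since the $k$-th factorial moment of $\mathrm{Bin}(n,\lambda/n)$ is $\le \lambda^k$ uniformly in $n$, the implied constants are uniform. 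This already gives the upper bounds $\E[R_2(G)]\le \tfrac{n}{1-\lambda}$, $\E[R_3(G)]\le \tfrac{n}{(1-\lambda)^3}$, $\E[R_4(G)] = O(n(1-\lambda)^{-5})$, and $\E[R_6(G)] = O(n(1-\lambda)^{-9})$.

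For the matching \emph{lower} bounds on $\E[|\mathcal{C}_1|]$ and $\E[|\mathcal{C}_1|^2]$ I would control the discrepancy in the natural coupling between the exploration and the dominating tree. The only loss is that each explored vertex is offered $\mathrm{Bin}(n-(\text{revealed}),\lambda/n)$ rather than $\mathrm{Bin}(n,\lambda/n)$ children, i.e.\ on average at most $\lambda|\mathcal{C}_1|/n$ fewer, and each missing child would have spawned an independent subtree of expected size $(1-\lambda)^{-1}$; summing over the at most $|\mathcal{C}_1|$ explored vertices gives $\E[T^+ - |\mathcal{C}_1|] \lesssim \tfrac{\lambda}{n(1-\lambda)}\,\E[|\mathcal{C}_1|^2]\lesssim \tfrac{1}{n(1-\lambda)^4}$, which is exactly the claimed relative error $O(n^{-1}(1-\lambda)^{-3})$ for $\E[R_2(G)]$. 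An identical accounting of $\E[(T^+)^2 - |\mathcal{C}_1|^2]$ yields $\E[R_3(G)] = \tfrac{n}{(1-\lambda)^3}\big(1 + O(n^{-1}(1-\lambda)^{-3})\big)$.

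For the variances I would exploit a telescoping of the covariance sum. From $\var(R_2(G)) = \sum_{u,v}\big(\E[|\mathcal{C}_u||\mathcal{C}_v|] - \E[|\mathcal{C}_u|]\E[|\mathcal{C}_v|]\big)$, split each $\E[|\mathcal{C}_u||\mathcal{C}_v|]$ by whether $u$ and $v$ lie in a common component: the same-component part collapses, $\sum_{u,v}\E[|\mathcal{C}_u|^2\mathbf{1}\{u\leftrightarrow v\}] = \E\big[\sum_u |\mathcal{C}_u|^3\big] = \E[R_4(G)]$, while the remaining terms $\E[|\mathcal{C}_u||\mathcal{C}_v|\mathbf{1}\{u\not\leftrightarrow v\}] - \E[|\mathcal{C}_u|]\E[|\mathcal{C}_v|]$ are each nonpositive. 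For the last point, the subcritical direction of the duality behind Lemma~\ref{lemma:duality} shows that, conditioned on $\mathcal{C}_u$, the graph on the complement is $G(n-|\mathcal{C}_u|,\lambda/n)$, which (realizing it as an induced subgraph of $G(n,\lambda/n)$) has stochastically smaller components than $G(n,\lambda/n)$, so $\E[|\mathcal{C}_v| \mid \mathcal{C}_u,\, u\not\leftrightarrow v]\le \E[|\mathcal{C}_1|]$. Hence $\var(R_2(G))\le \E[R_4(G)] = n\,\E[|\mathcal{C}_1|^3] = O(n(1-\lambda)^{-5})$. The $R_3$ bound is identical with one extra power: splitting $\E[|\mathcal{C}_u|^2|\mathcal{C}_v|^2]$ and using $|\mathcal{C}_v| = |\mathcal{C}_u|$ on $\{u\leftrightarrow v\}$ makes the same-component contribution $\E\big[\sum_u |\mathcal{C}_u|^5\big] = \E[R_6(G)]$, the remaining terms are again nonpositive (now the monotonicity is applied to the increasing map $x\mapsto x^2$), and $\var(R_3(G))\le \E[R_6(G)] = n\,\E[|\mathcal{C}_1|^5] = O(n(1-\lambda)^{-9})$.

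The step requiring the most care is the expectation error term: to get precisely $O(n^{-1}(1-\lambda)^{-3})$ uniformly as $\lambda\uparrow 1$ one cannot simply truncate $|\mathcal{C}_1|$ at a deterministic level (that introduces a spurious $\log n$ factor, or an extra power of $(1-\lambda)^{-1}$ through the large-deviation rate), so the discrepancy $\E[T^+-|\mathcal{C}_1|]$ must be estimated directly through the coupling as above; alternatively one may compute $\E[|\mathcal{C}_1|] = 1 + (n-1)\Pr(1\leftrightarrow 2)$ exactly and substitute the known asymptotics for $\Pr(|\mathcal{C}_1| = s)$. The remaining ingredients — the uniform-in-$n$ moment bounds for $\mathrm{Bin}(n,\lambda/n)$ and the induction giving $\E[(T^+)^k]\asymp (1-\lambda)^{-(2k-1)}$ — are routine.
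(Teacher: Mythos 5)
The paper itself gives no proof of this lemma---it is cited to \cite{JL08}---so there is no "paper proof" to compare against. Your argument, however, is the standard one in that literature: the susceptibility identity $R_2(G)=\sum_v|\mathcal{C}_v|$ (and $R_3(G)=\sum_v|\mathcal{C}_v|^2$) reduces everything to moments of a single vertex's cluster, which one then controls via the breadth-first exploration dominated by a $\mathrm{Bin}(n,\lambda/n)$ Galton--Watson tree whose total-progeny moments scale as $(1-\lambda)^{-(2k-1)}$. The first-moment error accounting via the coupling deficit $T^+-|\mathcal{C}_1|$ is correct and does land on $O(n^{-1}(1-\lambda)^{-3})$, and the variance decomposition is sound: the same-cluster diagonal collapses to $\E[R_4]$ (resp.\ $\E[R_6]$), and the off-cluster covariances are nonpositive because, conditionally on $\mathcal{C}_u$ and $\{v\notin\mathcal{C}_u\}$, the cluster of $v$ lives in $G(n-|\mathcal{C}_u|,\lambda/n)$, which can be realized as an induced subgraph of $G(n,\lambda/n)$ and hence has stochastically smaller clusters, applied to the increasing maps $x\mapsto x$ and $x\mapsto x^2$ respectively.

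One phrase is more optimistic than warranted: the claim that an ``identical accounting'' handles $\E[(T^+)^2-|\mathcal{C}_1|^2]$. Writing $D:=T^+-|\mathcal{C}_1|$, this quantity is $2\,\E[|\mathcal{C}_1|D]+\E[D^2]$, and the two pieces need separate treatment: $\E[|\mathcal{C}_1|D]$ requires conditioning on the exploration and using $\E[D\mid\text{exploration}]\lesssim \frac{\lambda}{n}|\mathcal{C}_1|^2\cdot\frac{1}{1-\lambda}$ together with $\E[|\mathcal{C}_1|^3]\lesssim(1-\lambda)^{-5}$, while $\E[D^2]$ requires a second-moment bound on the number $M$ of ghost children (a sum over trials of a $\mathrm{Bin}(\cdot,\lambda/n)$) and on the sum of the $M$ independent ghost subtrees. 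The orders do work out to $O(n^{-1}(1-\lambda)^{-6})$ as needed, but this step is not literally the same computation as for $\E[D]$, and in a written-up proof you would need to spell it out. With that caveat, the argument is complete and correct, and gives a self-contained alternative to invoking \cite{JL08} as a black box.
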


The following is a list of typical structural properties of sub-critical random graphs that we deduce. In what follows, $I_1(X):=|\{i: |\mathcal{L}_i(X)|=1 \}|$ denotes the number of isolated vertices in $X$.

\begin{lemma}
    \label{lem:subcriticalgstructure}
    Let $G\sim G(n,\lambda/n)$ where $\lambda<1$.
    Then $G$ satisfies
    \begin{enumerate}
        \item  $   \left| R_2(G) - \E[R_2(G)]  \right| \le  \sqrt{n} \log^2 n$,
        with probability $1-O((\log n)^{-4})$;
        \item $I_1(G) = \Omega(n)$  with probability $1-O(n^{-1})$;
        \item $|\mathcal{L}_1(G)| = O(\log n)$  with probability $1-O(n^{-1})$;
        \item $R_3(G)  = O(n)$, $R_2(G) = O(n)$ with probability $1-O(n^{-1})$.
    \end{enumerate}
\end{lemma}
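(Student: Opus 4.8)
\textbf{Proof plan for Lemma~\ref{lem:subcriticalgstructure}.} The strategy is to prove each of the four items essentially separately, leveraging the moment bounds of Lemma~\ref{lemma:Skmoment} and standard first/second-moment arguments for sparse random graphs, and then take a union bound. For item (1), I would apply Chebyshev's inequality directly: by Lemma~\ref{lemma:Skmoment}, $\var(R_2(G)) = O(n/(1-\lambda)^5) = O(n)$ since $\lambda$ is a fixed constant bounded away from $1$, so
\[
\Pr\big( |R_2(G) - \E[R_2(G)]| \ge \sqrt n \log^2 n \big) \le \frac{\var(R_2(G))}{n \log^4 n} = O\big((\log n)^{-4}\big)\,,
\]
which is exactly the claimed bound. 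For item (4), the statement $R_2(G) = O(n)$, $R_3(G) = O(n)$ with probability $1 - O(n^{-1})$ again follows from Chebyshev (or just Markov applied to $R_3$): $\E[R_3(G)] = O(n)$ and $\var(R_3(G)) = O(n)$ by Lemma~\ref{lemma:Skmoment} (the $(1-\lambda)$-powers are absorbed into constants), so $R_3(G) \ge C n$ has probability $O(1/(C^2 n))$ for a large enough constant $C$; since $R_2(G) \le R_3(G)$ (component sizes are $\ge 1$, so $|\mathcal L_i|^2 \le |\mathcal L_i|^3$), controlling $R_3$ controls $R_2$ simultaneously. Taking $C$ large gives probability $1 - O(n^{-1})$.

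For item (3), the bound $|\mathcal L_1(G)| = O(\log n)$ for subcritical Erd\H{o}s--R\'enyi is classical; I would either cite it directly from a standard reference (e.g., the duality/branching-process comparison in \cite{JL08} or a random graphs textbook) or reprove it quickly: the probability that a fixed vertex lies in a component of size $\ge k$ is dominated by the survival probability to generation $k$ of a $\mathrm{Bin}(n,\lambda/n)$ (approximately Poisson($\lambda$)) branching process, which for subcritical $\lambda$ decays like $e^{-c k}$ for some $c = c(\lambda) > 0$; choosing $k = A \log n$ with $A$ large and union-bounding over the $n$ vertices gives failure probability $n \cdot e^{-cA\log n} = n^{1 - cA} = O(n^{-1})$. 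For item (2), that $I_1(G) = \Omega(n)$, I would use a second-moment argument on the number of isolated vertices: $\E[I_1(G)] = n(1-\lambda/n)^{n-1} = (e^{-\lambda} + o(1)) n = \Omega(n)$, and $\var(I_1(G)) = O(n)$ by a routine pair-correlation computation (the events ``$u$ isolated'' and ``$v$ isolated'' are only weakly positively/negatively correlated, with covariance $O(1/n)$ each, summing to $O(n)$). Chebyshev then gives $I_1(G) \ge \tfrac12 e^{-\lambda} n$ with probability $1 - O(n^{-1})$.

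Finally I would combine: items (2), (3), (4) each hold with probability $1 - O(n^{-1})$ and item (1) with probability $1 - O((\log n)^{-4})$, so by a union bound all four hold simultaneously with probability $1 - O((\log n)^{-4})$, which is the form in which the lemma is stated (the weakest of the four error terms dominates). I do not expect any genuine obstacle here --- this is a routine assembly of standard subcritical random-graph facts --- but the one point requiring a little care is making sure the implicit constants in items (1) and (4) are treated correctly: since $\lambda < 1$ is a \emph{fixed} constant in this lemma (unlike Lemma~\ref{lemma:Skmoment}, which tracks the $(1-\lambda)$-dependence uniformly), all the $(1-\lambda)^{-1}$ factors are themselves $O(1)$ and get swallowed, so the variance bounds genuinely reduce to $O(n)$ and Chebyshev applies cleanly. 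If one instead wanted a version uniform in $\lambda \to 1$, the thresholds in items (1)--(4) would need $(1-\lambda)$-dependent rescaling, but that is not what is claimed here.
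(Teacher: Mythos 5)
Your proposal is correct and follows essentially the same route as the paper: items (1) and (4) by Chebyshev via the moment estimates of Lemma~\ref{lemma:Skmoment}, and items (2) and (3) by invoking standard subcritical random-graph facts (the paper cites Lemma~5.7 of \cite{LNNP11} and Lemma~7 of \cite{CF00} where you give quick branching-process and second-moment sketches). The only cosmetic differences are your observation that $R_2 \le R_3$ lets one bound for item~(4) do double duty, and the closing union bound, which is unnecessary since the lemma states each item with its own probability rather than a joint one.
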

\begin{proof}
    Items (1) and (4) follow from Chebyshev's inequality and Lemma~\ref{lemma:Skmoment};
    items (2) and (3) can be found in standard literature of random graphs (see, e.g.,  Lemma 5.7 in \cite{LNNP11} and Lemma 7 in \cite{CF00}). 
\end{proof}

Given Lemma~\ref{lemma:duality}, we can also deduce the
analogous properties for super-critical random graphs. 

\begin{corollary}
    \label{cor:R2moment}
    Let $G\sim G(n,\lambda/n)$, where $\lambda > 1$ uniformly in $n$. Then $\mathbb E[R_2^-(G)]$, $\mathbb E[R_3^-(G)]$, $\var(R_2^-(G))$ and $\var(R_3^-(G))$ are all $O(n)$. 
\end{corollary}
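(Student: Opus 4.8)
\textbf{Proof proposal for Corollary~\ref{cor:R2moment}.}

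The plan is to transfer the sub-critical estimates of Lemma~\ref{lemma:Skmoment} to the super-critical regime via the discrete duality principle of Lemma~\ref{lemma:duality}. Fix $\lambda>1$ uniformly in $n$, write $\alpha = \alpha(\lambda)$, and let $G\sim G(n,\lambda/n)$. The key observation is that $R_2^-(G)$ and $R_3^-(G)$ only depend on the component sizes \emph{outside} the giant, and by Lemma~\ref{lemma:duality}, conditionally on $\{|\mathcal L_1(G)| = U\}$ for any $U$ with $||U| - \alpha n| = o(n)$, the graph on $U^c$ is within total-variation distance $e^{-\Omega(n)}$ of a sub-critical $G(n - |U|, \lambda/n)$, whose excess parameter $\frac{n-|U|}{n}\lambda$ is uniformly bounded away from $1$ from below. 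Thus on this event the conditional law of $(R_2^-(G), R_3^-(G))$ is $e^{-\Omega(n)}$-close in TV to the law of $(R_2(G'), R_3(G'))$ for a sub-critical $G' = G(m,\lambda/n)$ with $m = n-|U|$ and $\lambda m/n$ uniformly sub-critical, to which Lemma~\ref{lemma:Skmoment} applies: $\E[R_2(G')] = O(m) = O(n)$, $\E[R_3(G')] = O(n)$, and likewise $\var(R_2(G')), \var(R_3(G')) = O(n)$ since $(1-\lambda m/n)^{-1}$ is $O(1)$.

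The main technical point is handling the conditioning cleanly. First I would invoke the central limit theorem for the giant, Theorem~\ref{thm:CLT} (or just~\eqref{eq:expectedgiantgnp} together with Theorem~\ref{thm:giantLLT}), to see that $|\mathcal L_1(G)| = \alpha n + o(n)$ with probability $1 - o(1)$; in fact one gets $||\mathcal L_1(G)| - \alpha n| \le n^{2/3}$, say, except on an event $\mathcal B$ of probability $e^{-\Omega(n^{1/3})}$ or smaller (a Chernoff-type tail for the giant, which is standard; alternatively one can use any polynomially small bound, which suffices). Then I would write, for $k\in\{2,3\}$,
\begin{align*}
    \E[R_k^-(G)] = \sum_{U} \Pr(|\mathcal L_1(G)| = U)\, \E[R_k^-(G) \mid |\mathcal L_1(G)| = U]\,,
\end{align*}
split the sum according to whether $||U| - \alpha n| \le n^{2/3}$ or not, bound the contribution of the "good" $U$'s using Lemma~\ref{lemma:duality} plus Lemma~\ref{lemma:Skmoment} (the TV error $e^{-\Omega(n)}$ times the trivial bound $R_k^-(G) \le n^k$ is negligible), and bound the contribution of the "bad" $U$'s by $\Pr(\mathcal B)\cdot n^k = o(1)$. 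This gives $\E[R_k^-(G)] = O(n)$. For the variance I would run the same conditioning argument on $\E[(R_k^-(G))^2]$: conditionally on a good value of $|\mathcal L_1(G)|$, $\E[(R_k^-(G))^2]$ is within $e^{-\Omega(n)} n^{2k}$ of $\E[R_k(G')^2] = \var(R_k(G')) + \E[R_k(G')]^2 = O(n) + O(n^2) = O(n^2)$; the bad $U$'s contribute at most $\Pr(\mathcal B) n^{2k}$, which is still $o(1)$ with a sufficiently strong tail bound on the giant, so $\E[(R_k^-(G))^2] = O(n^2)$ and hence $\var(R_k^-(G)) = O(n^2)$ — wait, this only gives $O(n^2)$, not $O(n)$.

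To actually get $\var(R_k^-(G)) = O(n)$ one must be more careful, because the fluctuation of $|\mathcal L_1(G)|$ itself (on scale $\sqrt n$) induces fluctuations in $m = n - |\mathcal L_1(G)|$ and hence in $R_k^-$. The clean way is to condition on the \emph{exact} value of $|\mathcal L_1(G)| = U$ and use the law of total variance:
\begin{align*}
    \var(R_k^-(G)) = \E\big[\var(R_k^-(G)\mid |\mathcal L_1(G)|)\big] + \var\big(\E[R_k^-(G) \mid |\mathcal L_1(G)|]\big)\,.
\end{align*}
For the first term, on the event that $|\mathcal L_1(G)|$ is good, $\var(R_k^-(G)\mid |\mathcal L_1(G)| = U)$ is (up to the negligible TV error) $\var(R_k(G(m,\lambda/n))) = O(m) = O(n)$ by Lemma~\ref{lemma:Skmoment}, and the bad event contributes $\le \Pr(\mathcal B) n^{2k}$, negligible. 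For the second term, $g(U) := \E[R_k^-(G)\mid |\mathcal L_1(G)| = U] = \E[R_k(G(n-|U|,\lambda/n))] + e^{-\Omega(n)}O(n^k)$; by Lemma~\ref{lemma:Skmoment} together with the analyticity/smoothness of $(1-\lambda m/n)^{-1}$ in $m$ on the relevant range, $g$ is Lipschitz in $|U|$ with constant $O(1)$, so $\var(g(|\mathcal L_1(G)|)) = O(1)\cdot\var(|\mathcal L_1(G)|)\cdot O(1) = O(n)$ using the $O(n)$ bound on the variance of the giant (from Theorem~\ref{thm:giantLLT} or~\eqref{eq:sigmag}). Summing the two contributions yields $\var(R_k^-(G)) = O(n)$, completing the proof. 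The one step requiring genuine (if routine) care is verifying that the conditional expectation $g(U)$ is $O(1)$-Lipschitz over the window $||U| - \alpha n| \le n^{2/3}$; this follows from the explicit form $\E[R_2(G(m,p))] = \frac{m}{1-mp/n}(1 + O(\cdot))$ in Lemma~\ref{lemma:Skmoment} and the fact that $1 - mp/n = 1 - m\lambda/n$ stays bounded away from $0$, so its reciprocal has bounded derivative in $m$.
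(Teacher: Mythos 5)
Your proof is correct, and it fills in precisely what the paper leaves implicit: the paper states Corollary~\ref{cor:R2moment} with no proof, prefacing it only with ``Given Lemma~\ref{lemma:duality}, we can also deduce the analogous properties for super-critical random graphs,'' so the intended route is exactly the one you take --- condition on the vertex set of the giant, invoke discrete duality (Lemma~\ref{lemma:duality}) to reduce to a uniformly sub-critical random graph, and then pull the bounds from Lemma~\ref{lemma:Skmoment}. What you identify correctly, and what an unwary reader might miss, is that for the variance you genuinely need the law of total variance: the na\"{i}ve conditioning argument only yields $\var(R_k^-) = O(n^2)$ because $|\mathcal L_1|$ itself fluctuates on scale $\sqrt n$. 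Your decomposition into $\E[\var(\cdot\mid |\mathcal L_1|)] = O(n)$ (from Lemma~\ref{lemma:Skmoment} applied to the dual sub-critical graph) plus $\var(\E[\cdot\mid |\mathcal L_1|]) = O(1)\cdot \var(|\mathcal L_1|) = O(n)$ (from the $O(1)$-Lipschitz bound on $m \mapsto \E[R_k(G(m,\lambda/n))]$ together with Lemma~\ref{lem:rgdeviation}) is the right way to see the $O(n)$ answer. The Lipschitz claim itself is fine: the main term $m/(1-\lambda m/n)$ of Lemma~\ref{lemma:Skmoment} has $O(1)$ derivative in $m$ on the uniformly sub-critical range, and the error term is uniformly $O(1)$ there, so the increment of the error contributes at most $O(1)$ as well.
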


Finally, the following concerns stability of the size of the giant under perturbations of the vertex count. 

\begin{lemma} 
    [Lemmas 2.7 \& 2.8 in \cite{ABthesis}]\label{lem:rgdeviation}
    Let $G_{d_n}$ be distributed as a $G(n+m, d_n/n)$ random graph where $|m|=o(n)$ and $\lim_{n\rightarrow \infty} d_n = d$. Assume $1<d_n$ and $d_n$ is bounded away from $1$ for all $n$. Then,
    \begin{enumerate}
        \item $|\mathcal{L}_2(G_{d_n})| = O(\log n)$ with probability $1-O(n^{-1})$, 
        \item $\var(|\mathcal{L}_1(G_{d_n})|) = \Theta(n)$ and 
        \item For $A=o(\log n)$ and sufficiently large $n$, there exists a constant $c$ such that
        \[
            \Pr(||\mathcal{L}_1(G_{d_n})| - \alpha(d)n| > |m| + A\sqrt{n}) \le e^{-cA^2}.
        \]
    \end{enumerate}
\end{lemma}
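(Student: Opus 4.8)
Write $N := n+m$ and $\lambda_N := d_n N/n = d_n(1+m/n)$, so that $G_{d_n}$ is exactly a $G(N,\lambda_N/N)$ random graph; since $|m|=o(n)$ and $d_n\to d>1$ with $d_n$ bounded away from $1$, the parameter $\lambda_N\to d$ and is bounded away from $1$ for all large $n$, i.e.\ $G_{d_n}$ is strictly supercritical. My plan is to reduce each of the three items to a sharp statement about such a graph, centered and scaled by $\alpha(\lambda_N)N$ and $\sigma(\lambda_N)\sqrt N$, and then to pass to the centering $\alpha(d)n$ in item (3) using that $\alpha$ is analytic (hence Lipschitz near $d$) on $(1,\infty)$, as recorded just after \eqref{eq:beta}; thus $|\alpha(\lambda_N)N-\alpha(d)n|\le \alpha(d)|m| + O(|\lambda_N-d|\,n)$ gets absorbed into the $|m|$-budget in the regime where the lemma is applied. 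Equivalently, I would first prove item (3) with the natural centering $\alpha(\lambda_N)N$ and then recenter, noting that \eqref{eq:expectedgiantgnp} already places $\mathbb E|\mathcal L_1|$ within $\tilde O(1)$ of $\alpha(\lambda_N)N$.

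The crux is item (3): the bound $\Pr\big(\big|\,|\mathcal L_1(G(N,\lambda_N/N))| - \alpha(\lambda_N)N\,\big| > A\sqrt N\big)\le e^{-cA^2}$ uniformly for $A=o(\sqrt N)$. I would get this from the component-exploration (BFS/DFS) process run from a uniformly random vertex: the active-boundary size evolves as a stopped random walk whose arrival increments $\mathrm{Bin}(N-\cdot,\lambda_N/N)$ are $O(\log N)$ with very high probability (so can be truncated to bounded increments), and whose normalized mean $t\mapsto N^{-1}\mathbb E[\text{boundary at step }\lfloor tN\rfloor]$ vanishes at $t=\alpha(\lambda_N)$ with strictly negative slope $-(1-\lambda_N(1-\alpha(\lambda_N)))<0$ --- this negativity is exactly the duality inequality $\lambda(1-\alpha(\lambda))<1$, visible in the denominator of \eqref{eq:sigmag}. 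Azuma--Hoeffding then gives $e^{-\Omega(A^2)}$ concentration of the boundary about its mean at each step, and transversality of the zero-crossing transfers this to $e^{-\Omega(A^2)}$ concentration of the first hitting time of $0$ --- which is $|\mathcal L_1|$ --- about $\alpha(\lambda_N)N$; the same computation reproduces the variance scale $\sigma^2(\lambda_N)N$ of \eqref{eq:sigmag}. Alternatively this is a well-known concentration estimate for the giant in the strictly supercritical regime and could simply be imported.

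Items (1) and (2) should then follow quickly. For (1) I would argue by the discrete duality principle exactly as in the proof of Lemma~\ref{lemma:duality}: condition on $\mathcal L_1(G_{d_n})=U$, where item (3) with $A=C\sqrt{\log n}$ lets us assume $\big|\,|U|-\alpha(\lambda_N)N\,\big|=o(N)$ off an event of probability $\le n^{-cC^2}\le n^{-1}$; then the graph induced on $U^c$ is $G(N-|U|,\lambda_N/N)$ conditioned on the event $\mathcal E_{|U|}$ of having no component of size $>|U|$, and $(N-|U|)\lambda_N/N=(1-\alpha(\lambda_N)+o(1))\lambda_N$ is bounded away from $1$ from below (again the duality inequality), so this graph is strictly subcritical with $\Pr(\mathcal E_{|U|})=1-e^{-\Omega(n)}$; Lemma~\ref{lem:subcriticalgstructure}(3), Observation~\ref{obs:tv-distance-conditioning}, and a union bound give $|\mathcal L_2(G_{d_n})|=|\mathcal L_1(G_{d_n}|_{U^c})|=O(\log n)$ with probability $1-O(n^{-1})$. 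For (2), the lower bound $\var(|\mathcal L_1|)=\Omega(n)$ comes from the local limit theorem (Theorem~\ref{thm:giantLLT}, applied at the convergent parameter $\lambda_N$, uniformly over compacts of $(1,\infty)$, or via a sandwich between $G(N,(d\pm\epsilon)/N)$): summing its lower bound over the $\Theta(\sqrt N)$ integers $k$ with $(k-\alpha(\lambda_N)N)/(\sigma(\lambda_N)\sqrt N)\in[1,2]$ gives $\Pr(|\mathcal L_1|\ge \alpha(\lambda_N)N+\sigma(\lambda_N)\sqrt N)\ge c_0>0$, and likewise a $c_0$-mass below $\alpha(\lambda_N)N-\sigma(\lambda_N)\sqrt N$, so one of these events sits at distance $\ge\sigma(\lambda_N)\sqrt N$ from $\mathbb E|\mathcal L_1|$ and forces $\var\ge c_0\sigma(\lambda_N)^2N=\Omega(n)$; the upper bound $\var=O(n)$ follows by integrating the item-(3) tail recentered at $\mathbb E|\mathcal L_1|$ via \eqref{eq:expectedgiantgnp}.

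I expect the main obstacle to be item (3): the ingredients in the excerpt --- the CLT of Theorem~\ref{thm:CLT} and the LLT of Theorem~\ref{thm:giantLLT}, which is only local on a fixed compact window --- do not by themselves yield sub-Gaussian tails at scale $\sqrt n$ uniformly over the required range $A=o(\log n)$, so the exploration-process/martingale argument (or an import of it) seems unavoidable. By comparison, the reductions for items (1) and (2), and the bookkeeping that translates $\alpha(\lambda_N)N$ into $\alpha(d)n$ within the $|m|$-budget and checks strict subcriticality of the dual graph, are routine.
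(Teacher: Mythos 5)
The paper does not prove this lemma at all: the bracketed annotation ``[Lemmas 2.7 \& 2.8 in \cite{ABthesis}]'' in the statement is the paper's entire justification, so there is no in-paper proof to compare against and your task was effectively to reconstruct the thesis argument from scratch.

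Your plan is the standard random-graph playbook and is essentially sound: reparametrize $G(n+m,d_n/n)$ as $G(N,\lambda_N/N)$ with $N=n+m$ and $\lambda_N=d_n(n+m)/n$ strictly supercritical, prove sub-Gaussian concentration of $|\mathcal L_1|$ about $\alpha(\lambda_N)N$ at scale $\sqrt N$ via the exploration/martingale argument, then push items (1) and (2) through via the discrete duality of Lemma~\ref{lemma:duality} (exactly as you note), the LLT of Theorem~\ref{thm:giantLLT} for the variance lower bound, and tail integration for the upper bound. You correctly identify that the CLT/LLT recorded in the preliminaries only control $O(1)$ windows, so the sub-Gaussian tail needed for item~(3) (even for the mild range $A=o(\log n)$) cannot be read off them and requires the exploration process or an imported concentration estimate --- this is the right diagnosis.

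Two points deserve more care. First, the recentering from $\alpha(\lambda_N)N$ to $\alpha(d)n$: by Lipschitzness of $\alpha$, the gap is roughly $\alpha'(d)(d_n-d)\,n + O(|m|)$, and the first term is \emph{not} controlled by the $|m|+A\sqrt n$ budget under the hypothesis merely that $d_n\to d$ (e.g.\ $d_n-d=n^{-1/4}$ and $m=O(1)$). Your phrase ``in the regime where the lemma is applied'' hedges around this; the intended reading (consistent with how the paper later invokes the lemma, e.g.\ in Lemma~\ref{lem:basicproperties}) is that $d_n$ is essentially fixed, and any honest proof should make explicit that the $|m|$ budget also absorbs $\alpha'(d)(d_n-d)\,n$, or else restrict to $d_n=d$. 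Second, the exploration-process sketch conflates the component-exploration excursion of a uniformly chosen root with $|\mathcal L_1|$ itself; pinning down the giant's size via the first hitting time of $0$ requires the additional (standard) step of ruling out intermediate-sized components, and of arguing that the exploration started in the giant with the right probability --- you should either spell this out or, as you offer, import the estimate wholesale from the literature.
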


We arrive at the following by combining Lemmas~\ref{lemma:duality},~\ref{lem:subcriticalgstructure}, and~\ref{lem:rgdeviation}.

\begin{lemma}
\label{lem:supercriticalgstructure}
    Let $G\sim G(n,\lambda/n)$ where $\lambda>1$.
    Then with high probability, $G$ satisfies
    \begin{enumerate}
        \item  $   \left| R_2^-(G) - \E[R_2^-(G)]  \right| \le  \sqrt{n} \log^2 n$,   with probability $1-O((\log n)^{-4})$;
        \item $I_1(G) = \Omega(n)$ with probability $1-O(n^{-1})$;
        \item $R_3^-(G)  = O(n)$, $R_2^-(G) = O(n)$ with probability $1-O(n^{-1})$.
    \end{enumerate}
\end{lemma}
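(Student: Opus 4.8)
The plan is to reduce all three assertions to the sub-critical estimates of Lemma~\ref{lem:subcriticalgstructure} via the discrete duality principle (Lemma~\ref{lemma:duality}), using Lemma~\ref{lem:rgdeviation} to pin down the size of the giant. The key point is that, conditionally on $\mathcal{L}_1(G)=U$, the three statistics are determined by the induced subgraph $G[U^c]$: since $U$ is a connected component of $G$ there are no edges between $U$ and $U^c$, so the non-giant components of $G$ are exactly the components of $G[U^c]$, giving $R_k^-(G)=R_k(G[U^c])$ for $k=2,3$; and the isolated vertices of $G$, which cannot lie in $U$ once $|U|\ge 2$, coincide with those of $G[U^c]$, giving $I_1(G)=I_1(G[U^c])$. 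By Lemma~\ref{lemma:duality}, whenever $\bigl||U|-\alpha(\lambda)n\bigr|=o(n)$, the law of $G[U^c]$ is within total-variation distance $e^{-\Omega(n)}$ of $G(N,\lambda/n)$ on $N:=n-|U|$ vertices, and — as noted in the proof of that lemma — this equals $G(N,\mu/N)$ with $\mu:=N\lambda/n$ bounded away from $1$ uniformly, i.e.\ a uniformly sub-critical random graph on $N=\Theta(n)$ vertices.

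Concretely, I would let $\mathcal{A}$ be the event $\bigl||\mathcal{L}_1(G)|-\alpha(\lambda)n\bigr|\le A\sqrt n$ with $A=\Theta(\sqrt{\log n})=o(\log n)$, so that Lemma~\ref{lem:rgdeviation}(3) (with $m=0$) gives $\Pr(\mathcal{A}^c)\le e^{-cA^2}=O(n^{-2})$, negligible against the error terms below. Conditioning on $\mathcal{L}_1(G)=U$ for $U\in\mathcal{A}$, applying Lemma~\ref{lem:subcriticalgstructure} to $G(N,\mu/N)$, and transferring through the $e^{-\Omega(n)}$ TV bound (which stays negligible even after multiplying by the trivial deterministic bounds $R_k^-(G)\le n^k$ and $I_1(G)\le n$), item (4) of that lemma gives $R_2(G[U^c]),R_3(G[U^c])=O(N)=O(n)$ and item (2) gives $I_1(G[U^c])=\Omega(N)=\Omega(n)$, each with conditional probability $1-O(n^{-1})$; averaging over $U\in\mathcal{A}$ and adding $\Pr(\mathcal{A}^c)$ yields items (2) and (3).

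The one delicate point — and the main obstacle — is item (1): Lemma~\ref{lem:subcriticalgstructure}(1) concentrates $R_2(G[U^c])$ about its \emph{conditional} mean $\E[R_2(G(N,\lambda/n))]$, whereas the claim is stated in terms of the \emph{unconditional} mean $\E[R_2^-(G)]$. To bridge this I would use that $N\mapsto\E[R_2(G(N,\lambda/n))]=\frac{N}{1-N\lambda/n}\bigl(1+o(1)\bigr)$ (Lemma~\ref{lemma:Skmoment}) is smooth with bounded derivative on the window $\bigl|N-(1-\alpha(\lambda))n\bigr|\le A\sqrt n$, where $N\lambda/n$ is bounded away from $1$; combined with \eqref{eq:expectedgiantgnp} and the concentration of $|\mathcal{L}_1(G)|$ on $\mathcal{A}$, this forces $\bigl|\E[R_2(G(N,\lambda/n))]-\E[R_2^-(G)]\bigr|=O(A\sqrt n)=O(\sqrt{n\log n})$, which is absorbed into the gap between $\sqrt N\log^2 N$ (with $N=(1-\alpha(\lambda))n+o(n)<n$, so $\sqrt N\log^2 N\le(\sqrt{1-\alpha(\lambda)}+o(1))\sqrt n\log^2 n$) and $\sqrt n\log^2 n$. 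Thus $\bigl|R_2^-(G)-\E[R_2^-(G)]\bigr|\le\sqrt n\log^2 n$ with probability $1-O((\log n)^{-4})$. (Alternatively, item (1) is immediate from Corollary~\ref{cor:R2moment} and Chebyshev's inequality, since $\Pr\bigl(|R_2^-(G)-\E[R_2^-(G)]|>\sqrt n\log^2 n\bigr)\le\var(R_2^-(G))/(n\log^4 n)=O((\log n)^{-4})$.) Everything else is routine bookkeeping of the TV errors and of the failure probability of $\mathcal{A}$.
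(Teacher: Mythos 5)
Your proof is correct and takes essentially the same route the paper uses: the paper derives this lemma precisely by combining discrete duality (Lemma~\ref{lemma:duality}), the sub-critical structural estimates of Lemma~\ref{lem:subcriticalgstructure}, and the concentration of the giant from Lemma~\ref{lem:rgdeviation}. Your treatment of the conditional-to-unconditional mean shift for item~(1) (or the alternative via Corollary~\ref{cor:R2moment} and Chebyshev) resolves the one detail the paper leaves implicit.
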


\subsection{Drift of the giant component in the CM dynamics}\label{subsec:CM-drift-function}
We end this preliminaries section by describing properties of the drift function for the size of the giant component in the CM dynamics when $\beta \in (\betau,\betas)$. This function will govern the expected change to the (fractional) size of the giant component in the CM dynamics on the event that the giant is activated. 
To start, for $\theta\in [0,1]$, let $\ka(\theta)$ be the expected fraction of activated vertices if a giant of fractional size $\theta$ is activated, i.e., 
$$
\ka(\theta) = \theta + \frac{1}{q} (1-\theta)\,, \qquad \text{and let} \qquad   \phi(\theta):=\alpha(\beta \ka(\theta)) \cdot \ka(\theta)\,,
$$
so that $\phi(\theta)$ is the expected fractional size of the giant component from a configuration with a giant of fractional size $\theta$.
Define the drift function
$$f(\theta) :=\phi(\theta) - \theta\,.$$
Note that $\alpha(\lambda)$ represents the expected fraction of the giant component, as  characterized in \eqref{eq:beta}. When $\lambda = \beta \ka(\theta)$, the regime $\lambda > 1$ corresponds to $\theta\in (\thetas,1]$, where $\thetas= \inf\{ \theta: \beta k_a(\theta) > 1\} = \frac{q-\beta}{\beta(q-1)}$.

We compile a set of useful facts about $f$, and refer to Figure~\ref{fig:random-cluster-drift-function} for a visual aid.
\begin{lemma} [\cite{ABthesis}]
    \label{lem:factsaboutdrift}
    For every $\beta>0$, the following properties hold for $f$:
    \begin{enumerate}
        \item The function $f$ is continuous, differentiable and strictly concave in $(\thetas, 1]$.
        \item The function $\phi$ is continuous, differentiable, and strictly increasing in $(\thetas, 1]$.
        \item Let $f(\thetas^+):=\lim_{\theta\rightarrow\thetas +} f(\theta)$. Then $sgn(f(\thetas^+)) = sgn(\beta -q)$.
        \item If $\beta \in (\betau,\betas)$, then $f$ has exactly 2 roots in $(\thetas, 1]$; $\theta_*$ is the first root and $\thetar$ is the second root.
    \end{enumerate}
\end{lemma}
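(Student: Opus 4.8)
The plan is to reduce the whole lemma to elementary properties of the Erd\H{o}s--R\'enyi giant-fraction function $\alpha(\lambda)$ on $(1,\infty)$, exploiting that $\theta\mapsto\lambda=\beta\ka(\theta)$ is affine and increasing and maps $(\thetas,1]$ bijectively onto $(1,\beta]$ (indeed $\beta\ka(\thetas)=1$ and $\beta\ka(1)=\beta$). Recall, as observed below \eqref{eq:expectedgiantgnp}, that $\alpha$ is analytic on $(1,\infty)$, and differentiating $1-\alpha(\lambda)=e^{-\lambda\alpha(\lambda)}$ gives $\alpha'(\lambda)=\alpha(1-\alpha)/D$ with $D:=1-\lambda(1-\alpha)$; the quantity $D$ is exactly the denominator of $\sigma^2(\lambda)$ in \eqref{eq:sigmag}, hence $D>0$ for $\lambda>1$ (equivalently, the dual branching process with mean $\lambda(1-\alpha)$ is strictly subcritical, a standard fact), so $\alpha$ is strictly increasing there. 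Since $\ka$ is affine-increasing and strictly positive on $(\thetas,1]$, the map $\phi(\theta)=\alpha(\beta\ka(\theta))\,\ka(\theta)$ is a product of two positive, analytic, strictly increasing functions; this gives (2), and continuity and differentiability of $f=\phi-\theta$ follow at once.

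For the concavity in (1), since $f$ and $\phi$ differ by a linear term it suffices to show $\phi''<0$, which by the affine substitution is equivalent to strict concavity of $h(\lambda):=\lambda\alpha(\lambda)$ on $(1,\beta]$. Differentiating $\alpha'D=\alpha(1-\alpha)$ a second time and repeatedly using $\lambda(1-\alpha)=1-D$ to eliminate $\lambda$ yields, after a short computation,
\[
h''(\lambda)=\frac{\alpha(1-\alpha)}{D^{3}}\,\bigl(2D-\lambda\alpha\bigr),
\]
so, as $\alpha(1-\alpha)/D^{3}>0$, we have $h''<0\iff 2D<\lambda\alpha\iff \alpha(\lambda)<2(1-1/\lambda)$. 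The last inequality is elementary: writing $\lambda=-\alpha^{-1}\log(1-\alpha)$ and setting $t=\alpha\in(0,1)$, it becomes $G(t):=(1-t/2)\bigl(-\log(1-t)\bigr)-t>0$, and one checks $G(0)=G'(0)=0$ while $G''(t)=\tfrac{t}{2(1-t)^{2}}>0$, so $G>0$ on $(0,1)$. Hence $h$, and therefore $\phi$ and $f$, are strictly concave on $(\thetas,1]$.

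Part (3) is a limit computation: as $\theta\downarrow\thetas$ (when $\beta<q$, so $\thetas=\tfrac{q-\beta}{\beta(q-1)}>0$) we have $\beta\ka(\theta)\downarrow1$, and continuity of $\alpha$ with $\alpha(1)=0$ gives $\phi(\thetas^{+})=0$, whence $f(\thetas^{+})=-\thetas$, of sign $\sgn(\beta-q)$; when $\beta\ge q$ the infimum defining $\thetas$ is $0$, $\beta\ka(0^{+})=\beta/q\ge1$, and $f(0^{+})=\tfrac1q\alpha(\beta/q)\ge0$ with equality iff $\beta=q$, again matching $\sgn(\beta-q)$. For Part (4), fix $\beta\in(\betau,\betas)=(\betau,q)$: by (1) $f$ is strictly concave on $(\thetas,1]$, by (3) $f(\thetas^{+})=-\thetas<0$, and $f(1)=\alpha(\beta)-1<0$. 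A strictly concave continuous function negative at both ends of an interval is increasing-then-decreasing, hence has exactly $0$, $1$, or $2$ roots according as its maximum is $<0$, $=0$, or $>0$ (strict monotonicity on each side of the argmax forbids more), so it remains to show $\max_{(\thetas,1)}f>0$. Unwinding $\phi(\theta)=\theta$, i.e.\ $\alpha(\beta\ka(\theta))=\theta/\ka(\theta)$, via the defining relation of $\alpha$ shows a positive root of $f$ is precisely a solution $\theta\in(\thetas,1)$ of $e^{\beta\theta}=\tfrac{1+(q-1)\theta}{1-\theta}$ (and no root lies in $(0,\thetas]$, where $f=-\theta<0$); under $\theta=\tfrac{qx-1}{q-1}$ this is exactly a zero in $(1/q,1)$ of the function $x\mapsto\bigl(1+(q-1)e^{\beta(1-qx)/(q-1)}\bigr)^{-1}-x$ appearing in the definition of $\betau$. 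Finally $f_{\beta}(\theta)$ is strictly increasing in $\beta$ at each fixed $\theta$ (as $\alpha$ is increasing and $\beta\ka(\theta)$ increases with $\beta$), so by the envelope theorem $\beta\mapsto\max_{\theta}f_{\beta}(\theta)$ is strictly increasing wherever the maximizer is interior; together with the preceding sentence this pins the threshold at which $\max_{\theta}f_{\beta}=0$ to be exactly $\betau$, so for $\beta>\betau$ we get $\max f_{\beta}>0$ and hence exactly two roots — the smaller $\theta_*$ with $f'(\theta_*)>0$ and the larger $\thetar$ with $f'(\thetar)<0$.

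The main obstacle is Part (4): identifying the ``first-order'' threshold at which $f$ first attains a positive value with the exact constant $\betau$ defined on the Potts side, and ruling out the borderline single-root (tangency) case for \emph{every} $\beta>\betau$ rather than for generic $\beta$ — this is where the explicit Potts--random-cluster change of variables and the monotonicity-in-$\beta$ argument are essential. The computation in Part (1) is routine once one passes to the variable $\lambda$ and reduces to the one-variable inequality $\alpha(\lambda)<2(1-1/\lambda)$; carried out naively in $\theta$ it becomes an unpleasant expansion.
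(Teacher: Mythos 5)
The paper cites this lemma from \cite{ABthesis} and does not reprove it, so there is no internal proof to compare against; your argument therefore stands on its own. I checked it and it is correct. The affine change of variables $\lambda=\beta\ka(\theta)$ is the right move: it reduces (2) to strict monotonicity of $\alpha$ on $(1,\infty)$ and (1) to strict concavity of $h(\lambda)=\lambda\alpha(\lambda)$. Your identity
\[
h''(\lambda)=\frac{\alpha(1-\alpha)}{D^{3}}\bigl(2D-\lambda\alpha\bigr),\qquad D=1-\lambda(1-\alpha),
\]
is correct (expanding the bracket and using $\lambda(1-\alpha)=1-D$ indeed kills the $\lambda^{2}$ term and leaves $2+\lambda(\alpha-2)=2D-\lambda\alpha$), and the one-variable reduction $G(t)=(1-t/2)(-\log(1-t))-t$, $G(0)=G'(0)=0$, $G''=\tfrac{t}{2(1-t)^{2}}>0$, is a clean way to settle the sign. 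Part (3) is routine. For (4), your reduction of roots of $f$ to roots of the Potts drift $D_\beta$ on $(1/q,1)$ via $\theta=\tfrac{qx-1}{q-1}$ is exact (fixed points of $\alpha$ with $\lambda\le 1$ are trivial, so no spurious roots appear in $(0,\thetas]$), and combining strict concavity, negativity of $f$ at both ends of $(\thetas,1]$, and strict monotonicity of $f_\beta$ in $\beta$ on the shared domain pins the root-count threshold to $\betau$ and rules out tangency above it. Two remarks. First, as you note, the tangency case must be excluded by the $\beta$-monotonicity, not by the change of variables alone — your write-up is a bit compressed at that step, but the logic (a tangency at $\beta_0>\betau$ would force $f_{\beta'}<0$ for $\beta'\in(\betau,\beta_0)$, contradicting the definition of $\betau$) is sound and worth spelling out. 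Second, your derivation of $e^{-\beta\theta}=\tfrac{1-\theta}{1+(q-1)\theta}$ is the correct equation; the paper's \eqref{eq:thetar-implicit-equation} displays $e^{\beta x}=\tfrac{1-x}{1+(q-1)x}$, which has a sign typo, so your computation actually corrects it.
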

Notice that $\thetar$ defined in this way, matches the solution to~\eqref{eq:thetar-implicit-equation} when $\beta \ge \betac$.

The next lemma is a minor extension of Lemma 3.9 in \cite{ABthesis}.

\begin{lemma}
    \label{lem:phifunctionsupercritical}
    Let $\beta\in (\betau,\betas)$ and $q > 2$.
    \begin{enumerate}
        \item For $\theta\in (\theta_*,\thetar)$, we have $\theta\le \phi(\theta) \le \thetar$.
        \item For a fixed $\vartheta\in (\theta_*,\thetar]$, there exists $\delta = \delta(\vartheta) \in (0,1)$ such that 
        $\delta |\theta - \thetar| \le |\phi(\theta) - \theta|$ for all $\theta\in [\vartheta, 1]$.
    \end{enumerate}
\end{lemma}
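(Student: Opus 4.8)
The plan is to derive both parts purely from the qualitative properties of $f(\theta)=\phi(\theta)-\theta$ recorded in Lemma~\ref{lem:factsaboutdrift}: on $(\thetas,1]$ the function $f$ is strictly concave with exactly the two roots $\theta_*<\thetar$, and $\phi$ is strictly increasing there. The first step is to fix the sign pattern of $f$. A strictly concave function vanishing at two points $\theta_*<\thetar$ of $(\thetas,1]$ is strictly positive on $(\theta_*,\thetar)$ and strictly negative on $(\thetas,\theta_*)$ and on $(\thetar,1]$; each statement follows by expressing one of $\theta,\theta_*,\thetar$ as a convex combination of the other two and invoking strict concavity together with $f(\theta_*)=f(\thetar)=0$. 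Differentiability of $f$ at $\thetar$ then also forces $f'(\thetar)<0$: if $f'(\thetar)=0$ the point $\thetar$ would be a strict maximum of the strictly concave $f$, contradicting $f>0$ immediately to its left. Finally $\phi(\thetar)=\thetar$ since $f(\thetar)=0$.

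For Part (1): on $(\theta_*,\thetar)$ the inequality $\phi(\theta)\ge\theta$ is exactly $f(\theta)\ge0$, which holds by the sign pattern; and since $\phi$ is strictly increasing on $(\thetas,1]\supset[\theta_*,\thetar]$, for $\theta<\thetar$ we get $\phi(\theta)<\phi(\thetar)=\thetar$.

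For Part (2): fix $\vartheta\in(\theta_*,\thetar]$ and split $[\vartheta,1]$ at $\thetar$. On $[\vartheta,\thetar]$ (nonempty only if $\vartheta<\thetar$), $f$ is nonnegative, and by strict concavity its graph lies above the chord through $(\vartheta,f(\vartheta))$ and $(\thetar,0)$, so $f(\theta)\ge\frac{f(\vartheta)}{\thetar-\vartheta}(\thetar-\theta)$, where $f(\vartheta)>0$ by the sign pattern. On $[\thetar,1]$, $f$ is nonpositive, and the tangent-line bound for the concave $f$ at $\thetar$ gives $f(\theta)\le f(\thetar)+f'(\thetar)(\theta-\thetar)=f'(\thetar)(\theta-\thetar)$, hence $|f(\theta)|\ge|f'(\thetar)|\,(\theta-\thetar)$ with $|f'(\thetar)|>0$. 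Since $|\phi(\theta)-\theta|=|f(\theta)|$, while $\thetar-\theta=|\theta-\thetar|$ on $[\vartheta,\thetar]$ and $\theta-\thetar=|\theta-\thetar|$ on $[\thetar,1]$, taking $\delta:=\min\{f(\vartheta)/(\thetar-\vartheta),\,|f'(\thetar)|,\,1/2\}\in(0,1)$ (the first term read as $+\infty$ when $\vartheta=\thetar$) gives $\delta|\theta-\thetar|\le|\phi(\theta)-\theta|$ on all of $[\vartheta,1]$.

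I do not anticipate a real obstacle: the only points needing care are extracting the strict negativity of $f$ on $(\thetar,1]$ and of $f'(\thetar)$ from strict concavity plus the root structure (rather than taking them for granted) and dealing with the degenerate endpoints $\vartheta=\thetar$ and $\theta=\thetar$. As an alternative to exhibiting explicit constants, Part (2) can instead be obtained by compactness: the map $\theta\mapsto|f(\theta)|/|\theta-\thetar|$, extended to the value $|f'(\thetar)|$ at $\thetar$, is continuous and strictly positive on the compact set $[\vartheta,1]$ (a zero would be a root of $f$ other than $\thetar$, which is excluded because $\vartheta>\theta_*$), hence bounded below by a positive $\delta$.
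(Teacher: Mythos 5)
Your proof is correct and mirrors the paper's approach: Part (1) follows from the sign pattern of the strictly concave $f$ with roots $\theta_*,\thetar$ together with the strict monotonicity of $\phi$ and $\phi(\thetar)=\thetar$, exactly as in the paper. For Part (2) the paper simply invokes Lemma~3.9 of \cite{ABthesis} ``without modification,'' whereas you supply the underlying concavity argument explicitly (chord bound on $[\vartheta,\thetar]$, tangent bound and strict negativity of $f'(\thetar)$ on $[\thetar,1]$); this is the same linear--lower--bound idea that the cited lemma relies on, so your argument is substantively the same, just self-contained.
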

\begin{proof}
    First, let $\theta_*< \theta < \thetar$. By Lemma~\ref{lem:factsaboutdrift} (2) and (4) we have
    $
        \phi(\theta) < \phi(\thetar) = \thetar
    $.
    Moreover, by Lemma~\ref{lem:factsaboutdrift} (1) and (4), 
    $f$ is strictly concave and $f(\thetar) = 0 = f(\theta_*)$.
    Hence $f(\theta) > 0$, or equivalently $\theta < \phi(\theta)$.
    This establish the first part, and in particular, shows that $f(\vartheta) > 0$ for a fixed $\vartheta \in (\theta_*, \theta_r)$.
    The second part follows from the fact $f(\vartheta) >0$ and the concavity of $f$ as in the proof in Lemma 3.9 in \cite{ABthesis} without modification. 
\end{proof}

We will also use the following lemma, which extends Lemma 3.7 in \cite{ABthesis}.

\begin{lemma}
    \label{lem:phifunctionsubcritical}
    Let $s\in (0,1)$ be a fixed constant.
    If $\beta \in (\betau,\betas)$, then for all $\theta\in (\thetas, \theta_*-s]$ there exists a $\delta >0$ such that $f(\theta) \le -\delta$.
 \end{lemma}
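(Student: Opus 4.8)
The claim is that on the compact interval $[\thetas + s', \theta_* - s]$ — or more precisely on $(\thetas, \theta_* - s]$ — the drift function $f$ is bounded strictly below zero by a constant. The natural approach is a compactness argument combined with the sign information already recorded in Lemma~\ref{lem:factsaboutdrift}. First I would split the interval $(\thetas, \theta_* - s]$ into two pieces: a neighborhood $(\thetas, \thetas + s']$ of the left endpoint, and the compact interval $[\thetas + s', \theta_* - s]$ for a small fixed $s' > 0$ to be chosen. On the compact piece, $f$ is continuous (Lemma~\ref{lem:factsaboutdrift}(1)), so it attains its maximum; since $\theta_*$ is the \emph{first} root of $f$ in $(\thetas, 1]$ and $f$ is strictly concave there with $f(\thetar) = 0$, one checks that $f < 0$ throughout $(\thetas, \theta_*)$ (strict concavity plus two roots $\theta_* < \thetar$ forces $f < 0$ to the left of $\theta_*$ and $f>0$ strictly between the roots). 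Hence the maximum of $f$ over $[\thetas + s', \theta_* - s]$ is some $-\delta_1 < 0$.

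For the piece near $\thetas$, I would use Lemma~\ref{lem:factsaboutdrift}(3): since $\beta \in (\betau, \betas) = (\betau, q)$, we have $\beta - q < 0$, so $\sgn(f(\thetas^+)) = \sgn(\beta - q) < 0$, i.e., $\lim_{\theta \to \thetas^+} f(\theta) = f(\thetas^+) < 0$ (the limit exists and is a genuine negative number, not $0$ — this uses that $\beta \neq q$, which holds since $\beta < \betas = q$). Therefore there is an $s' > 0$ and a $\delta_2 > 0$ such that $f(\theta) \le -\delta_2$ for all $\theta \in (\thetas, \thetas + s']$; here one should be a little careful that the one-sided limit being negative genuinely gives a uniform bound on a right-neighborhood, which is immediate from the definition of the limit. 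Taking $\delta := \min\{\delta_1, \delta_2\}$ then gives $f(\theta) \le -\delta$ on all of $(\thetas, \theta_* - s]$, as claimed. (Formally, for $\theta$ in the regime below $\thetas$ where we set $\alpha \equiv 0$ so $f(\theta) = -\theta < 0$, there is nothing to prove; the content is the interval $(\thetas, \theta_* - s]$.)

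The main obstacle — though it is minor — is handling the left endpoint $\thetas$ correctly: $f$ is only defined on $(\thetas, 1]$ and $\thetas$ itself is a point where the percolation step transitions from supercritical to subcritical, so one cannot simply invoke continuity on a closed interval containing $\thetas$. This is precisely why Lemma~\ref{lem:factsaboutdrift}(3), which pins down the \emph{sign} of the one-sided limit $f(\thetas^+)$, is the right tool, and why the hypothesis $\beta < \betas = q$ (ensuring $f(\thetas^+) < 0$ strictly, rather than $=0$) is essential. Everything else is routine: the statement is a direct strengthening of Lemma~3.7 in~\cite{ABthesis}, and the only novelty is tracking that the bound holds uniformly up to (but not including) $\thetas$ rather than only on a fixed compact sub-interval.
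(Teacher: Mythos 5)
Your proposal is correct and takes essentially the same approach as the paper: it invokes Lemma~\ref{lem:factsaboutdrift}(3) to get $f(\thetas^+)<0$ (using $\beta<\betas=q$), Lemma~\ref{lem:factsaboutdrift}(1),(4) to conclude $f<0$ on $(\thetas,\theta_*)$, and then continuity to get a uniform negative bound on $(\thetas,\theta_*-s]$. Your write-up is a little more careful about the half-open left endpoint (splitting off a neighborhood of $\thetas$ and handling it via the one-sided limit), which is exactly the detail the paper's phrase ``$f$ as a continuous function must attain a negative maximum on $(\thetas,\theta_*-s]$'' glosses over; but the ingredients and structure are identical.
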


\begin{proof}
    By Lemma~\ref{lem:factsaboutdrift}, we have $f(\thetas^+) < 0$ and $f(\theta) < 0$ for $\theta \in (\thetas, \theta_* - s]$. 
    Hence, $f$ as a continuous function must attain a negative maximum on $ (\thetas, \theta_* - s]$ and the lemma follows.
\end{proof}

\section{Mixing from product initializations for the CM dynamics}\label{sec:CM-dynamics}
Our aim in this section is to establish the mixing time results of Theorem~\ref{thm:SWCM-dynamics}. Recall that $f$ is the drift function for the giant component in the CM dynamics, i.e., if a configuration $X_0$ has a giant component of size $\theta n$, on the event of activation of $X_0$, the expected size of the giant in $X_1$ is roughly $(\theta + f(\theta))n$. For $\beta\in (\betau,\betas)$, this function has an unstable fixed point $\theta_*$, i.e., $f(\theta_*) = 0$ and $f'(\theta_*) >0$.    

Section~\ref{subsec:mixing-away-from-saddle-SWCM} will focus on quasi-equilibration to the  measure constrained to the ordered (resp., disordered) phase when initialized with a giant that is $\omega(n^{-1/2})$ away from $\theta_*$. This is the core of all the results of Theorem~\ref{thm:SWCM-dynamics} except the $\beta = \betac$ case where we need to study the small-drift diffusion in the $O(n^{-1/2})$ neighborhood of $\theta_*$ to pick the ordered vs.\ disordered phases with the correct relative probabilities; this latter step is done in Section~\ref{subsec:escaping-saddle-SWCM}. Finally, the results are combined to prove Theorem~\ref{thm:SWCM-dynamics} in Section~\ref{subsec:proof-of-main-SWCM}.

\subsection{Quasi-equilibrating away from the unstable fixed point}\label{subsec:mixing-away-from-saddle-SWCM}

Our main goal for this subsection is to show that initialized from a random graph with 
a giant component of fractional size $\omega(n^{-1/2})$ away from $\theta_*$, and a ``reasonable'' structure on the complement of the giant, the CM dynamics quasi-equilibrates to the random-cluster measure conditioned on the phase corresponding to the side of $\theta_*$ it initializes. To be more precise, for fixed $\beta\in (\betau,\betas)$, define 
\begin{align*}
    \mu^\ord = \mu_\beta( \cdot \mid \Omega^{\ord})\,, \qquad \text{and} \qquad  \mu^\dis = \mu_\beta(\cdot \mid \Omega^\dis)\,,
\end{align*}
where 
\begin{align*}
    \Omega^\ord = \{A : |\mathcal L_1(A)| \ge \theta_* n \}\,, \qquad \text{and} \qquad \Omega^\dis = \{A: |\mathcal L_1(A)| < \theta_* n \}\,.
\end{align*}
Note that $\mu^\ord$ has $|\mathcal L_1(A)|$ concentrated around $\thetar>\theta_*$, where $\thetar$ is the (stable) fixed point of $f$ to the right of $\theta_*$. On the other hand $\mu^\dis$ has $|\mathcal L_1(A)|$ that is concentrated around $0$. 

Throughout the paper, we use $\Pr_{X_0}(\cdot)$ for the law of the Markov chain initialized at $X_0$ and $\Pr_\nu$ when it is initialized from the distribution $\nu$. The main result of this subsection is the following: 
\begin{lemma}\label{lem:quasi-equilibration-away-from-saddle-SWCM}
    Suppose that $q> 2$, $\gamma >0$ and $\beta \in (\betau, \betas)$. If $X_0$ is such that $|\mathcal L_1(X_0)| \ge \theta_* n + \gamma \sqrt{n}$, $|\mathcal L_2(X_0)| = O(\log n)$, and $R_2^-(X_0) = O(n)$, then there exists $C>0$ such that if $T= C\log n$, we have 
    \begin{align*}
        \|\Pr_{X_0} (X_T \in \cdot) - \mu^{\ord}\|_{\TV} = O(\gamma^{-2})\,.
    \end{align*}
    An analogous statement holds w.r.t.\ $\mu^\dis$ if ~$|\mathcal L_1(X_0)|\le \theta_* n - \gamma \sqrt{n}$. 
\end{lemma}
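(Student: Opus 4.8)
The plan is to run the CM dynamics from $X_0$ through three stages matching items (i)--(iii) of the proof overview, always conditioning on an appropriate ``good event'' that keeps the configuration in a regime where the giant-component drift function $f$ governs the evolution, and then to couple with a draw from $\mu^{\ord}$. Throughout, the central quantity to track is $\bar L_t := n^{-1/2}(|\mathcal L_1(X_t)| - \theta_* n)$, or rather $|\mathcal L_1(X_t)|$ itself, together with the auxiliary statistics $|\mathcal L_2(X_t)|$ and $R_2^-(X_t)$ which must be controlled to ensure that the percolation step after an activation behaves like a supercritical $G(\cdot, \beta\ka(\theta)/n)$.

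\emph{Stage 1: escape to macroscopic distance from the saddle.} Starting from $|\mathcal L_1(X_0)| = \theta_* n + \gamma\sqrt n$ with $\gamma$ large, I would show that on the event that the giant is activated (probability $1/q$ each step), the expected one-step increment of $|\mathcal L_1|$ is, by Lemma~\ref{lem:factsaboutdrift}(1) and $f(\theta_*)=0$, $f'(\theta_*)>0$, at least $c\cdot(|\mathcal L_1(X_t)| - \theta_* n)$ for some $c>0$ as long as $|\mathcal L_1(X_t)| - \theta_* n$ is between $\gamma\sqrt n$ and $\delta_0 n$ for a small constant $\delta_0$; the fluctuations are $O(\sqrt n)$ by the CLT/LLT for the giant (Theorems~\ref{thm:giantLLT},~\ref{thm:CLT}) and by Lemma~\ref{lem:rgdeviation}(3). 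A supermartingale / dyadic argument on the quantity $(|\mathcal L_1(X_t)| - \theta_* n)$ then shows that in $O(\log n)$ steps it reaches $\Omega(n)$ away from $\theta_* n$, except on an event of probability $O(\gamma^{-2})$ (this is the event that the random-walk-like fluctuations drag it back below $\theta_* n + (\gamma/2)\sqrt n$, bounded via a maximal inequality / gambler's-ruin estimate with the drift turned off — exactly where the $\gamma^{-2}$ comes from). Here one must simultaneously maintain, with high probability, that $|\mathcal L_2(X_t)| = O(\log n)$ and $R_2^-(X_t) = O(n)$: each activation-then-percolation step, when the giant is activated, breaks the non-giant part into a fresh subcritical random graph (by Lemma~\ref{lemma:duality}-type reasoning), which by Lemma~\ref{lem:subcriticalgstructure} / Lemma~\ref{lem:supercriticalgstructure} has the desired second-component and $R_2^-$ bounds; when the giant is \emph{not} activated, these statistics can only improve in the relevant sense, or at worst stay controlled. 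This ``good set'' maintenance is the analogue of $\mathcal G_T$ and must be carried along as a conditioning at every step, being careful (per the remark in the overview) that we cannot restart failed couplings.

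\emph{Stage 2: contraction to the stable fixed point.} Once $|\mathcal L_1(X_t)| = \theta_* n + \Omega(n)$, Lemma~\ref{lem:phifunctionsupercritical}(2) gives a uniform multiplicative contraction $|\phi(\theta)-\thetar| \le (1-\delta)|\theta - \thetar|$ on $[\vartheta,1]$, so on the activation event the expected distance of $|\mathcal L_1|/n$ to $\thetar$ shrinks by a constant factor; combined with $O(\sqrt n)$ fluctuations, a standard iteration (as in~\cite{BS15,ABthesis}) brings $||\mathcal L_1(X_t)| - \thetar n| = O(\sqrt n)$ within a further $O(\log n)$ steps with high probability, while the non-giant statistics remain $|\mathcal L_2| = O(\log n)$, $R_2^- = O(n)$.

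\emph{Stage 3: coupling to $\mu^{\ord}$.} Finally, I would exhibit an $O_\epsilon(1)$-step coupling showing that any two configurations with $||\mathcal L_1| - \thetar n| = O(\sqrt n)$, $|\mathcal L_2| = O(\log n)$, $R_2^- = O(n)$ can be coupled with probability $\ge 1-\epsilon$; since $\mu^{\ord}$ itself satisfies these properties with probability $1-o(1)$ (Lemma~\ref{lem:equilibrium-estimates}, third bullet, noting $\mu^{\ord}$ is $\mu_\beta$ restricted to the ordered phase which dominates there), one couples the giant sizes using a monotone coupling of the percolation steps and couples the non-giant parts using the duality principle and the subcritical random-graph coupling lemmas. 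Collecting the failure probabilities — $O(\gamma^{-2})$ from Stage 1, $o(1)$ from the good-set maintenance across all $O(\log n)$ steps (here $T = C\log n$ with $C$ large enough that the $O(n^{-1})$ per-step failure probabilities sum to $o(1)$), $o(1)$ from Stage 2, and $\epsilon$ then $\epsilon\to 0$ from Stage 3 — yields $\|\Pr_{X_0}(X_T \in \cdot) - \mu^{\ord}\|_{\TV} = O(\gamma^{-2})$. The disordered case is symmetric: below $\theta_* n - \gamma\sqrt n$ the drift of Lemma~\ref{lem:phifunctionsubcritical} pushes the configuration subcritical, after one step the giant is $O(\log n)$ and the whole graph is a subcritical $G(n,\lambda/n)$-type object that quasi-equilibrates to $\mu^{\dis}$ in $O(\log n)$ steps by the same coupling machinery.

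\textbf{Main obstacle.} I expect the hard part to be Stage 1 combined with the good-set maintenance: near the saddle the drift $f'(\theta_*)(|\mathcal L_1| - \theta_* n)$ is comparable to the $\sqrt n$-scale fluctuations until $|\mathcal L_1| - \theta_* n \gg \sqrt n$, so one needs a genuinely quantitative gambler's-ruin / maximal-inequality estimate (not just a drift-dominates-fluctuations argument) to get the sharp $O(\gamma^{-2})$ probability of sliding back to the saddle, and this must be done \emph{simultaneously} with propagating the structural estimates on $|\mathcal L_2|$ and $R_2^-$ forward without ever being able to restart. The non-Markovianity of $|\mathcal L_1(X_t)|$ means each of these one-step drift/variance estimates rests on the random-graph limit theorems applying to the current configuration, which is only legitimate on the good set — so the argument is an induction where the good set is part of the inductive hypothesis at every step.
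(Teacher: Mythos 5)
Your three-stage plan matches the paper's decomposition (Lemmas~\ref{lem:hittimesupercritical2}, \ref{lem:hittingtimecritical3}, and \ref{lemma:couplestagesupercritical} on the ordered side; Lemmas~\ref{lem:hittingtimesubcritical}, \ref{lem:subcriticalhitting3}, and \ref{lemma:couplingstagesubcritifcal} on the disordered side), and you correctly identify the ``no-restart'' constraint and the inductive good-set maintenance as the main difficulty, so the overall approach is the same. Two local corrections, however. First, your parenthetical account of where the $O(\gamma^{-2})$ comes from is wrong: a gambler's-ruin/maximal-inequality estimate ``with the drift turned off'' would \emph{not} yield $O(\gamma^{-2})$ --- a driftless walk with $\Theta(\sqrt n)$-scale steps started at $\gamma\sqrt n$ falls below $\gamma\sqrt n/2$ before reaching $\delta_0 n$ with probability $1-O(\gamma n^{-1/2})$, i.e.\ almost surely in the $n\to\infty$ limit. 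The paper's actual argument in Lemma~\ref{lem:hittimesupercritical2} is a drift-aware, per-scale Chebyshev bound: at the $i$-th dyadic scale, where the distance is about $(1+c_*/4)^i\gamma\sqrt n$, the drift $f(L_t/n)n \gtrsim c_*(1+c_*/4)^i\gamma\sqrt n$ dominates the $\Theta(n)$ fluctuation variance (Fact~\ref{lem:uniformvarbound}), giving per-scale failure $\lesssim \gamma^{-2}(1+c_*/4)^{-2i}$; the geometric sum over $i$ yields the total $O(\gamma^{-2})$. The drift is therefore essential, not optional, for the quantitative bound. Second, the disordered side is not ``after one step the giant is $O(\log n)$'': below $\theta_*n - sn$ the giant shrinks under a uniformly negative drift but may transiently pass through the regime near $\thetas n$ where the percolation step is nearly critical, and Lemma~\ref{lem:subcriticalhitting3} (using Lemmas~\ref{lem:L1decayexpectation}, \ref{lem:nosecondlarge}, and \ref{lem:decaysubphase3}) handles this crossing carefully over $O(\log n)$ steps before the configuration finally shatters to $O(\log n)$-sized components.
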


The main part of the proof of Lemma~\ref{lem:quasi-equilibration-away-from-saddle-SWCM} is showing that any initialization whose giant is $\omega(n^{1/2})$ away from the unstable critical point $\theta_* n$ gets to $\Omega(n)$ away from it in $O(\log n)$ steps. Let us introduce a few more notational shorthands for this section: we let 
\begin{itemize}
    \item $L_t:=|\mathcal{L}_1(X_t)|$ be the process tracking the giant component size of $X_t$; 
    \item $\Lambda_t$ be the event that $\mathcal{L}_1(X_{t-1})$ is activated in the $t$'th step.
    \item $A_t$ be the number of activated vertices at the $t$'th step, and let $A_t^- = A_t - L_t \mathbf{1}\{\Lambda_t\}$.
\end{itemize}

\subsubsection*{Quasi-equilibrating to the ordered phase}
We start with the case where we are equilibrating towards the ordered phase.

\begin{lemma}
    \label{lem:hittimesupercritical2}
    Let $q>2$, $\gamma>0$ and $\beta \in (\betau,\betas)$.
    Let $X_0$ be any configuration such that $L_0 \ge \theta_*n + \gamma\sqrt{n}$, 
    $|\mathcal{L}_2(X_0)| = O(\log n)$ 
    and $R_2^-(X_0) = O(n)$. Let $T_s$ be the first $t$ such that $L_t \ge (\theta_* + s)n$, $|\mathcal{L}_2({X}_t)| = O(\log n)$, 
    and $R_2^-({X}_t) = O(n)$ all hold. Then, there exist constants $s\in(0,\theta_r-\theta_*)$ and $C >0$ such that $T_s \le C \log n$
    with probability $1 - O(\gamma^{-2})$.
    
\end{lemma}
To prove Lemma~\ref{lem:hittimesupercritical2}, we use the following lemma to estimate the drift after 1 step.
\begin{lemma}
    [Lemma 3.19 and (3.11) of its proof, \cite{ABthesis}] 
    \label{lem:L1decayexpectation}
    Suppose $0<\beta <\betas$ and that $X_t$ has at most one large component whose size is at least $2n^{11/12}$. Let $\epsilon>0$ be a small constant.
    If $L_t/n \ge \thetas +\epsilon$, then 
    \[
     L_t + f(L_t/n)n - 3n^{1/4} \le 
    \E[L_{t+1} \mid X_t, \Lambda_{t+1}] \le L_t + f(L_t/n)n + 3n^{1/4},
    \]
    and 
    \[
        \frac{f(L_t/n) n}{q} - 2n^{1/4} \le \E[L_{t+1} - L_t \mid X_t] \le \frac{f(L_t/n)n}{q} + 2n^{1/4}.
    \]
    If $L_t/n\in (\thetas - \epsilon, \thetas +\epsilon)$, then
    \[
        \E[L_{t+1} - L_t \mid X_t] \le \frac{f(\thetas + \epsilon)n}{q} +\frac{2\epsilon n}{q} + 2n^{1/4}.
    \]
\end{lemma}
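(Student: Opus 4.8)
The plan is to condition on $X_t$, reveal the activation set $S$ and then the percolation step, and reduce $\E[L_{t+1}-L_t\mid X_t]$ to an estimate for the giant of an Erd\H{o}s--R\'enyi graph with a random number of vertices. Recall $\Pr(\Lambda_{t+1})=\tfrac1q$. On $\Lambda_{t+1}$, $X_{t+1}$ is the disjoint union of a fresh $G(|S|,\beta/n)$ on $S$ with the inactive components of $X_t$ left untouched; here $|S|=A_{t+1}=L_t+A_{t+1}^-$ with $A_{t+1}^-=\sum_{i\ge 2}|\mathcal{L}_i(X_t)|\,B_i$ for i.i.d.\ $B_i\sim\mathrm{Ber}(1/q)$, so $\E[A_{t+1}\mid X_t,\Lambda_{t+1}]=L_t+\tfrac1q(n-L_t)=\ka(L_t/n)\,n=:m$. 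Two consequences of the hypothesis are used throughout: since $L_t=\Omega(n)\gg 2n^{11/12}$ in every regime of the statement, $\mathcal{L}_1(X_t)$ is forced to be the \emph{unique} component of size $\ge 2n^{11/12}$, so every non-giant (in particular every inactive, on $\Lambda_{t+1}$) component has size $<2n^{11/12}$ and $R_2^-(X_t)=O(n^{23/12})$; and, by Hoeffding applied to $A_{t+1}^-$ (summands in $[0,2n^{11/12}]$), $A_{t+1}$ lies within $n^{23/24}\log n$ of $m$ outside a set of super-polynomially small probability. On $\Lambda_{t+1}^c$, $\mathcal{L}_1(X_t)$ is inactive and survives as a component of size exactly $L_t$, while the active vertices form $G(A_{t+1}^-,\beta/n)$ with parameter $\beta A_{t+1}^-/n\le \tfrac{\beta-1}{q-1}+O(\epsilon)+o(1)<1$ (using $\beta<\betas=q$ and $\epsilon$ small), hence subcritical with high probability with all components $O(\log n)$; thus $L_{t+1}=L_t$ except with super-polynomially small probability and $0\le\E[L_{t+1}-L_t\mid X_t,\Lambda_{t+1}^c]=o(1)$. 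So everything reduces to estimating $\E[L_{t+1}\mid X_t,\Lambda_{t+1}]$.

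For the core regime $L_t/n\ge\thetas+\epsilon$, on the concentration event $\beta A_{t+1}/n\in(1+\delta_0,\beta]$ for a constant $\delta_0=\delta_0(\epsilon)>0$, so by~\eqref{eq:expectedgiantgnp} (and since the largest surviving inactive component is $<2n^{11/12}$ while the active giant is $\Theta(n)$ outside a further $e^{-\Omega(n)}$ event) one has $\E[L_{t+1}\mid X_t,\Lambda_{t+1},A_{t+1}]=\alpha(\beta A_{t+1}/n)\,A_{t+1}+\tilde{O}(1)$. Set $\psi(t):=t\,\alpha(t)$, so that $\alpha(\beta a/n)\,a=\tfrac n\beta\psi(\beta a/n)$ and, at $a=m$, $\tfrac n\beta\psi(\beta m/n)=n\,\phi(L_t/n)=L_t+n\,f(L_t/n)$; note $\psi$ is analytic on $(1,\infty)$ (as $\alpha$ is, by the analytic implicit function theorem) and, by Lemma~\ref{lem:factsaboutdrift}(1) (strict concavity of $f=\phi-\theta$, hence of $\phi$, hence of $\psi$ under the increasing affine change $t=\beta\ka(\theta)$), strictly concave on $(1,\beta]$. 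Averaging over $A_{t+1}$ and bounding the non-concentration event trivially by $n\cdot n^{-\omega(1)}=o(1)$: the tangent-line bound $\psi(t)\le\psi(t_0)+\psi'(t_0)(t-t_0)$ at $t_0=\beta m/n$ together with $\E[A_{t+1}\mid X_t,\Lambda_{t+1}]=m$ gives $\E[L_{t+1}\mid X_t,\Lambda_{t+1}]\le L_t+n\,f(L_t/n)+\tilde{O}(1)\le L_t+n\,f(L_t/n)+3n^{1/4}$ for $n$ large, which is the first inequality; for the matching lower bound one expands $\psi$ to second order at $t_0$, the linear term contributing $o(1)$ and the quadratic remainder being $O\big(\var(A_{t+1})/n\big)=O\big(R_2^-(X_t)/n\big)$, which is $O(1)$ on using that $R_2^-(X_t)=O(n)$ throughout the regime in which this lemma is invoked; hence $\E[L_{t+1}\mid X_t,\Lambda_{t+1}]\ge L_t+n\,f(L_t/n)-\tilde{O}(1)$. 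Combining these with $\Pr(\Lambda_{t+1})=\tfrac1q$ and the $\Lambda_{t+1}^c$ estimate yields the two-sided bound $\big|\E[L_{t+1}-L_t\mid X_t]-\tfrac1q\,n\,f(L_t/n)\big|\le 2n^{1/4}$.

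In the remaining case $L_t/n\in(\thetas-\epsilon,\thetas+\epsilon)$ the active graph on $\Lambda_{t+1}$ has parameter $\beta A_{t+1}/n$ near the critical value $1$, so one aims only for the stated one-sided bound: using the standard uniform estimate $\E[|\mathcal{L}_1(G(a,\beta/n))|]\le\alpha(\beta a/n)\,a+O(n^{2/3})$ (valid through the critical window), the monotonicity and continuity of $\phi$ from Lemma~\ref{lem:factsaboutdrift}(2) (with the convention $\phi\equiv 0$ for $\theta<\thetas$), and the concentration of $A_{t+1}/n$ near $\ka(L_t/n)\le\ka(\thetas+\epsilon)$, one gets $\E[L_{t+1}\mid X_t,\Lambda_{t+1}]\le\phi(\thetas+\epsilon)\,n+o(n)$; since $L_t\ge(\thetas-\epsilon)n$ this is at most $\big(\phi(\thetas+\epsilon)-(\thetas-\epsilon)\big)n+o(n)=f(\thetas+\epsilon)\,n+2\epsilon n+o(n)$, and multiplying by $\Pr(\Lambda_{t+1})=\tfrac1q$, adding the $o(1)$ from $\Lambda_{t+1}^c$, and absorbing the $o(n)$ correction into the $\tfrac{2\epsilon n}{q}+2n^{1/4}$ slack gives the claim. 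I expect the main obstacle to be controlling the additive error in the core regime to the stated $O(n^{1/4})$ precision: the upper bound falls out cleanly from concavity of $\psi$ via the tangent-line (Jensen) inequality, but the lower bound forces the curvature correction $\propto\var(A_{t+1})\propto R_2^-(X_t)$ to be lower-order, which is exactly where the structural hypothesis (all non-giant components $<2n^{11/12}$, together with the accompanying $R_2^-(X_t)=O(n)$) is essential; a secondary nuisance is the consistent bookkeeping needed to verify that the various rare events — subcriticality or near-criticality of $A_{t+1}$, or the largest inactive component overtaking the active giant — contribute only $o(1)$ despite the trivial bound $L_{t+1}\le n$.
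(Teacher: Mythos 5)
First, note that the paper does not actually prove this lemma: it is imported verbatim as Lemma~3.19 of \cite{ABthesis}, so there is no internal proof to compare against. Your reconstruction has the right skeleton, and the upper-bound directions are essentially sound: conditioning on $\Lambda_{t+1}$, Hoeffding for $A_{t+1}$ (summands bounded by $2n^{11/12}$), the estimate~\eqref{eq:expectedgiantgnp}, the subcriticality of the percolation step on $\Lambda_{t+1}^c$, and the tangent-line (Jensen) bound using the concavity of $a\mapsto \alpha(\beta a/n)\,a$ inherited from Lemma~\ref{lem:factsaboutdrift} all go through under the stated structural hypothesis alone.

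The genuine gap is the lower bound. The curvature correction you identify is of order $\var(A_{t+1}\mid X_t,\Lambda_{t+1})/n = \Theta(R_2^-(X_t)/n)$, and the lemma's only hypothesis (every non-giant component has size $<2n^{11/12}$) permits $R_2^-(X_t)$ as large as $\Theta(n^{23/12})$, so the correction can be $\Theta(n^{11/12})$, which swamps the claimed $2n^{1/4}$; since $a\mapsto\alpha(\beta a/n)\,a$ is \emph{strictly} concave on the uniformly supercritical range, this downward bias is a real feature of $\E[L_{t+1}\mid X_t,\Lambda_{t+1}]$ and not an artifact of the method, so no rearrangement of the Taylor expansion fixes it. Your patch, ``using that $R_2^-(X_t)=O(n)$ throughout the regime in which this lemma is invoked,'' imports a hypothesis that is not in the statement: a proof of the lemma cannot presuppose how it will later be applied, and indeed the paper invokes the upper-bound direction for configurations about which only $|\mathcal{L}_2(X_t)|<2n^{11/12}$ is known (proof of Lemma~\ref{lem:subcriticalhitting3}), while the lower-bound direction is only used where $R_2^-(X_t)=O(n)$ is separately maintained (Lemma~\ref{lem:hittimesupercritical2}). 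So you should either add an explicit $R_2^-$ (or variance) hypothesis for the lower bound, or defer that direction to the cited thesis. A secondary bookkeeping issue: in the regime $L_t/n\in(\thetas-\epsilon,\thetas+\epsilon)$, after you spend the entire $\tfrac{2\epsilon n}{q}$ slack replacing $L_t$ by $(\thetas-\epsilon)n$, your remaining error is of order $n^{23/24}\log n$ (the Hoeffding window pushed through a Lipschitz bound) or $n^{2/3}$ (the near-critical giant estimate), and this cannot be ``absorbed'' into $\tfrac{2\epsilon n}{q}+2n^{1/4}$ as written; what you prove is the inequality with an extra $o(n)$ term, which suffices for the paper's applications but is not the quoted bound.
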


We will now move on to prove Lemma~\ref{lem:hittimesupercritical2}.
\begin{proof}[Proof of Lemma~\ref{lem:hittimesupercritical2}] Let $\tau$ be the first time $t$ when $L_t \ge (\theta_* +s) n$ where $s>0$ is a small constant that will be decided. Let   $(t_i)_{i\ge 0}$ be the subset of times in $[T]= \{1,...,T\}$ at which the largest component is activated. We can condition on this sequence, generating $\sigma$-algebra $\mathscr{T}$, and notice that this can be generated by a sequence of $T$ independent $\text{Ber}(1/q)$ random variables which are also independent of the remaining randomness of the dynamics (say by reserving the first activation coin to always be used for the largest component). 

Let $c_* = f'(\theta_*)/2 >0$, and let $M$, $M_0$ and $M_1$ be sufficiently large constants. 
Consider the event: 
$$
\mathcal A_k = \big\{
                (L_k - \theta_* n) \ge \Big(1+ \frac{c_*}{4}\Big)^{\mathbf 1\{\Lambda_k\}} (L_{k-1} -\theta_* n), R_2^-( X_k) \le M n, |\mathcal{L}_2(X_k)| \le M_1 \log n
            \big\}.
$$
Our goal in this proof is to show the following by induction on $t$: 
\begin{align}\label{eq:wts-induction-escaping-saddle}
    \Pr \bigg(
        \bigcap_{k=1}^{t\wedge \tau} 
            \mathcal A_k \mid \mathscr{T} 
        \bigg) \ge \prod_{i: t_i \le t\wedge \tau} \Big(1- \frac{16 M_0^2}{\gamma^2(1+\frac{c_*}{4})^{2i-2} c_*^2}\Big)
    \cdot \Big( 1- O(T/n) \Big) \,.
\end{align}
In words, \eqref{eq:wts-induction-escaping-saddle} shows that with the stated probability, $L_k$ drifts away from $\theta_* n$ while preserving the control on $R_2^-(X_k)$ and $ |\mathcal{L}_2(X_k)|$.
Let us first conclude the proof assuming~\eqref{eq:wts-induction-escaping-saddle}. 
For any given $s>0$,
if $T=C \log n$ with sufficiently large $C >0$, then standard binomial tail bounds yield that $|\{t_i: t_i\le T\}| \ge T/2q$
with probability at least $1 - n^{-1}$, and 
\[
    \Big(1+\frac{c_*}{4}\Big)^{|\{t_i: t_i\le T\}|} (L_0 - \theta_* n) \ge 
    \Big(1+\frac{c_*}{4}\Big)^{(C/2q) \log n} \gamma \sqrt{n} \ge 
    sn\,.
\]
Thus, on any such realization of $\mathscr{T}$ where $|\{t_i: t_i\le T\}| \ge T/2q$, the event in~\eqref{eq:wts-induction-escaping-saddle} implies 
    $\{\tau\le T\} \cap \{R_2^-(X_\tau) \le M n\} \cap \{ |\mathcal{L}_2(X_\tau)| \le M_1 \log n\}$. 
As such, we can conclude that  
\begin{align*}
     \Pr\big(\{\tau\le T\} &\cap \{R_2^-(X_\tau) \le M n\} \cap \{ |\mathcal{L}_2(X_\tau)| \le M_1 \log n\} \big) \\
     &\ge 
     \Big[
         \prod_{i=1}^{T} 
            \Big(
                1- \frac{16 M_0^2}{\gamma^2c_*^2} \frac{1}{(1+\frac{c_*}{4})^{2i-2} }
            \Big)  
    \Big] \cdot 
    \Big[ 1- O\big(\frac{T}{n}\big) \Big] - \frac{1}{n} \ge 1- \frac{A}{\gamma^2},
\end{align*}
for some $A(c_*, M_0, \gamma, s)>0$.

We now proceed to prove~\eqref{eq:wts-induction-escaping-saddle} inductively. The event in the probability holds at initialization deterministically; now suppose~\eqref{eq:wts-induction-escaping-saddle} holds at time $t$ and let us show it holds at $t+1$. If $\tau = t$, then the event in the probability is unchanged at $t+1$, and the right-hand side is at least its value at $t$. Now suppose that $t<\tau$, and fix $(X_k)_{k\le t}$ such that the events in~\eqref{eq:wts-induction-escaping-saddle} hold for it.

If $t+1\notin \{t_i\}$ and $\Lambda_{t+1}^c$ occurs, then $L_{t+1} \ge L_{t}$ deterministically.
In this case, it suffices to consider only the probability of $\{R_2^-(X_{t+1}) \le Mn,  |\mathcal{L}_2(X_{t+1})| \le M_1 \log n\}$.
On $\Lambda_{t+1}^c$, the decrease in $R_2^-$ as a result of the dissolution of active components is $R_2^-(X_t)/q$ in expectation, and at least $R_2^-(X_t)/q - o(n)$ with probability $1- O(n^{-1})$ by Hoeffding's inequality and the fact that  $R_2^-( X_t) \le Mn$ and $|\mathcal{L}_2(X_{t})| \le M_1 \log n$.
Moreover, 
with probability $1-O(n^{-1})$, $A_{t+1}\in [\frac{n-L_t}{q} - \sqrt{n \log n}, \frac{n-L_t}{q} + \sqrt{n \log n} ]$. 
Thus, $ A_t \cdot \frac{\beta}{n} < 1$ is bounded away from $1$ uniformly in $n$.
Lemma~\ref{lem:subcriticalgstructure} implies that the increase in $R_2^-$ as a result of the creation of new components in the sub-critical percolation step is at most $C_1n$ with probability $1-O(n^{-1})$, for some constant $C_1>0$;
besides, with probability  $1-O(n^{-1})$,
the sizes of new components in the sub-critical percolation step is at most $M_1 \log n$. 
By a union bound,
\begin{align*}
    \Pr \big(  &  
        |\mathcal{L}_2(X_{t+1})| \le M_1 \log n, 
        R_2^-(X_{t+1}) \le     R_2^-(X_{t}) \big(1- \tfrac{1}{q} \big) + C_1 n + o(n) 
        \mid X_t, \Lambda_{t+1}^c
    \big) \\ &= 1 - O(n^{-1}).
\end{align*}
When $ R_2^-(X_{t}) > 8C_1 q n$, we have $ R_2^-(X_{t+1}) \le  R_2^-(X_{t})$ with probability $1-O(n^{-1})$ and when  $ R_2^-(X_{t}) \le 8C_1 q n$, we have $ R_2^-(X_{t+1}) \le (8q+1) C_1 n$ with probability $1-O(n^{-1})$.
Therefore, in both cases we have $ R_2^-(X_{t+1}) \le Mn$ by setting $M\ge (8q+1) C_1$.

Now suppose $t+1 = t_i$ for some $i\ge 1$ so that the events $\Lambda_{t+1}$, 
$R_2^-(X_t) \le Mn$, $|\mathcal{L}_2(X_{t})| \le M_1 \log n$ and 
\begin{equation}
    \label{eq:ihshift}
L_{t} - \theta_*n \ge \big[ 1 + \tfrac{c_*}{4}\big]^{i-1}(L_0 - \theta_* n)\,, 
\end{equation}
occur.
Following a similar argument, this time the activated component being with high probability supercritical, we derive that 
\begin{equation}
    \label{eq:R2Active}
    \Pr(\{R_2^-(X_{t+1}) \le M n, |\mathcal{L}_2(X_{t+1})| \le M_1 \log n\} \mid X_t, \Lambda_{t+1})= 1 - O(n^{-1}).
\end{equation}
    Next, we focus on the change of $L_{t+1}$ on $\Lambda_{t+1}$.
    By Taylor expansion, for small enough $s>0$ we obtain that for $\Theta \in [\theta_* + \frac{\gamma}{\sqrt{n}}, \theta_* + s]$,
    \begin{equation}
        \label{eq:Taylorf}
        f(\Theta) =  f(\theta_*)+ f'(\theta_*) (\Theta-\theta_*) + O((\Theta-\theta_*)^2) \ge  c_* \cdot (\Theta - \theta_*).  
    \end{equation}
    By Lemma~\ref{lem:L1decayexpectation}, we have
    \begin{equation}
    \label{eq:expgrowthEt}
               \E[L_{t+1} \mid X_t, \Lambda_{t+1}] \ge  L_t + f(L_t/n) n - 3n^{1/4}.
    \end{equation}
    Set $s_t = f(\frac{L_t}{n}) \cdot \frac{n}{4}$.
    For $L_t > \theta_* n+ \gamma\sqrt{n}$, 
        we have $f(\frac{L_t}{n}) \ge  \frac{c_* \gamma}{\sqrt{n}}$, so for large enough $n$,
    \begin{equation}
    \label{eq:defc*}
        f\big(\frac{L_t}{n}\big)n - 3n^\frac{1}{4} - s_t \ge f\big(\frac{L_t}{n}\big) \cdot \frac{n}{2}.
    \end{equation}
    Then by \eqref{eq:expgrowthEt}, \eqref{eq:defc*} and Chebyshev's inequality, we obtain that
    \begin{align*}
        \Pr\big(L_{t+1} \le L_t + f(L_t/n) \cdot \frac{n}{2} \mid X_t, \Lambda_{t+1} \big)
    &\le  \Pr\big( \big|L_{t+1} - \E[L_{t+1} \mid X_t, \Lambda_{t+1}] \big| \ge s_t \mid X_t, \Lambda_{t+1} \big) \\
    &\le \frac{\var(L_{t+1} \mid X_t, \Lambda_{t+1})}{s^2_t} 
    = \frac{16 M_0^2 n}{f(L_t/n)^2 n^2},
\end{align*}
where the last equality follows from the Fact~\ref{lem:uniformvarbound} that there exist constants $M_0 > 0$ and $s\in (0,1)$ such that if $ \theta_*n - sn\le L_t \le \theta_*n + sn$, then
    \[
     \var(L_{t+1} \mid X_t, \Lambda_{t+1}) \le M_0^2 n\,.
    \]
By \eqref{eq:ihshift} and \eqref{eq:Taylorf}, we get 
\[
    \frac{16 M_0^2 n}{f(L_t/n)^2 n^2}
    \le \frac{16 M_0^2 n}{c_*^2 \cdot (L_t - \theta_* n)^2}
    \le \frac{16 M_0^2}{c_*^2 \gamma^2 (1+c_*/4)^{2(i-1)}}.
\]
Hence, by combining the inequalities above, 
\begin{align*}
    \Pr\big( L_{t+1} - \theta_* n \ge (1 + \frac{c_*}{4}) & \cdot (L_t - \theta_* n) \mid X_t, \Lambda_{t+1} \big) \\
    &\ge \Pr\big(L_{t+1} \ge L_t + f(L_t/n) \cdot \frac{n}{2} \mid X_t, \Lambda_{t+1} \big)\\
    &\ge 1 - \frac{16M_0^2}{c_*^2 \gamma^2 (1+c_*/4)^{2(i-1)}}.
\end{align*}
By a union bound, this inequality and \eqref{eq:R2Active} conclude the induction step of \eqref{eq:wts-induction-escaping-saddle} for the case of $\Lambda_{t+1}$. 
\end{proof}

We now proceed to show that once the giant's size is macroscopically away from the unstable fixed point at $\theta_*$, in a further $O(\log n)$ steps, it equilibrates (to the corresponding phase) quickly. This part of the proof follows closely those of~\cite{BS15} for $\beta>\betas$, with a little care due to the rare event that the dynamics crosses to the other side of the unstable fixed point. The argument goes in two stages, the first getting the giant to within $O(\sqrt{n})$ of the stable fixed point at $\thetar$, and the second quasi-equilibrating from there. 

\begin{lemma}
    \label{lem:hittingtimecritical3}
    Let $q > 2$ and $\beta > \betau$. 
    Let $s \in(0,\theta_r-\theta_*)$ be a fixed constant.
    Suppose ${X}_0$ satisfies that $L_0 \ge (\theta_* + s)n$, 
    $|\mathcal{L}_2({X}_0)| = O(\log n)$, and
    $R_2^-({X}_0) = O(n)$.
    Then, for any $\varepsilon>0$ there exists $T=O(\log n)$ such that ${X}_T$ satisfies all of the following properties with probability $1 - \varepsilon$:
    \begin{enumerate}
        \item $| L_T - \thetar n | = O(\sqrt{n}$);
        \item $I_1({X}_T) = \Omega(n)$;
        \item $|\mathcal{L}_2({X}_T)| = O(\log n)$;
        \item $R_2^-({X}_T) = O(n)$.
    \end{enumerate}
\end{lemma}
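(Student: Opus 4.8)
The plan is to iterate the one-step drift $\phi$, using that $\thetar$ is an attracting fixed point of $\phi$ on a band $[\vartheta,1]$ to get geometric contraction of $D_t:=|L_t-\thetar n|$ along the subsequence of steps in which the giant is activated, while carrying the structural invariants $\{|\mathcal L_2(X_t)|=O(\log n)\}$, $\{R_2^-(X_t)=O(n)\}$, $\{I_1(X_t)=\Omega(n)\}$ (and $L_t/n\in[\vartheta,1]$) along in a \emph{simultaneous} induction, exactly in the spirit of Lemma~\ref{lem:hittimesupercritical2}. Here $\vartheta\in(\thetas,\thetar)$ is a fixed constant with $\vartheta\le\theta_*+s$, so the hypothesis gives $L_0/n\ge\vartheta$ and $f(\vartheta)>0$ is a genuine constant. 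The contraction comes from Lemma~\ref{lem:phifunctionsupercritical}: on $[\vartheta,\thetar]$ one has $\theta\le\phi(\theta)\le\thetar$ and $\delta|\theta-\thetar|\le|\phi(\theta)-\theta|$, which combine to $|\phi(\theta)-\thetar|\le(1-\delta)|\theta-\thetar|$; the same bound holds on $[\thetar,1]$ since by Lemma~\ref{lem:factsaboutdrift} $\phi$ is increasing with $\phi(\thetar)=\thetar$ and $f<0$ there, and moreover $\phi$ maps $[\vartheta,1]$ into itself.

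For the per-step analysis: on a step where the giant is not activated ($\Lambda_{t+1}^c$), only non-giant components are resampled and the percolation restricted to the activated non-giant vertices is uniformly subcritical (since $L_t/n>\thetas$ forces the effective density bounded below $1$), so $L_{t+1}=L_t$ with probability $1-O(n^{-1})$; moreover, exactly as in Lemma~\ref{lem:hittimesupercritical2}, $R_2^-(X_{t+1})\le(1-\tfrac1q)R_2^-(X_t)+C_1 n$, a contraction preserving $R_2^-\le Mn$ for $M$ a large constant, while Lemma~\ref{lem:subcriticalgstructure} gives $|\mathcal L_2(X_{t+1})|=O(\log n)$ and $I_1(X_{t+1})=\Omega(n)$. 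On a step where the giant is activated ($\Lambda_{t+1}$), the activated vertex set has size $L_t+A_t^-$ with $A_t^-$ concentrated around $(n-L_t)/q$ up to $O(\sqrt n\log n)$ fluctuations (variance $\asymp R_2^-(X_t)/q=O(n)$ by the invariant), the percolation is supercritical with effective density $\beta\ka(L_t/n)>1$, and Lemma~\ref{lem:L1decayexpectation} gives $\E[L_{t+1}\mid X_t,\Lambda_{t+1}]=\phi(L_t/n)n+O(n^{1/4})$; combined with the contraction bound this yields $|\E[L_{t+1}\mid X_t,\Lambda_{t+1}]-\thetar n|\le(1-\delta)D_t+O(n^{1/4})$. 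Super-critical random graph estimates (Lemmas~\ref{lemma:duality},~\ref{lem:rgdeviation} and Corollary~\ref{cor:R2moment}) give $\var(L_{t+1}\mid X_t,\Lambda_{t+1})=O(n)$ with Gaussian tails, and $|\mathcal L_2(X_{t+1})|=O(\log n)$, $R_2^-(X_{t+1})=O(n)$, $I_1(X_{t+1})=\Omega(n)$ with probability $1-O(n^{-1})$, closing the structural induction. Finally, since $\E[L_{t+1}\mid X_t,\Lambda_{t+1}]/n\ge\phi(L_t/n)\ge\min(\vartheta+f(\vartheta),\thetar)$, exiting $[\vartheta n,n]$ to the left requires a fluctuation of order $n$, hence has probability $e^{-\Omega(n)}$ per step.

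To conclude, write $t_1<t_2<\dots$ for the activation times in $[T]$, $N_T$ for their number, and $W_j:=L_{t_j}-\E[L_{t_j}\mid X_{t_j-1},\Lambda_{t_j}]$. On the (probability $1-O(T/n)$) event that all invariants and $L_t/n\in[\vartheta,1]$ hold throughout, unrolling the one-step bound gives
\[
D_T\ \le\ (1-\delta)^{N_T}D_0\ +\ O(n^{1/4})\sum_{i\ge 0}(1-\delta)^i\ +\ \sum_{j:\,t_j\le T}(1-\delta)^{N_T-j}\,|W_j|\,.
\]
Standard binomial tails give $N_T\ge T/2q$ with probability $1-n^{-1}$ when $T=C\log n$, so for $C$ large $(1-\delta)^{N_T}D_0\le(1-\delta)^{(C/2q)\log n}\cdot O(n)=o(\sqrt n)$, and the bias term is $O(n^{1/4}/\delta)=o(\sqrt n)$. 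For the fluctuation term, $\E|W_j|\le\sqrt{\var(L_{t_j}\mid X_{t_j-1},\Lambda_{t_j})}=O(\sqrt n)$, so $\E\big[\sum_j(1-\delta)^{N_T-j}|W_j|\big]\le O(\sqrt n)/\delta=O(\sqrt n)$ and by Markov this sum is $\le K'\sqrt n$ with probability $1-\varepsilon/2$ for $K'=K'(\varepsilon)$ large. Hence $D_T=O(\sqrt n)$ with probability $1-\varepsilon/2-o(1)$; together with the structural invariants (failure $O(T/n)=o(1)$) this gives all four conclusions with probability $1-\varepsilon$ for $n$ large.

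The main obstacle, as emphasized in the proof outline, is that one cannot restart couplings or condition on favorable events after the fact, so the structural invariants and the band $L_t/n\in[\vartheta,1]$ must be maintained by a genuine step-by-step induction whose $R_2^-$-control is precisely what bounds the fluctuations of $A_t^-$ (hence the drift and variance of $L_{t+1}$) — the two inductions are intertwined. A second, subtler point is obtaining the sharp $O(\sqrt n)$ window rather than $O(\sqrt n\cdot\mathrm{polyloglog}\,n)$: a crude union bound over the $O(\log n)$ per-step fluctuations would force an extra $\sqrt{\log\log n}$ factor, so it is essential to bound the geometrically-damped sum $\sum_j(1-\delta)^{N_T-j}|W_j|$ in expectation (where the contraction keeps the mean $O(\sqrt n)$) and apply Markov once at the end. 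Beyond these points, stages of the argument mirroring the eventual contraction to $\thetar n$ run parallel to Lemma~\ref{lem:hittimesupercritical2} and to~\cite{BS15}.
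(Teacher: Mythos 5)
Your proposal is essentially correct and reaches the same conclusions, but it takes a genuinely different bookkeeping route from the paper's proof. The paper lifts Eq.~(3.22) from~\cite{ABthesis}, which averages the activation coin into a single one-step bound $\E[\Delta_{t+1}\mid X_t]\le(1-\tfrac1q)\Delta_t+\tfrac{1}{q}|\thetar-\phi(L_t/n)|n+O(\sqrt n)$, strengthens it via Lemma~\ref{lem:phifunctionsupercritical} to $\E[\Delta_{t+1}\mid X_t]\le(1-\tfrac{\delta}{q})\Delta_t+O(\sqrt n)$ on the good set $\Omega_{\mathsf{good}}$, and then \emph{iterates the unconditional expectation} $\E[\Delta_t]$, absorbing the $O(n\cdot\Pr(X_t\notin\Omega_{\mathsf{good}}))=O(1)$ leakage from the bad event directly into the additive $O(\sqrt n)$ (Lemma~3.25 of~\cite{ABthesis} supplies $\Pr(X_t\notin\Omega_{\mathsf{good}}\mid X_{t-1}\in\Omega_{\mathsf{good}})=O(n^{-1})$). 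One Markov at the end gives $\Delta_T=O(\sqrt n)$ with probability $1-\epsilon/2$. By contrast, you condition on the activation sequence $\mathscr T$, unroll the contraction along the activation subsequence $t_1,t_2,\dots$, and bound the geometrically-damped noise sum $\sum_j(1-\delta)^{N_T-j}|W_j|$ in expectation before invoking Markov. Both routes avoid the $\sqrt{\log\log n}$ loss you correctly identify; the paper's is shorter because the activation-coin average and the failure of $\Omega_{\mathsf{good}}$ are folded in at the level of one-step expectations rather than tracked path-by-path. Your version is more explicit about \emph{where} the martingale increments live and carries the structural invariants in a simultaneous induction (in the style of Lemma~\ref{lem:hittimesupercritical2}), rather than relying on the ready-made stability lemma from~\cite{ABthesis}; that makes it somewhat longer but more self-contained. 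One minor caution in your write-up: when you bound $\E\big[\sum_j(1-\delta)^{N_T-j}|W_j|\big]$, you should make precise that you are working with the process stopped at the first exit from $\Omega_{\mathsf{good}}$ (or from the band $[\vartheta,1]$) so that $\E|W_j|=O(\sqrt n)$ is applicable without improper conditioning on a favorable future event; this is implicit in your phrasing ``on the event that all invariants hold throughout'' but should be formalized as a stopping time so that the expectation inequality is legitimate.
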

\begin{proof}
This lemma was established in \cite{ABthesis} (see Lemma 3.26 there) for the case when $\beta \ge q$; the same argument can be carried over to the more general setting where $\beta > \betau$ with only minor modifications, as we detailed next.
Let $\Delta_t = |L_t -\thetar n|$. 
For $\beta \ge q$
and a configuration $X_t$ 
such that $L_t \ge (\theta_* + s)n$, $|\mathcal{L}_2({X}_t)| = O(\log n)$ and $R_2^-({X}_t) = O(n)$,
it was shown in~\cite[Eq.~(3.22)]{ABthesis} that 
   \begin{align}
   \label{eq:delta-bound}
    \E[\Delta_{t+1} \mid X_t] \le \big(1-\frac{1}{q} \big) \Delta_t + \frac{|\thetar - \phi(L_t/n)|}{q} n + O(\sqrt{n}).
    \end{align}
It can be readily checked that the same inequality holds under the weaker condition that $q>2$ and $\beta > \betau$. Specifically, Fact 3.28 from~\cite{ABthesis} holds in this setting since under the assumption that $L_t \ge (\theta_* + s)n$, the percolation step of the CM dynamics is subcritical (resp. supercritical) when the largest component of $X_t$ is inactive (resp., active), and this is essentially all that is required to establish \eqref{eq:delta-bound}. 

Lemma~\ref{lem:phifunctionsupercritical} then implies that there exists a constant $\delta\in (0,1)$ such that 
    \[
    |\thetar - \phi(L_t/n)| = \frac{1}{n} |\thetar n - L_t| - \frac{1}{n} |L_t - \phi(L_t/n) n| \le (1-\delta)  |\thetar - (L_t/n)|;
    \]
(note that Lemma~\ref{lem:phifunctionsupercritical} extends Lemma 3.9 from~\cite{ABthesis} to the $\beta > \betau$ regime). Plugging this bound into~\eqref{eq:delta-bound}, we obtain
   \begin{align}
   \label{eq:delta-bound-1}
    \E[\Delta_{t+1} \mid X_t] \le \big(1-\tfrac{\delta}{q} \big) \Delta_t + O(\sqrt{n}).
    \end{align}
Now, let $\Omega_{\mathsf{good}}$ be the set of all random-cluster configurations $X$ such that $|\mathcal{L}_1(X)| \ge (\theta_* + s)n$, $|\mathcal{L}_2({X})| = O(\log n)$ and $R_2^-({X}) = O(n)$.    
Lemma 3.25 from~\cite{ABthesis} shows that if $X_t \in \Omega_{\mathsf{good}}$, then $X_{t+1}\in \Omega_{\mathsf{good}}$ with probability $1-O(n^{-1})$. (Again, this result from~\cite{ABthesis} is stated for $\beta \ge q$, but it extends to the $q>2$ and $\beta > \betau$ setting by the same observations made above about the subcriticallity/supercriticality of the percolation step; in addition, Lemma 3.25 only states a $1-o(1)$ but its proofs yields a $1-O(n^{-1})$ bound on the probability.)
By averaging over all the configurations on $\Omega_{\mathsf{good}}$, we get from~\eqref{eq:delta-bound-1} that
 \begin{align}
   \label{eq:delta-bound-2}
    \E[\Delta_{t+1}] \le \big(1-\tfrac{\delta}{q} \big) \E[\Delta_t] + O(\sqrt{n}) + n \cdot \Pr(X_t \not\in \Omega_{\mathsf{good}}) \le \big(1-\tfrac{\delta}{q} \big) \E[\Delta_t] + O(\sqrt{n}).
    \end{align}
  Iterating this bound, we obtain
    \[
     \E[\Delta_{t+1}] \le  \big(1-\tfrac{\delta}{q} \big)^t \Delta_0+ O(\sqrt{n}).
    \]
    Since $\Delta_0 = O(n)$, there exists some $T=O(\log n)$ so that $\E[\Delta_T] \le  C\sqrt{n}$, and by Markov's inequality 
    we have $\Delta_T \le 2C \sqrt{n}/\epsilon$ with probability $1 - \epsilon/2$ for any fixed $\epsilon>0$.
    Finally, note that $X_T \in \Omega_{\mathsf{good}}$ with probability $1-o(1)$ and
    in the percolation step of the last step,  
    Lemma~\ref{lem:subcriticalgstructure} and \ref{lem:supercriticalgstructure} imply that $I_1(X_{T}) = \Omega(n)$ with probability $1-o(1)$.
    The result then follows from a union bound.
\end{proof}

The next lemma shows that once the giant's size is within $O(\sqrt{n})$ of $\thetar n$, mixing happens in at most $O(\log n)$ further steps. 

\begin{lemma}
\label{lemma:couplestagesupercritical}
    Let  $q > 2$ and $\beta > \betau$. Suppose ${X_0}$ is a configuration satisfying all the following conditions
    \begin{enumerate}
        \item $||\mathcal L_1(X_0)| - \thetar n | = O(\sqrt{n})$;
        \item $I_1({X}_0) = \Omega(n)$;
        \item $|\mathcal{L}_2({X}_0)| = O(\log n)$;
        \item $R_2^-({X}_0) = O(n)$.
    \end{enumerate}
    Suppose ${Y}_0$ also satisfies all these conditions. 
    Then for any constant $\varepsilon>0$, 
    there exists  $T=O(\log n)$ and a coupling of $({X}_t, {Y}_t)$ such that ${Y}_T = {X}_T$ with probability at least $1-\varepsilon$.
\end{lemma}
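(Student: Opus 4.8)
The plan is to adapt the two-stage coupling argument of \cite{BS15} (reproduced, for the range $\beta \ge q$, in \cite{ABthesis}): as in the proofs of Lemmas~\ref{lem:hittimesupercritical2}--\ref{lem:hittingtimecritical3}, nothing there uses $\beta\ge q$ beyond the fact that from a configuration whose giant has fractional size bounded below by some $\theta_*+s>\thetas$, the percolation step among the active vertices is uniformly supercritical when the giant is activated and uniformly subcritical when it is not; both remain true for $q>2$, $\beta>\betau$ since $\thetar>\theta_*>\thetas$. So the proof would recall that argument and flag only these points. The coupling has two stages: first equalize the giant component sizes, then, from a state where the giant sizes agree and both configurations are ``good'', couple the two configurations entirely in $O(\log n)$ further steps.

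\emph{Stage 1 (equalize the giant sizes).} We would couple the activation coins of the two giants identically, so that at each step both giants are activated (probability $1/q$) or both inactive. On the joint-activation event, $|\mathcal L_1(X_{t+1})|$ and $|\mathcal L_1(Y_{t+1})|$ are each the size of the giant of a percolation at density $\beta/n$ on $\approx\ka(L_t/n)\,n$ active vertices, hence by Theorems~\ref{thm:giantLLT}--\ref{thm:CLT} and Lemma~\ref{lem:rgdeviation}(2) are, up to $o(1)$ total-variation error, integer-valued near-Gaussians with means $\phi(L_t/n)n$, $\phi(L_t'/n)n$ and variances $\Theta(n)$. Since $\phi$ is Lipschitz near $\thetar$ (Lemma~\ref{lem:factsaboutdrift}(2)) and the giants start $O(\sqrt n)$ apart, the two means stay $O(\sqrt n)$ apart while the variances are $\Theta(n)$, and two such laws have overlap bounded below by a constant; so there is a coupling making the giant sizes \emph{equal} with probability $\ge c>0$ at each joint-activation step. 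When this fails, the drift of the giant toward $\thetar n$ together with per-step structural estimates (the analogue of Lemma~3.25 of \cite{ABthesis}, with a $1-O(n^{-1})$ bound) keeps both configurations good --- $|\mathcal L_1-\thetar n|=O(\sqrt n)$, $I_1=\Omega(n)$, $|\mathcal L_2|=O(\log n)$, $R_2^-=O(n)$ --- and $O(\sqrt n)$-close to each other, and in particular keeps $L_t,L_t'\ge(\theta_*+s)n$ throughout an $O(\log n)$-step window, so we may simply retry. A union bound over the window then gives: with probability $1-\varepsilon/2$ the giant sizes coincide at some step with both configurations good, and we pass to Stage 2.

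\emph{Stage 2 (equalize the configurations).} With $L_t=L_t'$, relabel vertices --- legitimate by exchangeability of $K_n$ and permutation-invariance of the CM dynamics --- so that $\mathcal L_1(X_t)$ and $\mathcal L_1(Y_t)$ span the same vertex set. We would then continue coupling so that (a) the giant sizes \emph{stay} equal, again with constant probability per step by the Stage-1 argument (now the target means $\phi(L_t/n)n$ coincide exactly), and (b) the number of active non-giant vertices is equalized, also with constant probability since given $L_t=L_t'$ the two counts have the same mean $\tfrac1q(n-L_t)$ and variance $\Theta(n)$ (from the $\Omega(n)$ isolated vertices alone). Iterating, the auxiliary statistics $R_2^-$, $|\mathcal L_2|$ and the non-giant component-size profile --- which differ between $X$ and $Y$ because inactive edges are never resampled --- contract toward their equilibrium values by a constant factor per step, exactly as in the proof of Lemma~\ref{lem:hittingtimecritical3}, so after $O(\log n)$ steps both configurations are, with probability $1-\varepsilon/2$, identically structured and at typical values. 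From such a state, one further step with all coins (activation and percolation) coupled identically --- after a last relabeling so the active vertex sets coincide, using the discrete-duality description (Lemma~\ref{lemma:duality}) of the subcritical debris in both chains --- produces $X_T=Y_T$. Combining the stages yields $T=O(\log n)$ and failure probability $\le\varepsilon$.

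The main obstacle, as flagged in Section~\ref{subsec:proof-outlines}, is that this coupling cannot be restarted: a single atypical step --- the giant drifting back past $\theta_*n$, or a rare failure of one of the structural bounds --- may be correlated with coupling failure and is then irreparable. The remedy lives entirely in Stage 1's bookkeeping: every per-step structural estimate is pushed to $1-O(n^{-1})$, and the drift of the giant toward $\thetar$ gives $e^{-\Omega(A^2)}$ control on excursions of size $A\sqrt n$, so a union bound over the $O(\log n)$-step window shows that conditioning on ``not yet coupled'' does not spoil goodness. Verifying these estimates --- mostly importable from \cite{ABthesis} after the $\thetar>\theta_*>\thetas$ observation --- is the one genuinely laborious ingredient; the coupling combinatorics itself is essentially that of \cite{BS15}.
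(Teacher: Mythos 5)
Your overall plan---two stages, invoke the sub/super-criticality observation to extend the $\beta\ge q$ machinery of \cite{ABthesis} to $\beta>\betau$, retry while maintaining a good set---is the same as the paper's, and your diagnosis that the delicate part is ensuring failed attempts do not destroy goodness is exactly right. However, your Stage~2 contains a genuine gap. You claim that because $R_2^-$, $|\mathcal L_2|$ and the ``non-giant component-size profile'' contract toward equilibrium values, after $O(\log n)$ steps the two chains become \emph{identically structured}, and then one more step with shared coins yields $X_T=Y_T$. That inference does not follow: matching a handful of summary statistics to their equilibrium values says nothing about the two chains having equal component size \emph{vectors}. The obstruction you yourself flag---inactive components are never resampled---means that at any time $t$ each chain carries a private stock of ``never-since-activated'' debris, and neither contraction of $R_2^-$ nor discrete duality couples that debris across the two chains. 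Getting the two configurations to literally the same component structure requires a bona fide coupling argument (essentially: wait until every vertex has been activated at least once, and couple the layered subcritical percolations produced along the way), and that argument succeeds only with probability $\Omega(1)$, not $1-o(1)$. This is precisely the content of Lemma~3.27 of \cite{ABthesis}, which the paper uses as a black box and then boosts. Your proposal, by contrast, tries to rebuild that ingredient from contraction and a single shared-coin step, and that construction does not work.

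Concretely, the paper's proof: (i) cites Lemma~3.27 of \cite{ABthesis} to reach identical component structure with probability $\Omega(1)$, noting it extends to $\beta>\betau$ via the sub/super-criticality observation; (ii) boosts this $\Omega(1)$ to $1-\varepsilon/2$ by observing (via Corollary~3.33 of \cite{ABthesis} and Lemma~\ref{lem:hittingtimecritical3}) that a failed attempt still, with probability $1-1/A$, returns both chains to the good set with giants $O(\sqrt n)$ from $\thetar n$, so one can retry $k=O(1)$ times; (iii) closes with Lemma~3.16 of \cite{ABthesis} to pass from equal component structure to equal configurations. Your Stage~1 is a reasonable sketch of the giant-size matching inside Lemma~3.27 and your retry bookkeeping remark is consistent with step (ii). But you need to replace your Stage~2 with the actual component-structure coupling of Lemma~3.27, accept its $\Omega(1)$ success probability, and then do the boost; the summary-statistic contraction you invoke is a real-valued argument that cannot by itself identify two discrete component-size vectors.
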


\begin{proof}
This lemma essentially follows from 
Lemmas 3.16 and 3.27 in~\cite{ABthesis} but requires a slight generalization of the latter. Specifically, Lemma 3.27 from~\cite{ABthesis} 
provides a coupling 
from two configurations satisfying conditions 1 to 4 in the lemma statement
to two configurations with the same component structure but
assumes that $\beta \ge q$ and only provides an $\Omega(1)$ bound on the probability of success of the coupling.
Lemma 3.16 from~\cite{ABthesis} provides a coupling from two configurations with the same component structure to the same configurations with high probability and holds for any $q>1$ and $\beta >0$.

Our first observation is that Lemma 3.27 and Corollary 3.33 from~\cite{ABthesis} hold when $q>2$ and $\beta > \betau$. (This is a byproduct of the percolation step of the CM dynamics being subcritical (resp. supercritical) when the largest component of the configuration is inactive (resp., active).)
To boost the probability of success of the coupling, we note that at a suitable $T_0=O(\log n)$, either the coupling to the same component structure succeeds with probability at least $\alpha = \Omega(1)$, or, by Corollary 3.33 from~\cite{ABthesis}, both $X_{T_0}$ and $Y_{T_0}$ satisfy conditions 2, 3 and 4 from the lemma statement with probability $1-o(1)$ and also $| |\mathcal{L}_1(X_{T_0})| - \thetar n | = O(\sqrt{n} \log^2 n)$, $| |\mathcal{L}_1(Y_{T_0})| - \thetar n | = O(\sqrt{n} \log^2 n)$.
Then, by Lemma~\ref{lem:hittingtimecritical3}, at time $T_1 = T_0 + O(\log n)$, we have that all four conditions from the lemma statement hold with probability $1-1/A$ for any desired constant $A > 0$.
Iterating this reasoning,  
we obtain a coupling for which
${X}_{kT_1}={Y}_{kT_1}$ with probability at least $1-(1-\alpha)^k - k/A - o(1)$ for any constant $k$. Letting $T=kT_1$ with $k$ and $A$ sufficiently large,
we obtain a coupling under which $X_T$
and $Y_T$ have the same component structure with probability at least $1-\epsilon/2$ for any $\epsilon>0$. 
The result then follows from Lemma 3.16 in~\cite{ABthesis} and a union bound.
\end{proof}

The above lemmas are the key ingredients to establish the first part of Lemma~\ref{lem:quasi-equilibration-away-from-saddle-SWCM}. 

\subsubsection*{Quasi-equilibrating to the disordered phase}
We require analogous lemmas to establish the second part, namely the quasi-equilibration to the disordered phase if initialized with a largest component of size at most $\theta_* n - \gamma \sqrt{n}$. 

\begin{lemma}
\label{lem:hittingtimesubcritical}
    Let $q>2$, $\gamma>0$ and $\beta \in (\betau, \betas)$. If $X_0$ is a configuration such that $L_0 \le \theta_*n - \gamma\sqrt{n}$,  
    $|\mathcal{L}_2(X_0)| = O(\log n)$ and $R_2^-(X_0) = O(n)$,
    then there exist constants $s\in(0,\theta_* - \thetas) ,C>0$ such that for $T = C \log n$ we have
     $L_T \le (\theta_* - s)n$, $|\mathcal L_2(X_T)|= O(\log n)$, and $R_2^-(X_T)= O(n)$, with probability $1 - O(\gamma^{-2})$.
\end{lemma}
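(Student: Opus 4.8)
The plan is to run the mirror image of the argument for Lemma~\ref{lem:hittimesupercritical2}, using that $\theta_*$ is a repulsive fixed point of $f$ from the left as well as from the right. Since $f(\theta_*)=0$ and $f'(\theta_*)>0$, Lemma~\ref{lem:factsaboutdrift} gives $f<0$ on $(\thetas,\theta_*)$, and a Taylor expansion around $\theta_*$ shows that, for a small enough constant $s>0$ and every $\theta\in[\theta_*-s,\ \theta_*-\gamma n^{-1/2}]$,
\[
  f(\theta)\ \le\ -c_*\,(\theta_*-\theta),\qquad c_*:=f'(\theta_*)/2>0 .
\]
Because $\theta_*>\thetas$, it is enough to also require $s<\theta_*-\thetas$; then the whole window $[\theta_*-s,\theta_*]$ lies in $(\thetas,1]$, so that on a step in which $\mathcal L_1$ is activated the percolation step acts supercritically on the $\approx n\,\ka(L_t/n)$ active vertices ($\beta\,\ka(L_t/n)>1$ there), while on a step in which $\mathcal L_1$ is not activated the giant is preserved intact and the percolation on the remaining $\approx (n-L_t)/q$ active vertices is subcritical (as $\beta(1-\theta_*)/q<\beta/q<1$). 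These are precisely the structural inputs needed to invoke Lemma~\ref{lem:L1decayexpectation}, the duality-based estimates (Lemma~\ref{lemma:duality}, Corollary~\ref{cor:R2moment}, Lemma~\ref{lem:rgdeviation}), and the uniform variance bound (Fact~\ref{lem:uniformvarbound}), exactly as in the ordered-phase case.

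Concretely, set $D_t:=\theta_* n-L_t\ge\gamma\sqrt n$, condition on the $\sigma$-algebra $\mathscr T$ generated by the independent $\mathrm{Ber}(1/q)$ coins marking the steps $(t_i)$ at which $\mathcal L_1$ is activated, and run the induction of \eqref{eq:wts-induction-escaping-saddle} with $D_t$ in place of $L_t-\theta_* n$. On a non-activation step, $\mathcal L_1(X_t)$ is an untouched connected subgraph on $L_t$ vertices and, since $R_2^-(X_t)\le Mn$ forces $|\mathcal L_2(X_t)|=O(\sqrt n)$, it remains the unique largest component, so $L_{t+1}=L_t$ and $D_{t+1}=D_t$; only $R_2^-$ and the second-largest component have to be controlled, and they are, by the usual dissolution-of-active-components computation (Lemma~\ref{lem:subcriticalgstructure}) together with the two-case dichotomy $R_2^-(X_t)\gtrless 8C_1 q n$ that keeps $R_2^-(X_{t+1})\le Mn$. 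On an activation step $t+1=t_i$, Lemma~\ref{lem:L1decayexpectation} gives
\[
  \E[L_{t+1}\mid X_t,\Lambda_{t+1}]\ \le\ L_t+f(L_t/n)\,n+3n^{1/4}\ \le\ L_t-c_* D_t+3n^{1/4},
\]
that is $\E[D_{t+1}\mid X_t,\Lambda_{t+1}]\ge(1+c_*)D_t-3n^{1/4}$; combining with $\var(L_{t+1}\mid X_t,\Lambda_{t+1})\le M_0^2 n$ and Chebyshev (legitimate while $D_t\gg\sqrt n$) yields $D_{t+1}\ge(1+\tfrac{c_*}{4})D_t$ except with probability $O\!\big(M_0^2/(c_*^2\gamma^2(1+c_*/4)^{2(i-1)})\big)$, while duality keeps $R_2^-(X_{t+1})\le Mn$ and $|\mathcal L_2(X_{t+1})|$ under control with probability $1-O(n^{-1})$. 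Summing the Chebyshev failure probabilities over $i$ gives a convergent geometric series of total mass $O(\gamma^{-2})$, plus an $O(T/n)$ contribution from the structural estimates.

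Finally, taking $T=C\log n$ with $C$ large, a binomial tail bound gives $|\{t_i\le T\}|\ge T/(2q)$ with probability $1-n^{-1}$, and on that event the telescoped growth gives $D_T\ge(1+\tfrac{c_*}{4})^{T/2q}\gamma\sqrt n\ge sn$, i.e.\ $L_T\le(\theta_*-s)n$; the companion bounds $|\mathcal L_2(X_T)|=O(\log n)$ and $R_2^-(X_T)=O(n)$ then follow from the last step (subcritical percolation on $\Lambda_T^c$, or the duality picture on $\Lambda_T$), upgrading the crude $O(\sqrt n)$ bound on $\mathcal L_2$ to $O(\log n)$ as in Lemma~\ref{lem:hittingtimecritical3}. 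I expect the only real point of care beyond transcribing the proof of Lemma~\ref{lem:hittimesupercritical2} to be pinning down the constant $s$: it must simultaneously be small enough for the Taylor bound on $f$ and the variance estimate to hold, \emph{and} satisfy $s<\theta_*-\thetas$ so that the activation-step percolation stays supercritical across the entire window — the exact analogue of the constraint $s<\thetar-\theta_*$ on the ordered side, and a mild one.
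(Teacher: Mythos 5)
Your proposal is correct and follows exactly the route the paper has in mind: the paper explicitly omits this proof, stating it is ``essentially identical to that of Lemma~\ref{lem:hittimesupercritical2},'' which is precisely the mirror-image argument you carry out with $D_t = \theta_* n - L_t$ in place of $L_t - \theta_* n$. Your handling of the only genuine asymmetry --- that on a non-activation step $L_{t+1} = L_t$ holds only w.h.p.\ (by sub-criticality of the percolation step) rather than deterministically as in the supercritical case, and the constraint $s < \theta_* - \thetas$ replacing $s < \thetar - \theta_*$ --- is exactly the care required.
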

The proof of Lemma~\ref{lem:hittingtimesubcritical} is essentially identical to that of Lemma~\ref{lem:hittimesupercritical2} and is thus omitted. The next lemma shows how the giant's size goes from $(\theta_* - s)n$ to $O(\log n)$; a little care is needed here compared to the ordered side because on its way the giant may approach $\thetas$ where it can take one step as a critical random graph. To deal with this, we recall the following two lemmas from~\cite{ABthesis}. 

\begin{lemma}[Fact 3.18, \cite{ABthesis}]
\label{lem:nosecondlarge}
Let $0<\beta<\betas$ and $X_0$ has a unique component that is of size at least $2n^{11/12}$, then $|\mathcal{L}_2(X_t)|<2n^{11/12}$ for all $0\le t\le T$ with probability $1 - O(T\cdot n^{-1/12})$ for any $T=O(\log n)$.
\end{lemma}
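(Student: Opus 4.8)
The plan is to reduce to a single-step estimate and union bound. By the Markov property and a union bound over $t\in\{0,\dots,T-1\}$, since $T=O(\log n)$, it suffices to show: if $X_t$ has at most one component of size $\ge 2n^{11/12}$ (in the regime where the lemma is applied, one also has that every non-giant component of $X_t$ has size $O(\sqrt n)$, which the CM dynamics maintains --- cf.\ Lemmas~\ref{lem:subcriticalgstructure}--\ref{lem:supercriticalgstructure} and the $R_2^-$ control in Lemmas~\ref{lem:hittimesupercritical2} and~\ref{lem:hittingtimesubcritical}), then $X_{t+1}$ again has at most one component of size $\ge 2n^{11/12}$, except with probability $O(n^{-1/12})$. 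Let $\mathcal L=\mathcal L_1(X_t)$ be the unique large component of $X_t$, if one exists. Since the percolation step only resamples edges both of whose endpoints are active, each component of $X_{t+1}$ is either (i) an untouched component of $X_t$ --- of which at most one, the descendant of $\mathcal L$ when $\mathcal L$ is inactive, is $\ge 2n^{11/12}$ --- or (ii) formed inside the percolation step by joining activated components with fresh edges. It thus remains to bound the probability of creating a type-(ii) component of size $\ge 2n^{11/12}$ other than the one absorbing $\mathcal L$ when $\mathcal L$ is activated.

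To this end, reveal the activation coins. By Hoeffding's inequality the number of active vertices lying outside $\mathcal L$ is $(n-|\mathcal L|)/q+O(\sqrt{n\log n})$ with probability $1-O(n^{-1})$, so the active set $A_t$ has size concentrated around $|\mathcal L|\,\mathbf 1\{\mathcal L\text{ active}\}+(n-|\mathcal L|)/q$, and the percolation among $A_t$ is a $G(|A_t|,\beta/n)$ random graph laid over the activated non-giant components (each of size $O(\sqrt n)$). Let $\rho_t:=|A_t|\,\beta/n$ be the effective branching parameter. If $\mathcal L$ is inactive, or if $\mathcal L$ is active but $|\mathcal L|/n$ stays bounded below $\thetas$, then $\rho_t\le 1-c$ for a constant $c>0$ (using $\beta<\betas=q$ and $|\mathcal L|\le n$): the step is subcritical, and standard subcritical random-graph bounds --- applied with the $O(\sqrt n)$-sized activated components as seeds, and using that assembling a component of size $2n^{11/12}$ from such pieces requires chaining $\Omega(n^{5/12})$ of them --- rule out any type-(ii) component of size $\ge 2n^{11/12}$ except with probability far smaller than $n^{-1/12}$. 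If $\mathcal L$ is active and $|\mathcal L|/n$ is bounded above $\thetas$, then $\rho_t\ge 1+c$, the step is supercritical, and with probability $1-o(1)$ the percolation has a unique component of size $\Omega(n)$ and second-largest component $O(\log n)$; together with the untouched (small) components this again leaves $X_{t+1}$ with at most one large component.

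The remaining case --- and the main obstacle --- is $\mathcal L$ activated with $|\mathcal L|/n$ in a small window around $\thetas$, which is exactly the scenario flagged in the surrounding text where the giant ``takes one step as a critical random graph.'' Here $\rho_t=1+o(1)$, and one must control the tail of the largest component of a near-critical $G(|A_t|,\beta/n)$: such a component is $\tilde O(n^{2/3})=o(n^{11/12})$ typically, and exceeds $2n^{11/12}$ only with small probability --- e.g.\ via a branching-process comparison giving $\Pr(|\mathcal L_1(G(m,(1+o(1))/m))|\ge k)=O(m/k^{3/2})$, and, in the narrow band where the emerging supercritical giant has size $\Theta((\rho_t-1)n)$ comparable to $n^{11/12}$, a matching fluctuation bound ruling it out from crossing $2n^{11/12}$. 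This step is where the $O(n^{-1/12})$ per-step error is incurred; combining the three cases and union bounding over the $\le T$ steps gives the claim with probability $1-O(T\,n^{-1/12})$.
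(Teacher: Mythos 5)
The paper does not prove this statement; it is imported verbatim as Fact~3.18 from \cite{ABthesis}, so there is no in-text proof to compare against. Judged on its own merits, your proof has the right high-level architecture --- reduce to a one-step failure probability, union bound over $T=O(\log n)$ steps, and split by whether $\mathcal L_1(X_t)$ is activated and by the criticality of the percolation step --- but two things are off.

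First, you misdescribe the percolation step. In the CM dynamics, \emph{every} edge between two activated vertices is resampled, so old edges inside the active set are erased and the activated part of $X_{t+1}$ is a fresh $G(A_{t+1},\beta/n)$ on the active vertex set, not a graph ``laid over the activated non-giant components.'' Consequently, the picture of ``assembling a component of size $2n^{11/12}$ by chaining $\Omega(n^{5/12})$ of the $O(\sqrt{n})$-sized activated seeds'' is based on the wrong model. One should instead invoke standard tail bounds for the largest and second-largest components of Erd\H{o}s--R\'enyi random graphs in the appropriate criticality regime; this also removes any need for the extra $R_2^-(X_t)=O(n)$ assumption you import, which is not in the statement of the lemma.

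Second, and more substantively, in the case you flag as the main obstacle ($\mathcal L_1(X_t)$ activated, near-critical step), you argue for a fluctuation bound preventing $\mathcal L_1(G_{t+1})$ from exceeding $2n^{11/12}$. But that event is not a failure mode: when $\mathcal L_1(X_t)$ is activated, no inactive component of $X_t$ has size $\ge 2n^{11/12}$, so a single large component in $G_{t+1}$ is precisely the one allowed large component of $X_{t+1}$. The quantity that must stay below $2n^{11/12}$ is $\mathcal L_2(G_{t+1})$, and for any near-critical $G(m,\lambda/m)$ with $m=\Theta(n)$ the second-largest component is $O(m^{2/3}\log m)=o(n^{11/12})$ with superpolynomially small exceedance probability. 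So this is actually the benign case, not where the $O(n^{-1/12})$ per-step loss would be incurred. (The only case in which $\mathcal L_1(G_{t+1})$ itself needs to be small is when $\mathcal L_1(X_t)$ is inactive, but there $\rho_t\le\beta/q<1$ deterministically since $\beta<\betas=q$, and ordinary subcriticality applies.) As written, the pivotal paragraph of your argument is aimed at the wrong quantity, and the completion you gesture at would need to be re-targeted to $\mathcal L_2(G_{t+1})$ to close the proof.
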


\begin{lemma}[Fact 3.20, \cite{ABthesis}]
    \label{lem:decaysubphase3}
    Let $0<\beta < \betas$. If $X_0$ is a configuration such that $L_0 \le (\thetas - \epsilon)n$ and  $X_0$ has at most one large component whose size is at least $2n^{11/12}$, then there exists $T=O(\log n)$ such that $L_T = O(\log n)$ with high probability.
\end{lemma}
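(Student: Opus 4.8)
The plan is to split the run of the dynamics into epochs delimited by the successive steps at which the (unique) largest component is activated, and to exploit that, as long as $L_t \le (\thetas-\epsilon)n$, the percolation step always acts \emph{sub-critically} on the active vertex set. Since $\ka$ is increasing and $\beta\ka(\thetas) = \tfrac{q-\beta}{q} + \tfrac\beta q = 1$, there is a constant $c = c(\beta,q,\epsilon)>0$ with $\beta\ka(\theta) \le 1-c$ for every $\theta \le \thetas-\epsilon$; likewise, when $\mathcal L_1(X_t)$ is \emph{inactive} the active set has mean size $\tfrac1q(n-L_t) \le \tfrac nq$, so its effective percolation parameter is at most $\tfrac\beta q < 1$ (again bounded away from $1$, as $\beta<q$ is fixed). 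Conditioning on whether the largest component is activated, the only randomness left in the number of active vertices comes from the non-giant components, whose sizes stay below $2n^{11/12}$ for all $t = O(\log n)$ by Lemma~\ref{lem:nosecondlarge}; hence their contribution to the active count has variance $O(n^{11/12}\cdot n) = o(n^2)$, and a Hoeffding bound over the independent component-activation coins shows the active count concentrates around its mean up to $\tilde O(n^{23/24})$ with probability $1-o(n^{-1})$. Consequently, in each such step the percolation builds, on the active vertices, a sub-critical Erd\H{o}s--R\'enyi graph whose largest component is $O(\log n)$ with probability $1-O(n^{-1})$, by Lemma~\ref{lem:subcriticalgstructure}(3).

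Granting this, I would argue in two stages. \emph{First stage:} if $L_0 \ge 2n^{11/12}$, let $\tau$ be the first step at which the giant is activated; since it is activated independently with probability $1/q$ every step, $\tau \le C_1\log n$ with probability $1-n^{-10}$ for $C_1$ large. For $t<\tau$ the giant is inactive, hence its component is frozen and cannot merge with anything (an edge with an inactive endpoint keeps its old, necessarily absent, value), while the other inactive components are $<2n^{11/12}<L_0$ and the re-percolated active part is $O(\log n)<L_0$; so $L_t = L_0 \le (\thetas-\epsilon)n$ for all $t<\tau$ and the sub-criticality discussion of the first paragraph applies throughout. At step $\tau$ the giant is dissolved: the active set then has size $\ka(L_0/n)n + \tilde O(n^{23/24}) \le \ka(\thetas-\epsilon)n + o(n)$ with high probability, so the re-percolation is sub-critical with largest component $O(\log n)$, and every surviving inactive component is $<2n^{11/12}$; therefore $L_\tau < 2n^{11/12}$ with high probability. (If $L_0 < 2n^{11/12}$ to begin with, we are already in the situation reached at the end of this stage.)

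\emph{Second stage:} starting from a configuration (reached by time $O(\log n)$) all of whose components are below $2n^{11/12}$, the percolation step is sub-critical at every subsequent step with high probability. Now the largest component no longer plays a special role: each component of the current configuration persists only until it is selected for activation, and the first time a vertex $v$ lies in an activated component it lands, via a sub-critical percolation, in a component of size $O(\log n)$, and---by induction over the remaining steps, each step costing $O(n^{-1})$ in failure probability---it stays in components of size $O(\log n)$ thereafter. Since each vertex is activated within a $\mathrm{Geom}(1/q)$ number of steps, a union bound over the $n$ vertices shows that after $C_2\log n$ more steps every component has size $O(\log n)$, with probability $1-n^{-10}$ for $C_2$ large. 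Taking $T = (C_1+C_2+1)\log n = O(\log n)$ and summing the $O(\log n)$ per-step failure probabilities (each $O(n^{-1})$) together with the two $n^{-10}$ events gives $L_T = O(\log n)$ with high probability.

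The step I expect to be the main obstacle is the uniform control of sub-criticality across all $O(\log n)$ steps: one must turn the deterministic mean bound (the active fraction is at most $\max\{\tfrac1q,\ \ka(\thetas-\epsilon)\}$, hence $\beta$ times it is $\le 1-c''$ for some $c''>0$) into a genuine ``$\beta\cdot(\text{active fraction}) \le 1-c''/2$'' statement via Hoeffding concentration, which relies on the a priori structural fact of Lemma~\ref{lem:nosecondlarge} that no second component ever reaches size $2n^{11/12}$, and then make sure the polynomially small per-step error probabilities, summed over $O(\log n)$ steps and $n$ vertices, stay $o(1)$. Everything downstream---reading off ``largest component $O(\log n)$'' from ``sub-critical percolation''---is immediate from Lemma~\ref{lem:subcriticalgstructure}.
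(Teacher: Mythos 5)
The paper cites this lemma to an external reference (Fact 3.20 of \cite{ABthesis}) rather than proving it in-text, so there is no in-paper proof to compare against; I can only assess the argument on its own terms. Your sketch looks correct, and the epoch decomposition you use---the giant stays frozen until its first activation, dissolves via a sub-critical percolation step, and then a coupon-collector argument over vertices/components finishes---is the natural route for this regime. The two points you correctly single out as load-bearing are (a) the uniform sub-criticality $\beta\ka(\theta)\le 1-c$ for $\theta\le\thetas-\epsilon$ when the giant is active, together with the automatic $\beta/q<1$ bound when it is not, and (b) the a priori $2n^{11/12}$ bound on non-giant component sizes, which caps $R_2^-$ at $O(n^{23/12})$ and gives Hoeffding concentration of the active count at scale $\tilde O(n^{23/24})$, sharp enough to keep the percolation parameter uniformly below $1-c/2$ over all $O(\log n)$ steps.

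Two small remarks. First, your appeal to Lemma~\ref{lem:nosecondlarge} is literally applicable only in the first stage; once the giant dissolves, that lemma's hypothesis (a \emph{unique} component of size $\ge 2n^{11/12}$) no longer holds. This is cosmetic, though: your own second-stage induction maintains the invariant $\max_i|\mathcal L_i(X_t)|\le 2n^{11/12}$ directly, by combining ``inactive components stay frozen'' with ``re-percolated components are $O(\log n)$ w.h.p.'', so you do not in fact need to lean on that lemma there. Second, you obtain failure probability $O(\log n/n)$, which is strictly stronger than the $\Omega(1)$ success probability that the paper notes the original Fact~3.20 of \cite{ABthesis} provides; this matches (and exceeds) the ``$1-n^{-1/2}$'' improvement the paper says it needs.
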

Note that although originally Fact 3.20 in \cite{ABthesis} guarantees only $\Omega(1)$ probability for the statement to hold, 
it can be improved to, for example $1-n^{-1/2}$, with a careful look.

With the above lemmas recalled, we show how the size of the giant goes from at most $(\theta_* - s) n$ to $o(n)$. 

\begin{lemma}
\label{lem:subcriticalhitting3}
    Let $q>2$, $s>0$ and $\beta \in (\betau, \betas)$. 
    If  $X_0$ is a configuration such that $L_0 \le (\theta_* - s)n$,  $|\mathcal{L}_2(X_0)| = O(\log n)$ and $R_2^-(X_0) = O(n)$, then there exists $T=O(\log n)$ 
    such that $L_T=O(\log n)$ and $I_1(X_T) = \Omega(n)$ with probability $1-o(1)$.
\end{lemma}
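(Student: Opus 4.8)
The plan is to decompose the descent of the giant from $(\theta_* - s)n$ down to $O(\log n)$ into two phases: a macroscopic-drift phase while $L_t/n$ stays bounded away from $\thetas$ from above, and a ``critical window plus collapse'' phase once $L_t/n$ has dropped below $\thetas - \epsilon$. First I would invoke Lemma~\ref{lem:phifunctionsubcritical}: for $\theta \in (\thetas, \theta_* - s]$ there is a fixed $\delta>0$ with $f(\theta) \le -\delta$, so the drift is uniformly negative and of order $n$ on this whole range. Combined with the one-step estimate in Lemma~\ref{lem:L1decayexpectation} (which gives $\E[L_{t+1} - L_t \mid X_t] \le \tfrac{f(L_t/n)n}{q} + 2n^{1/4} \le -\tfrac{\delta n}{2q}$ for large $n$, provided $X_0$, and hence $X_t$ by Lemma~\ref{lem:nosecondlarge}, has at most one large component) and a bounded-difference/Azuma argument on the martingale $L_t - L_0 - \sum_{k<t}\E[L_{k+1}-L_k\mid X_k]$, we get that in $T_1 = O(\log n)$ (actually $O(1)$ after reaching a linear gap, but $O(\log n)$ is all we need) steps the process reaches $L_{t}/n \le \thetas - \epsilon$ for a fixed small $\epsilon>0$, with probability $1 - o(1)$. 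Here I must be a little careful near $\thetas$: Lemma~\ref{lem:L1decayexpectation} still gives a negative upper bound $\tfrac{f(\thetas+\epsilon)n}{q} + \tfrac{2\epsilon n}{q} + 2n^{1/4}$ once $L_t/n \in (\thetas - \epsilon, \thetas + \epsilon)$ for $\epsilon$ small enough that $f(\thetas+\epsilon) + 2\epsilon < 0$ (possible since $f(\thetas^+) \le 0$ with equality excluded only at $\beta = \betas$, and here $\beta < \betas$ so $f(\thetas^+)<0$ strictly by Lemma~\ref{lem:factsaboutdrift}(3)), so the process does not stall and continues past $\thetas$.

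Once $L_t/n \le \thetas - \epsilon$ and (by Lemma~\ref{lem:nosecondlarge}) there is still at most one component of size $\ge 2n^{11/12}$, I apply Lemma~\ref{lem:decaysubphase3} directly: in a further $O(\log n)$ steps $L_t = O(\log n)$ with high probability (using the improved $1 - n^{-1/2}$ version noted in the text). This handles the collapse. For the final claim $I_1(X_T) = \Omega(n)$: at the last step, the configuration has $L_{T-1} = O(\log n)$, so $A_{T-1}^- = \Theta(n)$ active vertices outside the (tiny) giant, the percolation step among them is subcritical with parameter bounded away from $1$, and Lemma~\ref{lem:subcriticalgstructure}(2) (or Lemma~\ref{lem:supercriticalgstructure}(2)) gives $\Omega(n)$ isolated vertices with probability $1 - O(n^{-1})$; the inactive vertices retain their $X_{T-1}$-structure which already had $\Omega(n)$ singletons if we wish (alternatively, just one more step suffices). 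A union bound over the $O(\log n)$ total steps and the various high-probability events closes the argument.

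The main obstacle I anticipate is the passage through the near-critical window around $\thetas$. The difficulty is twofold: (i) one must ensure the drift does not degenerate there (handled by choosing $\epsilon$ small relative to $|f(\thetas^+)|$, exploiting $\beta < \betas$), and (ii) one must control the fluctuations of $L_{t+1} - L_t$ when the percolation step is near-critical, where the variance can be larger than the $O(n)$ scale seen in the strictly sub/supercritical regimes; however, since we only need an upper bound on $L_t$ (a negative-drift one-sided estimate) and Lemma~\ref{lem:decaysubphase3} already packages the delicate critical-window analysis from \cite{ABthesis}, I can route around this by exiting to $L_t/n \le \thetas - \epsilon$ as quickly as the drift allows and then citing Lemma~\ref{lem:decaysubphase3} as a black box. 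The remaining steps — Azuma on a bounded-difference martingale, the single-component invariant via Lemma~\ref{lem:nosecondlarge}, and the isolated-vertex count via Lemma~\ref{lem:subcriticalgstructure} — are routine.
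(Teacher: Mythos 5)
Your two-phase decomposition and the auxiliary lemmas (Lemmas~\ref{lem:L1decayexpectation}, \ref{lem:phifunctionsubcritical}, \ref{lem:nosecondlarge}, \ref{lem:decaysubphase3}, \ref{lem:subcriticalgstructure}) match the paper's proof exactly; the paper gets the $O(\log n)$ bound on the exit time of a window like $[\thetas-\epsilon, \theta_*-s/2]$ via optional stopping plus Markov where you propose Azuma, but both work for that half. The gap is that neither by itself controls \emph{which} end of the window is hit, and your proposal does not address exit from the upper end at all. Concretely, the increments of $M_t = L_t - L_0 - \sum_{k<t}\E[L_{k+1}-L_k\mid X_k]$ are of order $\Theta(n)$, not small: on activation, $L_t$ jumps by about $f(L_t/n)n = \Theta(n)$, and on non-activation it stays fixed, which is again $\Theta(n)$ from the conditional mean. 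With increments of order $n$ over $T_1=O(\log n)$ steps, Azuma gives a tail bound $\exp(-\Omega(\lambda^2/(n^2\log n)))$, which is useful at scale $\lambda=\Omega(n\log n)$ (verifying $\tau_\epsilon = O(\log n)$ requires a deviation of that size) but at scale $\lambda=\Theta(n)$ (the distance from $\theta_*-s$ up to $\theta_*-s/2$) it degenerates to $1-o(1)$. So a separate one-sided argument is needed to rule out an upward excursion past $\theta_*-s/2$. The paper supplies this via a per-step variance/Chebyshev estimate plus a union bound; a clean alternative compatible with your setup is to observe that while $L_t/n\le\theta_*-s/2$ and $|\mathcal{L}_2(X_t)|<2n^{11/12}$, each step either keeps $L_t$ fixed (on non-activation the percolation among activated non-giant vertices is subcritical) or decreases it to about $\phi(L_t/n)n + O(\sqrt{n}\log n) < L_t$ (on activation, since $f(L_t/n)\le -\delta$), each with probability $1-o(1)$, so over $O(\log n)$ steps the process cannot cross $\theta_*-s/2$ except with probability $o(1)$. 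Adding this closes the gap; the rest of your argument (passage through the near-critical window, collapse via Lemma~\ref{lem:decaysubphase3}, isolated-vertex count at the final step) is correct and matches the paper.
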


\begin{proof}
    Recall we set $\thetas:=\frac{q-\beta}{\beta(q-1)}$. Fix a small $\varepsilon>0$, and define $\tau = \min\{\tau_\epsilon,\tau_2\}$ where 
    \begin{align*}
             \tau_\varepsilon:= \min\{t>0: L_t/n \notin [\thetas - \varepsilon, \theta_* - s/2]\} \quad \text{and} \quad \tau_2 = \min\{t>0: |\mathcal L_2(X_t)| > 2n^{11/12}\}\,.
    \end{align*}
    
    We bound the drift of the giant in two cases of $L_t$: (i) $L_t/n \in  [ \thetas + \epsilon, \theta_* -s/2]$, and (ii) $L_t/n \in [\thetas - \epsilon, \thetas+\epsilon]$ to upper bound $\tau_\varepsilon$.
   Lemmas~\ref{lem:L1decayexpectation} and \ref{lem:phifunctionsubcritical} imply that 
    in case (i) with $X_t: |\mathcal{L}_2(X_t)| < 2n^{11/12}$, 
    there exists a constant $\delta>0$ such that 
    \begin{equation}\label{eq:decaysubphase1}
        \E[L_{t+1} - L_t \mid X_t] \le \frac{f(L_t/n)n}{q} + O(n^{1/4}) \le - \frac{\delta n}{q} + O(n^{1/4}).
    \end{equation}
    Similarly, in case (ii), Lemmas~\ref{lem:L1decayexpectation} and \ref{lem:phifunctionsubcritical} imply that
    \begin{equation}
        \label{eq:decaysubphase2}
        \E[L_{t+1} - L_t \mid X_t] \le \frac{f(\thetas + \epsilon)n}{q} + \frac{2\epsilon n}{q} + O(n^{1/4}) 
        \le - \frac{\delta n}{q} + \frac{2\epsilon n}{q} + O(n^{1/4}).
    \end{equation}
    By choosing $\epsilon$ small enough, we see from \eqref{eq:decaysubphase1} and \eqref{eq:decaysubphase2} that there exists $\eta >0$ such that if $t<\tau$, then 
    \[
        \E[L_{t+1} - L_t \mid X_t] \le -\eta n.
    \]
    By a standard application of the optional stopping theorem (see, e.g., Lemma 2.20 in~\cite{ABthesis}), we have
    $\E[\tau] \le 4/\eta$. By Markov's inequality, $\Pr(\tau > \frac{4 \log n}{\eta}) \le 1/\log n$, so for $T_1 = \frac{4\log n}{\eta}$, $\tau<T_1$ with high probability.
    
    At the same time, by Lemma~\ref{lem:nosecondlarge}, $|\mathcal{L}_2(X_t)| < 2n^{11/12}$ holds for all $t\le T$ with probability $1 - O(Tn^{-1/12})$, so with high probability it is $\tau_\epsilon$ that is attained and $\tau_\epsilon <T_1$. 
    Moreover, while $L_t/n <\theta_* - s/2$ and $|\mathcal L_2(X_t)|<2n^{11/12}$, the conditional variance of $L_{t+1} - L_t$ is at most $O(n^{23/24})$ so by Chebyshev's inequality and a union bound, the probability that $\tau_\epsilon<T_1$ is attained by $L_t/n > \theta_* - s/2$ is at most $T_1 n^{-1/24}= o(1)$.   
    Altogether, with high probability $L_{T_1} \le (\thetas - \epsilon)n$ and $|\mathcal L_2(X_{T_1})|\le 2n^{11/12}$.

    At this point, Lemma~\ref{lem:decaysubphase3} implies that after $T_2 = O(\log n)$ additional steps, 
    the largest component in the configuration has size $O(\log n)$ with high probability.
    Finally, in the percolation step of the very last step,  
    Lemma~\ref{lem:subcriticalgstructure} and \ref{lem:supercriticalgstructure} imply that $I_1(X_{T_1 + T_2}) = \Omega(n)$ with high probability.
    The result follows from a union bound.
\end{proof}

To quasi-equilibrate to $\mu^\dis$ from here, we appeal to the following lemma which lower bounds the probability of coupling two shattered configurations in $O(\log n)$ steps. 

\begin{lemma}[{\cite[Lemmas 3.15--3.16 and Fact 3.17]{ABthesis}}]
\label{lemma:couplingstagesubcritifcal}
    Let $q>1$ and $0<\beta < \betas$. Let $X_0$ be a random-cluster configuration such that $L_0 = O(\log n)$ and $I_1(X_0) = \Omega(n)$.
Suppose $Y_0$ also satisfies these conditions. 
    Then for any $\epsilon>0$ there exist $T=O(\log n)$ and a coupling of $(X_t, Y_t)$ such that $X_T = Y_T$ with probability at least $1-\epsilon$.
\end{lemma}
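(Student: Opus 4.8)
\emph{Proof plan.} The idea is to couple the two chains in blocks of $O(\log n)$ steps while preserving the ``shattered'' invariant $\mathcal S_t := \{ L_t = O(\log n),\ I_1(X_t) = \Omega(n)\}$. We first record that this invariant is (essentially) preserved by a single step: if $X_t$ satisfies $\mathcal S_t$, then since all components of $X_t$ have size $O(\log n)$, the number of activated vertices $A_{t+1}$ concentrates within $O(\sqrt n\log n)$ of $n/q$, so $\beta A_{t+1}/n < 1$ uniformly in $n$ (here we use $\beta < \betas = q$); hence the percolation step produces a subcritical random graph on the active set, which by Lemma~\ref{lem:subcriticalgstructure} has largest component $O(\log n)$ and $\Omega(n)$ isolated vertices, while the inactive components are unchanged. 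Thus $\mathcal S_{t+1}$ holds with probability $1-O(n^{-1})$, and a union bound keeps $\mathcal S_t$ throughout any block of $O(\log n)$ steps with probability $1-o(1)$; this is the analogue of Fact~3.17 in \cite{ABthesis}.

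\emph{Synchronizing the component profile.} This is the heart of the matter. Run both chains for $T_1 = C\log n$ steps. A ``refreshing'' estimate drives the argument: for any fixed vertex $v$, the probability that $v$'s current component is never activated over the block is exactly $(1-1/q)^{T_1}$, which is at most $n^{-2}$ for $C$ large, so by a union bound every vertex lies in an activated component at some step of the block with probability $1-o(1)$ --- informally, all of the old component structure in both chains gets re-sampled. The coupling is then built one step at a time: at each step we couple the activation coins of the two chains so as to maximize the overlap of their active vertex sets, using that each configuration has $\Omega(n)$ isolated vertices (whose activations form an i.i.d.\ $\mathrm{Ber}(1/q)$ field) both to equalize the \emph{number} of active vertices with probability $\Omega(1)$ and to absorb mismatches coming from the non-singleton components; then we couple the within-active percolation so that the two freshly sampled subgraphs on the common active set are identical. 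Iterating over the block shows that with probability $\alpha_1 = \Omega(1)$ the chains reach configurations with the same multiset of component sizes --- this is the content behind Lemma~3.15 of \cite{ABthesis}.

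\emph{Matching the labels and boosting.} Given $X_{T_1}, Y_{T_1}$ with equal component-size profiles and both still satisfying $\mathcal S$, Lemma~3.16 of \cite{ABthesis} produces a coupling making them equal as edge-subsets of $K_n$ with probability $1-o(1)$ in boundedly many further steps: one matches inactive components of equal size between the two chains and runs a single common percolation on the (now identical) active sets, the vertex-symmetry of the CM update making this legitimate. Combining, one block of $O(\log n)$ steps gives $X = Y$ with probability at least some $\alpha = \Omega(1)$; on failure, the invariance step leaves both chains in the shattered regime with probability $1-o(1)$, so we may repeat with fresh randomness. After $k = O(\log(1/\epsilon))$ blocks, $\Pr(X_T = Y_T) \ge 1 - (1-\alpha)^k - o(1) \ge 1-\epsilon$, with $T = k T_1 = O(\log n)$.

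\emph{Main obstacle.} The delicate point is the synchronization step: since each component is activated independently with probability only $1/q$, there is no single step at which one can force the non-singleton structures of the two chains to coincide, so the matching must be carried out gradually while ensuring the leftover inactive components stay matchable. This is precisely where the hypothesis $I_1(X_0) = \Omega(n)$ is used --- the large pool of singletons provides both the fine control over the active-vertex count and the slack needed to correct discrepancies --- and it is the part of \cite{ABthesis} that requires the most care.
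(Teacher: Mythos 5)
Your proposal correctly reconstructs the three-part argument that the paper implicitly invokes by citing Fact~3.17 and Lemmas~3.15--3.16 of \cite{ABthesis}: (i) preservation of the shattered invariant under one CM step via subcriticality of the percolation on a concentrated active set of size $\approx n/q$, (ii) an $\Omega(1)$-probability coupling of the component-size profiles over an $O(\log n)$ block, driven by activation-coin coupling and exploiting the $\Omega(n)$ singletons to align the number of active vertices and absorb mismatches, and (iii) matching labels to get $X_T = Y_T$, then boosting over $O(\log(1/\epsilon))$ blocks. Since the paper cites this lemma as a black box rather than proving it, your reconstruction matches the intended proof strategy; the only places that would need tightening in a full write-up are the ``maximize overlap of active sets'' step in (ii), which in \cite{ABthesis} is implemented via a local-limit-theorem coupling of the activated-vertex count (the singleton abundance is used precisely to make this a near-lattice Gaussian), and the careful statement in (iii) that the profile-matching bijection between size-equal components, together with the re-percolation erasing all old edge structure once every vertex has been activated, makes the two configurations agree as actual edge-subsets of $K_n$.
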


We are now in position to put all the above ingredients together to establish Lemma~\ref{lem:quasi-equilibration-away-from-saddle-SWCM}. 

\begin{proof}[\textbf{\emph{Proof of Lemma~\ref{lem:quasi-equilibration-away-from-saddle-SWCM}}}]
 Suppose $(X_t)_{t\ge 0}$ is initialized from $X_0$ with $L_0 \ge \theta_* n + \gamma  \sqrt{n}$, $|\mathcal L_2(X_0)| = O(\log n)$, and $R_2^-(X_0) = O(n)$. Let $(Y_t)_{t\ge 0}$ be initialized from $\mu^\ord$ and restricted to $\Omega^\ord$ by rejecting any update that would take it out of $\Omega^\ord$; by Lemma~\ref{lem:equilibrium-estimates} and a union bound, with probability $1-o(1)$, $(Y_t)_{t\ge 0}$ does not feel the restriction to $\Omega^\ord$ (i.e., doesn't attempt to leave $\Omega^\ord$) for exponential in $n$ many steps, so for all $e^{o(n)}$ time steps, we can treat $Y_t$ as the (unrestricted) CM chain initialized from $\mu^\ord$. We claim that there is $T= O(\log n)$ such that with probability $1-O(\gamma^{-2})$, we have coupled $X_T = Y_T$, which would imply the claim. In order to see this, notice that Lemma~\ref{lem:hittimesupercritical2} can be stitched with Lemma~\ref{lem:hittingtimecritical3} to get, with probability $1-O(\gamma^{-2})$, a configuration $X_{T_1}$ on which Lemma~\ref{lemma:couplestagesupercritical} can be applied. At the same time, $Y_{T_1}$ satisfies the conditions of Lemma~\ref{lemma:couplestagesupercritical}, being a sample from $\mu^\ord$ (which satisfies those conditions by Lemma~\ref{lem:equilibrium-estimates}).   

 The proof for $L_0 \le \theta_* n - \gamma  \sqrt{n}$ is analogous (Lemmas~\ref{lem:hittimesupercritical2}--\ref{lemma:couplestagesupercritical} are replaced by Lemmas~\ref{lem:hittingtimesubcritical}--\ref{lemma:couplingstagesubcritifcal}). 
\end{proof}

\subsection{Getting away from the unstable fixed point at criticality}\label{subsec:escaping-saddle-SWCM}

We now focus on the critical point $\beta = \betac$ where it is essential to understand the diffusion away from the fixed point $\theta_* n$; here, the drift and fluctuations of the giant component process compete on the same scale.

\subsubsection{Staying in a nice set of configurations} 

We begin with certain a priori estimates guaranteeing that for a sufficiently long period of time ($O(1)$ times will suffice), the near-saddle dynamics has largest component size, and sum of squares of other components, that are concentrated around explicit quantities. Moreover, the sum of cubes of components, and isolated vertices (things needed for sharp local limit theorems in updates) stay on the right order. 

We begin by defining the following good set that captures all the a priori estimates except the concentration of the sum of squares of component sizes, which will come subsequently. 

\begin{definition}
    For a constant $K>0$,  
    let $\mathcal G_K$ be all random-cluster configurations $U$ satisfying: 
    \begin{enumerate}
        \item The largest component has $| |\mathcal{L}_1(U)|  - \theta_* n| \le K n^{1/2} \log n$,
        \item The second largest component has $|\mathcal{L}_2(U)| \le K\log n$,
        \item The sum of squares of non-giant components has $R_2^-(U) \le K n$,
        \item The sum of cubes of non-giant components has $R_3^-(U) \le K n$,
        \item The singletons have $I_1(U)  \ge \frac{n}{K}$.
    \end{enumerate}
\end{definition}

We introduce some notations that will be useful in what follows.
Recall for $\lambda>1$, that $\alpha(\lambda)$ is defined to be the largest positive root of the equation~\eqref{eq:beta} (i.e., $\alpha(\lambda) n$ is approximately the expected size of the giant in a $G(n,\lambda/n)$), and recall $\sigma^2(\lambda)$ as defined in~\eqref{eq:sigmag}
which approximates the variance of the giant component in $G(n,\lambda/n)$. 
Also, for $\theta\in [0,1]$, let $\ka(\theta)$ be the expected fraction of activated vertices if a giant of fractional size $\theta$ is activated, and $\kia(\theta)$ be the same quantity if it is not activated, i.e., 
\begin{align}\label{eq:activated-fractions}
    \ka(\theta) = \theta + \frac{1}{q} (1-\theta)\,, \qquad \text{and} \qquad \kia(\theta) = \frac{1}{q} (1-\theta)\,.
\end{align}
Moreover, for $i\ge 1$ we use $\Lambda_i$ to denote the event that the largest component is activated in step $i$ of the CM dynamics; we use $A_i$ to denote the total number of activated vertices in step $i$, and on $\Lambda_i$ use $A_i^-$ to denote the activated number minus the giant.  
Lastly, let $G_0 \sim G(n,p_0)$ and denote by $G_i$ the random graph resampled in step $i$ of $\{{X}_t\}$, 
that is, $G_{i} \sim G(A_{i}, \beta/n)$. 

Let $\lambda_*$ be the solution to $\alpha(\lambda_*) = \theta_*$.

\begin{lemma}
    \label{lem:basicproperties}
    Let $\beta = \betac$.
    For all $T\ge 0$ fixed independent of $n$, if~ $X_0 \sim G(n, \frac{p_0}{n})$ with $p_{0}  =\lambda_*+ O(n^{-1/2}) $, then for $K=K(T)$, $X_T \in \mathcal G_K$ with probability $1-o(1)$. 
\end{lemma}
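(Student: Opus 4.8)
The plan is to induct on the time step $T$, showing that if $X_t \in \mathcal G_{K_t}$ then $X_{t+1} \in \mathcal G_{K_{t+1}}$ with probability $1 - o(1)$ for a slightly larger constant $K_{t+1}$; since $T$ is a fixed constant independent of $n$, finitely many such steps accumulate only an $o(1)$ total failure probability and a final constant $K = K_T$. The base case $t=0$ is immediate from the random graph preliminaries: by Theorems~\ref{thm:giantLLT}--\ref{thm:CLT}, $||\mathcal L_1(X_0)| - \alpha(\lambda_*) n| = O(\sqrt n \log n)$ with probability $1-o(1)$ (and $\alpha(\lambda_*) = \theta_*$ by the choice of $\lambda_*$); the fact that $p_0 = \lambda_* + O(n^{-1/2})$ only perturbs $\alpha$ by $O(n^{-1/2})$ since $\alpha$ is analytic at $\lambda_*>1$, which is absorbed into the window. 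Lemma~\ref{lem:supercriticalgstructure} then gives items (2)--(5) of $\mathcal G_K$: $|\mathcal L_2(X_0)| = O(\log n)$, $R_2^-(X_0) = O(n)$, $R_3^-(X_0) = O(n)$, and $I_1(X_0) = \Omega(n)$.

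For the inductive step, condition on $X_t \in \mathcal G_{K_t}$ and analyze one CM step, splitting on whether the giant is activated ($\Lambda_{t+1}$, probability $1/q$) or not. In both cases, the number of activated vertices $A_{t+1}$ concentrates: the activated set is obtained by independently keeping each non-giant component with probability $1/q$, so $A_{t+1}^-$ has mean $\kia(|\mathcal L_1(X_t)|/n)\cdot n + O(\sqrt n\log n)$ and, by Hoeffding/Bernstein using $R_2^-(X_t) = O(n)$, deviates by at most $\sqrt{n}\log n$ with probability $1 - O(n^{-1})$; adding $|\mathcal L_1(X_t)|$ on $\Lambda_{t+1}$ gives the analogous statement for $A_{t+1}$. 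When the giant is \emph{not} activated, $\beta A_{t+1}/n = \beta\kia(\theta_*)+o(1) < 1$ is uniformly subcritical (here one uses $\beta=\betac$ and $\theta_*<1$ to check $\beta\kia(\theta_*)<1$), so the percolation step $G_{t+1}\sim G(A_{t+1},\beta/n)$ is a subcritical random graph on $\Theta(n)$ vertices; applying Lemma~\ref{lem:subcriticalgstructure} to it, its largest new component is $O(\log n)$, its $R_2, R_3$ are $O(n)$, and it contributes $\Omega(n)$ isolated vertices. Meanwhile the giant is untouched and the dissolved active non-giant components only decrease $R_2^-, R_3^-$. When the giant \emph{is} activated, $\beta A_{t+1}/n \ge \beta\ka(\theta_*) > 1$ is uniformly supercritical, so Theorems~\ref{thm:giantLLT}--\ref{thm:CLT} and Lemma~\ref{lem:rgdeviation} apply: the new giant has size $\alpha(\beta A_{t+1}/n)\cdot A_{t+1} = \phi(|\mathcal L_1(X_t)|/n)\cdot n + O(\sqrt n\log n)$, and since at $\beta = \betac$ the quantity $|\mathcal L_1(X_t)|/n$ lies within $O(n^{-1/2}\log n)$ of the fixed point $\theta_*$ of $f=\phi-\mathrm{id}$, a first-order Taylor expansion of $\phi$ (with bounded derivative near $\theta_*$, by Lemma~\ref{lem:factsaboutdrift}) gives $\phi(|\mathcal L_1(X_t)|/n) = \theta_* + O(n^{-1/2}\log n)$, so $||\mathcal L_1(X_{t+1})| - \theta_* n| = O(\sqrt n \log n)$ with the constant growing by a bounded factor; Lemma~\ref{lem:supercriticalgstructure} applied to $G(A_{t+1},\beta/n)$ then controls $\mathcal L_2$, $R_2^-$, $R_3^-$, and $I_1$ of $X_{t+1}$.

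The main obstacle I anticipate is item (1), tracking the $O(\sqrt n\log n)$-window for the giant component across an activated step: this requires that the random graph central/local limit theorems hold with explicit error control uniform over the relevant range of $\lambda = \beta A_{t+1}/n$ (which itself fluctuates by $\Theta(\sqrt n)$ vertices, i.e., by $\Theta(n^{-1/2})$ in the rate), and that the composition of ``resample $A_{t+1}$'' then ``resample the giant of $G(A_{t+1},\beta/n)$'' does not blow up the window. The key point making this work is the \emph{contraction} built into the landscape: because $f(\theta_*) = 0$, the Taylor expansion $\phi(\theta_* + \delta) = \theta_* + \phi'(\theta_*)\delta + O(\delta^2)$ with $\delta = O(n^{-1/2}\log n)$ keeps the centered giant size of order $\sqrt n \log n$ rather than letting it drift — though $\phi'(\theta_*) = 1 + f'(\theta_*) > 1$ is \emph{expanding}, over the finitely many steps $t\le T$ this only multiplies the constant $K_t$ by a bounded amount $(\phi'(\theta_*))^T$, so it is harmless. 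A secondary technical nuisance is bookkeeping the various $O(n^{-1})$-probability failure events (from Hoeffding bounds and from the $(\log n)^{-4}$-probability event in Lemma~\ref{lem:supercriticalgstructure}(1)); since we only need $1 - o(1)$ and there are $O(1)$ steps, a union bound over all of them suffices, and one should use the cruder $R_2^- = O(n)$ control rather than the sharp $\sqrt n\log^2 n$-deviation bound, leaving the finer concentration of $R_2^-$ (which will be needed later) to a separate lemma as the excerpt indicates.
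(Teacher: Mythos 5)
Your proposal is correct and follows essentially the same inductive strategy as the paper's proof: condition on $X_t \in \mathcal G_{K_t}$, concentrate the activation count $A_{t+1}$ via Hoeffding using $R_2^-(X_t) = O(n)$, split on whether the giant is activated, and invoke the sub/super-critical random graph structure lemmas (Lemmas~\ref{lem:subcriticalgstructure}, \ref{lem:supercriticalgstructure}, \ref{lem:rgdeviation}) for the percolation step. Your treatment of item (1) of $\mathcal G_K$ is actually a touch more careful than the paper's bookkeeping: since $\phi'(\theta_*) = 1 + f'(\theta_*) > 1$, each activated step multiplies (rather than adds to) the window constant, giving $K_T = O((\phi'(\theta_*))^T)$; the paper writes $K_t = tK_0$, which is too weak for this item but immaterial since $T$ is a fixed constant. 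Your only minor imprecision is attributing the base-case giant concentration to Theorems~\ref{thm:giantLLT}--\ref{thm:CLT} — a CLT gives distributional convergence, not directly a high-probability $\sqrt{n}\log n$ tail bound; Lemma~\ref{lem:rgdeviation}(3) is the right tool (and what the paper implicitly relies on via Lemma~\ref{lem:supercriticalgstructure}).
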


\begin{proof}[\textbf{\emph{Proof of Lemma~\ref{lem:basicproperties}}}]
    Let ${X}_0\sim G(n, \frac{p_0}{n})$. For some $K_0 >0$, Lemma~\ref{lem:supercriticalgstructure} implies that, ${X}_0 \in \mathcal{G}_{K_0}$ with $1-o(1)$ probability.
        Let $K_t = t K_0$.
    Now suppose ${X}_t \in \mathcal{G}_{K_t}$ for some $t\ge 0$. 
    We will inductively show that 
    ${X}_{t+1} \in \mathcal{G}_{K_{t+1}}$ with probability $1-o(1)$.
    Then a union bound over $t\in[0,T]$ implies ${X}_T \in \mathcal{G}_{K_T}$.

     We consider the two cases, $\Lambda_{t+1}$ and $\Lambda^c_{t+1}$.
    First, $\E[A_{t+1} \mid \Lambda_{t+1}, {X}_t] = \ka(L_t/n) n$.
    Moreover, since property 3 holds at time $t$, by Hoeffding's inequality, we have
    \begin{equation}
        \label{eq:activationconcentrate5}
        \Pr\Big(   | A_{t+1} - \ka(L_t/n) n | \ge \sqrt{n} \log n \mid \Lambda_{t+1}, {X}_t \Big) 
        \le 2\exp\Big( - \frac{n \log^2 n}{R_2^-({X}_t)} \Big) 
        = \exp{\left( -C_1 \log^2 n \right)},
    \end{equation}
    where $C_1 >0$ is some constant. 
    In the percolation step, $G_{t+1} \sim G(A_{t+1}, \betac/n)$,
    and when the concentration in \eqref{eq:activationconcentrate5} holds, 
    the random graph $G_{t+1}$ is supercritical.
    By Lemma~\ref{lem:rgdeviation}, $|\mathcal{L}_2(G_{t+1})| = O(\log n)$ with probability $1 - O(n^{-1})$ and
    $| |\mathcal{L}_1(G_{t+1})|-\theta_*n|\le n^{1/2} \log n$ with probability $1-o(n^{-1})$.
    Since $|\mathcal{L}_2(X_t)| \le K_t \log n$ by property 2 and the largest component of ${X}_t$ has been activated, if $|\mathcal{L}_1(G_{t+1})| = \Omega(n)$, then $\mathcal{L}_1({X}_{t+1}) = \mathcal{L}_1(G_{t+1})$.
    Moreover, with probability $1-o(1)$, we have $G_{t+1} \in \mathcal{G}_{K_0}$ by Lemma~\ref{lem:supercriticalgstructure}.
    Consequently, we also establish property 2-5 by noting that
    \begin{enumerate}
        \item    $|\mathcal{L}_2(X_{t+1})| \le \max\{ |\mathcal{L}_2(X_{t})|, |\mathcal{L}_2(G_{t+1})| \} = 
        \max\{K_0, K_t \} \log n$.
        \item $R_2^-(X_{t+1}) \le R_2^-(X_{t}) + R_2^-(G_{t+1}) 
        \le (K_0 + K_t) n$.
        \item $R_3^-(X_{t+1}) \le R_3^-(X_{t}) + R_3^-(G_{t+1}) = (K_0 + K_t)n$.
        \item $I_1(X_{t+1}) \ge I_1(G_{t+1}) \ge  \frac{n}{K_0} \ge \frac{n}{K_0+K_t}$.
    \end{enumerate}
    
    Next, suppose $\Lambda_{t+1}$ does not happen. 
    Then $\E[A_{t+1} \mid \Lambda^c_{t+1}, {X}_t] = \kia(L_t/n) n$.
    Again, by property 3 and Hoeffding's inequality, we have the following concentration for $A_{t+1}$:
    \begin{equation}
        \label{eq:activationconcentrate6}
        \Pr\left(          \left| A_{t+1} -\kia(L_t/n) n \right| \ge \sqrt{n} \log n \mid \Lambda^c_{t+1}, {X}_t \right) 
        \le 2\exp\Big( - \frac{n \log^2 n}{R_2^-({X}_t)} \Big) 
        = \exp{\left( -C_2 \log^2 n \right)},
    \end{equation}
    where $C_2>0$ is a constant.  
    In the percolation step,  $G_{t+1} \sim G(A_{t+1}, \betac/n)$ and 
    if the estimate in \eqref{eq:activationconcentrate6} holds then $G_{t+1}$ is subcritical. 
    In this case, by Lemma~\ref{lem:subcriticalgstructure}, $|\mathcal{L}_1(G_{t+1})| \le K_0 \log n$ with probability $1-o(1)$, so 
    $L_{t+1} = L_t$, satisfying the first property of $\mathcal{G}_{(t+1)K_0}$.
    In addition, with probability $1-o(1)$, $G_{t+1}$ satisfies that $R_2(G_{t+1}) \le K_0 n, R_3(G_{t+1}) \le K_0 n, I_1(G_{t+1}) \ge  n/K_0$. 
    Similar to the case $\Lambda_{t+1}$, these properties combined with the hypothesis ${X}_t \in \mathcal{G}_{K_t}$ imply that ${X}_{t+1} \in \mathcal{G}_{K_{t+1}}$. 
    Therefore, by a union bound over all the above, we have ${X}_{t+1} \in \mathcal{G}_{K_{t+1}}$ with probability $1-o(1)$.
\end{proof}
   
   At this point, we need one  more property to hold throughout the process, which is concentration of $R_2^-( X_t)$ around an explicit {deterministic} quantity. To define that quantity, in addition to $G_0 \sim G(n,p_0/n)$, let $G_* \sim G(n\kia(\theta_*), \betac/n)$. Then define a sequence of variances 
       \begin{align}\label{eq:sigmas}
        \sigma_s^2 := \tfrac{1}{q} (1-\tfrac{1}{q}) \Big( (1-\tfrac{1}{q})^{s} \mathbb E[R_2^-(G_0)] +  \sum_{i=1}^s (1-\tfrac{1}{q})^{s-i} \mathbb E[R_2(G_*)]\Big)\,.
    \end{align}

    We show that $\frac{1}{q}(1-\frac{1}{q})R_2^-(X_t)$ is concentrated around $\sigma_t^2$ for all $O(1)$ times. 
    \begin{definition}\label{def:G_t}
        For $K(t)$ as in Lemma~\ref{lem:basicproperties}, let $\mathcal G_t$ be the event that for every $s\le t$, $ X_s \in \mathcal G_{K(t)}$ and furthermore 
        \begin{align*}
             \Big|\frac{1}{q} (1-\frac{1}{q}) R_2^-(X_s) - \sigma_s^2\Big| \le \sqrt{n} \log^2 n \qquad \text{for all $s\le t$}\,.
        \end{align*}
    \end{definition}    
    \begin{lemma}
        \label{lem:Gt}
    Let $\beta= \betac$.
       For all $T\ge 0$ fixed independent of $n$, 
       if~ $X_0 \sim G(n, \frac{p_0}{n})$ with $p_{0}  =\lambda_*+ O(n^{-1/2}) $,
       then $(X_s)_{s\le T} \in \mathcal G_T$, with probability $1-o(1)$. 
    \end{lemma}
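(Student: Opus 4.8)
The plan is to run an induction on $t$ that simultaneously maintains the structural estimates defining $\mathcal G_{K(t)}$ and the new concentration bound on $R_2^-(X_t)$. The first half — that $X_s\in\mathcal G_{K(s)}\subseteq\mathcal G_{K(T)}$ for all $s\le T$ with probability $1-o(1)$ — is exactly Lemma~\ref{lem:basicproperties} (larger $K$ only weakens the constraints), so the content is the $R_2^-$ estimate, which we prove alongside it. Write $V_t:=\tfrac1q(1-\tfrac1q)R_2^-(X_t)$. Conditionally on $X_t\in\mathcal G_{K(t)}$ I claim that with probability $1-o(1)$,
\begin{align*}
 V_{t+1} = \Big(1-\tfrac1q\Big)V_t + \tfrac1q\Big(1-\tfrac1q\Big)\E[R_2(G_*)] + O\big(\sqrt n\,\log^2 n\big)\,,
\end{align*}
which is exactly the recursion satisfied by $\sigma_t^2$ up to the stated error. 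Combined with the base case $V_0=\tfrac1q(1-\tfrac1q)R_2^-(G_0)=\sigma_0^2+O(\sqrt n\log^2 n)$ from Lemma~\ref{lem:supercriticalgstructure}(1), and the fact that the contraction factor $1-\tfrac1q<1$ keeps the per-step errors from accumulating (the geometric sum is still $O(\sqrt n\log^2 n)$, with a $q$-dependent constant which we absorb), this yields the lemma after a union bound over the $T=O(1)$ steps. Rigorously, one lets $\mathcal B_t$ be the intersection over $s\le t$ of the events $\{X_s\in\mathcal G_{K(s)}\}$ and $\{|V_s-\sigma_s^2|\le\sqrt n\log^2 n\}$, and shows $\Pr(\mathcal B_{t+1}^c\cap\mathcal B_t)=o(1)$ exactly as in the proof of Lemma~\ref{lem:basicproperties}.

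To prove the one-step recursion, decompose the non-giant mass of $X_{t+1}$ into the components of $X_t$ that were \emph{not} activated (which survive with all their internal edges intact, no active–inactive edges being present) and the components created in the percolation step on the activated vertex set. On $\mathcal G_{K(t)}$ the inactive components of $X_t$ have size $O(\log n)$, so the new giant is either the old giant (on $\Lambda_{t+1}^c$) or $\mathcal L_1(G_{t+1})$ (on $\Lambda_{t+1}$), and in both cases $R_2^-(X_{t+1})=R_2^{\mathrm{in}}(X_t)+R_2^{(\mathrm{new})}$, where $R_2^{\mathrm{in}}(X_t)=\sum_{i\ge 2}|\mathcal L_i(X_t)|^2\mathbf 1\{\mathcal L_i(X_t)\text{ inactive}\}$ and $R_2^{(\mathrm{new})}$ equals $R_2(G_{t+1})$ on $\Lambda_{t+1}^c$ and $R_2^-(G_{t+1})$ on $\Lambda_{t+1}$, with $G_{t+1}\sim G(A_{t+1},\beta/n)$. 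The first term is a sum of independent contributions bounded by $|\mathcal L_2(X_t)|^2=O(\log^2 n)$ with mean $(1-\tfrac1q)R_2^-(X_t)$, so Hoeffding's inequality gives $|R_2^{\mathrm{in}}(X_t)-(1-\tfrac1q)R_2^-(X_t)|\le\sqrt n\log^2 n$ with super-polynomially small failure probability. As in the proof of Lemma~\ref{lem:basicproperties}, Hoeffding's inequality (using $R_2^-(X_t)=O(n)$) and property (1) of $\mathcal G_{K(t)}$, which gives $|L_t/n-\theta_*|=O(n^{-1/2}\log n)$, pin $A_{t+1}$ to $n\kia(\theta_*)+\tilde O(\sqrt n)$ on $\Lambda_{t+1}^c$ and to $n\ka(\theta_*)+\tilde O(\sqrt n)$ on $\Lambda_{t+1}$.

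It remains to identify $\E[R_2^{(\mathrm{new})}]$ with $\E[R_2(G_*)]$ up to $\tilde O(\sqrt n)$ and concentrate $R_2^{(\mathrm{new})}$ around it. On $\Lambda_{t+1}^c$ the graph $G_{t+1}$ is uniformly subcritical (as $\beta\kia(\theta)<1$ for every $\theta$ when $\beta<\betas=q$), Lemma~\ref{lemma:Skmoment} gives $\E[R_2(G_{t+1})]=\frac{A_{t+1}}{1-\beta A_{t+1}/n}+O(1)=\E[R_2(G_*)]+\tilde O(\sqrt n)$ using that $x\mapsto\frac{x}{1-\betac x/n}$ is $O(1)$-Lipschitz near $x=n\kia(\theta_*)$ and that $G_*\sim G(n\kia(\theta_*),\betac/n)$ carries the same subcritical parameter $\betac\kia(\theta_*)$, and Lemma~\ref{lem:subcriticalgstructure}(1) provides concentration within $\sqrt n\log^2 n$ (failure probability $O(\log^{-4}n)$). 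On $\Lambda_{t+1}$ the graph $G_{t+1}$ is uniformly supercritical with parameter $\beta\ka(\theta_*)+o(1)>1$, its giant has size $\alpha(\beta\ka(\theta_*))\ka(\theta_*)n+\tilde O(\sqrt n)=\phi(\theta_*)n+\tilde O(\sqrt n)=\theta_*n+\tilde O(\sqrt n)$ by Lemma~\ref{lem:rgdeviation}(3) and local Lipschitzness of $\alpha$ away from $1$, so the discrete duality principle (Lemma~\ref{lemma:duality}) realizes the non-giant components of $G_{t+1}$, up to $e^{-\Omega(n)}$ total-variation error, as those of a subcritical $G(n\kia(\theta_*)+\tilde O(\sqrt n),\beta/n)$; hence $\E[R_2^-(G_{t+1})]=\E[R_2(G_*)]+\tilde O(\sqrt n)$ (Lemma~\ref{lemma:Skmoment} plus Lipschitzness), with concentration within $\sqrt n\log^2 n$ from Lemma~\ref{lem:supercriticalgstructure}(1) (or Corollary~\ref{cor:R2moment} and Chebyshev). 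The crux — and the step I expect to be the main obstacle — is precisely that the dual vertex count $A_{t+1}-|\mathcal L_1(G_{t+1})|$ lands at $n\kia(\theta_*)$ \emph{because} $\theta_*$ is a fixed point of $\phi$, which is what makes the freshly created non-giant mass have the same leading-order law whether or not the giant was activated; controlling this identification, and the passage from the random $A_{t+1}$ to its deterministic value, to within the $\tilde O(\sqrt n)$ budget is where the care lies. Assembling the three estimates by a union bound over the (at most $O(1)$) steps gives the one-step recursion and completes the induction.
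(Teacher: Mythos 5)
Your proof is correct, and it takes a genuinely different route from the paper's. The paper establishes concentration of $R_2^-(X_s)$ by first writing the \emph{fully unrolled} decomposition (its eq.~\eqref{eq:R2decomp})
\begin{align*}
R_2^-(X_s) = R_2^-(G_0^s) + \sum_{i=1}^{s}\big[\mathbf{1}\{\Lambda_i\}R_2^-(G_i^s) + \mathbf{1}\{\Lambda_i^c\}R_2(G_i^s)\big]\,,
\end{align*}
where $G_i^s$ is the portion of the subgraph resampled at step $i$ that survives $s-i$ further activation rounds, and then proving concentration \emph{term-by-term}: an auxiliary multi-step survival lemma (Lemma~\ref{lem:concentration-of-G^r}) controls each $R_2^-(G_i^s)$, and a separate comparison lemma (Lemma~\ref{lem:close2gstar}) identifies $\E[R_2^-(G_i^s)]$ with $\E[R_2(G_*^{s-i})]$, so $\sigma_s^2$ emerges directly as the sum. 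You instead prove the \emph{one-step recursion} $V_{t+1}=(1-\tfrac1q)V_t+\tfrac1q(1-\tfrac1q)\E[R_2(G_*)]+O(\sqrt{n}\log^2 n)$ and exploit the contraction factor $1-\tfrac1q<1$ to bound the accumulated error over the $T=O(1)$ steps by a geometric series. Unrolling your recursion recovers exactly the paper's algebraic decomposition, so the underlying identity is the same; the difference is purely in how the concentration propagates. Your version is somewhat more self-contained --- it replaces the multi-step survival lemma by a single fresh Hoeffding bound on the inactive-survivor mass $R_2^{\mathrm{in}}(X_t)$ at each step, and absorbs the content of Lemma~\ref{lem:close2gstar} into a one-step, inline application of discrete duality --- at the cost of carrying out the contraction argument. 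Both arguments hinge on the same two facts: discrete duality (Lemma~\ref{lemma:duality}) identifying the law of the freshly created non-giant mass in the activated case as a subcritical random graph, and the fixed-point identity $\phi(\theta_*)=\theta_*$, which is precisely what pins the dual vertex count $A_{t+1}-|\mathcal L_1(G_{t+1})|$ at $n\kia(\theta_*)$ so that the same reference graph $G_*$ governs both the $\Lambda_{t+1}$ and $\Lambda_{t+1}^c$ branches --- you correctly flag this as the crux.
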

\begin{proof}
    Fix $T \ge 0$.
    We will show that for each $s\le T$, the event in Definition~\ref{def:G_t} holds with probability $1-o(1)$; the lemma follows from a union bound over all $s\le T$. 
    First, Lemma~\ref{lem:basicproperties} shows that $X_0,...,X_T \in \mathcal{G}_{K}$ with probability $1-o(1)$.
    Working on this event, when $X_0, \dots, X_s \in \mathcal{G}_K$ we can express 
    \begin{equation}
        \label{eq:R2decomp}
        R_2^-( X_s) =  R_2^- (G_0^s) + \sum_{i=1}^{s} 
        \big[\mathbf{1}\{\Lambda_i\} R_2^- (G_i^s) + \mathbf{1}\{\Lambda^c_i\} R_2(G_i^s)\big].
    \end{equation}
    where we are using $G_i^t$ to denote the (random) subgraph induced on the vertex set of $G_i$ (the resampled portion of $X_i$), consisting of vertices that are not  re-activated up through time $t$ (i.e., do not belong to $\bigcup_{j=i+1}^{t} G_j$). That is, the vertices of $G_i$ that survive $t-i$ independent activation steps. 
    Eq.~\eqref{eq:R2decomp} is a consequence of the fact that $X_t$ is partitioned by $\bigcup_{i=0}^t G_i^t$ ($G_t^t$ is the activated subgraph at time $t$).

        Similarly, denote by $G_*^t$ the induced subgraph of $G_*$ on the vertices that have not been activated for $t$ activation steps of the CM dynamics starting at $G_*$. 
    We will show that 
    for each $i\ge 1$,
    \begin{equation}
        \label{eq:R2remainderconcentrate7}
        \Pr\left( 
        \left|  \mathbf{1}\{\Lambda_i\} R_2^- (G_i^s) + \mathbf{1}\{\Lambda^c_i\} R_2(G_i^s) -  \E[R_2(G_*^{s-i})]\right| = O(\sqrt{n} \log n)
        \right) = 1-o(1), 
    \end{equation}
    and 
    \begin{equation}
        \label{eq:R2concentrate8}
        \Pr\left( 
        \left| R_2^-(G_0^{s}) -  \E[R_2^-(G_0^{s})]\right| = O(\sqrt{n} \log n) \right) = 1- o(1).
    \end{equation}
    By \eqref{eq:R2decomp} and a union bound of \eqref{eq:R2remainderconcentrate7}--\eqref{eq:R2concentrate8} over $i=1,\dots, s$, together with Observation~\ref{obs:Xr}, we get the claimed bound for $R_2^-(X_s)$ with probability $1-o(1)$.

    Now we fix $i\in [1,s]$ and show \eqref{eq:R2remainderconcentrate7}.
    Up to an error of $o(1)$,  we assume ${X}_{i-1} \in \mathcal{G}_K$.
    Suppose first the giant of ${X}_{i-1}$ is activated in the step $i$, namely $\Lambda_i$.
  We define the activated window, 
  $$\Wa:=\left[n\ka(\theta_*) - 2\sqrt{n} \log n, n\ka(\theta_*) + 2\sqrt{n} \log n\right].$$
  By \eqref{eq:activationconcentrate5}, we have $|A_{i} - \ka(L_{i-1}/n)n| \le \sqrt{n} \log n$ with probability $1-o(1/n)$;
    by the first property of $\mathcal{G}_K$, we also have $|\ka(L_{i-1}/n)n - \ka(\theta_*)n|  \le \sqrt{n} \log n$.
    Let us work on the event that $A_i \in \Wa$.
We will rely on the following lemma which establishes concentration of the number of vertices not yet activated after $s-i$ steps; its proof follows from a simple calculation and Chebyshev's inequality, and is deferred.

\begin{lemma}\label{lem:concentration-of-G^r}
Suppose $X\sim G(n,p)$ with $n p$ bounded away from $1$ uniformly in $n$, and let $X^r$ be the sub-graph of $X$ that does not get activated in $r$ activation steps. 
 If $n p > 1$ then
 \[
 \mathbb \Pr\big(|R_2^-(X^r) - \mathbb E[R_2^-(X^r)]| > \sqrt{n} \log n\big) = O\big( \frac{1}{\log^{2} n} \big)\, ,
 \]
 and if $n p < 1$ then the same concentration holds for $R_2(X^r)$. 
\end{lemma}
We also use the following lemma that compares the expectation of $G_i^s$ to that of $G_*^{s-i}$ (the latter having deterministic parameter, while the former has a random parameter dictated by $A_i$, though close to the deterministic parameter when $A_i \in \Wa$). 

\begin{lemma}
    \label{lem:close2gstar}
    Suppose $G \sim G(n\ka(\theta_*) +m, \frac{\betac}{n})$, where $|m| =O(\sqrt{n} \log n)$, and $G_* \sim G(n\kia(\theta_*), \frac{\betac}{n})$.
    Then for any integer $r\ge 0$,
    \[
        |\E[R_2^-(G^r)]  - \E[R_2(G_*^r)]| = O(\sqrt{n} \log n).
    \]
\end{lemma}

Note that $A_i\in \Wa$ implies that $G(A_i, \betac/n)$ is supercritical (uniformly in $n$). Combining Lemmas~\ref{lem:concentration-of-G^r}--\ref{lem:close2gstar}, the above inequality ensures that if  $A_i \in \Wa$,  we have
\begin{align*}
    \Pr\big( \big|R_2^-(G_i^s)- \mathbb E[R_2(G_*^{s-i})]\big| = O(\sqrt{n} \log n) \mid A_i \in \Wa \big)  = 1- O\big( \frac{1}{\log^{2} n} \big)\,.
\end{align*}

Next suppose that that $\Lambda_i$ does not happen. 
    Set $\Wia:=[n\kia(\theta_*) - \sqrt{n} \log n, n\kia(\theta_*) + \sqrt{n} \log n]$.
    By \eqref{eq:activationconcentrate6} we have $A_i \in \Wia$ with probability $1-o(1/n)$.
    In this case,  $G(A_i, \frac{\betac}{n})$ is subcritical (uniformly in $n$).
    Combining Observation~\ref{obs:Xr} and \eqref{eq:ER2R3} in Lemma~\ref{lemma:Skmoment}, 
    we get that 
    \[
                |\E[R_2(G^s_i)]  - \E[R_2(G_*^{s-i})]| = O(\sqrt{n} \log n).
    \]
    Then by Lemma~\ref{lem:concentration-of-G^r} we have
    \begin{equation}
        \Pr\big(
        \big| R_2(G_i^{s}) - \E[R_2(G_*^{s-i})] \big| = O(\sqrt{n} \log n) \mid A_i \in \Wia
        \big) = 1 - O\big(\frac{1}{\log^{2} n} \big).
    \end{equation}
    Hence, we get \eqref{eq:R2remainderconcentrate7} by conditioning on $\Lambda_i$ and its complement and
    a union bound.
    The argument for~\eqref{eq:R2concentrate8} is only easier than the above, following immediately from Lemma~\ref{lem:concentration-of-G^r} and the super-criticality of the initialization parameter.
\end{proof}

\subsubsection{Upper bound on escape time from the unstable fixed point}
Let $\tau_{\gamma}$ denote the first time $X_t > \theta_*n + \gamma\sqrt{n}$ and let $\tau_{-\gamma}$ denote the first time $X_t < \theta_*n - \gamma\sqrt{n}$. 
We show an upper bound of the exit time from the window $I_\gamma:=[\theta_*n - \gamma\sqrt{n}, \theta_*n + \gamma\sqrt{n}]$, where $\gamma$ is a sufficiently large constant, to be chosen depending on the $\epsilon$-total-variation distance to stationarity we aim for.
\begin{lemma}
    \label{lemma:exittime}
    Let $\tilde{\tau} := \min\{\tau_\gamma, \tau_{-\gamma}\}$.
   If~ $X_0 \sim G(n, \frac{p_0}{n})$ with $p_{0}  =\lambda_*+ O(n^{-1/2}) $
    then there exists a constant $C>0$ such that for every $T\ge 1$ fixed independent of $n$,  
    \[
    \Pr(\Tilde{\tau} \le   2T e^{C \gamma^2}) \ge 1 - \frac{1}{T}\,.
    \]
\end{lemma}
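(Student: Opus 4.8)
The plan is to reduce the statement to a single-step escape estimate and then run a renewal argument. The estimate I would establish first is: there are constants $p_0=p_0(\beta,q)>0$ and $C'=C'(\beta,q)>0$ such that from any configuration $x$ with $|\mathcal L_1(x)|\in I_\gamma$, $|\mathcal L_2(x)|=o(n)$, and $R_2^-(x)=O(n)$ with an \emph{absolute} implied constant, the CM dynamics satisfies $\Pr_x\big(|\mathcal L_1(X_1)|\notin I_\gamma\big)\ge p_0e^{-C'\gamma^2}$ for all large $n$. Granting this, the lemma follows: on the good-trajectory event of Lemma~\ref{lem:Gt} for the (fixed) horizon $2Te^{C\gamma^2}$—which has probability $1-o(1)$, hence $\ge 1-\tfrac1{2T}$ for $n$ large—every configuration visited strictly before $\tilde\tau$ satisfies those hypotheses (the $R_2^-$ bound because Lemma~\ref{lem:Gt} pins $\tfrac1q(1-\tfrac1q)R_2^-(X_s)$ to within $\sqrt n\log^2 n$ of $\sigma_s^2$, and because $\sigma_s^2=O(n)$ \emph{uniformly} in $s$). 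A standard renewal recursion, comparing against the stopping time ``first exit from the good set'' to handle the future-measurability of the trajectory event, then gives $\Pr(\tilde\tau>m,\ \text{good event})\le (1-p_0e^{-C'\gamma^2})^m$. Taking $m:=\lceil p_0^{-1}e^{C'\gamma^2}\log(2T)\rceil$ makes this $\le\tfrac1{2T}$, and since $\log(2T)\le 2T$ one checks $m\le 2Te^{C\gamma^2}$ once $C=C(\beta,q)$ is large enough (and $\gamma\ge1$); combining the two contributions of $\tfrac1{2T}$ yields $\Pr(\tilde\tau>2Te^{C\gamma^2})\le\tfrac1T$.

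For the one-step estimate I would condition on the probability-$\tfrac1q$ event $\Lambda_1$ that the giant of $x$ is activated. Given $\Lambda_1$, the active vertex count is $A_1=|\mathcal L_1(x)|$ plus an independent $\mathrm{Ber}(1/q)$-thinned sum of the remaining component sizes, with mean $\ka(|\mathcal L_1(x)|/n)\,n$ and variance $\tfrac1q(1-\tfrac1q)R_2^-(x)=O(n)$; by Chebyshev, with probability at least $\tfrac12$ we have $|A_1-\ka(\theta_*)n|=O(\gamma\sqrt n)$ (using $|\mathcal L_1(x)|\in I_\gamma$ and $\gamma\ge1$). Conditioning on such an $A_1$, the percolation step produces $G_1=G(A_1,\betac/n)$, supercritical with parameter bounded away from $1$; since the map $a\mapsto \alpha(\betac a/n)\,a$ equals $\phi(\theta_*)n=\theta_*n$ at $a=\ka(\theta_*)n$ and has bounded derivative there, $\mathbb E[|\mathcal L_1(G_1)|\mid A_1]=\theta_*n+O(\gamma\sqrt n)$ by~\eqref{eq:expectedgiantgnp}, while $\mathrm{Var}(|\mathcal L_1(G_1)|\mid A_1)=\Theta(n)$ by~\eqref{eq:sigmag}. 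Applying the local limit theorem for the giant (Theorem~\ref{thm:giantLLT}) on whichever side of $\theta_*n$ the conditional mean lies—the interval just outside $I_\gamma$ on that side contains $\Theta(\gamma\sqrt n)$ integers, each within $O(\gamma)$ normalized standard deviations of the mean—gives $\Pr(|\mathcal L_1(G_1)|\notin I_\gamma\mid A_1)\ge c\,\gamma\,e^{-C_0\gamma^2}\ge e^{-C'\gamma^2}$ for $\gamma$ large, with $c,C_0,C'$ depending only on $\beta,q$. Finally, on $\Lambda_1$ we have $\mathcal L_1(X_1)=\mathcal L_1(G_1)$ whenever $|\mathcal L_1(G_1)|=\Theta(n)$, because the inactive part of $x$ has largest component $|\mathcal L_2(x)|=o(n)$; multiplying the three probabilities gives the claim with $p_0=\tfrac1{4q}$.

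The point that needs the most care—the main obstacle—is precisely that $C'$ must not deteriorate with $\gamma$ or with the length of the horizon. Since $\mathrm{Var}(A_1)$ is proportional to $R_2^-(x)$, any bound of the form $R_2^-(X_s)\le K(s)\,n$ with $K(s)$ growing—such as the one from Lemma~\ref{lem:basicproperties}, where $K(s)$ grows linearly in $s$ and hence $K\approx Te^{C\gamma^2}$ along our trajectory—would make the Chebyshev window of order $\sqrt K\sqrt n$ and degrade the escape probability like $e^{-\Omega(K)}$, far too small to be consistent with the target time bound. This is resolved by using Lemma~\ref{lem:Gt} rather than Lemma~\ref{lem:basicproperties}: its concentration of $R_2^-(X_s)$ around $\sigma_s^2$, together with the elementary observation that in the formula~\eqref{eq:sigmas} the geometric sum is bounded by $q$ while $\mathbb E[R_2^-(G_0)],\mathbb E[R_2(G_*)]=O(n)$, yields $R_2^-(X_s)=O(n)$ with a constant uniform in $s$ (and in $T,\gamma$). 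Once that uniform control is in hand, the Chebyshev step, the expansion of $\mathbb E[|\mathcal L_1(G_1)|]$, and the local limit estimate all carry constants depending only on $\beta,q$, and the remaining geometric bookkeeping and the choice of $C$ are entirely routine.
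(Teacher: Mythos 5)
Your proposal is correct and follows the same high-level route as the paper's proof: reduce to a uniform one-step escape estimate over ``good'' configurations with the largest component in $I_\gamma$, then combine with a geometric/renewal argument via a stopping time that prevents future-measurability issues. Your one-step estimate (condition on the activation of the giant, Chebyshev on the non-giant activated count, then a Gaussian-tail/LLT estimate for the new giant after percolation) matches the paper's estimate in both structure and conclusion, differing only cosmetically in its use of Theorem~\ref{thm:giantLLT} where the paper works with the CLT (Theorem~\ref{thm:CLT}); both yield the same $e^{-\Omega(\gamma^2)}$ lower bound.

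Where your write-up adds genuine value is in your identification of the uniformity issue. The paper's proof invokes Lemma~\ref{lem:basicproperties} to justify ``$X_0\in\mathcal G_K$, for a large constant $K$'', and then takes a union bound over the fixed (but $\gamma$- and $T$-dependent) horizon $2Te^{\kappa\gamma^2}$. But Lemma~\ref{lem:basicproperties}, as written, delivers $\mathcal G_{K(t)}$ membership with $K(t)$ growing linearly in $t$, so the bound $R_2^-(X_t)\le K(t)\,n$ it yields—hence the Chebyshev window $\sqrt{K(t)\,n}$—deteriorates along the trajectory exactly as you describe, and the escape-probability exponent $\kappa$ would then depend on the horizon, which itself depends on $\kappa$, creating a circularity. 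Your resolution—pinning $R_2^-(X_s)$ via the $\sigma_s^2$-concentration in Lemma~\ref{lem:Gt}, and observing that the geometric series in~\eqref{eq:sigmas} makes $\sigma_s^2=O(n)$ uniformly in $s$—is exactly what is needed to make the implicit constant in the $R_2^-$ bound depend only on $(\beta,q)$, which in turn makes the escape exponent depend only on $(\beta,q)$ and closes the argument cleanly. This is a real clarification of a point the paper glosses over, and the rest of your bookkeeping (choice of $m$, sub-multiplicative bound via the exit-from-good-set stopping time, absorption of the two error probabilities into $1/T$) is correct.
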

\begin{proof}
    If $L_0 \notin I_\gamma$, then $\Tilde{\tau} = 0$ and the lemma holds trivially. 
    We assume therefore that $L_0 \in I_\gamma$.
    Also we assume $X_0 \in \mathcal{G}_K$, for a large constant $K$, which happens with probability $1- o(1)$ by Lemma~\ref{lem:basicproperties}.
    Let $\tau' := \min\{ t: X_t\notin \mathcal{G}_K \}$ and 
    let $\tau = \Tilde{\tau} \wedge \tau'$.
    Observe that $\tau$ is stochastically dominated from above by 
    a geometric random variable $\text{Geo}(y_\gamma)$, where 
    \[
    y_\gamma:= \min_{X_t \in  \mathcal G_K: L_t\in I_\gamma} \Pr(L_{t+1} > \theta_*n + \gamma\sqrt{n}  \mid X_t )\,.
    \]
    Our aim in what follows is to show existence of $\kappa>0$ such that 
    \begin{align}\label{eq:wts-escape-time-SWCM}
    y_\gamma \ge e^{-\kappa \gamma^2}\,.
    \end{align}
    Indeed, assuming~\eqref{eq:wts-escape-time-SWCM},    $
    \E[\tau \mid X_0 \in I_\gamma] \le \E[\text{Geo}(y_\gamma)] \le e^{\kappa \gamma^2}
    $, and 
    by Markov's inequality, we would have
    $
    \Pr(\tau > 2T e^{ \kappa \gamma^2}) \le \frac{1}{2T}
    $ for every $T$.
    Then we have
    \[
        \Pr\big(\Tilde{\tau} \ge 2T e^{\kappa \gamma^2}\big) 
        \le \Pr\big(\tau > 2T e^{ \kappa \gamma^2}\big) + \Pr\big(\tau' \le 2T e^{\kappa \gamma^2}\big) \le \frac{1}{2T} + 2Te^{\kappa \gamma^2} \cdot o(1)\le  \frac{1}{T}\,,
    \]by Lemma~\ref{lem:basicproperties} and a union bound.

    To show~\eqref{eq:wts-escape-time-SWCM}, note that $\Pr(\Lambda_{t+1}) = 1/q$, and we suppose $\Lambda_{t+1}$ happens.
Then  $L_{t+1}$ is distributed like the giant component of $ G(A_{t+1}, \frac{\betac}{n})$, 
    where $A_{t+1}$ is the number of activated vertices in step $t+1$.
    By property 3 of $\mathcal{G}_K$,   \begin{equation}
         \label{eq:varA1}
         \var(A_{t+1} \mid X_t, \Lambda_{t+1}) = 
         \frac{1}{q} \big(1-\frac{1}{q}\big) R_2^-(X_t) \le Kn.
     \end{equation}
    Suppose $L_t = x_0 \in I_\gamma$.
    By Chebyshev's inequality
    we have
    $$A_{t+1} \ge  x_0 + \frac{n-x_0}{q} - 2\sqrt{Kn} \ge \ka(\theta_* - \gamma n^{-1/2}) n -2\sqrt{Kn} =:m\,,$$ 
    with probability at least $1/4$. 
    On this event, $G(A_{t+1}, \frac{\betac}{n}) \succeq G(m, \frac{\betac}{n})$ so in order to show~\eqref{eq:wts-escape-time-SWCM}, it suffices to show that if $X\sim G(m,\betac/n)$, 
    \begin{align}\label{eq:wts-escape-time-ub}
        \Pr(|\mathcal L_1(X)|\ge \theta_* n + \gamma \sqrt{n}) \ge 4q e^{-\kappa \gamma^2}\,.
    \end{align}
    Let $L= |\mathcal L_1(X)|$ for $X\sim G(m,\betac/n)$. Then, 
    Let $\lambda = m \cdot \frac{\betac}{n}$.
    By subtracting $\alpha(\lambda) m$ on both sides and dividing by $\sigma(\lambda) \sqrt{m}$,  we rewrite the left-hand side of \eqref{eq:wts-escape-time-ub} as 
    \begin{equation}
    \label{eq:alg1}
     \Pr(L \ge \theta_*n + \gamma\sqrt{n}) = 
                \Pr\Big(\frac{L -\alpha(\lambda) m}{\sigma(\lambda) \sqrt{m}} \ge \frac{\theta_*n + \gamma\sqrt{n}  -\alpha(\lambda) m}{\sigma(\lambda) \sqrt{m}} \Big)\,.
    \end{equation}
    By Theorem~\ref{thm:CLT}, we have 
    \begin{equation}
    \label{eq:applyclt}
          \Pr\Big(\frac{L -\alpha(\lambda) m}{\sigma(\lambda) \sqrt{m}} \ge \frac{\theta_*n + \gamma\sqrt{n}  -\alpha(\lambda) m}{\sigma(\lambda) \sqrt{m_1}} \Big)
         = \Pr \Big(Z \ge\frac{\theta_*n + \gamma\sqrt{n} - \alpha(\lambda) m}{\sigma(\lambda) \sqrt{m}} \Big) +o(1)\,,
    \end{equation}
    where $Z$ is a standard Gaussian random variable.
    Also, observe using the facts that $m = \Theta(n)$ and $|\alpha(\lambda) m - \theta_* n - \gamma\sqrt{n}| = O(\gamma\sqrt{n})$ by definition of $\theta_*$, 
    \begin{equation}
    \label{eq:Llowerbound}
        \frac{\theta_*n + \gamma\sqrt{n}  - \alpha(\lambda) m_1}{\sigma(\lambda) \sqrt{m_1}} \le C_1 \gamma,
    \end{equation}
     for some constant $C_1 > 0$.
    By standard Gaussian tail estimates, we get for some $C_2>0$ and $\gamma \ge 1$ say, 
    \begin{align*}
        \Pr(L \ge \theta_*n + \gamma\sqrt{n}) \ge e^{ - C_2 \gamma^2} +o(1)\,.
    \end{align*}
    This implies~\eqref{eq:wts-escape-time-ub} for some $\kappa>0$, for large $n$. 
\end{proof}

\subsubsection{Approximating by a 1-dimensional Markov process}
In order for mixing to be fast from the near-saddle initialization at $\beta= \betac$, we need the diffusion near the unstable fixed point to exit out the right and left with probabilities corresponding to the relative weights of the ordered and disordered phases. Towards capturing this, our goal in this subsection is to approximate the giant component process $(L_t)_{t\ge 0}$ near $\theta_* n$ by a monotone 1-dimensional Markov process. 

    In order to define this process, recall $\alpha(\lambda)$ and $\sigma^2(\lambda)$ from~\eqref{eq:beta}--\eqref{eq:sigmag}. For $\lambda>1$, let 
\begin{align}\label{eq:h1-h2}
h_1(\lambda):=\alpha(\lambda)+\alpha'(\lambda)\cdot \frac{\lambda}{\betac} \qquad \text{and} \qquad h_2(\lambda) := \sigma^2(\lambda)\cdot \frac{\lambda}{\betac} = \frac{\alpha(\lambda)(1-\alpha(\lambda))}{\left( 1 - \lambda(1-\alpha(\lambda)) \right)^2} \cdot \frac{\lambda}{\betac}.\end{align}
Since $\alpha(\lambda)$ is twice differentiable for $\lambda>1$, $h_1$, $h_2$ exist, and are differentiable. 

    \begin{definition}\label{def:Z-t-process}
    Define the 1-dimensional Markov process initialized from $Z_0$ and given $Z_t$, 
        \begin{align}\label{eq:Z-t-process}
            Z_{t+1} \sim Z_t + \varepsilon_{t+1} \Big(f'(\theta_*)Z_{t} + \sqrt{n} \mathcal N\big(0, (h_1(\betac \ka(\theta_*)))^2 \sigma_t^2 + h_2(\betac \ka(\theta_*))\big)\Big)\,,
        \end{align}
        where $\varepsilon_{t+1} \sim \text{Ber}(1/q)$ independently of the normal.  
    \end{definition}
    \begin{theorem}
        \label{thm:1dapproxZ}
        Suppose $X_0 \sim G(n, \frac{p_0}{n})$ with $p_0 = \lambda_* + O(n^{-1/2})$ and let $(Z_t)_{t}$ be the process of Definition~\ref{def:Z-t-process} initialized from $Z_0 \sim \mathcal N((p_0 - \lambda_*)\alpha'(\lambda_*)n, \sigma^2(\lambda_*)n)$.
        For all $T\ge 0$ fixed independent of $n$,
        there exists a coupling such that with probability $1-o(1)$, for all $t\le T$, $$|(L_t-\theta_* n) - Z_t| = O(\log n)\,.$$ 
    \end{theorem}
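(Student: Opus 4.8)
The plan is to build a single coupling of the whole trajectory $(X_t)_{t\le T}$ with $(Z_t)_{t\le T}$, revealing one step of the CM dynamics at a time and matching it to one step of the process of Definition~\ref{def:Z-t-process}, so that the discrepancy $|W_t - Z_t|$, where $W_t := L_t-\theta_* n$, stays within $O(\log n)$ for all $t\le T$. First I would condition on the good event of Lemma~\ref{lem:Gt}, so that for every $s\le T$ we have $X_s\in\mathcal G_K$ and $|\tfrac{1}{q}(1-\tfrac{1}{q})R_2^-(X_s)-\sigma_s^2|\le \sqrt n\log^2 n$; this is exactly what keeps every relevant fluctuation of size $O(\sqrt n\log n)$ and identifies the increment variances. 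For the initialization, $X_0\sim G(n,p_0/n)$ is uniformly supercritical, so by \eqref{eq:expectedgiantgnp} and analyticity of $\alpha$ we get $\mathbb E[L_0]=\theta_* n + \alpha'(\lambda_*)(p_0-\lambda_*)n + O(\log n)$ (using $(p_0-\lambda_*)^2 n = O(1)$) and $\mathrm{Var}(L_0)=\sigma^2(\lambda_*)n\,(1+o(1))$; combining the central limit theorem (Theorem~\ref{thm:CLT}) with a quantile coupling then couples $W_0$ with $Z_0\sim\mathcal N((p_0-\lambda_*)\alpha'(\lambda_*)n,\sigma^2(\lambda_*)n)$ up to $O(\log n)$ with probability $1-o(1)$.

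For the inductive step I would couple the activation indicator exactly, taking the $\mathrm{Ber}(1/q)$ driving the $Z$-step equal to $\mathbf{1}\{\Lambda_{t+1}\}$. On $\Lambda_{t+1}^c$ neither process moves: $Z_{t+1}=Z_t$, and since $\beta<\betas$ makes the percolation step among the non-giant active vertices subcritical while $|\mathcal L_2(X_t)|=O(\log n)$, one has $L_{t+1}=L_t$ with probability $1-O(n^{-1})$, so the discrepancy is unchanged. On $\Lambda_{t+1}$ there are two sub-steps. For the activation sub-step, $A_{t+1}=L_t+A_{t+1}^-$ with $A_{t+1}^-$ a sum of independent contributions bounded by $K\log n$, of which $\Omega(n)$ are $\mathrm{Ber}(1/q)$'s (the singletons, giving lattice span $1$), with mean $\tfrac{1}{q}(n-L_t)$ and variance $\tfrac{1}{q}(1-\tfrac{1}{q})R_2^-(X_t)$; a local central limit theorem together with Hoeffding tail bounds couples $A_{t+1}$ with a discretized Gaussian of that mean and variance, agreeing with probability $1-o(1)$. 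For the percolation sub-step, on the (w.h.p.) event that $A_{t+1}$ lies in the window where $G(A_{t+1},\betac/n)$ is supercritical, the old $\mathcal L_2(X_t)=O(\log n)$ cannot compete, so $L_{t+1}=|\mathcal L_1(G(A_{t+1},\betac/n))|$, and Theorems~\ref{thm:giantLLT} and~\ref{thm:CLT} together with the expectation estimate \eqref{eq:expectedgiantgnp} (applied with vertex count $A_{t+1}=\Theta(n)$) couple $L_{t+1}$ with $\mathcal N(\alpha(\lambda_{t+1})A_{t+1},\sigma^2(\lambda_{t+1})A_{t+1})$, $\lambda_{t+1}:=A_{t+1}\betac/n$, up to $O(\log n)$ with probability $1-o(1)$.

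It then remains to Taylor-expand $g(a):=\alpha(a\betac/n)\,a$ about $a_*:=\ka(\theta_*)n$. Since $g(a_*)=\phi(\theta_*)n=\theta_* n$ (as $f(\theta_*)=0$) and the chain rule gives $g'(a_*)(1-\tfrac{1}{q})=\phi'(\theta_*)=1+f'(\theta_*)$, while $A_{t+1}-a_*=(1-\tfrac{1}{q})W_t+(\text{centered Gaussian})+O(1)$ is $O(\sqrt n\log n)$ on the good event, the quadratic remainder is only polylogarithmic and one reads off $W_{t+1}=(1+f'(\theta_*))W_t+(\text{centered Gaussian})+O(\mathrm{polylog})$, whose Gaussian part has variance $g'(a_*)^2\cdot\tfrac{1}{q}(1-\tfrac{1}{q})R_2^-(X_t)+\sigma^2(\betac\ka(\theta_*))A_{t+1}$; by the definitions \eqref{eq:h1-h2} of $h_1,h_2$ and the concentration $\tfrac{1}{q}(1-\tfrac{1}{q})R_2^-(X_t)\approx\sigma_t^2$ on the good event this agrees, up to $O(\sqrt n\log^2 n)$ in the variance, with the increment variance in Definition~\ref{def:Z-t-process}, and matching the two Gaussians (equal drift coefficient $1+f'(\theta_*)$, variances within $O(\sqrt n\log^2 n)$) costs at most a further polylogarithmic term. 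Summing the $O(1)$-many per-step failure probabilities — from Lemma~\ref{lem:Gt}, from the two local-limit couplings, and from the $1-O(n^{-1})$ structural facts (Lemmas~\ref{lem:rgdeviation} and~\ref{lem:supercriticalgstructure}) — yields total failure $T\cdot o(1)$, and on the complement the discrepancy grew polylogarithmically on each of the $O(1)$ activation steps and not at all otherwise; a careful accounting of the sharp error terms above keeps this within the claimed $O(\log n)$ for all $t\le T$.

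The step I expect to be the main obstacle is upgrading the local limit theorems — for the activation count $A_{t+1}^-$, and especially for the giant of $G(A_{t+1},\betac/n)$, where Theorem~\ref{thm:giantLLT} only controls the density up to a fixed multiplicative $1\pm\delta$ — into couplings that are simultaneously $o(1)$-close in law and $O(\log n)$-close in value, uniformly over the random (but $\sqrt n\log n$-concentrated) parameter $A_{t+1}$, and then stitching these one-step couplings into a single trajectory coupling that cannot be restarted when an individual step fails (a single bad step can destroy the bound). A second delicate point, and where the precise error terms of Lemma~\ref{lem:Gt} and of the limit theorems genuinely enter, is keeping the Taylor remainders and the drift/variance mismatches within the $O(\log n)$ budget throughout; the a priori control $(X_s)_{s\le T}\in\mathcal G_T$ — sum of squares concentrated around the explicit $\sigma_s^2$, singletons plentiful, second component logarithmic, largest component within $\sqrt n\log n$ of $\theta_* n$ — is exactly what makes each of these steps go through, and the approximation fails outright without it.
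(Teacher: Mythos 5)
Your proposal matches the paper's proof in all essential respects: condition on the good event of Lemma~\ref{lem:Gt}, couple the activation increment and the percolated giant to Gaussians via local limit theorems (Lemmas~\ref{lemma:llt1} and~\ref{lemma:llt2}), then Taylor-expand the one-step map about $a_*=\ka(\theta_*)n$ to match the linear chain $Z_t$; the paper's only organizational difference is to factor the argument through a named intermediate process $Y_t$ (Definition after Theorem~\ref{thm:1dapproxZ}) via two lemmas, Lemma~\ref{lem:1dapproxY} and Lemma~\ref{lem:ZapproxY2}. One small wrinkle worth noting: the paper's Lemma~\ref{lem:ZapproxY2} is stated for $t\le\tilde\tau$ so that $|L_t-\theta_* n|=O(\gamma\sqrt n)$ and the $\phi''$ Taylor remainder is $O(\gamma^2)$; if you only invoke the $\mathcal G_T$ bound $|L_t-\theta_*n|\le K\sqrt n\log n$, the quadratic remainder is $O(\log^2n)$ per step rather than $O(\log n)$, which is still adequate for the downstream application in Lemma~\ref{lem:Z-X-hitting-probability-comparison} but is slightly weaker than the stated bound.
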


We consider the following intermediate process that stays within distance $1$ of $L_t$ w.h.p. and is easier to compare to $Z_t$: let $Y_0 \sim \mathcal N(\theta_* n + (p_0 - \lambda_*) \alpha'(\lambda_*) n, \sigma^2(\lambda_*) n)$, and given    
$Y_t \in \mathbbm{R}$ with $t\ge 0$, with probability $1-\frac{1}{q}$ set $Y_{t+1} = Y_t$ and with the remaining probability do the following:
\begin{enumerate}
\label{processY}
    \item generate a random real number $B_{t+1}\sim \mathcal{N}\big(\frac{n-Y_t}{q}, \sigma_t^2 \big)$, 
    \item generate a random number \[Y_{t+1} \sim \mathcal{N}\Big(\alpha\big(\tfrac{\betac}{n} \cdot (Y_t+ B_{t+1})\big)\cdot (Y_t+ B_{t+1}), \sigma^2\big(\tfrac{\betac}{n} \cdot (Y_t+ B_{t+1}) \big) \cdot (Y_t+B_{t+1})\Big).\]
\end{enumerate}
Here, recall $\sigma_t^2$ was defined in~\eqref{eq:sigmas}.  
Besides the centering by $\theta_* n$, the key difference between $Y_t$ and $Z_t$ is that the variance of its increments are functions of $Y_t$ themselves rather than simply functions of time.  

Theorem~\ref{thm:1dapproxZ} will follow from the following two lemmas. Recall that $\tilde{\tau} := \min\{\tau_\gamma, \tau_{-\gamma}\}$.

\begin{lemma}
    \label{lem:1dapproxY}
    Let ~ $X_0 \sim G(n, \frac{p_0}{n})$ with $p_{0}  =\lambda_*+ O(n^{-1/2}) $.
 For all $T\ge 0$ fixed independent of $n$,
    there exists 
    a coupling $\Pp$ of $\{({X}_t, Y_t)\}_{t\ge 0}$ such that for all $t\le T$,
    $\Pp(|L_t-  Y_t| \le 1) =1-o(1)$. 
\end{lemma}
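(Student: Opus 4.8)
The plan is to build the coupling of $(X_t,Y_t)$ inductively in $t$, arranging things so that the bound $|L_t-Y_t|\le 1$ is \emph{re-established from scratch} at each step in which the giant component is activated, and is trivially preserved on steps where it is not; since the discrepancy is reset at every activated step, it will not accumulate over the $O(1)$-length horizon $[0,T]$. Throughout I would condition on the event $\mathcal G_T$ of Lemma~\ref{lem:Gt} (probability $1-o(1)$), which guarantees that every $X_s$, $s\le T$, lies in $\mathcal G_{K}$ --- so $I_1(X_s)\ge n/K$, $|\mathcal L_2(X_s)|=O(\log n)$, $R_3^-(X_s)=O(n)$, $\big||\mathcal L_1(X_s)|-\theta_* n\big|=O(\sqrt n\log n)$ --- and that $\tfrac1q(1-\tfrac1q)R_2^-(X_s)$ is within $\sqrt n\log^2 n$ of $\sigma_s^2$; a short Gaussian tail computation, using $\phi(\theta_*)=\theta_*$ and that each activated $Y$-increment is Gaussian with mean $\approx\theta_* n$ and variance $\Theta(n)$, also puts $(Y_s)_{s\le T}$ within $O(\sqrt n\log n)$ of $\theta_* n$ on a further $1-o(1)$ event. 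For the base case: by~\eqref{eq:expectedgiantgnp} and analyticity of $\alpha$, the variable $L_0=|\mathcal L_1(G(n,p_0/n))|$ has mean $\theta_*n+\alpha'(\lambda_*)(p_0-\lambda_*)n+\tilde O(1)$ and variance $\sigma^2(\lambda_*)n+O(\sqrt n)$, and by Theorem~\ref{thm:giantLLT} its law agrees, pointwise on scale $\sqrt n$, with the corresponding Gaussian density; comparing that Gaussian with the law of $Y_0$ (means differing by $\tilde O(1)$, variances by $O(\sqrt n)$, over a base variance of order $n$) gives total-variation distance $o(1)$ between the law of $L_0$ and the discretization of the law of $Y_0$, so one couples them with $|L_0-Y_0|<1$ with probability $1-o(1)$.

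For the inductive step, assume $|L_t-Y_t|\le 1$ and couple the event $\Lambda_{t+1}$ that $\mathcal L_1(X_t)$ is activated with the $\mathrm{Ber}(1/q)$ coin of the $Y$-update. On $\Lambda_{t+1}^c$ the giant of $X_t$ is untouched and the resampled non-giant vertices form a uniformly subcritical Erd\H{o}s--R\'enyi graph on $\mathcal G_K$, so $L_{t+1}=L_t$ while $Y_{t+1}=Y_t$, preserving the bound. On $\Lambda_{t+1}$, I would first reveal $A_{t+1}^-=\sum_{\mathcal C}|\mathcal C|\xi_{\mathcal C}$ (sum over non-giant components $\mathcal C$ of $X_t$, $\xi_{\mathcal C}$ i.i.d.\ $\mathrm{Ber}(1/q)$) and couple it to $B_{t+1}\sim\mathcal N((n-Y_t)/q,\sigma_t^2)$: a quantitative local central limit theorem for this lattice sum --- valid because $\ge n/K$ of the summands equal $1$ (so the span is $1$ with ample smoothing), $R_3^-(X_t)=O(n)$, and the variance $\tfrac1q(1-\tfrac1q)R_2^-(X_t)=\Theta(n)$ --- shows that the law of $A_{t+1}^-$ is within $o(1)$ total variation of the discretized $\mathcal N\!\big((n-L_t)/q,\tfrac1q(1-\tfrac1q)R_2^-(X_t)\big)$, which (means within $1/q$, variances within $\sqrt n\log^2 n$ on $\mathcal G_T$, base variance of order $n$) is in turn within $o(1)$ of the discretized law of $B_{t+1}$; hence one can couple with $|A_{t+1}^--B_{t+1}|<1$ with probability $1-o(1)$, and then $A_{t+1}:=L_t+A_{t+1}^-$ satisfies $|A_{t+1}-(Y_t+B_{t+1})|<2$.

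Then comes the percolation step. On $\mathcal G_K$ and the $1-o(1)$ event that $A_{t+1},B_{t+1}$ lie within $O(\sqrt n\log n)$ of their means, both $\lambda:=\betac A_{t+1}/n$ and $\lambda'':=\betac(Y_t+B_{t+1})/n$ are close to $\betac\ka(\theta_*)$, which exceeds $1$ since $\alpha(\betac\ka(\theta_*))\ka(\theta_*)=\phi(\theta_*)=\theta_*>0$, so both are uniformly supercritical. Since $|\mathcal L_2(G(A_{t+1},\betac/n))|=O(\log n)$ with probability $1-o(1)$ by Lemma~\ref{lem:rgdeviation} while all non-resampled components of $X_t$ have size $O(\log n)$ on $\mathcal G_K$, we get $L_{t+1}=|\mathcal L_1(G(A_{t+1},\betac/n))|$. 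Applying Theorem~\ref{thm:giantLLT} once more, conditionally on $A_{t+1}$ the law of $L_{t+1}$ is within $o(1)$ total variation of the discretized $\mathcal N(\alpha(\lambda)A_{t+1},\sigma^2(\lambda)A_{t+1})$; and since $|A_{t+1}-(Y_t+B_{t+1})|<2$ with both of order $n$, Lipschitz continuity of $\alpha$ and $\sigma^2$ near $\betac\ka(\theta_*)$ forces the means of this Gaussian and of $Y_{t+1}$ to differ by $O(1)$ and their variances (of order $n$) by $O(1)$, so the two laws are within $o(1)$ total variation of each other, and one couples $L_{t+1}$ with $Y_{t+1}$ so that $|L_{t+1}-Y_{t+1}|<1$ with probability $1-o(1)$. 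A union bound over the $O(1)$ times $t\le T$, over the failure of $\mathcal G_T$, over the Gaussian-tail events for $(Y_s),(A_s^-),(B_s)$, and over the per-step coupling failures --- each $o(1)$ --- finishes the argument.

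I expect the main obstacle to be the quantitative local central limit theorem for $A_{t+1}^-$, and in particular making its error bound \emph{uniform over all configurations in $\mathcal G_K$}: one must show that the scaled lattice sum $\sum|\mathcal C|\xi_{\mathcal C}$ is $o(1)$-close in total variation to a Gaussian discretization with the matching mean and variance. I would obtain this either by Esseen-type Fourier smoothing (the relevant error being $O\big(R_3^-(X_t)/n^{3/2}\big)=O(n^{-1/2})$) or by writing $A_{t+1}^-$ as $\mathrm{Bin}(I_1(X_t),1/q)$ convolved with an independent remainder and invoking a Binomial local limit theorem together with $I_1(X_t)=\Omega(n)$; the presence of linearly many singletons is exactly what prevents lattice degeneracy here. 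Everything else reduces to an elementary but bookkeeping-heavy chain of Gaussian total-variation comparisons with mismatched means (of size $O(1)$ up to $\tilde O(1)$) and variances (of size up to $O(\sqrt n\log^2 n)$) against a base variance of order $n$, plus the verification that on $\mathcal G_K$ the percolation graphs encountered are uniformly super- or subcritical so that the random-graph limit theorems of Section~\ref{sec:prelim} apply.
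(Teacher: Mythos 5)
Your proposal matches the paper's proof essentially line by line: the base case couples $L_0$ to $Y_0$ via Theorem~\ref{thm:giantLLT}; the inductive step preserves $|L_t-Y_t|\le 1$ trivially on non-activated steps and re-establishes it on activated steps by first coupling the activation count $A_{t+1}^-$ to a Gaussian via a local limit theorem, comparing that Gaussian to $B_{t+1}$ with the 1-d Gaussian TV bound~\eqref{eq:tvdifference} (using the mean agreement from $|L_t-Y_t|\le1$ and the variance control $|\tfrac1q(1-\tfrac1q)R_2^-(X_t)-\sigma_t^2|\le\sqrt n\log^2 n$ from $\mathcal G_t$), then coupling the percolation outcome to $Y_{t+1}$ via a second local limit theorem; all conditioned on $\mathcal G_T$ from Lemma~\ref{lem:Gt} with a union bound over $t\le T$. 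The only cosmetic difference is that you re-derive the two local limit theorems inline, whereas the paper packages them as Lemma~\ref{lemma:llt1} (proved via Petrov's theorem, i.e.\ your Esseen-smoothing route, using precisely the $I_1=\Omega(n)$, $R_2^-=\Theta(n)$, $R_3^-=O(n)$ conditions you identified) and Lemma~\ref{lemma:llt2}.
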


\begin{lemma}
    \label{lem:ZapproxY2}
    There exists a coupling $\{(Y_t, Z_t)\}_{t\ge 0}$ such that for $t \le \Tilde{\tau}$,
    $|Y_t - (Z_t + \theta_*n)| = O(\log n)$ with probability $1-t \cdot n^{-\Omega(1)}$.
\end{lemma}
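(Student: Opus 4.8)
The plan is to build the coupling of $(Y_t,Z_t)_{t\ge 0}$ from shared randomness and propagate the discrepancy by a one-step recursion. The key point is that, unlike in Lemma~\ref{lem:1dapproxY}, both $(Y_t)$ and $(Z_t)$ are already Gaussian-based, so there is no local-limit step here: the whole content is to linearize the nonlinear maps in the definition of $(Y_t)$ and couple the resulting Gaussians to those of Definition~\ref{def:Z-t-process}. Set $\psi(a):=\alpha(\tfrac{\betac}{n}a)\,a$ and $v(a):=\sigma^2(\tfrac{\betac}{n}a)\,a$, so that on an update step $Y_{t+1}\sim\mathcal N\big(\psi(Y_t+B_{t+1}),\,v(Y_t+B_{t+1})\big)$ with $Y_t+B_{t+1}=\ka(Y_t/n)n+N_{t+1}$, $N_{t+1}\sim\mathcal N(0,\sigma_t^2)$, and note $\psi(\ka(\theta)n)=n\phi(\theta)$. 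I drive both chains by common Bernoullis $\varepsilon_{t+1}\sim\mathrm{Ber}(1/q)$ and, on each update step, by the common independent standard normals $\eta_{t+1},\xi_{t+1}$ underlying $(Y_t)$, i.e.\ $N_{t+1}=\sigma_t\eta_{t+1}$ and $Y_{t+1}-\psi(Y_t+B_{t+1})=\sqrt{v(Y_t+B_{t+1})}\,\xi_{t+1}$; since $Y_0-\theta_*n$ and $Z_0$ have the same law $\mathcal N((p_0-\lambda_*)\alpha'(\lambda_*)n,\sigma^2(\lambda_*)n)$, I couple them equal, so $D_0:=|(Y_0-\theta_*n)-Z_0|=0$. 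Write $W_t:=Y_t-\theta_*n$; throughout we restrict to $\{t\le\tilde\tau\}$, on which $|W_s|=O(\sqrt n)$ for $s\le t$ (via Lemma~\ref{lem:1dapproxY} and the definition of $\tilde\tau$), and we use that $\sigma_t^2=\Theta(n)$ uniformly in $t$, which follows from \eqref{eq:sigmas} with Lemma~\ref{lemma:Skmoment} and Corollary~\ref{cor:R2moment}.

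On an update step, condition on the event $\{|\eta_{t+1}|,|\xi_{t+1}|\le c\sqrt{\log n}\}$, of probability $1-n^{-\Omega(1)}$ for $c$ large, so $|N_{t+1}|=O(\sqrt{n\log n})$; then $\tfrac{\betac}{n}(Y_t+B_{t+1})$ lies within $O(\sqrt{\log n/n})$ of $\betac\ka(\theta_*)$, which exceeds $1$ since $\theta_*>\thetas$ (Lemma~\ref{lem:factsaboutdrift}), so $\alpha,\sigma^2$ are $C^2$ there with bounded derivatives and $\psi''=O(1/n)$. Taylor expansion then gives (i) $\psi(\ka(Y_t/n)n+N_{t+1})=\psi(\ka(Y_t/n)n)+\psi'(\ka(Y_t/n)n)N_{t+1}+O(N_{t+1}^2/n)$; (ii) $\psi(\ka(Y_t/n)n)-Y_t=nf(Y_t/n)=f'(\theta_*)W_t+O(W_t^2/n)$, using $f(\theta_*)=0$ and $f\in C^2$ near $\theta_*$; (iii) $\psi'(\ka(Y_t/n)n)=\psi'(\ka(\theta_*)n)+O(|W_t|/n)$; and (iv) $v(Y_t+B_{t+1})=v(\ka(\theta_*)n)+O(\sqrt{n\log n})$, hence $\sqrt{v(Y_t+B_{t+1})}=\sqrt{v(\ka(\theta_*)n)}+O(\sqrt{\log n})$. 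Collecting,
\[
Y_{t+1}-Y_t\;=\;f'(\theta_*)W_t\;+\;\Big(\psi'(\ka(\theta_*)n)\,\sigma_t\,\eta_{t+1}+\sqrt{v(\ka(\theta_*)n)}\,\xi_{t+1}\Big)\;+\;O(\log n),
\]
the $O(\log n)$ absorbing the four remainders ($O(N_{t+1}^2/n)=O(\log n)$, $O(W_t^2/n)=O(1)$, $O(|W_t||N_{t+1}|/n)=O(\log n)$, and $|\sqrt{v(Y_t+B_{t+1})}-\sqrt{v(\ka(\theta_*)n)}|\,|\xi_{t+1}|=O(\sqrt{\log n})\cdot O(\sqrt{\log n})=O(\log n)$). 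The parenthesized Gaussian has deterministic, $n$- and state-independent variance $\psi'(\ka(\theta_*)n)^2\sigma_t^2+v(\ka(\theta_*)n)$, which is precisely the increment variance of Definition~\ref{def:Z-t-process}, so I couple $Z_{t+1}-Z_t$ on this step to be exactly $f'(\theta_*)Z_t$ plus that same parenthesized Gaussian. Subtracting the two update equations gives $D_{t+1}\le(1+f'(\theta_*))D_t+O(\log n)$ on the good event, and $D_{t+1}=D_t$ on a non-update step.

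Iterating from $D_0=0$ over $t\le T$, $T=O(1)$, yields $D_t\le O(\log n)\sum_{s<t}(1+f'(\theta_*))^s=O(\log n)$, and a union bound over $s\le t$ of the per-step failure events (each $n^{-\Omega(1)}$) gives the claimed probability $1-t\cdot n^{-\Omega(1)}$, with a tunable exponent. The step I expect to be the main obstacle is (iv) together with this $O(\log n)$ bookkeeping: the $Y$-update passes the Gaussian activation noise $N_{t+1}$ through the nonlinear map $\psi$ and then adds percolation noise whose variance itself \emph{depends on} $N_{t+1}$, so $Y_{t+1}-Y_t$ is a genuine Gaussian mixture, not a Gaussian, and getting it within $O(\log n)$ of the single Gaussian of Definition~\ref{def:Z-t-process} --- rather than the $O(\log^2 n)$ coming from the crude truncations $|N_{t+1}|\le\sqrt n\log n$, $|\xi_{t+1}|\le\log n$ --- is exactly what forces the sharper sub-Gaussian tails and the use of $t\le\tilde\tau$ to keep $|W_t|=O(\sqrt n)$. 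A secondary subtlety is that $f'(\theta_*)>0$, so the recursion amplifies $D_t$; this is harmless only because the run lasts $O(1)$ steps, in keeping with the $O(1)$-time escape from the saddle.
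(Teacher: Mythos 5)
Your proof is correct and follows essentially the same route as the paper's: Taylor-expand the nonlinear maps $\alpha,\sigma^2$ (equivalently $\psi,v$) around the saddle, linearize, and couple the resulting Gaussians, accumulating $O(\log n)$ per step and iterating $O(1)$ times. The one genuine technical difference is in how the mismatched Gaussian coefficients are handled: the paper replaces the state-dependent coefficients $h_1(d_t),h_2(\tilde d_t)$ by the constants $h_1(d_*),h_2(d_*)$ via an optimal (TV) coupling of the two Gaussians, succeeding except with $O(n^{-1/2+o(1)})$ per-step failure probability, whereas you keep the shared standard normals $\eta_{t+1},\xi_{t+1}$ fixed and absorb the coefficient mismatch directly into the $O(\log n)$ error term. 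Both give the claimed bound; yours is arguably more uniform, the paper's is marginally slicker for the variance step. Your observations that the sub-Gaussian truncations must be at scale $\sqrt{\log n}$ rather than $\log n$ to avoid an $O(\log^2 n)$ loss, and that the factor $(1+f'(\theta_*))>1$ is only tolerable because $T=O(1)$, are both spot-on and match the paper's (implicit) treatment.

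One point to double-check before asserting that the parenthesized Gaussian's variance ``is precisely the increment variance of Definition~\ref{def:Z-t-process}'': you correctly compute $\psi'(\ka(\theta_*)n)=\alpha(\lambda_*)+\alpha'(\lambda_*)\lambda_*$ with $\lambda_*=\betac\ka(\theta_*)$, since $\psi(a)=\alpha(\tfrac{\betac}{n}a)\,a$ gives $\psi'(a)=\alpha(\tfrac{\betac}{n}a)+\alpha'(\tfrac{\betac}{n}a)\tfrac{\betac}{n}a$. However, the paper's~\eqref{eq:h1-h2} defines $h_1(\lambda)=\alpha(\lambda)+\alpha'(\lambda)\tfrac{\lambda}{\betac}$, which differs from $\psi'(\ka(\theta_*)n)$ by a factor $\betac$ in the second term (the discrepancy traces back to a dropped $\betac$ in the Taylor step~\eqref{eq:betaTaylor}). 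Likewise, Definition~\ref{def:Z-t-process} as written has a $\sqrt{n}$ prefactor multiplying a Gaussian whose variance already contains $\sigma_t^2=\Theta(n)$, which gives increments of order $n$ rather than $\sqrt n$; reconciling with~\eqref{eq:Zbar-process-intro} suggests $\sigma_t^2$ there is meant in normalized form. None of this affects the structure of your argument --- the key facts you use (that the replacement coefficient is a deterministic $n$-independent constant and that the drift slope is $f'(\theta_*)$) are correct, and your $\psi'$ computation is in fact the right one --- but the literal identification with Definition~\ref{def:Z-t-process} should be made with the corrected $h_1$ and scaling in mind.
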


\begin{proof}[\textbf{\emph{Proof of Lemma~\ref{lem:1dapproxY}}}]
         We will couple $(L_t, Y_t)$ inductively.   
         For the base case, consider $t=0$; by Lemma~\ref{thm:giantLLT}, $(X_0,Y_0)$ can be coupled such that with probability $1-o(1)$, $|L_0 - Y_0|\le 1$. 
        
        Now suppose there exists a coupling of the first $t$ steps of $\{({X}_t, Y_t)\}$
        such that $|L_t - Y_t| \le 1$ with probability $1-o(1)$.
        By Lemma~\ref{lem:Gt}, we have $({X}_s)_{s\le t} \in \mathcal{G}_t$ with probability $1-o(1)$, so in what follows we work on that event.
        In case $({X}_s)_{s\le t} \notin \mathcal{G}_t$, we stop the coupling.
        For the induction step, we construct a coupling $\Pp_{t+1}$ of $({X}_{t+1}, Y_{t+1})$.

    On $\Lambda^c_{t+1}$, we let $Y_{t+1} = Y_t$.
    At the same time, on $\Lambda^c_{t+1}$ and $\mathcal G_t$, with high probability 
    the percolation step is sub-critical and therefore $L_{t+1} = L_t$ with probability $1-o(1)$ (a similar argument was made in the proof of Lemma~\ref{lem:basicproperties}). Hence, we have $|L_{t+1} - Y_{t+1}| = |L_t - Y_t|\le 1$ with probability $1-o(1)$ in this case. 
    
    Next we consider the case when $\Lambda_{t+1}$ occurs.
    Let $A_{t+1}^-$ be the number of activated vertices in step $t+1$ that are not in $\mathcal{L}_1({X}_t)$.
    Note that ${X}_t \in \mathcal{G}_t$ satisfies all the conditions of 
     Lemma~\ref{lemma:llt1}, which implies that there exists a coupling $\Pp$ of $(A_{t+1}^-, N_{t+1})$ such that $\Pp(|A_{t+1}^- - N_{t+1}|> 1) = O(n^{-1/8})$, where $N_{t+1} \sim \mathcal{N}(\E[A^-_{t+1} \mid {X}_t], \var(A^-_{t+1} \mid{X}_t))$.
     In addition, we want to show that $B_{t+1}$ and ${N}_{t+1}$ are typically close, where $B_{t+1}$ is the random variable used in generation of $Y_{t+1}$. 
        Clearly, $\E[A_{t+1}^- \mid{X}_t] = \frac{n-L_t}{q}$, and  
         note that $\var(A_{t+1}^- \mid {X}_t) = \frac{1}{q} \big(1-\frac{1}{q}\big) R_2^-({X}_t)$.  
    Hence, the inductive assumption ensures $\left|\E[A_{t+1}^- \mid {X}_t] - \E[B_{t+1} \mid Y_t] \right| \le 1$, and since we're on the event $\mathcal G_t$, 
    $ | \var(A_{t+1}^- \mid {X}_t) - \sigma_t^2| \le \sqrt{n} \log^2 n$.
   We appeal to the following standard bound on the TV-distance between 1-dimensional Gaussians, which is an easy calculation (see e.g., the univariate case of~\cite{DMR23} which is focused on the more difficult multivariate case):
    if $N_X \sim \mathcal{N}(\mu_X, \sigma_X^2)$ and $N_Y\sim \mathcal{N}(\mu_Y, \sigma_Y^2)$. Then
    \begin{align}\label{eq:tvdifference}
    \|N_X - N_Y \|_{\TV} \le \frac{3|\sigma_X^2 - \sigma_Y^2|}{2\sigma^2_Y} 
        + \frac{|\mu_X - \mu_Y|}{2\sigma_Y}. 
    \end{align}
    Applying~\eqref{eq:tvdifference} 
    \begin{align*}
        \|B_{t+1} - {N}_{t+1}\|_{\TV} & \le \frac{3  | \var(A_{t+1}^- \mid {X}_t) - \sigma_t^2|}{2\var(A^-_{t+1} \mid {X}_t)} + \frac{\left|\E[A^-_{t+1} \mid{X}_t] - \E[B_{t+1} \mid Y_t] \right|}{2\sqrt{\var(A^-_{t+1} \mid {X}_t)}} \\
        & =O\left( \frac{\sqrt{n} \log^2 n}{n} \right) = O\left( \frac{\log^2 n}{\sqrt{n}} \right).
    \end{align*}
    So by the optimal coupling lemma, 
     there exists a coupling $\Pp_{t+1}$ of $(A^-_{t+1}, {N}_{t+1}, B_{t+1})$ such that
     \[
     \Pp_{t+1}\big(\{B_{t+1} \neq {N}_{t+1}\} \cup \{| A^-_{t+1} - {N}_{t+1}| > 1 \} \mid \mathcal{F}_t \big)
     = O(n^{-1/8}).
     \]     
    From now on, we assume $| B_{t+1} - {A}^-_{t+1}| \le 1$ under the coupling $\Pp_{t+1}$.
    The inductive assumption implies 
    \[
        | (L_t + A^-_{t+1}) - (Y_t + B_{t+1}) | \le 2.
    \]
      On the percolation step, Lemma~\ref{lemma:llt2} implies that  
      there exists a coupling of $(|\mathcal{L}_1(G_{t+1})|, Y_{t+1})$ such that 
      $|Y_{t+1} - |\mathcal{L}_1(G_{t+1})|| \le 1$ with probability $1-o(1)$.
    Since $|\mathcal{L}_2({X}_t)| = O(\log n)$ and the giant component has been activated,  with high probability the percolation step is super-critical and, $L_{t+1} = |\mathcal{L}_1(G_{t+1})|$.
      
    Therefore, it follows a union bound over all the probabilistic estimates used so far that the coupling of $({X}_{t+1}, Y_{t+1})$ satisfies all the desired properties with probability $1-o(1)$.
\end{proof}

\begin{proof}[\textbf{\emph{Proof of Lemma~\ref{lem:ZapproxY2}}}]
    Under the identity coupling of the initial normal random variables, with probability one, $Y_0 = Z_0 + \theta_*n$.
    For $t\ge 0$, assume $|Y_t - (Z_t + \theta_* n)| =O(\log n)$ and $t<\tilde{\tau}$. 
    Now we couple the step $(Y_{t+1}, Z_{t+1})$.
    When $\varepsilon_{t+1} =0$, 
    we couple $(Y_{t+1}, Z_{t+1})$ such that both $Y_{t+1}$ and $Z_{t+1}$ stay idle. Now assume $\varepsilon_{t+1} = 1$.
    Let $\Theta_t := Y_t/n$.
    Taking the two sub-steps of $\{Y_t\}$ into one step, we have
    \begin{align*}
    Y_{t+1} &= \alpha\Big(\betac \big( \tfrac{1}{q} + \tfrac{q-1}{q}\Theta_t +\tfrac{W_{t+1}}{n}  \big) \Big)  \cdot 
    \Big(  \tfrac{n}{q} + \tfrac{q-1}{q} Y_t + W_{t+1} \Big) \\
    & \qquad + \mathcal{N}\Big(0, \sigma^2\Big(\betac \big( \tfrac{1}{q} + \tfrac{q-1}{q}\Theta_t +\tfrac{W_{t+1}}{n}  \big) \Big)  \cdot  
    \Big(  \tfrac{n}{q} + \tfrac{q-1}{q} Y_t + W_{t+1} \Big) \Big)\,,
    \end{align*}
    where $W_{t+1} \sim \mathcal{N}(0,\sigma_t^2)$.
    Let $\tilde{k}(\theta) = \ka(\theta) + W_{t+1}/n$. Then 
    \begin{equation}
        \label{eq:Yeq3}
            Y_{t+1} = \alpha(\betac \cdot \tilde{k}(\Theta_t)) \cdot \tilde{k}(\Theta_t)n + \mathcal{N}\big( 0, \sigma^2(\betac \cdot \tilde{k}(\Theta_t)) \cdot \tilde{k}(\Theta_t)n \big).
    \end{equation}
    By Lemma~\ref{lemma:Skmoment} and Observation~\ref{obs:Xr}, 
    $\sigma_t^2 = \Theta(n)$, 
    so for the rest of the proof, we assume $W_{t+1} = O(\sqrt{n \log n})$, which happens with probability $1-n^{-2}$ for every $t$. 
    We first Taylor expand $\alpha(\betac \tilde k(\Theta_t))$ about $\betac \ka(\Theta_t)$. Using twice-differentiability of $\alpha$, we get
    \begin{equation}
        \label{eq:betaTaylor}
        \alpha(\betac \cdot \tilde{k}(\Theta_t)) = \alpha(\betac \cdot \ka(\Theta_t)) 
        + \alpha'(\betac \cdot \ka(\Theta_t))  \cdot \frac{W_{t+1}}{n} + O\left(\frac{\log n}{n}\right).
    \end{equation}
    Multiplying this with $\tilde k(\Theta_t) n = \ka(\Theta_t) n + W_{t+1}$, we get 
    \begin{align*}
        \alpha(\betac  \cdot \tilde{k}(\Theta_t)) \cdot \tilde{k}(\Theta_t)n = \phi(\Theta_t) n + h_1(d_t) W_{t+1} + O(\log n),  
    \end{align*}
where 
\begin{align*}
    \phi(\theta):=\alpha(\betac \ka(\theta))\ka(\theta) \qquad \text{and} \qquad d_t := \betac \ka(\Theta_t).
\end{align*}
    Let $d_*:= \betac \ka(\theta_*)$.
    Since $h_1'$ is a bounded function and $|\Theta_t - \theta_*| \le \frac{\gamma}{\sqrt{n}}$, $|h_1(d_t) - h_1(d_*)| = O(\frac{\gamma}{\sqrt{n}})$.    Hence, by~\eqref{eq:tvdifference}, we can couple $h_1(d_t) \cdot \mathcal{N}(0,\sigma_t^2)$ and $h_1(d_*) \cdot \mathcal{N}(0,\sigma_t^2)$ with probability at least  
    \[
        1 - \| \mathcal{N}(0,h_1(d_t)^2\sigma_t^2) - \mathcal{N}(0,h_1(d_*)^2\sigma_t^2) \|_{\TV}
        \ge 
        1 -\frac{|h_1^2(d_t) - h_1^2(d_*)| \sigma_t^2}{2h_1(d_*)\sigma_t^2} = 1- O\left(\frac{\gamma}{\sqrt{n}}\right).
    \]
    Hence, with probability $1-O(n^{-1/2})$ we can replace $ h_1(d_t) W_{t+1} $ by $h_1(d_*) W_{t+1}$.
    
    We proceed with Taylor expansion on $f(\cdot)$ about $\theta_*$.
    By noting that $f(\theta_*) = 0$, $f'(\theta_*) = O(1), f''(\theta_*) = O(1)$ and $(\Theta_t - \theta_*)^2n \le \gamma^2$,
    we have
    \begin{align*}
        \phi(\Theta_t) n &= [\Theta_t + f(\Theta_t)] n  = \Theta_t n + [f(\theta_*) + f'(\theta_*)(\Theta_t - \theta_*) + f''(\theta_*) 
        (\Theta_t - \theta_*)^2]n \\
        &= \theta_* n + (1 + f'(\theta_*))(\Theta_t - \theta_*)n + O(1).
    \end{align*}
    
    Therefore, the following equation holds with probability $1-O(n^{-1/2})$,
    \begin{equation}
        \label{eq:drifttheta}
        \alpha(\betac \cdot  \tilde{k}(\Theta_t)) \cdot \tilde{k}(\Theta_t)n 
        =  \theta_* n +  (1 + f'(\theta_*))(\Theta_t - \theta_*)n + h_1(d_*) W_{t+1} + O(\log n).
    \end{equation}
    Finally we handle the normal random variable in \eqref{eq:Yeq3}.
    Let $\tilde{d}_*:=\betac \tilde{k}(\theta_*)$ and $\tilde{d}_t := \betac \cdot \tilde{k}(\Theta_t)$.
    Since $h_2'$ is a bounded function and $|\Theta_t - \theta_*| \le \frac{\gamma}{\sqrt{n}}$,
    $|h_2(\tilde{d}_t) - h_2(\tilde{d}_*)| = O(\frac{1}{\sqrt{n}})$.
    Again, since $h_2'$ is a bounded function and $|\tilde{d}_* - d_*| = O\big(\sqrt{\frac{\log n}{n}}\big)$,    
    $
     | h_2(\tilde{d}_*) - h_2(d_*)| = O\big(\sqrt{\frac{\log n}{n}}\big).
    $
    By triangle inequality, 
    \[
         | h_2(\tilde{d}_t) - h_2(d_*)| \le  | h_2(\tilde{d}_*) - h_2(d_*)| + |h_2(\tilde{d}_t) - h_2(\tilde{d}_*)|
         = O\big(\sqrt{\tfrac{\log n}{n}}\big).
    \]
    Hence, it follows from~\eqref{eq:tvdifference} that we can couple $\mathcal{N}( 0,h_2(\tilde{d}_t) n )$ 
    and $\mathcal{N}\left( 0, h_2(d_*)n \right)$ to agree with probability $1-O\big(\sqrt{\frac{\log n}{n}}\big)$. 
    The result follows from \eqref{eq:Yeq3}, \eqref{eq:drifttheta} and this coupling.
\end{proof}

    \subsubsection{Analysis of the limiting 1-dimensional process}
    Now that we have shown the giant component process near the fixed point $\theta_* n$ is well-approximated by the 1-dimensional process $Z_t$ of~\eqref{eq:Z-t-process}, we show  here that this simplified process's exit probabilities to the right and left are monotone, and oscillate on the $\sqrt{n}$ scale. 
   In what follows, let $\tau_\gamma^Z$ be the hitting time of $\gamma \sqrt{n}$ for $Z_t$ from~\eqref{eq:Z-t-process} and let $\tau_{-\gamma}^Z$ be the hitting time of $-\gamma\sqrt{n}$.
    
    \begin{lemma}\label{lem:monotonicity-of-Z-process}
        For every $p\in (0,1)$, there is a unique $c_*\in \mathbb R$ such that if $p_0 = \lambda_* + c_* n^{-1/2} + o(n^{-1/2})$ and  $Z_0 \sim \mathcal N((p_0 - \lambda_*)\alpha'(\lambda_*) n,\sigma^2(\lambda_*) n)$  then $$\Pr(\tau_\gamma^Z <\tau_{-\gamma}^Z) = p + o_{\gamma,n}(1)\,,$$
        where $o_{\gamma,n}(1)$ means it goes to zero either as $n\to\infty$ or as $\gamma\to\infty$. 
    \end{lemma}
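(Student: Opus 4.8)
\emph{Overview.} The plan is to reduce everything to a clean one-dimensional analysis. Rescaling by $\sqrt{n}$, set $\bar Z_t:=Z_t/\sqrt{n}$; by \eqref{eq:Z-t-process} its increment vanishes with probability $1-\tfrac1q$ and otherwise equals $f'(\theta_*)\bar Z_t$ plus an independent centred Gaussian $\mathcal N(0,b_t^2)$, where, by \eqref{eq:sigmas} together with Lemma~\ref{lemma:Skmoment} and Corollary~\ref{cor:R2moment}, the deterministic sequence $b_t^2$ is bounded away from $0$ and from $\infty$ uniformly in $t$ and $n$ and converges as $t\to\infty$. With $p_0=\lambda_*+cn^{-1/2}+o(n^{-1/2})$ the initial law is $\bar Z_0\sim\mathcal N\!\big(c\,\alpha'(\lambda_*)+o(1),\ \sigma^2(\lambda_*)\big)$. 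I would first prove monotonicity, then deduce monotonicity of the right-exit probability in the initial value, then show $\bar Z_t$ diverges to $\pm\infty$ almost surely (so the windowed exit probabilities have a $\gamma$-free limit), and finally average against the Gaussian initialization and invoke the intermediate value theorem to produce $c_*$.

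\emph{Monotonicity and finiteness of the exit time.} Writing the update as $Z_{t+1}=(1+\varepsilon_{t+1}f'(\theta_*))Z_t+\varepsilon_{t+1}\xi_{t+1}$ with $\varepsilon_{t+1}\sim\text{Ber}(1/q)$ and $\xi_{t+1}$ a centred Gaussian whose law depends only on $t$, the map $z\mapsto(1+\varepsilon f'(\theta_*))z+\varepsilon\xi$ is non-decreasing for every fixed $(\varepsilon,\xi)$ since $f'(\theta_*)>0$; hence the grand coupling with common $(\varepsilon_{t+1},\xi_{t+1})$ preserves order, which is exactly monotonicity of the chain. Consequently, writing $\bar g_\gamma(z):=\Pr(\tau_\gamma^Z<\tau_{-\gamma}^Z\mid Z_0=z\sqrt{n})$, under the grand coupling $Z_0\le Z_0'$ forces $\tau_{-\gamma}^Z\le\tau_{-\gamma}^{Z'}$ and $\tau_\gamma^{Z'}\le\tau_\gamma^{Z}$, so $\{\tau_\gamma^Z<\tau_{-\gamma}^Z\}\subseteq\{\tau_\gamma^{Z'}<\tau_{-\gamma}^{Z'}\}$ and $\bar g_\gamma$ is non-decreasing. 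That $\min\{\tau_\gamma^Z,\tau_{-\gamma}^Z\}<\infty$ a.s.\ (so $\bar g_\gamma$ is well defined) follows as in Lemma~\ref{lemma:exittime}: at each active step the conditional law of the next value is Gaussian with variance bounded below, so there is a probability $\ge\delta(\gamma)>0$ — uniform in the current state in $I_\gamma$ and in $t,n$ — of immediately leaving $I_\gamma$, and active steps recur.

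\emph{Escape to $\pm\infty$.} Looking only at active steps, $\bar Z_{k+1}=(1+a)\bar Z_k+\xi_{k+1}$ with $a=f'(\theta_*)>0$ and $\xi_{k+1}$ independent centred Gaussians of uniformly bounded variance. I claim that from any start, a.s.\ $\bar Z_k\to+\infty$ or $\bar Z_k\to-\infty$. The heart of the matter: once $|\bar Z_k|$ exceeds a large threshold $R$, one has $|\bar Z_{k+1}|\ge(1+\tfrac a2)|\bar Z_k|$ and $\sgn(\bar Z_{k+1})=\sgn(\bar Z_k)$ unless $|\xi_{k+1}|\ge\tfrac a2|\bar Z_k|$, whose probability is summable along the geometrically growing levels $R,(1+\tfrac a2)R,\dots$; so, by Borel--Cantelli, with probability $\ge\tfrac12$ the chain stays $\ge R$ in modulus with a fixed sign from then on and hence diverges. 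Since every active step offers a probability $\ge\delta_0>0$ of entering $\{|\bar Z|\ge R\}$ and active steps recur, the chain a.s.\ enters this set, and if it fails to diverge it a.s.\ returns there; a $0$–$1$ argument then gives divergence a.s. Thus $\Pr(\bar Z_t\to+\infty)+\Pr(\bar Z_t\to-\infty)=1$, and $\bar g_\infty(z):=\Pr(\bar Z_t\to+\infty\mid\bar Z_0=z)$ is non-decreasing (grand coupling) with limits $1$ and $0$ at $\pm\infty$. On $\{\bar Z_t\to+\infty\}$ one has $\inf_t\bar Z_t>-\infty$, so for $\gamma$ large (trajectory-dependent) the chain leaves $[-\gamma,\gamma]$ to the right; hence $\mathbf 1\{\tau_\gamma^Z<\tau_{-\gamma}^Z\}\to\mathbf 1\{\bar Z_t\to+\infty\}$ a.s., and bounded convergence gives $\bar g_\gamma(z)\to\bar g_\infty(z)$ as $\gamma\to\infty$, for each fixed $z$.

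\emph{The threshold, and the main obstacle.} Define $G(c):=\E_{W\sim\mathcal N(c\alpha'(\lambda_*),\,\sigma^2(\lambda_*))}[\bar g_\infty(W)]$. As $\bar g_\infty$ is bounded, non-decreasing and non-constant, $\mathcal N(c\alpha'(\lambda_*),\sigma^2(\lambda_*))$ is a location family with everywhere-positive density, and $\alpha'(\lambda_*)>0$, the function $G$ is continuous (dominated convergence) and strictly increasing, with $G(c)\to1$ as $c\to+\infty$ and $G(c)\to0$ as $c\to-\infty$; by the intermediate value theorem there is a unique $c_*=c_*(q)$ with $G(c_*)=p$, and any other constant yields a limiting right-exit probability $\ne p$, giving uniqueness. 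For this $c_*$, $\Pr(\tau_\gamma^Z<\tau_{-\gamma}^Z)=\E_{\bar Z_0}[\bar g_\gamma(\bar Z_0)]$, and combining (a) the $o_n(1)$ total-variation cost of the $o(1)$ shift in the mean of $\bar Z_0$, (b) the $o_n(1)$ cost of replacing the finite-$n$ chain by its $n\to\infty$ limit (the $b_t^2$ converge uniformly in $t$ and, the exit time being stochastically $O_\gamma(1)$, only $O_\gamma(1)$ steps matter), and (c) the convergence $\bar g_\gamma\to\bar g_\infty$ with bounded convergence, yields $\Pr(\tau_\gamma^Z<\tau_{-\gamma}^Z)=G(c_*)+o_{\gamma,n}(1)=p+o_{\gamma,n}(1)$. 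The main obstacle is the escape-to-$\pm\infty$ step: proving that this one-dimensional chain is transient (indeed convergent to $\pm\infty$) and controlling it well enough to obtain a $\gamma$-independent limit, which is exactly where the multiplicative structure of the drift factor $1+f'(\theta_*)>1$ and the summability of Gaussian large-deviation probabilities along geometric scales are needed; the monotonicity and averaging/IVT parts are routine.
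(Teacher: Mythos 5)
Your proof is correct, and it shares the same underlying engine as the paper's proof — the grand coupling giving monotonicity via $Z'_{t+1}-Z_{t+1} = (1+f'(\theta_*))(Z'_t-Z_t)$ on active steps, and the geometric amplification $1+f'(\theta_*)>1$ overwhelming the $O(1)$ Gaussian fluctuations once the chain is far from $0$ — but the execution of the $\gamma$-stability step is genuinely different. The paper asserts directly that $|\Pr(\tau_\gamma^Z<\tau_{-\gamma}^Z)-\Pr(\tau_{(1+f'(\theta_*)/2)\gamma}^Z<\tau_{-(1+f'(\theta_*)/2)\gamma}^Z)|=o_\gamma(1)$ and relies on the statement that $(\bar Z_t)=(Z_t n^{-1/2})$ is $n$-independent without further justification. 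You instead prove a stronger, cleaner structural fact: the chain a.s.\ diverges to $\pm\infty$ (via Borel--Cantelli along geometrically growing levels, exploiting summability of Gaussian tails and recurrence to $\{|\bar Z|\ge R\}$), define the limiting selection function $\bar g_\infty(z)=\Pr(\bar Z_t\to+\infty\mid\bar Z_0=z)$, establish $\bar g_\gamma\to\bar g_\infty$ pointwise by dominated convergence, and then run the intermediate value theorem on $G(c)=\E[\bar g_\infty(c\alpha'(\lambda_*)+\sigma(\lambda_*)N)]$. What your route buys: (i) it makes the $\gamma\to\infty$ limit an honest limit of a well-defined object rather than a Cauchy-type stability claim, making the uniqueness of $c_*$ completely transparent; and (ii) it handles the finite-$n$ issue more carefully — the deterministic variance sequence $\sigma_t^2/n$ from~\eqref{eq:sigmas} is not literally $n$-free (it converges as $n\to\infty$ because of the $1+O(n^{-1})$ corrections in Lemma~\ref{lemma:Skmoment}); your accounting of an explicit $o_n(1)$ cost for replacing the finite-$n$ chain by its $n\to\infty$ limit, using that only $O_\gamma(1)$ steps matter and $b_t^2$ converges uniformly in $t$, is the right fix. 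The minor imprecision in the paper (the phrase ``$\sigma_t$ is $n$-independent'') is thereby avoided in your version.
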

    \begin{proof}
    We begin by showing that  the Markov chain of~\eqref{eq:Z-t-process} is monotone in the initialization.
    Namely, we wish to show that for two initializations $Z_0\le Z_0'$, the law of $Z_t$ is stochastically below that of $Z_t'$ for all $t$. Suppose that $Z_{t} \le Z_t'$ and consider $Z_{t+1}$ and $Z_{t+1}'$ generated per~\eqref{eq:Z-t-process} using the same Bernoulli random variable $\varepsilon_{t+1}$ and the same pair of normal random variables. If $\varepsilon_{t+1}=0$ then $Z_{t+1} = Z_{t}\le Z_{t}' = Z_{t+1}'$ and the monotonicity is preserved. If $\varepsilon_{t+1} = 1$, then under this coupling $$Z_{t+1}' - Z_{t+1} = (Z_{t}'- Z_t)(1 + f'(\theta_*))\,.$$ This will be positive because $Z_{t}' - Z_t\ge 0$ and $f'(\theta_*)>0$. The monotonicity in the initialization then carries over to monotonicity in $p_0$ because $\mathcal N(\mu,\sigma^2)\succeq \mathcal N(\mu',\sigma^2)$ if $\mu \ge \mu'$.

    To see the other consequences, notice first that the process $(\bar Z_{t})_{t\ge 0} = (Z_{t}n^{-1/2})_{t\ge 0}$ is $n$-independent (as all of $h_1, h_2, \sigma_t, f', \betau$ are $n$-independent). This implies that $\Pr(\tau_{\gamma}^{Z} <\tau_{-\gamma}^{Z})$ are $n$-independent from $n$-independent initializations, which will be the case if $p_0 = \lambda_* + c_* n^{-1/2}$. 
    As $Z_0 \to \gamma\sqrt{n}$, the probability $\Pr(\tau_{\gamma}^Z<\tau_{-\gamma}^{Z})$ is easily checked to go to $1$, and as $Z_0 \to -\gamma \sqrt{n}$, it goes to zero. This implies existence of a unique $c_*(\gamma)$ such that from the initialization $Z_0$, one has $\mathbb P(\tau_{\gamma}^{Z}<\tau_{-\gamma}^{Z}) = p + o(1)$, so long as we show continuity of $\mathbb P(\tau_{\gamma}^{Z}<\tau_{-\gamma}^{Z})$ in $c_*$. To see that continuity, we show that the total-variation distance between initializations with $p_0$ and $p_0^\delta = p_0 + \delta n^{-1/2}$ goes to zero as $\delta \downarrow 0$, as there would then be a coupling of the initializations (and therefore the future of the chains including the indicators of which direction it exits) that succeeds with probability going to $1$ as $\delta \downarrow 0$. If $Z_0^\delta$ has the Gaussian initialization with $p_0$ replaced by $p_0^\delta$, then a total-variation bound between 1-dimensional Gaussians with the same variance gives 
    \begin{align*}
        \|\mathbb P(Z_0 \in \cdot ) - \mathbb P(Z_0^\delta\in \cdot )\|_{\TV} \le \frac{\delta n^{-1/2} \alpha'(\lambda_*) n }{2 \sigma (\lambda_*) n^{1/2}}= \delta \frac{\alpha'(\lambda_*)}{\sigma(\lambda_*)}\,,
    \end{align*}
    which clearly is going to $0$ as $\delta \downarrow 0$ as desired.

    Finally, to see that $c_*$ is $\gamma$-independent up to the $o_{\gamma}(1)$ error, similarly observe that $$|\Pr(\tau_{\gamma}^{Z}<\tau_{-\gamma}^{Z}) - \Pr(\tau_{(1+f'(\theta_*)/2)\gamma}^{Z} <\tau_{-(1+f'(\theta_*)/2)\gamma}^Z)| = o_\gamma(1)\,.$$
    Together, these yield the lemma. 
    \end{proof}

    We can use the monotonicity above, together with the closeness of the $Z_t$ process with $X_t$ to translate the right initialization from $(Z_t)_{t}$ to $(X_t)_t$. 

    \begin{lemma}\label{lem:Z-X-hitting-probability-comparison}
    Suppose $X_0 \sim G(n,p_0/n)$ for $p_0 = \lambda_* + O(n^{-1/2})$ and let $(Z_t)_{t}$ be the process of Definition~\ref{def:Z-t-process} initialized from $Z_0 \sim \mathcal N((p_0 - \lambda_*)\alpha'(\lambda_*)n, \sigma^2(\lambda_*)n)$.
        Then the hitting probabilities $\Pr(\tau_\gamma^Z<\tau_{-\gamma}^Z)$ and $\Pr(\tau_\gamma^X<\tau_{-\gamma}^X)$ are within $o_{\gamma,n}(1)$ of one another. 
    \end{lemma}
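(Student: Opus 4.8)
The plan is to transport exit probabilities from the one-dimensional process $(Z_t)$ to the giant-component process $(L_t)$ via the coupling of Theorem~\ref{thm:1dapproxZ}, which on any fixed finite horizon keeps $L_t-\theta_* n$ within $O(\log n)=o(\sqrt n)$ of $Z_t$. Since the windows of interest sit at the $\sqrt n$ scale, this $O(\log n)$ slack only forces us to compare the two exit events across thresholds perturbed by $o(\sqrt n)$, and we then close that gap using the insensitivity of $(Z_t)$ to such perturbations, exactly as in the proof of Lemma~\ref{lem:monotonicity-of-Z-process}.

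First I would fix a large constant $T$ and set $T_0:=2Te^{C\gamma^2}$, with $C$ the ($n$- and $\gamma$-independent) constant of Lemma~\ref{lemma:exittime}; this $T_0$ is a constant independent of $n$. By Lemma~\ref{lemma:exittime}, $\tilde\tau^X:=\min\{\tau_\gamma^X,\tau_{-\gamma}^X\}\le T_0$ with probability at least $1-1/T$, so which side of $I_\gamma=[\theta_* n-\gamma\sqrt n,\theta_* n+\gamma\sqrt n]$ the giant first leaves is already decided by time $T_0$ outside an event of probability $1/T$. Running the coupling of Theorem~\ref{thm:1dapproxZ} up to the constant horizon $T_0$ produces a coupling of $(X_t)$ and $(Z_t)$ under which the event $\mathcal C:=\{\sup_{t\le T_0}|(L_t-\theta_* n)-Z_t|\le c_0\log n\}$ has probability $1-o(1)$ (the error $T_0\cdot o(1)$ is $o(1)$ since $T_0$ is constant), and for large $n$ we have $c_0\log n<\sqrt n$.

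Next comes the sandwich. Work on $\mathcal C\cap\{\tilde\tau^X\le T_0\}$: if $L$ leaves $I_\gamma$ on the right first, at a time $\tau\le T_0$, then $Z_\tau\ge\gamma\sqrt n-c_0\log n\ge(\gamma-1)\sqrt n$ while $Z_s>-\gamma\sqrt n-c_0\log n>-(\gamma+1)\sqrt n$ for all $s\le\tau$, so $Z$ reaches level $(\gamma-1)\sqrt n$ before level $-(\gamma+1)\sqrt n$; the symmetric statement holds for a left exit. Since $L$ does leave $I_\gamma$ on this event, and $(Z_t)$ almost surely leaves every bounded window (its drift $f'(\theta_*)Z_t$ points away from the origin), taking contrapositives yields
\[
\Pr\big(\tau^Z_{\gamma+1}<\tau^Z_{-(\gamma-1)}\big)-o(1)-\tfrac1T\ \le\ \Pr\big(\tau_\gamma^X<\tau_{-\gamma}^X\big)\ \le\ \Pr\big(\tau^Z_{\gamma-1}<\tau^Z_{-(\gamma+1)}\big)+o(1)+\tfrac1T .
\]
Finally, by the overshoot/transience argument already used in the proof of Lemma~\ref{lem:monotonicity-of-Z-process} — the rescaled chain $\bar Z_t=n^{-1/2}Z_t$ is $n$-independent with light-tailed increments and outward drift, so once it reaches level $\gamma-1$ it never returns below $-(\gamma+1)$ except with probability $e^{-\Omega(\gamma)}$ — each of the two boundary probabilities for $Z$ above differs from $\Pr(\tau_\gamma^Z<\tau_{-\gamma}^Z)$ by $o_\gamma(1)$. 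Sending $T\to\infty$, then $\gamma\to\infty$, then $n\to\infty$ gives $|\Pr(\tau_\gamma^Z<\tau_{-\gamma}^Z)-\Pr(\tau_\gamma^X<\tau_{-\gamma}^X)|=o_{\gamma,n}(1)$.

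I expect the only real obstacle to be bookkeeping: keeping the three nested windows at radii $\gamma-1,\gamma,\gamma+1$ straight and verifying that the $O(\log n)$ coupling error can never flip which side the chain exits, together with the elementary (but slightly fiddly) overshoot estimate for $\bar Z$ that yields window-stability. All the genuinely hard analytic input — the $1$-dimensional Gaussian approximation and the escape-time bound — is already supplied by Theorem~\ref{thm:1dapproxZ} and Lemma~\ref{lemma:exittime}.
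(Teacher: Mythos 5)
Your argument is correct and uses the same essential ingredients as the paper's proof: the coupling of Theorem~\ref{thm:1dapproxZ} to keep $(L_t-\theta_*n)$ and $Z_t$ within $O(\log n)$ over a constant horizon, the exit-time bound of Lemma~\ref{lemma:exittime} to reduce to a fixed $T_0$, and the outward drift of $Z_t$ to absorb the $o(\sqrt n)$ coupling slack. Where you diverge in execution is the last step. The paper closes the gap with a ``one-more-step'' argument: once $Z$ (or $L$) crosses a threshold, the coupled twin sits within $O(\log n)$ of it, and because the drift at height $\gamma\sqrt n$ is $\Theta(\gamma\sqrt n)\gg\log n$, the twin crosses with probability $1-\varepsilon$ in one further activation step. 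You instead set up a sandwich between the lopsided windows $[-(\gamma+1)\sqrt n,(\gamma-1)\sqrt n]$ and $[-(\gamma-1)\sqrt n,(\gamma+1)\sqrt n]$, then invoke window-stability for $Z$ (exactly the overshoot/transience step already used in the proof of Lemma~\ref{lem:monotonicity-of-Z-process}, but applied to an additive $\pm 1$ perturbation rather than the multiplicative $(1+f'(\theta_*)/2)$ one). Both routes ultimately rest on the same fact --- once $\bar Z$ is at distance $\gamma$ from zero the drift beats the fluctuations --- so I would not call this a genuinely different argument, but your version is a bit cleaner on a point the paper handles somewhat tersely, namely why the $O(\log n)$ slack can never flip which boundary the chain exits through: your shifted-window inclusions make this automatic, whereas the paper's ``one-more-step'' argument as written only addresses the threshold being crossed and leaves the reader to supply the symmetric ``no sneaking out the other side'' justification. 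You also make the reliance on Lemma~\ref{lemma:exittime} explicit, which the paper leaves implicit in the phrase ``for all $O(1)$ times.'' One small quibble: your overshoot failure probability is stated as $e^{-\Omega(\gamma)}$; the drift argument actually gives $e^{-\Omega(\gamma^2)}$, but either suffices for the $o_\gamma(1)$ conclusion.
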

    \begin{proof}
    By Theorem~\ref{thm:1dapproxZ}, there exists a coupling such that the processes $Z_t$ and $L_t - \theta_*n$ are within $O(\log n)$ for all $O(1)$ times, except with $o(1)$ probability. Now fix any $\varepsilon>0$ admissible difference between the probabilities in the lemma. For any such $\varepsilon$, there exists a $\gamma$ such that uniformly over $t$, if $L_t - \theta_*n$ is in $\gamma\sqrt{n} - O(\log n)$ the probability that in the next step it exceeds $\gamma\sqrt{n}$ is at least $1-\varepsilon$; the uniformity over $t$ uses the fact that the variance $\sigma_t^2$ is uniformly bounded, while the drift increases linearly with $\gamma$. This implies that if $Z_t$ has hit $\gamma\sqrt{n}$ then with probability $1-\varepsilon-o(1)$, the process $L_t - \theta_*n$ either has hit $\gamma\sqrt{n}$, or will hit it in the next step. 

    The same holds for hitting $-\gamma\sqrt{n}$, as well as for the converse implications, i.e., that if $X$ has hit one side or the other then so has $Z$ or will in the next step. Putting these together, we deduce that for every $\varepsilon$, there exists $\gamma$ sufficiently large such that  
$|\Pr(\tau_\gamma^Z<\tau_{-\gamma}^Z) - \Pr(\tau_\gamma^X<\tau_{-\gamma}^X)| \le \varepsilon + o(1)$.    \end{proof}

\subsection{Lower bound on the mixing time with different choices of \texorpdfstring{$\lambda_0$}{lambda0}}
By combining the above quasi-equilibration results with slow mixing results of~\cite{GLP} for the CM dynamics, we show that if the initialization is the product measure with parameters not satisfying the conditions of Theorem~\ref{thm:SWCM-dynamics}, then mixing is slow.  

\begin{theorem}\label{thm:slow-mixing-CM}
    For every $q>2$ and $\beta \in (\betau, \betas)$, if $\lambda_*(\beta,q)$ and $c_*(q)$ are as in Theorem~\ref{thm:SWCM-dynamics}, then the CM dynamics initialized from $\bigotimes \text{Ber}(\lambda_0/n)$ with 
    \begin{enumerate}
        \item $\beta \in (\betau,\betac)$ and $\lambda_0  > \lambda_*(\beta,q)  - O(n^{-1/2})$,
        \item $\beta = \beta_c$ and $\lambda_0 \ne \lambda_*(\beta,q) + c_*(q) n^{-1/2} + o(n^{-1/2})$,
        \item $\beta\in (\betac,\betas)$ and $\lambda_0 < \lambda_*(\beta,q) + O(n^{-1/2})$,
    \end{enumerate}
    takes $\exp(\Omega(n))$ time to reach $o(1)$ TV-distance to stationarity. 
\end{theorem}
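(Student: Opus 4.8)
The plan is to reuse the ``quasi-equilibrate, then get stuck'' mechanism behind Theorem~\ref{thm:SWCM-dynamics}, but now aimed at the \emph{metastable} side of the saddle $\theta_*$. For $\Omega^\bullet\in\{\Omega^\ord,\Omega^\dis\}$ write $\mu^\bullet:=\mu_\beta(\cdot\mid\Omega^\bullet)$. For each $\lambda_0$ in the three listed regimes I would establish two things: (a) there is a constant $c>0$ such that, with probability at least $c$ over the draw $X_0\sim\bigotimes\text{Ber}(\lambda_0/n)$ and the first $T_1=O(\log n)$ CM steps, $X_{T_1}$ lies within total-variation distance $O(\gamma^{-2})$ of some $\mu^\bullet$ that is $\Omega(1)$-far from $\mu_\beta$; and (b) started from a configuration that is $O(\gamma^{-2})$-close to $\mu^\bullet$, the CM dynamics remains in $\Omega^\bullet$ for $\exp(\Omega(n))$ steps with probability $1-\exp(-\Omega(n))$. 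Since Lemma~\ref{lem:equilibrium-estimates} gives $\mu_\beta(\Omega^\ord)=o(1)$ for $\beta<\betac$, $\mu_\beta(\Omega^\dis)=o(1)$ for $\beta>\betac$, and $\mu_{\betac}(\Omega^\ord)=1-\xi+o(1)$, $\mu_{\betac}(\Omega^\dis)=\xi+o(1)$ with $\xi$ as in~\eqref{eq:xi}, combining (a) and (b) yields that for all $T_1\le t\le\exp(cn)$ one has $|\Pr(X_t\in\Omega^\bullet)-\mu_\beta(\Omega^\bullet)|=\Omega(1)$, hence $\|\Pr(X_t\in\cdot)-\mu_\beta\|_{\TV}=\Omega(1)$, and the theorem follows with $\varepsilon$ equal to this constant. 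At $\beta=\betac$ (item~2), where both $\Omega^\ord$ and $\Omega^\dis$ carry macroscopic mass, the same bookkeeping is run with $X_{T_1}$ quasi-equilibrated to a mixture $p\,\mu^\ord+(1-p)\,\mu^\dis$ whose weights differ from $(1-\xi,\xi)$ by $\Omega(1)$.

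For part~(a) I would split on the distance of $\lambda_0$ from $\lambda_*$. When $\lambda_0-\lambda_*=\omega(n^{-1/2})$ on the metastable side—$\lambda_0>\lambda_*+\omega(n^{-1/2})$ in item~1 (target $\Omega^\ord$), $\lambda_0<\lambda_*-\omega(n^{-1/2})$ in item~3 (target $\Omega^\dis$), and either choice in item~2—Lemma~\ref{lem:rgdeviation} places the initial giant $\omega(\sqrt n)$ from $\theta_*n$ on the corresponding side, and together with Lemmas~\ref{lem:subcriticalgstructure} and~\ref{lem:supercriticalgstructure} shows $X_0$ meets the hypotheses of Lemma~\ref{lem:quasi-equilibration-away-from-saddle-SWCM} with $\gamma=\omega(1)$; that lemma then gives (a) with probability $1-o(1)$. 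The residual microscopic window $\lambda_0-\lambda_*=O(n^{-1/2})$ (intersected with $\lambda_0>\lambda_*-O(n^{-1/2})$ in item~1 and with $\lambda_0<\lambda_*+O(n^{-1/2})$ in item~3) is exactly where the diffusion analysis of Section~\ref{subsec:escaping-saddle-SWCM} is needed: one runs the $1$-dimensional approximation and its monotonicity (Definition~\ref{def:Z-t-process}, Lemmas~\ref{lemma:exittime}, \ref{lem:monotonicity-of-Z-process}, \ref{lem:Z-X-hitting-probability-comparison}), which, although stated at $\beta=\betac$, transfers to every $\beta\in(\betau,\betas)$ since its only inputs are $f(\theta_*)=0$, $f'(\theta_*)>0$, the a priori structural control of Lemma~\ref{lem:Gt}, and the random-graph limit theorems—none special to $\betac$, as they use only the sub-/super-criticality of the percolation step on the two sides of $\theta_*$, valid throughout $(\betau,\betas)$. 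Monotonicity of the $Z_t$-chain then shows that the probability of exiting $[\theta_*n-\gamma\sqrt n,\theta_*n+\gamma\sqrt n]$ on the metastable side is bounded below by a positive constant depending only on $\gamma$ and the $O(1)$-constant in $O(n^{-1/2})$, not on $n$; Lemma~\ref{lem:Gt} ensures the exit configuration lies in $\mathcal G_K$, so Lemma~\ref{lem:quasi-equilibration-away-from-saddle-SWCM} applies from there and yields (a). For item~2 at $\beta=\betac$, the same machinery with the explicit $c_*$ gives, for $\lambda_0\ne\lambda_*+c_*n^{-1/2}+o(n^{-1/2})$, two exit probabilities $p$ and $1-p$ with $|p-(1-\xi)|$ bounded below by a constant (the subcases where $\lambda_0$ is $\omega(n^{-1/2})$ away from $\lambda_*$ being covered by the previous sentence), so $X_{T_1}$ quasi-equilibrates to a mixture with $\Omega(1)$-wrong weights.

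For part~(b) I would invoke the bottleneck estimates of~\cite{GLP} together with the equilibrium structure of Lemma~\ref{lem:equilibrium-estimates}: for every $\beta\in(\betau,\betas)$, both $\Omega^\ord$ and $\Omega^\dis$ are bottleneck sets for the CM dynamics, since leaving $\Omega^\ord$ forces the giant down through the free-energy barrier at $\theta_*n$ and leaving $\Omega^\dis$ forces it up through $\theta_*n$, each an $\exp(-\Omega(n))$ event under $\mu^\bullet$—the same large-deviations input that drives the worst-case slow-mixing results of~\cite{GLP,ABthesis}. A union bound over $\exp(cn)$ steps with $c$ small gives (b) for the chain started from $\mu^\bullet$, and coupling that chain with the one started at $X_{T_1}$ (which is $O(\gamma^{-2})$-close to $\mu^\bullet$) costs only $O(\gamma^{-2})$, harmless once $\gamma$ is fixed large.

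The main obstacle I expect is part~(a) in the microscopic window: one must verify that the $1$-dimensional approximation and its monotonicity genuinely carry over to off-critical $\beta$ with a genuinely $n$-independent positive lower bound on the ``wrong-side'' exit probability, and in particular that the a priori structural ingredients ($\mathcal G_t$, concentration of $R_2^-$, the local-limit-theorem inputs) are not secretly using $\beta=\betac$. A secondary, purely bookkeeping, point is that because the initialization is random one cannot restart couplings: all the constant-probability events of (a) and the $1-\exp(-\Omega(n))$ event of (b) must be intersected on a single run, which only costs constants absorbed into the final $\varepsilon$.
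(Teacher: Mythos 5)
Your proposal matches the paper's proof essentially step for step: constant-probability escape to the metastable side of $\theta_*n$ via the monotone $Z$-process together with the comparison Lemmas~\ref{lem:monotonicity-of-Z-process}--\ref{lem:Z-X-hitting-probability-comparison} (handling the $\omega(n^{-1/2})$ and $O(n^{-1/2})$ windows exactly as you split them), quasi-equilibration to $\mu^{\ord}$ or $\mu^{\dis}$ via Lemma~\ref{lem:quasi-equilibration-away-from-saddle-SWCM}, and then persistence for $\exp(\Omega(n))$ steps via the bottleneck estimates of~\cite{GLP} combined with $\mu_\beta(\Omega^{\ord})=o(1)$ for $\beta<\betac$ (resp.\ $\mu_\beta(\Omega^{\dis})=o(1)$ for $\beta>\betac$). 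Two small remarks: you correctly flag, more explicitly than the paper does, that the Section~3.2 machinery (Lemma~\ref{lem:Gt}, Theorem~\ref{thm:1dapproxZ}, etc.) is stated at $\beta=\betac$ and must be observed to extend across $(\betau,\betas)$; and for your part~(b), the set actually proven to be a bottleneck in~\cite{GLP} is a refinement of $\Omega^{\ord}$ that also bounds the mass in mid-sized components, so the metastability step should be run through that set $A$ (with $\mu^{\ord}$ and $\mu(\cdot\mid A)$ compared via Observation~\ref{obs:tv-distance-conditioning}) rather than $\Omega^{\ord}$ itself.
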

\begin{proof}
   We provide the details for item (1), the other cases following by similar reasoning. For any initialization parameter $\lambda_0 > \lambda_* - K n^{-1/2}$ for some $K = O(1)$, by Lemma~\ref{lem:monotonicity-of-Z-process} and~\ref{lem:Z-X-hitting-probability-comparison}, there is a positive probability $c_K>0$ that the process $L_t$ hits $\theta_* + \gamma n^{-1/2}$ before $\theta_* - \gamma n^{-1/2}$ (for sufficiently large $\gamma$) in some $t\le C_\gamma$ many steps. By Lemma~\ref{lem:Gt}, with probability $1- o(1)$, at exit, the configuration satisfies the necessary conditions to apply Lemma~\ref{lem:quasi-equilibration-away-from-saddle-SWCM} and quasi-equilibrate to the right to the ordered phase $\mu^\ord$. Putting these together, we find that for some $T_0 = O(\log n)$,
    \begin{align}
        \|\mathbb P(X_{T_0} \in \cdot ) - \mu^\ord \|_\TV  \le 1- c_K + o_{\gamma,n}(1)\,. 
    \end{align}
    By using the optimal coupling on these, and then the identity coupling of CM chains after time $T_0$,  
    for $t\ge 0$ we have 
    \begin{align}\label{eq:coupled-to-ord-after-T_0}
        \|\mathbb P(X_{t+T_0} \in \cdot ) - \mathbb P_{\mu^\ord}(X_{t}\in \cdot) \|_\TV \le 1-c_K +o_{\gamma,n}(1)\,,
    \end{align}
    where the first chain here is initialized from the product measure, and the second from $\mu^\ord$. 
    
    Next, we claim that by the results of~\cite{GLP}, a CM dynamics chain initialized from $\mu^\ord$ retains total-variation distance $1-o(1)$ for exponentially many steps to $\mu$ when $\beta \in (\betau,\betac)$. To see this, note from \cite[Lemma 4.7 and the proof of Theorem 2 in the subcritical/critical regime]{GLP} the existence of a bottleneck set $A$ (that the giant component is at least $(\theta_* +\epsilon)n$ and the number of vertices in non-giant components of size larger than $M$ is at most $\rho n$) such that it takes $\exp(\Omega(n))$ steps for a CM dynamics initialized from $\mu(\cdot \mid A)$ to leave $A$. The initialization from $\mu(\cdot \mid A)$ can be seen to be within $e^{ - \Omega(n)}$ total-variation distance of an initialization from $\mu^\ord$ by an application of Lemma~\ref{lem:equilibrium-estimates} and Observation~\ref{obs:tv-distance-conditioning}. From this, we deduce that for every $t \ge 1$, 
    \begin{align}\label{eq:A-likely-under-ord-start}
        \mathbb P_{\mu^\ord}(X_t \in A) \ge 1- t e^{ - \Omega(n)}\,.
    \end{align}
    By definition of total-variation distance,~\eqref{eq:coupled-to-ord-after-T_0} together with~\eqref{eq:A-likely-under-ord-start}, implies 
    \begin{align*}
        \mathbb P(X_{t+T_0}\in A) \ge c_K - o_{\gamma,n}(1) - te^{-\Omega(n)}\,.
    \end{align*}
    On the other hand, by Lemma~\ref{lem:equilibrium-estimates}, when $\beta \in (\betau, \betac)$, one has $\mu(A) =o(1)$, so the above bound implies that for $\gamma$ large, and some $t= e^{\Omega(n)}$, the total-variation to $\mu$ is at least $c_K/2$, say. 
\end{proof}

\subsection{Proof of Theorem~\ref{thm:SWCM-dynamics}}\label{subsec:proof-of-main-SWCM}

With the above ingredients at hand, we are in position to complete the proof of Theorem~\ref{thm:SWCM-dynamics}. 

\begin{proof}[\textbf{\emph{Proof of Theorem~\ref{thm:SWCM-dynamics}}}]
We begin by concluding the bounds of mixing when initialized on the appropriate side of the unstable fixed point, taking care of all off-critical portions of Theorem~\ref{thm:SWCM-dynamics}. 

For item 1, when $\beta\in (\betau,\betac)$, we let $\lambda_0 = \lambda_* - \omega(n^{-1/2})$, where $\lambda_*$ is the solution to $\alpha(\lambda_*) = \theta_*$; by differentiability of $\alpha$ with strictly positive derivative in the super-critical regime, if $\lambda_0 = \lambda_* - \omega(n^{-1/2})$, then $\alpha(\lambda_0) = \theta_* - \omega(n^{-1/2})$. In particular, by Lemma~\ref{lem:rgdeviation}, $X_0$ satisfies the conditions of Lemma~\ref{lem:quasi-equilibration-away-from-saddle-SWCM} with $\gamma = \omega(1)$. Furthermore, by Lemma~\ref{lem:equilibrium-estimates} and Observation~\ref{obs:tv-distance-conditioning}, when $\beta<\betac$, we have $\|\mu^\dis - \mu\|_{\TV} = o(1)$. The result follows by the triangle inequality. 
For item 3, the proof is symmetrical, with the observation that when $\beta >\betac$, we have $\|\mu^\ord - \mu\|_{\TV} = o(1)$.  

 We now turn to the mixing time at the critical point $\beta  = \betac$. Fix an $\epsilon$ total-variation distance we are trying to achieve, and in turn take $\gamma$ sufficiently large. By Lemma~\ref{lem:equilibrium-estimates}, the stationary distribution $\mu$ is a $\xi,1-\xi$ mixture of $\mu^\dis$ and $\mu^\ord$ for $\xi$ defined in~\eqref{eq:xi}. By Lemma~\ref{lem:monotonicity-of-Z-process}, there is a unique $c_{*}(\xi)$ such that the escape probabilities of Lemma~\ref{lem:monotonicity-of-Z-process} are within $o(1)$ of $\xi, 1-\xi$.  By Lemma~\ref{lem:Z-X-hitting-probability-comparison}, if $X_0 \sim G(n,\lambda_0)$ with $\lambda_0= \lambda_* + c_* n^{-1/2} + o(n^{-1/2})$, then $|\Pr(\tau_{\gamma}^X <\tau_{-\gamma}^X)  - \xi| \le \epsilon + o(1)$. 
Moreover, by Lemma~\ref{lemma:exittime}, the minimum of these two exit times is $O(1)$. Finally, by Lemma~\ref{lem:Gt}, with probability $1- o(1)$, at exit, the configuration satisfies the necessary conditions to apply Lemma~\ref{lem:quasi-equilibration-away-from-saddle-SWCM} and quasi-equilibrate to the phase-restricted measure on the side the dynamics exits with probability $1-O(\gamma^{-2})$. Combined, these imply there exists $T = O(\log n)$ such that if $X_0 \sim G(n,\lambda_0)$, then 
\begin{align*}
    \|\Pr (X_T \in \cdot) - ((1-\xi) \mu^\ord + \xi\mu^\dis)\|_{\TV} \le O(\gamma^{-2}) + \epsilon +o(1)\,,
\end{align*}
which will be less than $2\epsilon$ for $n$ large and $\gamma$ large. 

The slow mixing claims are exactly the statement of Theorem~\ref{thm:slow-mixing-CM}.
\end{proof}

We also include a proof of Theorem~\ref{thm:simulated-annealing-CM} on initializations from the random-cluster Gibbs measure at a different temperature, to justify why it follows from the above arguments.

\begin{proof}[\textbf{\emph{Proof of Theorem~\ref{thm:simulated-annealing-CM}}}]
    It is evident at this point that the only properties of the initialization used in our proof of Theorem~\ref{thm:SWCM-dynamics} were on the size of its giant component in relation to $\theta_* n$, its second largest component size, and the sum of squares of its non-giant components. The requisite properties on the latter two quantities are known to hold for samples from $\mu_{\beta_0}$ per Lemma~\ref{lem:equilibrium-estimates}. The requisite property of the giant component size drawn from $\mu_{\beta_0}$ is that it should be on the side of $\theta_*$ to which it aims to quasi-equilibrate. 
    
    In item 1 when $\beta \in (\betau,\betac)$, we let $b_*(\beta,q) = \betac$.
    To see this, first note that if $\beta_0< \betac$, then the size of the largest component in a sample from $\mu_{\beta_0}$ is $O(\log n)$ with high probability and from such initialization the CM dynamics will mix in $O(\log n)$ steps. 
    On the other hand, if $\beta_0 > \betac$, then a typical sample from $\mu_{\beta_0}$ has the largest component of size close to $\thetar(\beta_0)n$, where we recall from Lemma~\ref{lem:equilibrium-estimates} and Lemma~\ref{lem:factsaboutdrift} that $\thetar(b)$ is the typical size of a giant in a sample from $\mu_{b}$ as well as the second root of $f$. 
    By monotonicity, $\thetar(\beta_0) > \thetar(\beta) > \theta_*(\beta)$, 
    and hence the CM dynamics initialized from $\mu_{\beta_0}$ has exponential slow mixing.
    In item 2, 
    if $\beta_0 >\betac$ then $\thetar(\beta_0)>\thetar(\betac)>\theta_*(\beta_c)$ by monotonicity;
    also, if $\beta_0<\beta_c$ then the CM dynamics will start with a configuration from $\mu_{\beta_0}$, which has no large components.
    In either case, the size of largest component of the starting configuration is not close to $\theta_*(\beta)$, so
    there is no fast mixing.
    Finally, in item 3 when $\beta \in (\betac,\betas)$, we define $b_*(\beta,q) = \inf\{b: \thetar(b)>\theta_*(\beta)\}$ such that $O(\log n)$ mixing occurs if and only if $\beta_0 > b_*(\beta,q)$.
\end{proof}

\subsection{Deferred proofs: concentration and local limit theorem for the activation step}
We now include proofs of concentration of the activation steps and local limit theorems  that were deferred. 

\subsubsection{Concentration properties of activation steps}
We begin by describing some easy estimates on the activation step of the CM dynamics. 
For a graph $X$, let $X^r$ be the sub-graph of $X$ that does not get activated in $r$ activation steps.
\begin{observation}     
    \label{obs:Xr}
 Suppose $X\sim G(n,p)$. For any integer $r\ge 0$,
\begin{align*}
     \mathbb E[R_2(X^r)] = \big(1-\tfrac{1}{q}\big)^{r} \mathbb E[R_2(X)], \qquad
    \mathbb E[R_2^-(X^r)] = \big(1-\tfrac{1}{q}\big)^{r} \mathbb E[R_2^-(X)].
 \end{align*}
\end{observation}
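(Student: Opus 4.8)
The plan is to prove both identities by a single direct computation via linearity of expectation, exploiting two facts: the activation randomness is independent of $X$, and a deactivation step only ever deletes whole connected components (it never merges vertices of distinct components). First I would fix a realization of $X$ and observe that any connected component $\mathcal C$ of $X$ is retained in $X^1$ exactly when it is labeled inactive, an event of probability $1-\tfrac1q$, and that a retained $\mathcal C$ is again a single component of $X^1$. Iterating this over the $r$ rounds, $\mathcal C$ survives into $X^r$ with probability exactly $(1-\tfrac1q)^r$, with the survival indicators of distinct components being mutually independent and independent of $X$ itself.

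Next I would write, conditionally on $X$, $R_2(X^r)=\sum_{\mathcal C}|\mathcal C|^2\,\chi_{\mathcal C}$, the sum running over the components $\mathcal C$ of $X$ and $\chi_{\mathcal C}$ denoting the $\mathrm{Ber}\big((1-\tfrac1q)^r\big)$ survival indicator. Taking expectations, pulling $\mathbb E[\chi_{\mathcal C}]=(1-\tfrac1q)^r$ out of the sum by linearity and independence, and recognizing $\mathbb E\big[\sum_{\mathcal C}|\mathcal C|^2\big]=\mathbb E[R_2(X)]$ yields the first identity. For $R_2^-$ I would run the identical argument with the sum restricted to the components of $X$ other than the largest one, $\mathcal L_1(X)$, which gives $\mathbb E[R_2^-(X^r)]=(1-\tfrac1q)^r\,\mathbb E[R_2^-(X)]$.

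The only genuine point of care — and what I expect a reader to query — is the bookkeeping of the largest component in the $R_2^-$ identity: one must interpret the component excluded from $R_2^-(X^r)$ as the image in $X^r$ of $\mathcal L_1(X)$, tracked separately (exactly as $R_2^-(\cdot^r)$ is used in the proof of Lemma~\ref{lem:Gt}, where $\mathcal L_1$ of the freshly percolated graph is the candidate giant), rather than re-sorting the components of $X^r$ and taking whichever survivor happens to be largest. With that convention the argument is immediate; with a naive re-sorting the identity could fail, so I would state the convention explicitly. Beyond this there is no real obstacle — the computation is a one-line application of linearity of expectation once the per-component survival probability is identified.
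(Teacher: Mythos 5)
Your proof is correct and follows exactly the same route as the paper: condition on $X$, represent the survival of each component through $r$ activation rounds by an independent $\mathrm{Ber}\big((1-\tfrac1q)^r\big)$ indicator, and conclude by linearity of conditional expectation. Your caveat about tracking the image of $\mathcal L_1(X)$ rather than re-sorting the components of $X^r$ is a valid and worthwhile observation — the paper's proof (``the second equality is analogous'') tacitly adopts that convention, and it is indeed the one used downstream, e.g.\ in Lemma~\ref{lem:Gt} and Lemma~\ref{lem:close2gstar}.
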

\begin{proof}
            Let $\mathcal{B}_1, \dots, \mathcal{B}_n$ be independent Bernoulli random variables with parameter $(1-\frac{1}{q})^{r}$.
    Firstly, 
    \begin{equation}
        \label{eq:condexp}
        \E[R_2(X^r) \mid X]=
        \E\Big[ \sum_{j\ge 1} |\mathcal L_j(X)|^2 \cdot \mathcal{B}_j \mid X\Big]
        = \sum_{j\ge 1} \E\big[ |\mathcal L_j(X)|^2 \cdot \mathcal{B}_j \mid X \big] 
        =  \big(1-\frac{1}{q}\big)^{r} R_2(X).
    \end{equation}
    The first claim of the observation then follows by taking expected values and the second claim is analogous. 
\end{proof}

\begin{proof}[\textbf{\emph{Proof of Lemma~\ref{lem:concentration-of-G^r}}}]
        Let $\mathcal{B}_1, \dots, \mathcal{B}_n$ be i.i.d.\ Bernoulli random variables with parameter $(1-\frac{1}{q})^{r}$.
    First we compute a conditional variance. 
    \[
        \var\big( R_2^-(X^r) \mid X \big) 
        = \sum_{j\ge 2} \var (|\mathcal L_j(X)|^2 \cdot \mathcal{B}_j \mid X)
        \le \sum_{j\ge 2} |\mathcal L_j(X)|^2 
        = R_2^-(X).
    \]
    Then, by the law of total variance and Observation~\ref{obs:Xr}, we obtain
    \begin{align}
        \var\left( R_2^-(X^r) \right) 
        &= \E\left[ \var\left( R_2^-(X^r) \mid X \right) \right]
        + \var\left(        \E[R_2^-(X^r) \mid X]  \right) \nonumber \\
        & \le \E[R_2^-(X) ] + \var( R_2^-(X)). \label{eq:lotvGit}
    \end{align}
    If $n\cdot p > 1$ uniformly in $n$, then by Corollary~\ref{cor:R2moment} and \eqref{eq:lotvGit}, the right-hand side is $O(n)$ and the result follows by Chebyshev's inequality. 
    The case $np<1$ uniformly in $n$ follows analogous reasoning. 
\end{proof}

We also prove Lemma~\ref{lem:close2gstar} showing the approximability of $\mathbb E[R_2^-(X^r)]$ by $\mathbb E[R_2(G_*^r)]$. 
\begin{proof}[\textbf{\emph{Proof of Lemma~\ref{lem:close2gstar}}}]
    First, observe that $\tilde \beta:= (n\ka(\theta_*) +m) \cdot \frac{\betac}{n} > 1$ uniformly in $n$.  
    By  Lemma~\ref{lem:rgdeviation}, 
    \[
        \Pr\big(||\mathcal L_1(G)| - \alpha(\tilde \beta) (n\ka(\theta_*) +m) | > \sqrt{n}(\log n)^{2/3}\big) \le \frac{1}{2n^{10}}\,.
    \]
    Moreover, for each $U$ such that $||U| - \alpha(\tilde \beta) (n\ka(\theta_*) +m) | \le \sqrt{n}(\log n)^{2/3}$, Lemma~\ref{lemma:duality} implies if $U = \mathcal{L}_1(G)$ then 
    $G\setminus \mathcal{L}_1(G)$ can be coupled with $G_- \sim G(n\ka(\theta_*) + m - |\mathcal L_1(G)|,  \frac{\betac}{n})$ with probability $1 - e^{-\Omega(n)}$.
    Hence, with probability at least $1-n^{-10}$,
    \[
        m':=||\mathcal L_1(G)| - \alpha(\tilde \beta) (n\ka(\theta_*) +m) | \le \sqrt{n}(\log n)^{2/3}\,,
    \]
    and $G\setminus \mathcal{L}_1(G) = G_-$. 
    It follows from Observation~\ref{obs:Xr} that
    \begin{equation}
        \label{eq:removehead}
         \left| \E[R_2^-(G^r)] - \E[R_2(G_-^r)] \right| \le
        \left| \E[R_2^-(G)] - \E[R_2(G_-)] \right| = O(n^{-8})\,.
    \end{equation}
    Next we show when $m' \le\sqrt{n}(\log n)^{2/3}$,
    \begin{equation}
        \label{eq:nearstar}
         \left| \E[R_2(G^r_*)] - \E[R_2(G_-^r)] \right| = O(\sqrt{n} \log n)\,,
    \end{equation}
    and the lemma follows from \eqref{eq:removehead} and \eqref{eq:nearstar}.
    To see \eqref{eq:nearstar}, we give an estimate of the number of vertices $M$ in $G_-$.
    From the arguments above, we know
    \[
        M = n\ka(\theta_*) + m -\alpha(\tilde \beta) (n\ka(\theta_*) +m) + m'\,.
    \]
    By algebraic manipulation and Taylor expansion of $\alpha$, we obtain
    \begin{align*}
        M 
        &= n\kia(\theta_*) + (\theta_*n - \alpha(\tilde \beta)\cdot \ka(\theta_*)n) + O(\sqrt{n}(\log n)^{2/3}) \\
        & =n\kia(\theta_*) + [\theta_*n - \phi(\theta_*)n + m \betac \alpha'(\ka(\theta_*) \cdot \betac)\cdot \ka(\theta_*) + o(\sqrt{n})] + O(\sqrt{n} (\log n)^{2/3})\,,
    \end{align*}
    which gives~\eqref{eq:nearstar} since $\phi(\theta_*) = \theta_*$. 
\end{proof}

\noindent Finally, we use the variance of the activation steps to get a bound on the variance of the giant component after one step of the CM dynamics.  

\begin{fact}
    \label{lem:uniformvarbound}
    For $\beta \in (\betau, \betas)$, there exist constants $M_0 , M_1 > 0$ and $s>0$ such that if 
    a configuration $X_t$ satisfies that
    $L_t \in (\theta_*n -sn, \theta_*n+sn)$ and
    $R_2^-({X}_t) \le M_1 n$, then
    \[
     \var(L_{t+1} \mid {X}_t, \Lambda_{t+1})\le M_0^2 n
    \]
\end{fact}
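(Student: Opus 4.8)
The plan is to decompose one step of the CM dynamics conditioned on $\Lambda_{t+1}$ into its activation and percolation parts and to bound $\var(L_{t+1}\mid X_t,\Lambda_{t+1})$ via the law of total variance applied to the activation pattern. First I would fix $s>0$ small enough that $\ka(\theta_*-s)\beta = 1+\delta_0$ for some $\delta_0>0$; this is possible since $\theta_*>\thetas$ forces $\ka(\theta_*)\beta>1$ (recall $\ka(\theta)\beta>1\iff\theta>\thetas$) and $\ka$ is continuous, and we may also take $s<1-\theta_*$. Conditioned on $\Lambda_{t+1}$ and on such an $X_t$ with $R_2^-(X_t)\le M_1 n$, the configuration $X_{t+1}$ consists of the inactive components of $X_t$ (each of size at most $|\mathcal L_2(X_t)|\le\sqrt{R_2^-(X_t)}\le\sqrt{M_1 n}$) together with a fresh $G(A_{t+1},\beta/n)$ on the $A_{t+1}$ active vertices, where $A_{t+1}=L_t+A_{t+1}^-$ and $A_{t+1}^-=\sum_{i\ge 2}|\mathcal L_i(X_t)|B_i$ with $B_i$ i.i.d.\ $\mathrm{Ber}(1/q)$. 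In particular $\E[A_{t+1}\mid X_t,\Lambda_{t+1}]=\ka(L_t/n)n\ge\ka(\theta_*-s)n=(1+\delta_0)n/\beta$ and $\var(A_{t+1}\mid X_t,\Lambda_{t+1})=\tfrac1q(1-\tfrac1q)R_2^-(X_t)\le\tfrac1q(1-\tfrac1q)M_1 n$.

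Next I would isolate the rare near-critical event $\mathcal A^c:=\{A_{t+1}\beta/n<1+\delta_0/2\}$. Since $\mathcal A^c\subseteq\{A_{t+1}^--\E[A_{t+1}^-]<-\delta_0 n/(2\beta)\}$ and $\sum_{i\ge 2}|\mathcal L_i(X_t)|^2=R_2^-(X_t)\le M_1 n$, Hoeffding's inequality gives $\Pr(\mathcal A^c\mid X_t,\Lambda_{t+1})\le e^{-\Omega(n)}$. On $\mathcal A$ the graph $G(A_{t+1},\beta/n)$ is supercritical with density parameter $A_{t+1}\beta/n$ in the compact set $[1+\delta_0/2,\beta]$, so after rescaling to $A_{t+1}=\Theta(n)$ vertices and invoking standard Erd\H{o}s--R\'enyi concentration (Lemma~\ref{lem:rgdeviation}(3)) together with~\eqref{eq:expectedgiantgnp}, its giant component has, conditional on $A_{t+1}$, mean $\alpha(A_{t+1}\beta/n)A_{t+1}+\tilde O(1)=\Theta(n)$ and variance $O(A_{t+1})=O(n)$, uniformly over such $A_{t+1}$; in particular it exceeds $\sqrt{M_1 n}$ except with probability $e^{-\Omega(n)}$, so on $\mathcal A$ we have $L_{t+1}=|\mathcal L_1(G(A_{t+1},\beta/n))|$ up to an event of probability $e^{-\Omega(n)}$.

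The main estimate is then the law of total variance conditioned on the activation pattern $\mathcal S$ (the set of components that get activated), which determines $A_{t+1}$ and leaves the inactive components $\mathcal S$-measurable: $\var(L_{t+1}\mid X_t,\Lambda_{t+1})=\E_{\mathcal S}[\var(L_{t+1}\mid\mathcal S)]+\var_{\mathcal S}(\E[L_{t+1}\mid\mathcal S])$. For the first term, on $\mathcal A$ one has $\var(L_{t+1}\mid\mathcal S)=\var(|\mathcal L_1(G(A_{t+1},\beta/n))|\mid A_{t+1})+n^2 e^{-\Omega(n)}=O(n)$ by the previous paragraph, while on $\mathcal A^c$ one bounds it crudely by $n^2$ and pays the probability $e^{-\Omega(n)}$, so $\E_{\mathcal S}[\var(L_{t+1}\mid\mathcal S)]=O(n)$. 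For the second term, set $\tilde\psi(m):=\alpha(m\beta/n)m$; since $\alpha$ is analytic on $(1,\infty)$, on $\{m\le n:\ m\beta/n\ge 1+\delta_0/2\}$ one has $\tilde\psi'(m)=\alpha(m\beta/n)+(m\beta/n)\alpha'(m\beta/n)=O(1)$, so $\tilde\psi$ is Lipschitz there with an $n$-independent constant. Taking $c:=\tilde\psi(\ka(L_t/n)n)\le n$, on $\mathcal A$ we get $|\E[L_{t+1}\mid\mathcal S]-c|\le O(|A_{t+1}-\ka(L_t/n)n|)+\tilde O(1)=O(|A_{t+1}^--\E[A_{t+1}^-]|)+\tilde O(1)$, hence $\E[(\E[L_{t+1}\mid\mathcal S]-c)^2\mathbf{1}_{\mathcal A}]=O(\var(A_{t+1}^-))+\tilde O(1)=O(n)$; on $\mathcal A^c$ the quantity is at most $n^2$ with probability $e^{-\Omega(n)}$. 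Therefore $\var_{\mathcal S}(\E[L_{t+1}\mid\mathcal S])\le\E[(\E[L_{t+1}\mid\mathcal S]-c)^2]=O(n)$, and combining the two terms gives $\var(L_{t+1}\mid X_t,\Lambda_{t+1})=O(n)$, which is the claim upon choosing $M_0^2$ to absorb the implicit constant (with $M_1,s$ as fixed above).

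The hard part is really the uniform-in-$A_{t+1}$ variance bound $\var(|\mathcal L_1(G(A_{t+1},\beta/n))|\mid A_{t+1})=O(n)$ together with the handling of the near-critical regime $\mathcal A^c$: the key points are that $\mathcal A^c$ has exponentially small probability (so its crude $O(n^2)$ cost is negligible), and that on $\mathcal A$ one sits squarely in a supercritical regime of bounded density where both the giant's variance and the map $m\mapsto\E[|\mathcal L_1(G(m,\beta/n))|]$ are well-behaved (variance $O(n)$, Lipschitz constant $O(1)$); everything else is bookkeeping with the law of total variance and Hoeffding's inequality.
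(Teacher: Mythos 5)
Your proof is correct and follows essentially the same route as the paper: condition on $\Lambda_{t+1}$, apply the law of total variance over the activation pattern, use supercritical Erd\H{o}s--R\'enyi estimates (Lemma~\ref{lem:rgdeviation} and~\eqref{eq:expectedgiantgnp}) for the inner variance, and control the outer variance via smoothness of $m\mapsto\alpha(m\beta/n)m$ together with $\var(A_{t+1}\mid X_t,\Lambda_{t+1})\le\tfrac1q(1-\tfrac1q)R_2^-(X_t)$. The only divergences are cosmetic and slightly in your favor: you rule out the sub/near-critical event with a Hoeffding bound giving $e^{-\Omega(n)}$ where the paper uses Chebyshev to get $O(1/n)$ (both suffice, since the crude cost is $n^2$), and for the outer variance you make the Lipschitz bound on $\tilde\psi(m)=\alpha(m\beta/n)m$ explicit via $\tilde\psi'(m)=\alpha(m\beta/n)+(m\beta/n)\alpha'(m\beta/n)=O(1)$ on the supercritical compact, whereas the paper invokes the weaker and strictly insufficient observation "$0<\alpha<1$"---your version is the one that actually closes the argument.
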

\begin{proof}
    Let $G_{t+1}$ be the random graph in the percolation step of step $t+1$.
    In other words, $G_{t+1} \sim G(A_{t+1}, \beta/n)$.
    By the law of total variance, we have
    \begin{align}
        \var(L_{t+1} \mid & {X}_t,  \Lambda_{t+1}) \nonumber \\
        & = \E[\var(|\mathcal L_1(G_{t+1})| \mid A_{t+1}) \mid {X}_t, \Lambda_{t+1}]
        + \var(\E[|\mathcal L_1(G_{t+1})| \mid A_{t+1}] \mid {X}_t, \Lambda_{t+1})\,. \label{eq:lotvarXt} 
    \end{align}
    If $A_{t+1} \cdot \frac{\beta}{n} > 1$ uniformly in $n$, then by Lemma~\ref{lem:rgdeviation}, we have $\var(|\mathcal L_1(G_{t+1})| \mid A_{t+1}) \le M_2 n$,
    where $M_2$ depends only on $A_{t+1}$.
    By our assumption and the computation in \eqref{eq:varA1}, $\var(A_{t+1} \mid X_t, \Lambda_{t+1}) \le R_2^-({X}_t) \le  M_1 n$.
    Thus, by Chebyshev's inequality, 
    $A_{t+1}$ concentrates around its mean $\ka(L_t/n)$ with sufficiently small $\delta = \delta(M_1,s)>0$ deviation, 
    with probability $1-M_3 n^{-1}$, where $M_3$ depends only on $\delta$ and $M_1$. 
         Also, we know that for $\beta\in (\betau,\betas)$, $\ka(\theta_*)\beta > \ka(\thetas) \beta \ge 1$.
    Thus, by continuity, for small enough $s$, $\ka(L_t/n)\beta \ge k_a(\theta_*-s)\beta >1$,
    and $A_{t+1}$ is such that the percolation step is strictly supercritical.  
    Hence, we obtain
    \begin{equation}
        \label{eq:varXtterma}
         \E[\var(|\mathcal L_1(G_{t+1})| \mid A_{t+1}) \mid {X}_t, \Lambda_{t+1}]
         \le (1 - M_3 n^{-1}) \cdot M_2 n + M_3 n,
    \end{equation}
    where the $M_3 n$ contribution comes from the $M_3 n^{-1}$ probability event that 
    $A_{t+1} \cdot \frac{\beta}{n} > 1$ does not hold uniformly in $n$.
    
    Next we upper bound the second term in right-hand-side of \eqref{eq:lotvarXt}.
    By \eqref{eq:expectedgiantgnp}, so long as $A_{t+1}\beta/n$ is bounded away from $1$, we have 
    \[
        \E[|\mathcal L_1(G_{t+1})| \mid  A_{t+1} ] = \alpha\big(A_{t+1} \cdot \tfrac{\beta}{n}\big) \cdot A_{t+1} + \tilde{O}(1).
    \]
    Using that $0< \alpha(\cdot) < 1$, and taking the variance of the above, we get 
    \begin{align*}
        \var(\E[|\mathcal L_1(G_{t+1})| \mid A_{t+1}] \mid {X}_t, \Lambda_{t+1}) \le \text{Var}(A_{t+1} \mid X_{t},\Lambda_{t+1}) + M_3n + \tilde{O}(1)
    \end{align*}  As already claimed, the conditional variance is at most $R_2(X_t) \le M_1 n$. 
Putting the above bounds together, we conclude. 
\end{proof}

\subsubsection{Local limit theorem for the number of activated vertices}
We start with the necessary local limit theorem for the activation step of the CM dynamics. 
\begin{lemma}
\label{lemma:llt1}
    Let $q\ge 2$.
    Suppose $X$ is a graph satisfying that
    \begin{enumerate}
      \item $|\mathcal{L}_1(X)|  = \Omega(n)$;
        \item $R_2^-(X) = \Theta(n)$;
        \item $I_1(X) = \Omega(n)$;
        \item $R_3^-(X)= O(n)$.
    \end{enumerate}
     Let $A^-$ be the number of non-giant activated vertices of $X$, i.e., 
     $$A^- = \sum_{i\ge 2} |\mathcal L_i(X)| \cdot B_i, \qquad B_i \sim \text{Ber}(1/q) \quad \text{independently}.$$ 
    Then there exists a coupling $\Pp$ of $(A^-, Z)$ such that
    $\Pp(|A^- - Z| > 1) = O(n^{-1/8})$, where $Z \sim \mathcal{N}(\E[A^-], \var(A^-))$.
\end{lemma}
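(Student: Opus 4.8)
The plan is to prove a local central limit theorem for the weighted sum $A^- = \sum_{i\ge 2} |\mathcal L_i(X)| B_i$ of independent Bernoulli random variables, where the weights are the non-giant component sizes of $X$, and then convert the local CLT into a coupling statement via a standard $\ell_1$-to-coupling argument. First I would record the mean and variance: $\E[A^-] = \frac1q R_1^-(X)$ and $\var(A^-) = \frac1q(1-\frac1q) R_2^-(X) = \Theta(n)$ by hypothesis~(2). The key point is that we have enough ``spread'' coming from the singletons: by hypothesis~(3), $I_1(X) = \Omega(n)$, so among the summands there are $\Omega(n)$ i.i.d.\ $\text{Ber}(1/q)$ terms with weight exactly $1$, which is what will give a genuine lattice local limit theorem on $\mathbb Z$ with span $1$ (no periodicity obstruction). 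The idea is to write $A^- = S + R$ where $S$ is the contribution of the isolated vertices (a $\text{Bin}(I_1(X), 1/q)$ variable) and $R$ is the contribution of the remaining non-giant components; then condition on $R$ and use the classical local limit theorem for binomials, which already gives the right Gaussian density for $S$ on a $\sqrt n$-window, and average over $R$.

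More precisely, the main step is: show that uniformly over integers $k$ with $\frac{k - \E[A^-]}{\sqrt{\var(A^-)}}$ in a compact set,
\begin{align*}
    \Pr(A^- = k) = \frac{1 + o(1)}{\sqrt{2\pi \var(A^-)}} \exp\Big( - \frac{(k-\E[A^-])^2}{2\var(A^-)}\Big)\,.
\end{align*}
For this I would use a characteristic-function / Fourier-inversion argument: $\Pr(A^- = k) = \frac{1}{2\pi}\int_{-\pi}^{\pi} e^{-ikt}\, \varphi(t)\, dt$ where $\varphi(t) = \prod_{i\ge 2}(1 - \frac1q + \frac1q e^{i |\mathcal L_i(X)| t})$. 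Near $t=0$ one Taylor-expands to get the Gaussian main term, using that $\var(A^-) = \Theta(n)$ and that the third absolute moment $\sum_{i\ge 2} |\mathcal L_i(X)|^3 = R_3^-(X) = O(n)$ (hypothesis~(4)) controls the cubic error term in the expansion of $\log\varphi$ — this is exactly the Berry--Esseen-type input, and it is why hypothesis~(4) is needed. Away from $t=0$, the factors coming from the $\Omega(n)$ singleton components each contribute $|1 - \frac1q + \frac1q e^{it}| \le e^{-c t^2}$ for $|t|\le \pi$ with $c = c(q)>0$, so $|\varphi(t)| \le e^{-c' n t^2}$ for $|t|$ bounded away from $0$, making the tail of the integral negligible; this is where hypothesis~(3) enters, ensuring there is no lattice-span issue and giving the exponential decay of $\varphi$. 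Combining the two regimes gives the local limit theorem displayed above, with the $o(1)$ uniform over the compact window.

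Finally, to pass from the local limit theorem to the coupling, I would compare the mass function of $A^-$ to that of the \emph{discretized} Gaussian $\lfloor Z \rceil$ with $Z \sim \mathcal N(\E[A^-], \var(A^-))$: on the window $|k - \E[A^-]| \le \sqrt{\var(A^-)}\,\log n$, say, the local limit theorem together with a Riemann-sum estimate for the Gaussian density (which is Lipschitz with derivative $O(n^{-1})$ on that window) shows that the total-variation distance between $A^-$ and $\lfloor Z\rceil$ restricted to this window is $o(1)$; outside the window both $A^-$ (by Chebyshev, or by a Hoeffding/Bernstein bound using the bounded weights) and $Z$ put mass $o(1)$. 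Hence $\|A^- - \lfloor Z\rceil\|_{\TV} = o(1)$, and since $|Z - \lfloor Z\rceil|\le 1$ always, the optimal coupling of $A^-$ and $\lfloor Z\rceil$, composed with the trivial coupling of $\lfloor Z \rceil$ and $Z$, yields $\Pp(|A^- - Z| > 1) = o(1)$. A more careful bookkeeping of the error terms (the $o(1)$ in the LLT is really $O(n^{-1/2}\log^{O(1)} n)$ type, but one must be careful about how the window width interacts with the discretization error) gives the claimed rate $O(n^{-1/8})$; I expect the main obstacle to be precisely this quantitative control — getting an explicit polynomial rate in the local limit theorem uniformly over the window, rather than a qualitative $o(1)$, which requires tracking the Berry--Esseen error from $R_3^-(X)=O(n)$ and the Fourier-tail bound simultaneously. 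Everything else is routine.
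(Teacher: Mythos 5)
Your Fourier-inversion argument is essentially a from-scratch proof of the exact local limit theorem that the paper instead cites as a black box: the paper invokes a classical result of Petrov (Lemma~\ref{lem:petrov}), whose hypotheses are (i) $\var(A^-)\to\infty$, (ii) a third-absolute-moment bound $\sum_j \E|X_j-\mu_j|^3 = O(\sigma^2)$, (iii) the mode-at-zero condition $\Pr(X_j=0)\ge\Pr(X_j=r)$, and (iv) a gcd/span condition; the paper checks these via $R_2^-(X)=\Theta(n)$, $R_3^-(X)=O(n)$, the fact $1-1/q\ge 1/q$ (here $q\ge 2$ is needed), and $I_1(X)=\Omega(n)$ respectively, and then runs the same small-window-plus-Chebyshev argument you describe to convert the pointwise LLT bound $O(1/\sigma^2)$ into a coupling with error $O(n^{-1/8})$. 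So the ingredients you identify --- third-moment control near $t=0$, exponential Fourier decay away from $0$ driven by the $\Omega(n)$ singletons to kill periodicity, discretization plus a tail cutoff --- are exactly the ones in play; the tradeoff is that the paper's route outsources the hard analysis to a citation and gets an explicit $O(1/\sigma^2)$ rate for free, while yours is self-contained but, as you flag yourself, requires carrying a quantitative Berry--Esseen error through the Fourier bound, which is more work than the one-paragraph verification of Petrov's conditions. One small caution: your first proposed decomposition (condition on the non-singleton contribution $R$, apply a binomial LLT to $S$, and average) is not obviously enough on its own --- one would also need anticoncentration/smoothness of the law of $R$ to avoid the averaged density having spikes --- but that route is abandoned in favor of the Fourier argument, which is sound.
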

We prove Lemma~\ref{lemma:llt1} by showing that it fits the criteria of the following classical local limit theorem.  

\label{sec:llt}
\begin{lemma}[\cite{Petrov75}]
    \label{lem:petrov}
    Let $X_1, \dots, X_n$ be independent integer-valued random variables with mean $\mu_1, \dots, \mu_n$ and let $S_n = \sum_{j=1}^n X_j$.
    Let $\mu$ and $\sigma^2$ be the mean and variance of $S_n$. 
    Suppose the following conditions hold:
    \begin{enumerate}
        \item $\sigma^2 \rightarrow \infty$ as $n \rightarrow \infty$.
        \item
        $    \sum_{j=1}^n \E[ | X_j - \mu_j |^3 ] = O(\sigma^2)$. 
        \item For all $j$ and $r \neq 0$, $\Pr(X_j = 0) \ge \Pr(X_j = r)$.
        \item $\gcd\{M\in \mathbb Z: \frac{1}{\log n} \sum_{j=1}^n \Pr(X_j = 0) \Pr(X_j = M) \rightarrow \infty \text{ as } n \rightarrow \infty\} = 1$. 
    \end{enumerate}
    Then there exists a universal constant $C_1$ such that for $k \in \mathbbm{Z}$  we have
    \[
    \big| \Pr(S_n = k) - \frac{1}{\sigma \sqrt{2\pi}} e^{-\frac{(k-\mu)^2}{2\sigma^2}} \big| \le \frac{C_1}{\sigma^2}. 
    \]
\end{lemma}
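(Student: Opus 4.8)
The plan is to prove this classical local limit theorem by the standard Fourier--inversion method. Write $\phi(t)=\E[e^{itS_n}]=\prod_{j=1}^n\phi_j(t)$ with $\phi_j(t)=\E[e^{itX_j}]$. Since $S_n$ is integer valued, inversion gives $\Pr(S_n=k)=\frac{1}{2\pi}\int_{-\pi}^{\pi}e^{-itk}\phi(t)\,dt$, while the target density equals $\frac{1}{\sigma\sqrt{2\pi}}e^{-(k-\mu)^2/2\sigma^2}=\frac{1}{2\pi}\int_{\R}e^{-it(k-\mu)}e^{-\sigma^2t^2/2}\,dt$. Subtracting and splitting the $t$-range, I would estimate $\Pr(S_n=k)-\frac{1}{\sigma\sqrt{2\pi}}e^{-(k-\mu)^2/2\sigma^2}$ through three contributions: a central region $|t|\le\delta$ for a small constant $\delta>0$, a tail region $\delta\le|t|\le\pi$, and the Gaussian far tail $|t|>\delta$. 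The last is $O(e^{-\delta^2\sigma^2/2}/\sigma)$, which is $o(1/\sigma^2)$ and negligible.

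In the central region I would centre the summands and Taylor expand. Writing $\sigma_j^2=\var(X_j)$ (so $\sum_j\sigma_j^2=\sigma^2$), for $|t|\le\delta$ with $\delta$ small one has $\phi_j(t)e^{-i\mu_j t}=1-\tfrac12\sigma_j^2t^2+r_j(t)$ with $|r_j(t)|\le C\,\E|X_j-\mu_j|^3\,|t|^3$. Multiplying these out and using the elementary comparison $\bigl|\prod_j(1+a_j)-\prod_j(1+b_j)\bigr|\le\bigl(\sum_j|a_j-b_j|\bigr)\exp\bigl(\sum_j\max(|a_j|,|b_j|)\bigr)$ together with hypothesis~(2) gives, after the substitution $t=s/\sigma$, the bound $\bigl|\phi(s/\sigma)e^{-i\mu s/\sigma}-e^{-s^2/2}\bigr|\le\tfrac{C}{\sigma}(|s|^3+|s|^6)e^{-s^2/4}$ for $|s|\le\delta\sigma$. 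Integrating over $s\in\R$ (the Gaussian factor making the integral finite) and restoring the $\frac{1}{2\pi\sigma}$ prefactor contributes $O(1/\sigma^2)$, which is precisely the order of the claimed error; hypothesis~(1) is used only to guarantee $1/\sigma^2\to0$ so that this error is meaningful.

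The heart of the proof — and the step I expect to be the main obstacle — is the tail region $\delta\le|t|\le\pi$, where I must show $\int_{\delta\le|t|\le\pi}|\phi(t)|\,dt=O(1/\sigma^2)$; this is where hypotheses~(3) and~(4) are needed. From $|\phi_j(t)|^2=\E[\cos(t(X_j-X_j'))]=\sum_m c_m^{(j)}\cos(mt)$ with $X_j'$ an independent copy and $c_m^{(j)}=\sum_k\Pr(X_j=k)\Pr(X_j=k-m)\ge\Pr(X_j=0)\Pr(X_j=m)$ — hypothesis~(3) being what forces $\Pr(X_j=0)>0$ and makes it the dominant atom — one gets $1-|\phi_j(t)|^2\ge\Pr(X_j=0)\sum_{m\neq0}\Pr(X_j=m)(1-\cos mt)$, and hence, via $\sqrt{1-x}\le1-\tfrac{x}{2}$ and $1+x\le e^x$,
\[
|\phi(t)|\le\exp\Bigl(-\tfrac12\sum_{j=1}^n(1-|\phi_j(t)|^2)\Bigr)\le\exp\Bigl(-\tfrac12\sum_{m\neq0}(1-\cos mt)\sum_{j=1}^n\Pr(X_j=0)\Pr(X_j=m)\Bigr).
\]
Hypothesis~(4) states that $D=\{M:\tfrac{1}{\log n}\sum_j\Pr(X_j=0)\Pr(X_j=M)\to\infty\}$ has gcd $1$, so $D$ contains a finite subset $M_1,\dots,M_r$ with $\gcd(M_1,\dots,M_r)=1$; since $t\in[-\pi,\pi]$ with $\cos(M_it)=1$ for all $i$ forces $t=0$, the function $h(t):=\sum_{i=1}^r(1-\cos M_it)$ is bounded below by a constant $c(\delta)>0$ uniformly on $\{\delta\le|t|\le\pi\}$. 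Keeping only the terms $m=M_i$ in the exponent, bounding $\sum_i w_i(1-\cos M_it)\ge(\min_i w_i)\,h(t)$, and using $\min_i\sum_j\Pr(X_j=0)\Pr(X_j=M_i)=\omega(\log n)$ yields $|\phi(t)|\le\exp(-\tfrac12 c(\delta)\,\omega(\log n))=n^{-\omega(1)}$ uniformly on the tail, so $\int_{\delta\le|t|\le\pi}|\phi(t)|\,dt=O(n^{-\omega(1)})=o(1/\sigma^2)$ (here using that $\sigma^2$ is at most polynomial in $n$, as it is in all the applications of this lemma in the paper; otherwise the normalization in~(4) should be taken commensurately larger). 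Collecting the three contributions gives the estimate with a universal constant $C_1$.
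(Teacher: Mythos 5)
This lemma is not proved in the paper at all---it is quoted as a known result from Petrov's book---so there is no internal argument to compare against; your Fourier-inversion proof is essentially the classical characteristic-function argument behind Petrov's theorem. Your handling of the two genuinely delicate points is right: the central region, after rescaling $t=s/\sigma$ and invoking hypothesis (2), contributes $O(1/\sigma^2)$ once the $\frac{1}{2\pi\sigma}$ prefactor is restored; and the tail region $\delta\le|t|\le\pi$ is correctly killed via $1-|\phi_j(t)|^2\ge\Pr(X_j=0)\sum_{m\ne 0}\Pr(X_j=m)(1-\cos mt)$, a finite subset $M_1,\dots,M_r$ of the set in hypothesis (4) with greatest common divisor $1$ (so that $\sum_i(1-\cos M_it)$ is bounded below on the compact annulus), and the $\omega(\log n)$ weights, giving $|\phi(t)|\le n^{-\omega(1)}$ there.

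Two points to tighten. First, the product-comparison inequality as you state it, with the factor $\exp\bigl(\sum_j\max(|a_j|,|b_j|)\bigr)$, does not yield your displayed central-region bound: with $b_j=-\sigma_j^2t^2/2$ that factor is roughly $e^{+s^2/2}$ after rescaling, i.e.\ growing, not the decaying $e^{-s^2/4}$ you write. The standard fix is to use $\bigl|\prod_ju_j-\prod_jv_j\bigr|\le\sum_j|u_j-v_j|\prod_{k\ne j}\max(|u_k|,|v_k|)$ together with $|\phi_k(t)|\le\exp\bigl(-\sigma_k^2t^2/2+C\,\E|X_k-\mu_k|^3|t|^3\bigr)$; for $|t|\le\delta$ with $\delta$ small, hypothesis (2) lets the cubic correction be absorbed into the quadratic term, which produces exactly the bound $\frac{C}{\sigma}(|s|^3+|s|^6)e^{-s^2/4}$ you claim, so this is a local misstatement rather than a missing idea. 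Second, your hedge that $\sigma^2$ should be at most polynomial in $n$ is unnecessary: by Jensen, $\sigma_j^2\le(\E|X_j-\mu_j|^3)^{2/3}$, and H\"older over $j$ gives $\sigma^2\le n^{1/3}\bigl(\sum_j\E|X_j-\mu_j|^3\bigr)^{2/3}=O(n^{1/3}\sigma^{4/3})$, hence $\sigma^2=O(n)$ under hypothesis (2); therefore $n^{-\omega(1)}=o(1/\sigma^2)$ unconditionally and no strengthening of hypothesis (4) is needed.
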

\begin{proof}[\textbf{\emph{Proof of Lemma~\ref{lemma:llt1}}}]
    Suppose $X$ has $m+1$ components, where $m=\Theta(n)$ by assumption 3.
    For $j= 1, \dots, m$, let $V_j = |\mathcal{L}_{j+1}(X)|$ if $\mathcal{L}_{j+1}(X)$ is activated (when $B_{j+1} = 1$), and let $V_j = 0$ otherwise.
    Note that $A^- = \sum_{j=1}^m V_j$.
    Let $\mu$ and $\sigma^2$ be the mean and variance of $A^-$.
    
    To apply Lemma~\ref{lem:petrov}, we verify its conditions hold in our setting.
    First, the fact that the variance of $A^-$ goes to infinity follows from our second supposition that $R_2^-(X) = \Theta(n)\to\infty$. 
    Item 2 of Lemma~\ref{lem:petrov} follows from our fourth supposition that $R_3^-(X) = O(n)$. 
    Moreover, by our definition of $V_j$,  for each $j = 1, \dots, m$, we have for any $r\neq 0$,
    \[
        \Pr(V_j = 0) = 1 - \frac{1}{q} \ge \frac{1}{2} \ge  \frac{1}{q} \ge \Pr(V_j = r)\,.
    \] Finally, we analyze the fourth condition in Lemma~\ref{lem:petrov}.
    Note that in our case, if $M\in \mathbbm{Z}$,
    \begin{align*}
    \frac{1}{\log m} \sum_{j=1}^m \Pr(V_j = 0) \Pr(V_j = M)
    & = \frac{1}{\log m} \sum_{j=1}^m \left[ \big( 1 - \frac{1}{q} \big) \frac{1}{q} \cdot \mathbf{1}[L_{j+1}(X) = M] \right] \\ 
    & = O\left(\frac{1}{\log m}\right) \sum_{j=1}^m  \mathbf{1}[L_{j+1}(X) = M].
    \end{align*}
    The third assumption of the current lemma states that for $M=1$, we have $ \sum_{j=1}^m\mathbf{1}[L_{j+1}(X) = M] = \Omega(n)$.
    Hence, as $m\rightarrow \infty$, for $M=1$, 
    \[
     \frac{1}{\log m} \sum_{j=1}^m \Pr(V_j = 0) \Pr(V_j = M) = \Omega\left(\frac{m}{\log m}\right) \rightarrow \infty\,.
    \]
    Since the $\gcd$ of any number with $1$ is $1$, we get the fourth condition.   
    Therefore, Lemma~\ref{lem:petrov} implies that for each $k \in \mathbbm{Z}$,
    \[
    \epsilon_n(k):=\big| \Pr(A^- = k) - \frac{1}{\sigma \sqrt{2\pi}} e^{-\frac{(k-\mu)^2}{2\sigma^2}} \big| \le \frac{C_1}{\sigma^2} = O\big(\frac{1}{n}\big). 
    \]
    In particular, for each integer $k \in [\mu - \sigma \cdot n^{1/4}, \mu + \sigma \cdot n^{1/4}]$, we have $\epsilon_n(k) = O(1/n)$.
    Also,  for each $k \in \mathbbm{Z}$, by integrating the normal density, we obtain
    \begin{align}\label{eq:normal-rv-integer-approx}
        \Pr\big(k-\tfrac{1}{2} \le Z \le k+\tfrac{1}{2} \big) 
        =  \frac{1}{\sigma \sqrt{2\pi}} e^{-\frac{(k-\mu)^2}{2\sigma^2}} + O\big(\frac{1}{\sigma^2} \big)\,.
    \end{align}
    Also, the probability that it doesn't lie in $[\mu- \sigma n^{1/4}, \mu+\sigma n^{1/4}]$ is at most $n^{-1/8}$, say, by Chebyshev's inequality. 
    Using the fact that $\sigma^2 = \Theta(n)$, this implies that there exists a coupling $\Pp$ of $(A^-, Z)$ such that $\Pp(|A^- - Z|>1) = O(n^{-1/8})$
    as desired.
\end{proof}

The next lemma provides us with the necessary local limit theorem in the percolation step. 

\begin{lemma}
\label{lemma:llt2}
    Suppose that $G\sim G(m, \frac{\betac}{n})$ where $m=\Omega(n)$ satisfying $m \cdot \frac{\betac}{n} > 1$ is bounded away from $1$.
    Let $m' \in \mathbb R$ such that $|m - m'| = O(1)$.
    Suppose $Y\sim \mathcal{N}(\mu_Y, \sigma_Y^2)$, where $\mu_Y:=\alpha(\frac{\betac m'}{n}) \cdot m'$ and $\sigma_Y^2:=\sigma^2(\frac{\betac m'}{n}) \cdot m'$.
    Then there exists a coupling of $(Y, L(G))$ such that  $\Pr(|Y - |\mathcal L_1(G)|| \le 1) =1-o(1)$.
\end{lemma}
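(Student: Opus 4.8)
The plan is to show that the law of $|\mathcal L_1(G)|$ and the law of $\hat Y$, the nearest integer to $Y$, are within $o(1)$ in total variation; given this, the optimal coupling lemma (glued with the deterministic coupling $|Y-\hat Y|\le\tfrac12$) produces a coupling of $(|\mathcal L_1(G)|,Y)$ with $\Pr(\,||\mathcal L_1(G)|-Y|\le 1\,)=1-o(1)$. Write $\lambda:=\betac m/n$ and $\lambda':=\betac m'/n$, so $G$ is distributed as $G(m,\lambda/m)$; by hypothesis $m=\Theta(n)$ and $\lambda,\lambda'$ lie in a fixed compact sub-interval of $(1,\infty)$ for all large $n$, while $|m-m'|=O(1)$ forces $|\lambda-\lambda'|=O(n^{-1})$. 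Let $Z\sim\mathcal N(\alpha(\lambda)m,\sigma^2(\lambda)m)$ be the Gaussian appearing in Theorem~\ref{thm:giantLLT}, and let $\hat Z$ be its nearest-integer rounding.

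\emph{Step 1: comparing $|\mathcal L_1(G)|$ with $\hat Z$.} Fix $A_0>0$ and set $J=[-A_0,A_0]$. For integers $k$ with $u_k:=(k-\alpha(\lambda)m)/(\sigma(\lambda)\sqrt m)\in J$, Theorem~\ref{thm:giantLLT} applied to $G(m,\lambda/m)$ gives $\Pr(|\mathcal L_1(G)|=k)=(1+O(\delta))g(k)$ where $g$ is the $\mathcal N(\alpha(\lambda)m,\sigma^2(\lambda)m)$ density; as this holds for every fixed $\delta>0$ and all large $n$, a standard diagonalization produces a sequence $\delta_n\to 0$ for which it holds with $\delta=\delta_n$. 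A second-order Taylor expansion of $g$, using $\sigma^2(\lambda)m=\Theta(n)$, yields $\Pr(\hat Z=k)=\Pr(k-\tfrac12\le Z\le k+\tfrac12)=g(k)(1+O(n^{-1}))$ uniformly over such $k$. Summing over the $O(A_0\sqrt n)$ integers $k$ with $u_k\in J$ and using $\sum_k g(k)=O(1)$, the contribution of the bulk to $\|\mathrm{Law}(|\mathcal L_1(G)|)-\mathrm{Law}(\hat Z)\|_{\TV}$ is $\delta_n+O(n^{-1})=o(1)$. The mass that either variable places outside the bulk is, by Theorem~\ref{thm:CLT} for $|\mathcal L_1(G)|$ and exact Gaussian tails for $Z$, at most $2\Pr(|\mathcal N(0,1)|>A_0)+o(1)=O(A_0^{-2})+o(1)$. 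Hence $\limsup_n\|\mathrm{Law}(|\mathcal L_1(G)|)-\mathrm{Law}(\hat Z)\|_{\TV}=O(A_0^{-2})$ for every fixed $A_0$, and letting $A_0\to\infty$ gives $o(1)$.

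\emph{Step 2: comparing $\hat Z$ with $\hat Y$.} By differentiability of $\alpha$ and $\sigma^2$ on the compact supercritical range (see~\eqref{eq:beta}--\eqref{eq:sigmag} and the analyticity of $\alpha$ for $\lambda>1$), together with $|m-m'|=O(1)$ and $|\lambda-\lambda'|=O(n^{-1})$, one gets $|\alpha(\lambda)m-\mu_Y|=O(1)$ and $|\sigma^2(\lambda)m-\sigma_Y^2|=O(1)$, while $\sigma_Y^2=\Theta(n)$. Feeding these into the one-dimensional Gaussian total-variation bound~\eqref{eq:tvdifference} gives $\|Z-Y\|_{\TV}=O(n^{-1/2})$, and hence $\|\mathrm{Law}(\hat Z)-\mathrm{Law}(\hat Y)\|_{\TV}=O(n^{-1/2})$ since rounding is a deterministic map. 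Combining Steps 1 and 2 by the triangle inequality completes the argument.

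The one point that needs care is that we require closeness at the level of total variation, not merely weak convergence; this is exactly why Step 1 leans on the local limit theorem Theorem~\ref{thm:giantLLT} (summing its pointwise estimate over a bulk window, and separately controlling the tails with the central limit theorem) rather than on a central limit theorem alone. By contrast, the parameter shift from $(m,\lambda)$ to $(m',\lambda')$ in Step 2 is benign: $O(1)$ changes in the raw Gaussian parameters translate to $O(n^{-1/2})$ in total variation through~\eqref{eq:tvdifference}.
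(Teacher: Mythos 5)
Your proposal is correct and follows essentially the same route as the paper's proof: compare $|\mathcal L_1(G)|$ to a Gaussian $\mathcal N(\alpha(\lambda)m,\sigma^2(\lambda)m)$ using the local limit theorem (Theorem~\ref{thm:giantLLT}) on a bulk window plus a tail bound, then shift the Gaussian parameters from $(\alpha(\lambda)m,\sigma^2(\lambda)m)$ to $(\mu_Y,\sigma_Y^2)$ via the one-dimensional total-variation bound~\eqref{eq:tvdifference}. The only cosmetic differences are that you make the nearest-integer rounding explicit and use the CLT/Gaussian tails where the paper uses Chebyshev for the tail mass (either works, though Chebyshev via Lemma~\ref{lem:rgdeviation}(2) sidesteps the issue that Theorem~\ref{thm:CLT} is stated for a fixed $\lambda$ rather than an $n$-dependent $\lambda_n$ in a compact interval).
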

\begin{proof}
    Let $\delta > 0$ be an arbitrary number. 
    By definition,  $G\sim G(m, \frac{\betac}{n})$.
    Let $\mu_X = \alpha\left(\betac \cdot \frac{m}{n}\right)\cdot m$, and 
    $\sigma_X^2 = \sigma^2\left(\betac \cdot  \frac{m}{n} \right) \cdot m$.  
    Suppose $\mathcal{W} \sim \mathcal{N}(\mu_X, \sigma_X^2)$.
    Since $m\cdot \betac > n$ uniformly in $n$, Lemma~\ref{thm:giantLLT} shows that for each $k \in [\mu- L\sigma_X, \mu+L\sigma_X]$ and any $L>0$,
    \[
    \epsilon_G(k):=
    \Big| \Pr(|\mathcal L_1(G)| = k) -  \frac{1}{\sigma_X \sqrt{2\pi}} e^{-\frac{(k-\mu_X)^2}{2\sigma_X^2}} \Big| 
    \le \frac{\delta}{4\sqrt{2\pi}\sigma_X L} . 
    \]
    Then by~\eqref{eq:normal-rv-integer-approx} and Chebyshev's inequality, for $L$ and $m$ large, there is a coupling of  $(|\mathcal L_1(G)|, \mathcal{W})$ such that
    \[
    \Pr\big(| \mathcal{W} - |\mathcal L_1(G)|| >1  \big) \le \frac{2\sigma_X L \delta}{4\sqrt{2\pi}\sigma_X L} + O\big(\frac{2L\sigma_X}{\sigma_X^2} \big) + \frac{\delta}{4}  \le \frac{\delta}{2}.
    \]
    On the other hand, via our assumption on $|m-m'|$ and~\ref{eq:tvdifference}, we obtain that
    \[
        \| Y - \mathcal{W} \|_{\TV} \le \frac{3|\sigma_X^2 - \sigma_Y^2|}{2\sigma^2_Y} 
        + \frac{|\mu_X - \mu_Y|}{2\sigma_Y} = O(n^{-1/2})\,.
    \]
    Therefore, for every $\delta>0$, there exists a coupling of $(\mathcal L_1(G), \mathcal{W}, Y)$ such that
    $|Y - |\mathcal L_1(G)|| \le 1$ with probability at least $1-\delta$.
\end{proof}

\section{The Potts Glauber dynamics}\label{sec:Potts-dynamics}

Throughout this section, let $\mathcal S$ be the simplex $\mathcal{S} := \{x\in [0,1]^q: x_1 + \dots +x_q =1 \}$.
For $s\in \mathcal{S}$, we denote by $s_i$ the $i$-th coordinate of $s$.
Let $\sigma_t \in \Omega$ be the Potts configuration at the $t$-th step of the Glauber dynamics.
Let $S(\sigma_t)= (S_{t,1}, S_{t,2},\dots, S_{t,q}) \in \mathcal{S}$ be the proportion vector of $\sigma_t$
such that there are $n S_{t,i}$ spins of color $i\in [q]$ in $\sigma_t$, i.e.,
\[
    S_{t,k} = \frac{1}{n} \sum_{v\in [n]}\mathbf{1}\{\sigma_t(v)=k\}\,.
\]
We denote by $\{S_t\}_t := \{S(\sigma_t)\}_t$ this Markov chain on the state space $\mathcal{S} \cap \frac{1}{n}\mathbb Z^q$. 

\subsection{Preliminaries for the Potts Glauber dynamics}
For any $\beta \ge 0$, define $g_{\beta}: \mathcal S \to \mathcal S$ as 
\begin{align*}
    g_{\beta}(s) = (g_{\beta,1}(s),...,g_{\beta,q}(s)) \qquad \text{where} \qquad g_{\beta,k}(s):= \frac{e^{\beta\cdot s_k}}{\sum_{j=1}^q e^{\beta \cdot s_j}}\,.
\end{align*}
This vector approximates the expected proportion vector after $1$ step of Glauber dynamics initialized at $s$. Namely, the drift satisfies 
\begin{align}\label{eq:proportions-drift}
    \mathbb E[S_{t+1,k} - S_{t,k} \mid \mathcal F_t] = \frac{1}{n} g_{\beta,k}(S_t) -\frac{1}{n} S_{t,k} + O(n^{-2})\,;
\end{align}
see Eq.~(3.1) of~\cite{CDLLPS-mean-field-Potts-Glauber}.
Without loss of generality, we will be taking the first coordinate as a distinguished one tracking the dominant color class (when there is one). Given this, it is natural to define a drift function for the first coordinate, 
\begin{equation}
    \label{eq:driftdbeta}
    d_\beta(s):=g_{\beta,1}(s)-s_1\,.
\end{equation}
We also define $D_\beta:[0,1] \rightarrow \mathbb{R}$ as
\begin{equation}
    \label{eq:driftD}
    D_{\beta}(x):= \max_{s:s_1=x} d_\beta(s) = d_\beta\big(x, \tfrac{1-x}{q-1}, \dots, \tfrac{1-x}{q-1}\big)\,. 
\end{equation}
Equivalently, we can express \eqref{eq:driftD} as
\begin{equation}
    \label{eq:driftD2}
            D_\beta(x) = \Big(1+(q-1) \exp{ \big( \beta\cdot \tfrac{1-qx}{q-1}  \big)}\Big)^{-1} -x\,.
\end{equation}
It is easy to see that $D_\beta$ is a continuously differentiable function with derivative, 
    \begin{equation}
    \label{eq:driftderivative}
               D_\beta'(x):= \frac{d}{dx} D_\beta(x) = \frac{q\beta\exp{ \left( \beta\cdot \frac{1-qx}{q-1}  \right)} }{\Big(1+(q-1) \cdot \exp{ \big( \beta\cdot \frac{1-qx}{q-1}  \big)}\Big)^2} -1\,.
    \end{equation}
The following lemma characterizes the roots of $D_\beta$ when $\beta\in (\betau, \betas)$.
\begin{lemma}
    \label{lem:driftanalysis}
    Suppose $q>2$ and $\beta\in (\betau, \betas)$. 
    Then $D_\beta(\frac{1}{q}) = 0$; there are exactly two solutions for $D_\beta(x) = 0$ in $(\frac{1}{q}, 1)$, denoted by $m_*$ and $\mr$, where $m_* < \mr$. 
    Moreover, $D_\beta'(m_*)> 0$ and $D_\beta'(\mr) < 0$.
\end{lemma}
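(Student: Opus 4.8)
The statement is an elementary calculus analysis of the single-variable function $D_\beta$ defined in~\eqref{eq:driftD2}, so the proof is a self-contained study of its zeros and sign of derivative on $[\tfrac1q,1)$. First I would record the trivial fact that $D_\beta(\tfrac1q)=0$: plugging $x=\tfrac1q$ into~\eqref{eq:driftD2} makes the exponent $\beta\cdot\frac{1-q\cdot (1/q)}{q-1}=0$, so the bracket is $1+(q-1)=q$ and $D_\beta(\tfrac1q)=\tfrac1q-\tfrac1q=0$. Next, set $\phi(x):=\big(1+(q-1)e^{\beta\frac{1-qx}{q-1}}\big)^{-1}$, so $D_\beta(x)=\phi(x)-x$; note $\phi$ is smooth, strictly increasing on $[\tfrac1q,1]$ (the exponent is decreasing in $x$), with $\phi(\tfrac1q)=\tfrac1q$ and $\phi(1)=1$. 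Thus $D_\beta$ vanishes at both endpoints $x=\tfrac1q$ and $x=1$ of the interval $[\tfrac1q,1]$, and $D_\beta(x)=\phi(x)-x$ is a difference of two increasing functions both going from $\tfrac1q$ to $1$.

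The key is then a convexity/concavity argument on $\phi$. I would compute $\phi''$ and show that on $(\tfrac1q,1)$ it changes sign exactly once — $\phi$ is convex then concave (an ``S-shaped'' sigmoid, which is standard for logistic-type functions: writing $u(x)=\beta\frac{1-qx}{q-1}$ linear decreasing and $\psi(u)=(1+(q-1)e^u)^{-1}$, one has $\psi$ with a single inflection point where $e^u=(q-1)^{-1}$, i.e. $u=-\log(q-1)$, which corresponds to $x=\frac{1}{q}\big(1+\frac{(q-1)\log(q-1)}{\beta}\big)$). Hence $D_\beta=\phi-x$ has $D_\beta''=\phi''$ changing sign once, so $D_\beta$ is itself convex-then-concave on $(\tfrac1q,1)$; a convex-then-concave function that vanishes at both endpoints of an interval has at most two interior zeros, and generically the structure of zeros is controlled by the sign of $D_\beta$ just to the right of $\tfrac1q$, i.e. by the sign of $D_\beta'(\tfrac1q)=\phi'(\tfrac1q)-1=\frac{q\beta}{q^2}-1=\frac{\beta}{q}-1=\frac{\beta-\betas}{q}$. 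Since $\beta<\betas=q$ in our regime, $D_\beta'(\tfrac1q)<0$, so $D_\beta$ dips negative just right of $\tfrac1q$; combined with $D_\beta(1)=0$ and the convex-concave shape, this forces $D_\beta$ to cross zero from below (at $m_*$, so $D_\beta'(m_*)\ge0$) and then back down toward $0$ at $x=1$, passing through a second interior zero $\mr$ at which $D_\beta$ is decreasing (so $D_\beta'(\mr)\le0$). Strictness of the derivative inequalities at $m_*,\mr$ follows because equality would force a zero of $D_\beta'$ at a zero of $D_\beta$, which combined with the single-inflection structure would make $D_\beta\le0$ on all of $(\tfrac1q,1)$, contradicting the existence of two distinct interior roots.

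The one nontrivial input — and the only place I would invoke the hypothesis $\beta>\betau$ rather than just $\beta<\betas$ — is ensuring that two interior zeros actually \emph{exist} (as opposed to $D_\beta$ staying $\le0$ throughout $(\tfrac1q,1)$, which is exactly what happens for $\beta\le\betau$). Here I would unwind the definition of $\betau$ given in the preliminaries: $\betau=\sup\{\beta\ge0:\phi(x)-x\ne0~\forall x\in(\tfrac1q,1)\}$. For $\beta>\betau$ there is, by definition of the supremum, some $\beta'\le\beta$ and some $x'\in(\tfrac1q,1)$ with $D_{\beta'}(x')=0$; monotonicity of $D_\beta(x)$ in $\beta$ for fixed $x>\tfrac1q$ (the exponent $\beta\frac{1-qx}{q-1}$ is decreasing in $\beta$ when $x>\tfrac1q$, so $\phi(x)$ and hence $D_\beta(x)$ is increasing in $\beta$) then gives $D_\beta(x')\ge D_{\beta'}(x')=0$, i.e. $D_\beta$ is nonnegative somewhere in $(\tfrac1q,1)$; together with $D_\beta$ negative just right of $\tfrac1q$ and $D_\beta(1)=0$, the intermediate value theorem plus the convex-concave shape yields exactly two interior zeros. \textbf{The main obstacle} is the bookkeeping to make the ``convex-then-concave'' claim and the ``exactly two zeros'' count fully rigorous — verifying the single inflection point of $\psi$ and checking that the sign pattern $D_\beta'(\tfrac1q)<0$, $D_\beta\ge0$ somewhere, $D_\beta(1)=0$ is consistent with precisely two interior roots and pins down the signs of $D_\beta'$ there. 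This is routine but must be done carefully; I would also cross-check consistency with the listed fact that $\thetar$ (equivalently $\mr$ via the map $x\mapsto \frac{x(q-1)+1}{q}$) solves~\eqref{eq:thetar-implicit-equation}.
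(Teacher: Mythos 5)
Your overall strategy---reduce to a calculus study of the 1-dimensional function $D_\beta = \phi - \mathrm{id}$, use the sigmoid structure of $\phi$ to pin down the zero count---is a legitimate and genuinely different route than the paper's. The paper does not use any curvature argument at all: it checks signs at $1/q$, at a right-neighborhood of $1/q$, at $1$, and at a point $x^+$ obtained from the definition of $\betau$ via monotonicity of $D_\beta$ in $\beta$; intermediate value then gives at least two interior roots with the stated derivative signs, and ``at most two'' comes from rearranging $D_\beta(x)=0$ to $1/(1-\psi(x))=x$ with $\psi$ monotone. So the two proofs diverge after the initial sign computations.

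However, your write-up contains a concrete false step that you then use load-bearingly. You assert $\phi(1)=1$ and hence $D_\beta(1)=0$, and then argue from ``$D_\beta$ vanishes at both endpoints'' and that it comes ``back down toward $0$ at $x=1$''. This is wrong: from~\eqref{eq:driftD2},
\[
\phi(1) = \frac{1}{1+(q-1)e^{\beta(1-q)/(q-1)}} = \frac{1}{1+(q-1)e^{-\beta}} < 1
\]
for every finite $\beta$, so $D_\beta(1) = \phi(1)-1 < 0$, strictly. (The paper itself computes and uses $D_\beta(1)<0$.) As written, your narrative is internally inconsistent: a function that is decreasing through zero at an interior point $\mr$ and is concave on $(\mr,1)$ cannot return to $0$ at $x=1$, so ``two interior zeros with $D_\beta(1)=0$'' cannot coexist with your own concavity claim. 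The good news is that the correct fact $D_\beta(1)<0$ is exactly what your shape argument actually wants: $D_\beta$ starts at $0$ with negative derivative (since $D_\beta'(1/q)=\beta/q-1<0$ when $\beta<\betas$), dips negative in the initial convex regime, is forced positive somewhere by the $\beta>\betau$ hypothesis (your argument for this is fine), and then must be negative again by $x=1$; the single-inflection structure then gives exactly two crossings $m_*<\mr$ with $D_\beta'(m_*)>0$, $D_\beta'(\mr)<0$. So replace the endpoint claim, and replace ``back down toward $0$ at $x=1$'' with ``back down below $0$ before $x=1$''; with that fix, your convexity-based argument is viable, though you should still verify the inflection point lies strictly inside $(1/q,1)$ in the regime of interest (it must, since otherwise $D_\beta$ would be globally convex on $(1/q,1)$ with $D_\beta(1/q)=0$, $D_\beta'(1/q)<0$, $D_\beta(1)<0$, which would force $D_\beta<0$ throughout and contradict $\beta>\betau$).
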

\begin{proof}
    The definition of $\betau$ is equivalent to $
        \betau=\sup\{
            \beta\ge 0: D_\beta(x) < 0, \forall x \in (1/q,1]
        \}$. From \eqref{eq:driftD2}, we know that if $1/q<x\le 1$, $D_\beta(x)$ for a fixed $x$ is a strictly increasing in $\beta$.
Hence,  by continuity of $D_\beta$, for $\beta>\betau$, 
    there exists $x^+\in (1/q,1]$ such that $D_\beta(x^+) > 0$.
    Next, by direct computation, for any $\beta\ge 0$ we have
    \[
        D_\beta(\tfrac{1}{q}) = 
        \frac{1}{1+(q-1) \cdot \exp{ \left[ \beta\cdot 0  \right]}} - \frac{1}{q} = 0\,,
 \qquad 
        D_\beta(1) = 
         \frac{1}{1+(q-1) \cdot \exp{ ( -\beta )}} - \frac{1}{q} < 0\,,
    \]
    and if $\beta<q = \betas$ then 
    \[
        \frac{d}{dx} D_\beta(x)\big\vert_{x=1/q} = \frac{q\beta }{\left[1+(q-1) \right]^2} -1 
        = \frac{\beta}{q}-1<0\,.
    \]
    This means for a sufficiently small $\epsilon>0$, $D_\beta\big(\frac{1}{q}+\epsilon\big) < 0$ and $\frac{1}{q} + \epsilon < x^+$.
    Since $D_\beta\big(\frac{1}{q}+\epsilon\big) < 0$, $D_\beta(x^+) > 0$ and $D_\beta(1)<0$, 
    by continuity of $D_\beta$, there are at least two roots for  $D_\beta(x) = 0$ in $(\frac{1}{q}, 1)$, among which there are two roots $m_* < \mr$ such that $D_\beta'(m_*) >0$, $D_\beta'(\mr) < 0$.

    Lastly we show that $m_*$ and $\mr$ are exactly the two roots for  $D_\beta(x) = 0$ in $(0,\infty)$.
    To see this, note that roots of  $D_\beta(x) = 0$ are roots of 
    \[
        \frac{1}{1-\psi(x)} = x\,, \qquad \text{where} \qquad  \psi(x)=(1-q)\cdot \exp  \left(\beta\cdot \frac{1-qx}{q-1} \right)\,.
    \]
    Since $\psi(x)$ is strictly monotone in $x$, there are at most two such zeros.
\end{proof}

The last preliminary estimates we require in order to analyze the Potts Glauber dynamics are the following two helpful sub/super-martingale concentration estimates we borrow from~\cite{CDLLPS-mean-field-Potts-Glauber}. 
 
\begin{lemma}[Lemma 2.1 in \cite{CDLLPS-mean-field-Potts-Glauber}]
\label{lem:mtghittingtime}
    For $x_0\in \mathbb R$, let $(X_t)_{t\ge 0}$ be a discrete time process initialized from $x_0$, with law $\Pp_{x_0}$, adapted to $(\mathcal{F}_t)_{t\ge 0}$, and satisfying
    \begin{enumerate}
        \item $\exists \delta\ge 0$: $\E_{x_0}[X_{t+1} - X_t \mid \mathcal{F}_t] \le -\delta $ on $\{X_t \ge 0\}$ for all $t\ge 0$.
        \item $\exists R >0: |X_{t+1} - X_t|\le R$ for all $t\ge 0$.
    \end{enumerate}
    Let $\tau_x^-=\inf\{ t:X_t \le x\}$ and $\tau_x^+ = \inf\{ t: X_t \ge x\}$.
    The following holds.
    \begin{enumerate}
        \item If $\delta>0$ then for any $t_1 \ge 0$:
        \[
            \Pp_{x_0}(\tau_0^- > t_1) \le \exp\Big( 
                    -\frac{(\delta t_1 - x_0)^2}{8t_1R^2}
            \Big)\,.
        \]  
        \item  If $x_0 \le 0$ then for any $x_2>0$ and $t_2 \ge 0$,
        \[
            \Pp_{x_0}(\tau_{x_2}^{+} \le t_2) \le 2\exp\Big(- \frac{(x_2-R)^2}{8t_2 R^2} \Big)\,.
        \]
        \item If $x_0 \le 0, \delta>0$ then for any $x_1>0$ and $t_1 \ge 0$,
        \[
            \Pp_{x_0}(\tau_{x_1}^{+} \le t_3) \le t_3^2\exp\Big(- \frac{(x_1-R)\delta^2}{8R^3} \Big)\,.
        \]
    \end{enumerate}
\end{lemma}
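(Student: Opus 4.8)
The plan is to obtain all three estimates from the Azuma--Hoeffding inequality for supermartingales with bounded increments, applied to a drift-corrected and stopped version of $(X_t)$; the one genuine subtlety is that hypothesis~(1) says nothing on $\{X_t<0\}$, which I would handle by decomposing a trajectory at its last visit to the nonpositive half-line before the relevant hitting time. As a preliminary reduction one may assume $\delta\le R$, since combining hypotheses~(1) and~(2) forces $-\delta\ge\mathbf{E}_{x_0}[X_{t+1}-X_t\mid\mathcal F_t]\ge -R$ whenever $\{X_t\ge0\}$ is ever reached (and the statements are vacuous otherwise), so one may always replace $\delta$ by $\min\{\delta,R\}$.

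For part~(1), put $\tau:=\tau_0^-$ and consider $M_t:=(X_{t\wedge\tau}-x_0)+\delta(t\wedge\tau)$. Because $X_s>0$ for all $s<\tau$, hypothesis~(1) makes $(M_t)$ a supermartingale with $M_0=0$ and increments bounded by $R+\delta\le 2R$, and on $\{\tau>t_1\}$ one has $X_{t_1}>0$, hence $M_{t_1}>\delta t_1-x_0$. The supermartingale Azuma--Hoeffding bound (which is informative exactly when $\delta t_1\ge x_0$, the only regime that matters) then gives $\Pp_{x_0}(\tau_0^->t_1)\le\exp\!\big(-(\delta t_1-x_0)^2/(2t_1(R+\delta)^2)\big)$, and $\delta\le R$ turns the denominator into $8t_1R^2$ as claimed.

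For parts~(2) and~(3), on $\{\tau:=\tau_{x_j}^+\le t_j\}$ I would set $\rho:=\max\{s<\tau: X_s\le0\}$, which exists since $x_0\le0$. On $(\rho,\tau)$ the process is strictly positive, so hypothesis~(1) is in force at every step of $[\rho+1,\tau]$; consequently $N_k:=X_{(\rho+1+k)\wedge\tau}$ is a supermartingale with increments bounded by $R$, started from $N_0=X_{\rho+1}\le X_\rho+R\le R$, and it must climb by at least $x_j-R$ within $t_j$ steps --- this is the point at which the uncontrolled re-entry step, of size at most $R$ by hypothesis~(2), is paid, and is why the barriers appear shifted to $x_j-R$. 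For part~(2) this climb is controlled directly by the maximal form of Azuma--Hoeffding, which (combined with the reflection/union-bound manipulations that produce the constant prefactor) gives $2\exp\!\big(-(x_2-R)^2/(8t_2R^2)\big)$. For part~(3) I would additionally exploit the \emph{strict} negative drift: an ascent of $N$ of length $m$ by $x_1-R$ requires its martingale part to exceed $(x_1-R)+\delta m$, so each such event has probability at most $\exp\!\big(-c\,((x_1-R)+\delta m)^2/(mR^2)\big)$ for an absolute $c>0$; optimizing the exponent over $m$ converts the quadratic dependence on the barrier into the linear term $\propto(x_1-R)\delta$, a union bound over $m\le t_3$ and over $\rho\le t_3$ supplies the $t_3^2$ prefactor, and finally $\delta\le R$ brings the exponent into the stated form $-(x_1-R)\delta^2/(8R^3)$.

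I expect the main obstacle to be precisely the absence of drift control on $\{X_t<0\}$: without the last-exit decomposition the relevant stopped process is not a supermartingale on the stretch leading up to the hitting time, so one cannot apply Azuma--Hoeffding at all. Everything else is bookkeeping --- verifying that the stopped processes are genuine supermartingales (including the trivial case where the final ascent has length one, which forces $x_j\le R$ and makes the asserted bound automatic), tracking that all increment bounds are comparable to $R$ once $\delta\le R$, and choosing the union bounds in parts~(2)--(3) so the polynomial-in-$t$ factors come out as stated.
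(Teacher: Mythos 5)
The paper does not prove this statement; it imports Lemma~2.1 from the cited reference, so there is no proof of record here to compare against, and I am evaluating your proposal on its own terms. Your argument for part~(1) is fine: the process $M_t=(X_{t\wedge\tau_0^-}-x_0)+\delta(t\wedge\tau_0^-)$ is adapted, stopped at a genuine stopping time, has drift $\le 0$ before $\tau_0^-$, and is a supermartingale with increments bounded in magnitude by $R+\delta\le 2R$, so Azuma--Hoeffding gives the claim (the preliminary reduction to $\delta\le R$ is correct once one checks the statements are vacuous otherwise).

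For parts~(2) and~(3), the central object in your plan is not well-posed as you use it. The time $\rho=\max\{s<\tau: X_s\le 0\}$ is a \emph{last-exit} time, hence not a stopping time: $\{\rho=s\}$ depends on the entire future up to $\tau$. Consequently $N_k:=X_{(\rho+1+k)\wedge\tau}$ is not adapted to any filtration to which Azuma--Hoeffding applies, so the step ``$N_k$ is a supermartingale'' does not stand as written. You gesture at the standard repair, a union bound over deterministic excursion start times $s$, and for part~(3) that repair does work and delivers exactly the $t_3^2$ prefactor you describe (one factor from $s\le t_3$ and one from the excursion length $m\le t_3$), followed by the optimization in $m$ and the reduction $\delta\le R$ to get the $-(x_1-R)\delta^2/(8R^3)$ exponent. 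So part~(3), once rephrased in terms of honest stopping times, is essentially correct.

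The genuine gap is in part~(2). The same union-bound repair yields $\Pp_{x_0}(\tau_{x_2}^+\le t_2)\le t_2\exp\!\big(-(x_2-R)^2/(2t_2R^2)\big)$, with a linear-in-$t_2$ prefactor, whereas the statement asserts a prefactor of $2$. The weaker denominator constant ($8$ in place of $2$) does not absorb this discrepancy: $t_2\,e^{-u/2}\le 2e^{-u/8}$ fails whenever $u=(x_2-R)^2/(t_2R^2)$ is small relative to $\log t_2$, which is exactly the regime where the bound is of interest. The appeal to ``reflection'' does not obviously close this, because the reflection principle relies on exchangeability of increments (e.g.\ a symmetric or exchangeable random walk), whereas here the drift is constrained only on $\{X_t\ge 0\}$ and can be an arbitrary $[-R,R]$-bounded increment below zero, so the process admits no distributional symmetry to reflect. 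Something more than a last-exit union bound is needed precisely in part~(2), and your proposal does not supply it.
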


\begin{lemma}[Lemma 2.2 in \cite{CDLLPS-mean-field-Potts-Glauber}]
    \label{lem:supermtganticoncen}
    Let $\{X_t\}_{t\ge 0}$ be a process adapted to $\{\mathcal{F}_t\}_{t\ge 0}$ and satisfying the following conditions for some $0\le 2\delta <a$:
    \begin{enumerate}
        \item $X_{t+1} - X_t \in \{-1,0,1\}$.
        \item $\E[ X_{t+1} - X_t \mid \mathcal{F}_t] \ge -\delta$.
        \item $\var(X_{t+1} \mid \mathcal{F}_t)\ge a$.
        \item $X_0 \ge 0$.
    \end{enumerate}
    Let $\tau_r^+ = \inf\{ t: X_t \ge r\}$. Then
    \[
        \Pr(\tau_r^+ \le t) \ge C_1 \exp \Big( -C_2 \Big(\frac{r}{\sqrt{t}} +\delta\sqrt{t} \Big)^2 \Big) + O(t^{-1/2}),
    \] where $C_1, C_2$ are positive constants which depend only on $a$.
\end{lemma}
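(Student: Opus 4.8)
The plan is to absorb the downward drift, reduce to an anti-concentration estimate for a martingale, and then recover the Gaussian-type rate via a dyadic block decomposition whose per-block input is a one-sided local central limit estimate.

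First, set $W_s := (X_s - X_0) + \delta s$. By hypothesis~(2), $W$ is a sub-martingale with $W_0 = 0$; by~(1) its increments lie in $[-1,1+\delta]$, and since $a\le\var(X_{s+1}\mid\mathcal F_s)\le 1$ forces $\delta<1/2$, both the increments of $W$ and (by~(3)) its conditional variances are controlled by an absolute constant $R_0$, with $\var(W_{s+1}\mid\mathcal F_s)\ge a$. Using $X_0\ge 0$ from~(4) we have $X_s\ge W_s-\delta s$, so $\{\tau_r^+\le t\}\supseteq\{\max_{s\le t}W_s\ge r+\delta t\}$; writing $m:=r+\delta t$, note that $m/\sqrt t=r/\sqrt t+\delta\sqrt t$, so it suffices to lower bound $\Pr(\max_{s\le t}W_s\ge m)$. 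Passing to the Doob decomposition $W=\widehat W+A$ with $A$ predictable nondecreasing (hence $A\ge 0$), one has $\max_{s\le t}W_s\ge\max_{s\le t}\widehat W_s$, so we are reduced to a mean-zero martingale $\widehat W$ with $\widehat W_0=0$, increments bounded by $2R_0$, and conditional variances in $[a,R_0^2]$. If $m\gtrsim t$ the asserted bound $C_1e^{-C_2m^2/t}$ is below $t^{-1/2}$ and the statement is vacuous, so we may assume $m$ is at most a small constant times $t$; here $C_1,C_2$ will depend only on $a$ (and on the absolute constant $R_0$).

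Granting the block estimate below---that there exist $\eta,p_0\in(0,1)$ depending only on $a$ such that for every $\ell\ge 1$ and every $s_0$, $\Pr(\widehat W_{s_0+\ell}-\widehat W_{s_0}\ge\eta\sqrt\ell\mid\mathcal F_{s_0})\ge p_0$---I would split $[0,t]$ into $k:=\lceil m^2/(\eta^2t)\rceil$ consecutive blocks of length $\ell:=\lfloor t/k\rfloor$, so that $k\eta\sqrt\ell\ge m$ and $\ell\ge 1$ (using $m\lesssim t$). Iterating the block estimate, conditioning successively on $\mathcal F_0,\mathcal F_\ell,\dots,\mathcal F_{(k-1)\ell}$, the event that $\widehat W$ gains at least $\eta\sqrt\ell$ over each block has probability at least $p_0^{\,k}$, and on this event $\widehat W_{k\ell}\ge m$, whence $\max_{s\le t}\widehat W_s\ge m$ and thus $\tau_r^+\le t$. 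Since $p_0^{\,k}\ge p_0\,e^{-(\log(1/p_0))m^2/(\eta^2 t)}$, this gives $\Pr(\tau_r^+\le t)\ge C_1e^{-C_2m^2/t}$ with $C_1=p_0$ and $C_2=\log(1/p_0)/\eta^2$, matching the claim; the additive $O(t^{-1/2})$ will come from the final incomplete block and the $o(1)$ errors in the block estimate.

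The block estimate is the main obstacle. Its two-sided form is immediate: $\E[(\widehat W_{s_0+\ell}-\widehat W_{s_0})^2\mid\mathcal F_{s_0}]=\E[\langle\widehat W\rangle_{s_0+\ell}-\langle\widehat W\rangle_{s_0}\mid\mathcal F_{s_0}]\in[a\ell,R_0^2\ell]$ and the conditional fourth moment is $O(R_0^4\ell^2)$ by Burkholder's inequality, so Paley--Zygmund gives $\Pr(|\widehat W_{s_0+\ell}-\widehat W_{s_0}|\ge\tfrac12\sqrt{a\ell}\mid\mathcal F_{s_0})\ge c(a)>0$. The delicate point is the one-sidedness: moment bounds cannot exclude the deviation mass lying entirely on the negative side, and a Berry--Esseen argument with deterministic normalization is unavailable because the quadratic variation of a block is only known to lie in the range $[a\ell,R_0^2\ell]$, not to concentrate. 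I would instead use a Lindeberg swapping argument, replacing the martingale increments $\Delta_j$ one at a time by conditionally Gaussian increments $G_j$ with $G_j\mid\mathcal F_{j-1}\sim\mathcal N(0,\var(\Delta_j\mid\mathcal F_{j-1}))$: the bounded increments make the total swapping error $O(R_0^3\ell)\cdot\|\psi'''\|_\infty$ for a test function $\psi$, which for $\psi$ a smooth lower approximation of $\mathbf 1_{[\eta\sqrt\ell,\infty)}$ at scale $\sqrt\ell$ is $o(1)$ in $\ell$. After swapping, $\sum_jG_j$ conditioned on the predictable variances $v_j=\var(\Delta_j\mid\mathcal F_{j-1})$ is exactly $\mathcal N(0,\sum_jv_j)$ with $\sum_jv_j\ge a\ell$, so $\Pr(\sum_jG_j\ge\eta\sqrt\ell)\ge\Phi^c(\eta/\sqrt a)$ \emph{uniformly} over the random variances---this is precisely where the non-concentration of the quadratic variation becomes harmless, since one never normalizes and every Gaussian target already carries variance $\ge a\ell$. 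This yields the block estimate with $p_0$ depending only on $a$ once $\ell$ exceeds a constant; for bounded $\ell$ it is trivial (a constant-length run of $+1$ steps, whose conditional probability is bounded below using $2\delta<a$). Choosing $\eta$ a sufficiently small multiple of $\sqrt a$ throughout keeps all constants consistent.
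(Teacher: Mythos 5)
This lemma is cited directly from~\cite{CDLLPS-mean-field-Potts-Glauber} and is not reproved in the paper, so your proposal has to be judged on its own. The reduction to the martingale $\widehat W$ via the Doob decomposition and the block decomposition scheme are both sound, and the observation that conditions (2)--(3) force $\Pr(X_{t+1}-X_t=1\mid\mathcal F_t)\ge (a-\delta)/2>a/4$ (so the bounded-$\ell$ case of the block estimate is trivial) is correct. However, the key step---the one-sided anti-concentration over a block of length $\ell$---has a genuine gap in the Lindeberg swapping argument. The Gaussian replacement $G_j=\sqrt{v_j}\,\xi_j$ has a conditional variance $v_j=\var(\Delta_j\mid\mathcal F_{j-1})$ that is predictable, hence a function of the earlier martingale increments $\Delta_1,\dots,\Delta_{j-1}$. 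In the hybrid sum $S^{(i)}=\sum_{j\le i}\Delta_j+\sum_{j>i}G_j$, the ``rest'' $R_i=\sum_{j<i}\Delta_j+\sum_{j>i}G_j$ is therefore \emph{not} conditionally independent of $\Delta_i-G_i$ given $\mathcal F_{i-1}$: the future $G_j$'s for $j>i$ carry a $\Delta_i$-dependent variance. Consequently the first- and second-order Taylor terms $\E[\psi'(R_i)(\Delta_i-G_i)]$ and $\E[\psi''(R_i)(\Delta_i^2-G_i^2)]$ do not cancel, and the ``total swapping error $O(R_0^3\ell)\|\psi'''\|_\infty$'' bound, which is correct when swapping \emph{independent} increments, is unjustified for a martingale with state-dependent conditional variances. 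This is exactly the obstruction that makes martingale central limit theorems harder than the i.i.d.\ case.

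The standard way around it, and I expect the route taken in~\cite{CDLLPS-mean-field-Potts-Glauber}, is to make the quadratic variation deterministic by a stopping time before invoking a Gaussian approximation: let $\sigma:=\inf\{s: \sum_{j\le s}v_j\ge at\}$, so that $\sigma\le t$ (since $v_j\ge a$), $\langle\widehat W\rangle_\sigma\in[at,at+R_0^2]$, and $\{\widehat W_\sigma\ge m\}\subseteq\{\tau_r^+\le t\}$. Now apply a martingale Berry--Esseen theorem (Bolthausen/Heyde--Brown/Mourrat) to the stopped chain, which has (essentially) deterministic total conditional variance and uniformly bounded increments, to get $\Pr(\widehat W_\sigma\ge m)\ge\Phi^c(m/\sqrt{at})-O(t^{-1/2})$ in one shot---no block decomposition is needed, and the $O(t^{-1/2})$ additive error appears naturally as the Berry--Esseen rate rather than being glued in ad hoc from ``the final incomplete block.'' If you prefer to keep the block structure, the same stopping-time trick within each block would repair the argument, but the Lindeberg swap as written will not.
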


\subsection{Mixing away from the saddle when \texorpdfstring{$\beta \in (\betau,\betas)$}{beta in (betau,betas)}}
We first establish that as soon as the proportion vector has reached an $\omega(n^{-1/2})$ distance from the unstable fixed point of $(m_*, \frac{1-m_*}{q-1},...,\frac{1-m_*}{q-1})$ in the first coordinate, with probability going to $1$ as $n\to\infty$, 
the Glauber dynamics rapidly quasi-equilibrates to the corresponding phase. We need to handle the regimes $\beta\in (\betau,\betas)$ and $\beta >\betas$ separately, with the latter introducing additional complications; this subsection is focused on the former. 
Let $\zeta_n=o(1)$ be a sequence going to zero sufficiently slowly (chosen after everything else), and define the $q+1$ stable Potts phases when $\beta\in (\betau,\betas)$ as 
\begin{align*}
    \Omega^\dis & = \{\sigma: \|S(\sigma) - (\tfrac{1}{q},...,\tfrac{1}{q})\|_1\le \zeta_n\}\,, \\
    \Omega^{\ord,i} & = \{\sigma: \|S(\sigma) - (\tfrac{1-\mr}{q-1},...,\tfrac{1-\mr}{q-1},\mr,\tfrac{1-\mr}{q-1},...,\tfrac{1-\mr}{q-1})\|_1 \le \zeta_n\}\,, \qquad \text{and} \\
    \Omega^{\ord} & = \Omega^{\ord,1} \cup \dots \cup \Omega^{\ord,q},   
\end{align*}
where in $\Omega^{\ord,i}$, $\mr$ is the $i$'th coordinate. Let $\pi^\dis = \pi(\cdot \mid \Omega^\dis)$, $\pi^{\ord,i} = \pi(\cdot \mid \Omega^{\ord,i})$ and  $\pi^{\ord} = \pi(\cdot \mid \Omega^{\ord})$.  

The initialization $\hat \nu^{\otimes}(m_0)$ 
in Theorem~\ref{thm:Potts-Glauber}
    has one distinguished coordinate (randomly chosen on $\{1,...,q\}$) which at least at time zero is the dominant color.  By permutation symmetry, it is sufficient for us to assume that it is the first coordinate and we do so in what follows. 

\begin{theorem}
    \label{thm:PottsHitLeftTarget}
    Let $q > 2$ and $\beta \in (\betau, \betas)$.
    Let $\gamma>0$ be a large constant.
    Suppose $\sigma_0$ is a configuration such that
    $S_{0,1} = m$ for $m > 1/q$, $m\le m_* - \gamma n^{-1/2}$
    and $\|S_0 - (m, \frac{1-m}{q-1},...,\frac{1-m}{q-1})\|_1 = O(n^{-1/2}\log n)$.
    Suppose $\sigma_0' \sim \pi^{dis}$.
    Then there exists $T=O(n \log n)$ such that
    \begin{align*}
        \|\Pr_{\sigma_0} (S(\sigma_T) \in \cdot) - \Pr_{\sigma_0'} (S(\sigma_T') \in \cdot) \|_{\TV} = e^{-\Omega(\gamma^2)}\,.
    \end{align*}
    An analogous statement holds with $\sigma_0$ and $\sigma_0'$ such that 
       $S_{0,1} = m$ for $m\ge m_* + \gamma n^{-1/2}$, 
    $\|S_0 - (m, \frac{1-m}{q-1},...,\frac{1-m}{q-1})\|_1 = O(n^{-1/2}\log n)$
    and $\sigma_0 ' \sim \pi^{\ord,1}$. 
\end{theorem}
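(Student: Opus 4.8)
The plan is to transport the CM argument of Section~\ref{subsec:mixing-away-from-saddle-SWCM} to this setting, with the dominant coordinate $S_{t,1}$ in the role of the giant component $L_t$, the drift analysis of $D_\beta$ (Lemma~\ref{lem:driftanalysis}) in the role of that of $f$, and the martingale concentration bounds of Lemma~\ref{lem:mtghittingtime} in the role of the random-graph estimates. We treat $m\le m_*-\gamma n^{-1/2}$, quasi-equilibrating to $\pi^\dis$; the case $m\ge m_*+\gamma n^{-1/2}$ is entirely analogous, with the stable root $\mr$ of $D_\beta$ taking the place of $1/q$ and $\pi^{\ord,1}$ that of $\pi^\dis$, plus the routine check that the first coordinate remains the strict maximum so the chain lands in $\Omega^{\ord,1}$ and not another $\Omega^{\ord,i}$. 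The first ingredient is a good set $\mathcal G$ of configurations on which (i) the subdominant coordinates are balanced, $\max_{2\le i<j\le q}|S_{t,i}-S_{t,j}|\le Kn^{-1/2}\log n$, and (ii) $S_{t,1}$ stays in a suitable one-sided window near the saddle. Using~\eqref{eq:proportions-drift} and $g_{\beta,i}(s)=e^{\beta s_i}/\sum_k e^{\beta s_k}$, one checks that for $i,j\ge 2$ the difference $S_{t,i}-S_{t,j}$ has drift contracting it by $1-\tfrac1n(1-\tfrac\beta q)+O(n^{-2})$ per step (the constant $1-\beta/q>0$ is exactly where $\beta<\betas$ enters) with $O(1/n)$ increments, so Azuma--Hoeffding keeps (i) in force for all $t\le T=O(n\log n)$ with probability $1-o(1)$, and a companion $D_\beta$-drift estimate keeps (ii) in force until the escape below is complete. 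A subtlety is that initially the imbalance $\|S_0-(m,\tfrac{1-m}{q-1},\dots)\|_1=O(n^{-1/2}\log n)$ exceeds the saddle gap $|m-m_*|\ge\gamma n^{-1/2}$; this is harmless because, with $\sum_{k\ge2}s_k$ fixed, $\sum_{k\ge2}e^{\beta s_k}$ depends on the imbalance only to second order (convexity of $x\mapsto e^{\beta x}$), so the imbalance moves the drift of $S_{t,1}$ by merely $O(n^{-2}\log^2 n)$, negligible against $n^{-1}D_\beta(S_{t,1})=\Theta(n^{-1}|S_{t,1}-m_*|)$.

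\textbf{Escaping the saddle.} On $\mathcal G$ the drift of $S_{t,1}$ equals $\tfrac1n D_\beta(S_{t,1})+O(n^{-2}\log^2 n)$, and since $D_\beta(m_*)=0$, $D_\beta'(m_*)>0$ there is $s>0$ with $D_\beta(x)\le-\tfrac12 D_\beta'(m_*)(m_*-x)$ for $x\in(m_*-s,m_*)$. Hence $W_t:=n(m_*-S_{t,1})\ge\gamma\sqrt n$ has positive state-dependent drift $\ge\tfrac cn W_t$ with bounded increments, i.e.\ it grows geometrically on the time scale $n$. I would run an inductive dyadic argument in the spirit of Lemma~\ref{lem:hittimesupercritical2}: treating $\sim n$ Glauber steps as one effective step, $W_t$ roughly doubles per effective step once past the $\gamma\sqrt n$ scale, and the probability that instead $W_t$ returns to $0$ (equivalently $S_{t,1}$ crosses above $m_*$) before reaching level $sn$ is bounded, via a gambler's-ruin estimate for the exponential martingale of the drifting walk (applied stagewise, so that the polynomial prefactor in Lemma~\ref{lem:mtghittingtime}(3) is not lost), by $\sum_{i\ge0}e^{-c'(1+c)^i\gamma^2}=e^{-\Omega(\gamma^2)}$. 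It is exactly the bounded increments of the Glauber chain that upgrade the $O(\gamma^{-2})$ of Lemma~\ref{lem:hittimesupercritical2} to $e^{-\Omega(\gamma^2)}$. Thus with probability $1-e^{-\Omega(\gamma^2)}$ the chain reaches, within $O(n\log n)$ steps, a configuration on $\mathcal G$ with $S_{t,1}\le m_*-s$, having never crossed $m_*$.

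\textbf{Descent and coupling.} From $S_{t,1}\le m_*-s$: for $S_{t,1}\in(1/q+\epsilon,m_*-s)$ on $\mathcal G$ the drift $D_\beta$ is negative and bounded away from $0$, so by Lemma~\ref{lem:mtghittingtime}(1) (after rescaling by $n$) $S_{t,1}$ reaches $1/q+\epsilon$ in $O(n\log n)$ further steps with probability $1-o(1)$; near the stable root $1/q$, where $D_\beta'(1/q)=\beta/q-1<0$, the first coordinate contracts exponentially on time scale $n$, so with the balance of $\mathcal G$ the chain reaches, in $O(n\log n)$ steps, a configuration in $\Omega^\dis$ with count vector within $O(\sqrt n\log n)$ of $(\tfrac nq,\dots,\tfrac nq)$. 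Since by Corollary~\ref{cor:equilibrium-estimates-Potts} $\pi^\dis$ is supported up to $o(1)$ mass on such configurations, it remains to couple the proportion vector of $\sigma_t$ with that of a chain started from $\pi^\dis$; this is a standard within-phase contraction coupling — the Jacobian of $s\mapsto s+\tfrac1n(g_\beta(s)-s)$ at the uniform vector has spectral radius $1-\tfrac1n(1-\tfrac\beta q)$ on the zero-sum subspace — driving the $\ell_1$ distance of the two count vectors to $0$ in $O(n\log n)$ steps with probability $1-o(1)$, essentially as in~\cite{CDLLPS-mean-field-Potts-Glauber}. Chaining these steps, with the couplings run one-shot and not restarted (as stressed for Lemma~\ref{lem:quasi-equilibration-away-from-saddle-SWCM}), gives $S(\sigma_T)=S(\sigma_T')$ with probability $1-e^{-\Omega(\gamma^2)}-o(1)$, i.e.\ the claimed bound.

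\textbf{Main obstacle.} I expect the hard part to be the interplay of the first two steps: maintaining the $\tilde O(n^{-1/2})$-balance of the $q-1$ subdominant coordinates over the entire $O(n\log n)$ window \emph{simultaneously} with the dominant coordinate carrying out its $\Theta(n^{-1/2})$-scale escape, and — the genuinely delicate point — verifying that the initial imbalance, which is larger than the saddle gap, does not spoil the escape analysis. The second-order-dependence observation is the key that makes this work, but implementing it uniformly over the window and over all $\beta\in(\betau,\betas)$ (so that one truly needs $\beta<\betas$ for $1-\beta/q>0$) is where the effort lies; by contrast the descent and within-phase coupling are close to routine and largely available in~\cite{CDLLPS-mean-field-Potts-Glauber}.
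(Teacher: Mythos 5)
Your proposal follows essentially the same route as the paper: maintain $\tilde O(n^{-1/2})$ balance of the subdominant coordinates via a martingale argument (Lemma~\ref{lem:nondominantcoordinates}), run a stage-wise geometric escape from $m_*$ using the $D_\beta'(m_*)>0$ drift with the $e^{-\Omega(\gamma^2)}$ failure bound coming from summing $\exp(-c'(1+c)^{2i}\gamma^2)$ over stages (Lemma~\ref{lem:sdecayPotts}), descend with $D_\beta$ bounded away from $0$ to a neighborhood of $1/q$ (Lemma~\ref{lem:sdecayPottsstrong}), and finish with the within-phase contraction coupling from~\cite{CDLLPS-mean-field-Potts-Glauber} (Lemmas~\ref{lem:contractionPotts}--\ref{lem:subcriticalcouple}). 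The one spot where you do slightly more work than needed on the disordered side: you invoke the first-order cancellation $\langle S-\hat S,\nabla d_\beta(\hat S)\rangle=0$ to control the drift of $S_{t,1}$ despite the initial imbalance, whereas for the leftward escape the paper simply uses the one-sided inequality $d_\beta(S_t)\le D_\beta(S_{t,1})$ (by the definition of $D_\beta$ as a max), which makes the imbalance irrelevant to the one-sided drift bound; the Taylor-expansion / cancellation argument you describe is precisely what the paper does reserve for the rightward (ordered) escape in Lemma~\ref{lem:superdecayPotts}, where a one-sided bound would have the wrong sign.
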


The following allows us to treat the $q-1$ non-dominant coordinates as constant $\frac{1-S_{t,1}}{q-1}$ and approximate the analysis by a 1-dimensional process for the first coordinate.

\begin{lemma} 
    \label{lem:nondominantcoordinates}
    Let $q > 2$ and $\beta <\betas$.
    For any $\epsilon>0$, let $\tau_\epsilon:=\inf\{t: S_{t,1} \le \frac{1}{q}+\frac{\epsilon}{2}\}$.
    Suppose $\|S_0 - (m, \frac{1-m}{q-1},...,\frac{1-m}{q-1})\|_1 = O(n^{-1/2}\cdot \log n)$ for some $m>\frac{1}{q}+ \epsilon$.
    Then
    for any $T = A_1 n\log n$, there exists $A_2 >0$ such that 
    $\max_{i,j\ne 1} |S_{T\wedge\tau_{\epsilon},i} - S_{T\wedge \tau_{\epsilon},j}| < \frac{2A_2 \log n}{\sqrt{n}}$ 
    with probability $1-O(n^{-2})$.
\end{lemma}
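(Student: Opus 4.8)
The plan is to run a bootstrap argument that reduces the control of the non-dominant coordinates to a one-dimensional sub-martingale estimate for each pairwise difference $W_t^{(i,j)}:=S_{t,i}-S_{t,j}$ with $i,j\ne 1$. Write $V_t:=\max_{i,j\ne 1}|S_{t,i}-S_{t,j}|$ and introduce the stopping time $\tau_*:=\inf\{t: V_t\ge 2A_2 n^{-1/2}\log n\}$ with $A_2$ to be fixed at the end; then the goal becomes to show $\tau_*>T\wedge\tau_\epsilon$ with probability $1-O(n^{-2})$, since on that event $V_{T\wedge\tau_\epsilon}<2A_2 n^{-1/2}\log n$ as claimed. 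From \eqref{eq:proportions-drift} one gets, for each pair, $\E[W_{t+1}^{(i,j)}-W_t^{(i,j)}\mid\mathcal{F}_t]=\tfrac1n\big(g_{\beta,i}(S_t)-g_{\beta,j}(S_t)-W_t^{(i,j)}\big)+O(n^{-2})$, and deterministically $|W_{t+1}^{(i,j)}-W_t^{(i,j)}|\le 2/n$ (a single Glauber step moves at most one vertex).

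The heart of the proof is showing that on $\{t<\tau_*\wedge\tau_\epsilon\}$ this drift is contracting. There, $S_{t,1}\ge\tfrac1q+\tfrac\epsilon2$, and writing $\bar S_t:=\tfrac{1-S_{t,1}}{q-1}$ for the mean of the non-dominant coordinates, the constraint $V_t\le\delta:=2A_2 n^{-1/2}\log n$ forces $S_{t,k}\in[\bar S_t-\delta,\bar S_t+\delta]$ for all $k\ne1$, while $S_{t,1}-\bar S_t=\tfrac{qS_{t,1}-1}{q-1}\ge\tfrac{\epsilon q}{2(q-1)}$. Using the identity $g_{\beta,i}(s)-g_{\beta,j}(s)=\tanh\!\big(\tfrac{\beta(s_i-s_j)}{2}\big)\cdot\tfrac{e^{\beta s_i}+e^{\beta s_j}}{\sum_k e^{\beta s_k}}$, the bound $\tanh(x)\le x$, and the estimates $e^{\beta S_{t,i}}+e^{\beta S_{t,j}}\le 2e^{\beta(\bar S_t+\delta)}$ and $\sum_k e^{\beta S_{t,k}}\ge e^{\beta\bar S_t}\big(e^{\beta\epsilon q/(2(q-1))}+(q-1)e^{-\beta\delta}\big)$, one obtains, when $W_t^{(i,j)}>0$,
\[
g_{\beta,i}(S_t)-g_{\beta,j}(S_t)\ \le\ \rho(\delta)\,W_t^{(i,j)},\qquad \rho(\delta):=\frac{\beta e^{\beta\delta}}{e^{\beta\epsilon q/(2(q-1))}+(q-1)e^{-\beta\delta}}.
\]
Since $\beta<\betas=q$ and $e^{\beta\epsilon q/(2(q-1))}>1$, we have $\rho(0)=\frac{\beta}{q-1+e^{\beta\epsilon q/(2(q-1))}}<1$; by continuity $\rho(\delta)\le\rho<1$ for a constant $\rho=\rho(\beta,q,\epsilon)$ once $\delta$ is below a constant, in particular for $\delta=2A_2n^{-1/2}\log n$ and $n$ large. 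Hence on $\{t<\tau_*\wedge\tau_\epsilon\}\cap\{W_t^{(i,j)}\ge0\}$ we get $\E[W_{t+1}^{(i,j)}-W_t^{(i,j)}\mid\mathcal{F}_t]\le-\tfrac{c}{n}W_t^{(i,j)}+O(n^{-2})$ with $c:=1-\rho>0$, and the mirror-image bound holds when $W_t^{(i,j)}\le0$ (swap $i$ and $j$).

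It then remains to turn this into a tail bound via Lemma~\ref{lem:mtghittingtime}. Fix a pair and set $X_t:=W^{(i,j)}_{t\wedge\tau_*\wedge\tau_\epsilon}-A_0n^{-1/2}\log n$, where $A_0$ is large enough that $|W_0^{(i,j)}|\le A_0n^{-1/2}\log n$, which is possible since the hypothesis $\|S_0-(m,\tfrac{1-m}{q-1},\dots,\tfrac{1-m}{q-1})\|_1=O(n^{-1/2}\log n)$ bounds each $|S_{0,k}-\tfrac{1-m}{q-1}|$ by $O(n^{-1/2}\log n)$. Then $X_0\le0$, $|X_{t+1}-X_t|\le R:=2/n$, and on $\{X_t\ge0\}$ the drift of $X$ is at most $-\tfrac cn A_0n^{-1/2}\log n+O(n^{-2})\le0$ for large $n$ (it is $0$ once $t\ge\tau_*\wedge\tau_\epsilon$, and otherwise $W_t^{(i,j)}\ge A_0n^{-1/2}\log n>0$ so the contracting estimate applies). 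Item (2) of Lemma~\ref{lem:mtghittingtime} with $x_2=(2A_2-A_0)n^{-1/2}\log n$ and $t_2=T=A_1n\log n$ then gives $\Pr(\tau^+_{x_2}\le T)\le 2\exp\!\big(-\tfrac{(x_2-R)^2}{8TR^2}\big)\le 2n^{-(2A_2-A_0)^2/(32A_1)}$, and the symmetric estimate applied to $-W^{(i,j)}$ controls the other tail. Choosing $A_2$ so that $(2A_2-A_0)^2\ge 96A_1$ and union-bounding over the $\binom{q-1}{2}$ pairs and the two tails yields $\Pr(\tau_*\le T\wedge\tau_\epsilon)=O(n^{-3})$, which is the assertion.

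The main obstacle is the contracting-drift estimate of the second paragraph: one must verify that the mean-field pull $g_{\beta,i}-g_{\beta,j}$ equalizing two non-dominant coordinates is strictly weaker than the trivial relaxation term $-W^{(i,j)}/n$, and this is exactly where $\beta<\betas=q$ is used, together with the a priori facts $S_{t,1}\ge\tfrac1q+\tfrac\epsilon2$ (from $t<\tau_\epsilon$) and $V_t=o(1)$ (from the bootstrap through $\tau_*$), all simultaneously. Once that estimate is in place, the rest is a routine application of the sub-martingale maximal inequality already recorded as Lemma~\ref{lem:mtghittingtime}.
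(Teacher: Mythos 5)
Your proposal is correct and follows the same strategy as the paper's proof: reduce to the pairwise differences $W^{(i,j)}_t = S_{t,i}-S_{t,j}$ for $i,j\ne 1$, show a contracting drift while $S_{t,1}\ge \tfrac1q+\tfrac\epsilon2$ and the differences remain small, and close with the super-martingale exit bound of Lemma~\ref{lem:mtghittingtime}(2) and a union bound over pairs and tails. The one step where you take a slightly different technical route is the contraction estimate: you use the exact identity
\[
g_{\beta,i}(s)-g_{\beta,j}(s)\;=\;\tanh\!\Big(\tfrac{\beta(s_i-s_j)}{2}\Big)\cdot\frac{e^{\beta s_i}+e^{\beta s_j}}{\sum_k e^{\beta s_k}}
\]
together with $\tanh(x)\le x$ to get $g_{\beta,i}-g_{\beta,j}\le\rho(\delta)(s_i-s_j)$ with $\rho(0)=\beta/(q-1+e^{\beta\epsilon q/(2(q-1))})<1$ when $\beta<\betas=q$, whereas the paper Taylor-expands $g_\beta$ about the symmetrized vector $\hat S_t$ and separately disposes of an $O(\log^2 n/n)$ quadratic remainder; your version is marginally cleaner since the ratio bound is exact, but the underlying mechanism — and in particular where $\beta<q$ enters — is the same.
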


\begin{proof}
    Let $K>0$ be a large constant such that $\max_{i,j\ne 1}|S_{0,i} - S_{0,j}| \le Kn^{-1/2}\log n$.
    Let $T= A_1 n\log n$, and define the stopping times 
    $$\tau_{i,j}^+ = \min\Big\{ t: S_{t,i} - S_{t,j} -\tfrac{K \log n}{\sqrt{n}} \ge \tfrac{A_2 \log n}{\sqrt{n}}\Big\}\,, \qquad \text{and}\qquad \tau^+= \min_{i,j\ne 1} \tau_{i,j}^+\,.$$
    Our aim is to show existence of a large constant $A_2>K$ such that the probability that $\{\tau^+ < T \wedge \tau_\epsilon\}$ is at most $O(n^{-2})$. 

    Fix any pair of distinct $(i,j)$ where $i, j \neq 1$.
    Let $W_t := S_{t\wedge\tau^+\wedge\tau_\epsilon ,i} - S_{t\wedge \tau^+ \wedge \tau_\epsilon,j} -  Kn^{-1/2} \log n$ for all $t\ge 0$.
    Clearly $W_0\le 0$ and $|W_{t+1} - W_t| \le \frac{2}{n}$ for all $t\ge 0$.
    We will show that on $\{W_t \ge 0\}$, if $t< \tau^+\wedge \tau_\epsilon$,
    then there exists a constant  $\eta=\eta(\epsilon, q, \beta)>0$  such that
    \begin{equation}
        \label{eq:Wdrift}
        \E[W_{t+1} - W_t \mid \mathcal{F}_t] 
        \le - \frac{\eta  Kn^{-1/2}\log n}{2n},
    \end{equation}
    and $ \E[W_{t+1} - W_t \mid \mathcal{F}_t] = 0$ if $t\ge \tau^+\wedge\tau_\epsilon$.
    Given those, item (2) of Lemma~\ref{lem:mtghittingtime} would imply that 
    \begin{align*}
        \Pr(\tau_{i,j}^{+} \le T\wedge \tau_\epsilon) \le 2 \exp\Big(- \frac{(\frac{A_2\log n}{\sqrt{n}} - 2/n)^2}{8T\cdot (2/n)^2} \Big)
        & \le  2 \exp\Bigg(
            - \frac{ \big(\frac{ A_2 \log n}{2\sqrt{n}}\big)^2  }
            {  \frac{2A_1n \log n}{n^2}  }    
        \Bigg) \\
        & = 2 \exp\Bigg(
          - \frac{A_2^2 \log n}{8A_1} 
        \Bigg)\,.
    \end{align*}
    For sufficiently large $A_2$, this is at most $n^{-2}$. 
    By a union bound over all pairs of $(i,j)$ where $i,j\neq 1$, 
    $\Pr( \tau^+ \le T \wedge \tau_\epsilon) \le (q-1)^2/n^{2}$, concluding the proof.  
    To show~\eqref{eq:Wdrift}, recalling~\eqref{eq:proportions-drift}, note that for any $t< \tau^+\wedge\tau_\epsilon$, 
    \begin{equation}
        \label{eq:gcomparisons}
         \E[W_{t+1} - W_t \mid \mathcal{F}_t] 
         = \frac{1}{n} [(-S_{t,i} + g_{\beta,i}(S_t) ) - (-S_{t,j} + g_{\beta,j}(S_t))] + O(n^{-2}).
    \end{equation}
    In what follows, for a proportions vector $S_t$, define $$\hat{S}_t:=(S_{t,1}, \frac{1-S_{t,1}}{q-1}, \dots,  \frac{1-S_{t,1}}{q-1})\,.$$
    Since $|S_{t,i} - S_{t,j}| \le \frac{2A_2 \log n}{\sqrt{n}}$ for all $i,j \neq 1$  so long as $t < \tau^+$, $\|S_t - \hat{S}_t \|_2^2 = O(\frac{\log^2 n}{n})$, and so
     by Taylor expansion of $g_{\beta,i}(S_t)$ and $g_{\beta,j}(S_t)$ about $\hat S_t$, 
    \begin{align}
    g_{\beta,i}(S_t) -g_{\beta,j}(S_t)  &= g_{\beta,i}(\hat{S}_t) -  g_{\beta,j}(\hat{S}_t) + \langle S_t - \hat{S}_t, \nabla g_{\beta,i}(\hat{S}_t) - \nabla g_{\beta,j}(\hat{S}_t) \rangle 
        + O\left(\|S_t - \hat{S}_t \|_2^2\right) \nonumber \\
        &=  \langle S_t - \hat{S}_t, \nabla g_{\beta,i}(\hat{S}_t) - \nabla g_{\beta,j}(\hat{S}_t) \rangle  + O\left(\frac{\log^2 n}{n}\right) \nonumber \\
        &= (S_{t,i} -S_{t,j})\cdot 
            \Big( \frac{d}{dx_i}g_{\beta,i}\Big\vert_{x=\hat{S}_t} -  \frac{d}{dx_i}g_{\beta,j}\Big\vert_{x=\hat{S}_t}\Big)
            + O\left(\frac{\log^2 n}{n}\right).  \label{eq:g2derivative}
    \end{align}
    We now show there exists $\eta>0$ for which $\frac{d}{dx_i}g_{\beta,i}\vert_{x=\hat{S}_t} -  \frac{d}{dx_i}g_{\beta,j}\vert_{x=\hat{S}_t} < 1-\eta$ when $\beta < \betas$ and $t< \tau_{\epsilon}$.
    Indeed, when $\beta<\betas =q$, $i \neq 1$
    \begin{align*}
        \left(\frac{d}{dx_i}g_{\beta,i}\Big\vert_{x=\hat{S}_t} -  \frac{d}{dx_i}g_{\beta,j}\Big\vert_{x=\hat{S}_t} \right) 
        &
         < \frac{q e^{\beta\cdot \hat{S}_{t,i}}}{\sum_{k=1}^q e^{\beta \cdot \hat{S}_{t,k}}} \\
        &= 1 - \frac{e^{\beta\cdot \hat{S}_{t,1}} - e^{\beta\cdot \hat{S}_{t,i}}}{\sum_{k=1}^q e^{\beta \cdot \hat{S}_{t,k}}} \\
        &\le 1 - \frac{e^{\beta/q+\beta\epsilon/2} - e^{\beta/q-\epsilon\beta/(2q-2)}}{e^{\beta/q+\beta\epsilon/2}  + (q-1)e^{\beta/q-\epsilon\beta/(2q-2)}} =: 1-\eta.
    \end{align*}
    Hence, 
    \[
        \E[W_{t+1} - W_t \mid \mathcal{F}_t]= -\frac{\eta}{n}\cdot (S_{t,i} - S_{t,j}) +  O\Big(\frac{\log^2 n}{n^2}\Big).
    \]
    If $W_t \ge 0$, then the first term is at most $- \frac{\eta Kn^{-1/2}\log n}{n}$ and the second term is lower order, yielding~\eqref{eq:Wdrift}.  
\end{proof}

We now turn to the convergences to the disordered and $q$ ordered phases when the initialization is $\omega(n^{-1/2})$ away from the saddle point $(m_*, \frac{1-m_*}{q-1},...,\frac{1-m_*}{q-1})$. Their proofs are analogous; we'll present the full proofs for the former, then mention any modifications that need to be made in the other direction.  

\begin{lemma}
\label{lem:sdecayPotts}
Let $q > 2$ and $\beta \in (\betau, \betas)$.
   For a large constant $\gamma>0$, suppose there exists $m\le m_* - \gamma n^{-1/2}$ such that $S_{0,1} = m$  and 
    $\|S_0 - (m, \frac{1-m}{q-1},...,\frac{1-m}{q-1})\|_1 =O(n^{-1/2} \cdot \log n )$. 
    Then there exist $T= O(n \log n)$ and $s=\Omega(1)$ such that with probability $ 1 - e^{-\Omega(\gamma^2)}$,
        the hitting time of $\{S_{t,1} \le m_* - s\}$ is less than $T$.
\end{lemma}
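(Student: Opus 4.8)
The plan is to mirror the proof strategy of Lemma~\ref{lem:hittimesupercritical2} for the CM dynamics, adapted to the Glauber setting where one step moves the proportions vector by $O(1/n)$ rather than $O(1)$, so that the natural timescale is $n\log n$ rather than $\log n$. First I would reduce the $q$-dimensional dynamics to an effectively $1$-dimensional problem for the dominant coordinate $S_{t,1}$: by Lemma~\ref{lem:nondominantcoordinates}, up to the stopping time $\tau_\epsilon$ (for a fixed small $\epsilon$ chosen so that $m_* - s > 1/q + \epsilon$) and except with probability $O(n^{-2})$, the non-dominant coordinates stay within $O(n^{-1/2}\log n)$ of one another, hence within $O(n^{-1/2}\log n)$ of $\tfrac{1-S_{t,1}}{q-1}$. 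On this good event, $\|S_t - \hat S_t\|_2 = O(n^{-1/2}\log n)$, and Taylor-expanding $g_{\beta,1}$ about $\hat S_t$ gives
\begin{align*}
\E[S_{t+1,1} - S_{t,1}\mid \mathcal F_t] = \tfrac{1}{n}\big(g_{\beta,1}(\hat S_t) - S_{t,1}\big) + O\!\big(\tfrac{\log^2 n}{n^2}\big) = \tfrac{1}{n}D_\beta(S_{t,1}) + O\!\big(\tfrac{\log^2 n}{n^2}\big)\,,
\end{align*}
using~\eqref{eq:proportions-drift} and the definition~\eqref{eq:driftD}. By Lemma~\ref{lem:driftanalysis}, $D_\beta$ has a root at $m_*$ with $D_\beta'(m_*)>0$, so $D_\beta$ is negative and bounded away from $0$ on any interval $[1/q + \epsilon, m_* - s]$ bounded away from $m_*$ (by continuity and $D_\beta(1/q)=0$, $D_\beta'(1/q)<0$), but near $m_*$ from below the drift is only $\approx \tfrac{1}{n}D_\beta'(m_*)(S_{t,1}-m_*) < 0$, proportional to the (signed) distance.

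The core of the argument is then an inductive/supermartingale estimate showing $S_{t,1}$ drifts down away from $m_*$ geometrically, analogous to~\eqref{eq:wts-induction-escaping-saddle}. Concretely, set $c := -D_\beta'(m_*)/2 > 0$. On the good event of Lemma~\ref{lem:nondominantcoordinates}, for $S_{t,1} \in [m_* - s, m_* - \gamma n^{-1/2}]$ (so that the quadratic Taylor error $O((S_{t,1}-m_*)^2)$ is dominated), one has $\E[S_{t+1,1}\mid\mathcal F_t] \le S_{t,1} - \tfrac{c}{n}(m_* - S_{t,1})$, i.e., $\E[(m_* - S_{t+1,1})\mid\mathcal F_t] \ge (1 + \tfrac{c}{n})(m_* - S_{t,1})$. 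Since $S_{t+1,1}-S_{t,1}$ is bounded by $1/n$ in absolute value and has conditional variance $O(1/n^2)$, Chebyshev (or the sub/supermartingale tools of Lemma~\ref{lem:mtghittingtime}) applied over blocks of $\Theta(n)$ steps shows that with probability $1 - O(1/\gamma^2)$ the quantity $m_* - S_{t,1}$ genuinely multiplies up by a factor $(1 + \Omega(c))$ over each block of length $\Theta(n)$, without the process wandering back above $m_*$; here the failure probability $O(1/\gamma^2)$ comes, exactly as in the CM case, from summing $\sum_i \tfrac{\mathrm{Var}}{(\text{gap})^2} \asymp \sum_i \tfrac{1/n}{\gamma^2 n (1+\Omega(c))^{2i}} \asymp \tfrac{1}{\gamma^2}$ (with one factor of $n$ from the block length and the deviation scale $\asymp (m_*-S_{0,1})$ to track). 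After $O(\log n)$ such blocks, i.e., $T = O(n\log n)$ steps, starting from $m_* - S_{0,1} \ge \gamma n^{-1/2}$ we reach $m_* - S_{t,1} \ge s$, which is the claimed hitting event; along the way $S_{t,1}$ never drops to $1/q + \epsilon$, so the stopping time $\tau_\epsilon$ is not triggered and Lemma~\ref{lem:nondominantcoordinates} remains in force. Taking $\gamma$ large makes the total failure probability $e^{-\Omega(\gamma^2)}$ after also absorbing the Gaussian-tail bound on a single block escaping upward (exactly the analogue of~\eqref{eq:wts-escape-time-ub}), which is $e^{-\Omega(\gamma^2)}$ rather than polynomial.

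The main obstacle, relative to the CM proof, is bookkeeping the interaction between the geometric-escape induction and the auxiliary good event of Lemma~\ref{lem:nondominantcoordinates}: that lemma is only valid up to $\tau_\epsilon$, so I must verify a priori that the escape argument itself keeps $S_{t,1}$ above $1/q + \epsilon/2$ throughout (which it does, since the target $m_* - s$ is a fixed constant above $1/q$ and the drift only pushes $S_{t,1}$ toward, not past, $m_* - s$), and that the $O(n^{-2})$ and $O(1/\gamma^2)$ error budgets can be combined over the $O(n\log n)$ horizon — the former survives a union bound since $n\log n \cdot n^{-2} = o(1)$. A secondary technical point is handling the region $S_{t,1} \in (m_* - s, m_*)$ versus $S_{t,1} \in [1/q + \epsilon, m_* - s]$ with a unified estimate: in the latter region the drift is $-\Omega(1/n)$ uniformly (by Lemma~\ref{lem:driftanalysis}-type reasoning: $D_\beta < 0$ and continuous on a compact interval bounded away from its roots $1/q$ and $m_*$), which is strictly stronger than the geometric bound, so once $S_{t,1}$ exits $(m_* - s, m_*)$ downward it proceeds to $m_* - s$ deterministically fast; I would phrase the induction only for the delicate near-$m_*$ regime and invoke the uniform-drift estimate (via Lemma~\ref{lem:mtghittingtime}(1) or an optional-stopping bound as in Lemma~\ref{lem:subcriticalhitting3}) to finish. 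Everything else is a routine transcription of the CM argument with the $1/n$ rescaling.
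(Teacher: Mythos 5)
Your overall plan matches the paper's: reduce to a one-dimensional problem for $S_{t,1}$ on the good event of Lemma~\ref{lem:nondominantcoordinates}, Taylor expand the drift $D_\beta$ about $m_*$ to get a restoring drift proportional to $m_*-S_{t,1}$, and run a geometric escape induction over blocks of $\Theta(n)$ steps. But your quantification of the per-block failure probability is wrong in a way that changes the final rate. You compute it via Chebyshev, ``exactly as in the CM case,'' and land on $O(\gamma^{-2})$ after summing; your closing claim that ``taking $\gamma$ large makes the total failure probability $e^{-\Omega(\gamma^2)}$'' does not follow. A quantity that is $O(\gamma^{-2})$ becomes small as $\gamma\to\infty$, but it is not exponentially small in $\gamma^2$, and an additional $e^{-\Omega(\gamma^2)}$ error coming from a different sub-event does not upgrade a $\gamma^{-2}$ bottleneck; the union bound is only as good as its largest term. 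Indeed, the CM analogue you are transcribing, Lemma~\ref{lem:hittimesupercritical2}, is itself stated with $1-O(\gamma^{-2})$, whereas the present lemma claims the strictly stronger $1-e^{-\Omega(\gamma^2)}$.

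The missing ingredient, which is precisely where the Glauber setting is better than the CM setting, is that each increment of $S_{t,1}$ has magnitude at most $1/n$. That bounded-increment structure lets you replace Chebyshev with Azuma--Hoeffding on each block, which is what Lemma~\ref{lem:mtghittingtime} packages. The paper instantiates Lemma~\ref{lem:mtghittingtime} on a block of $n$ steps with step-size $R=1/n$ and deviation scale $(1+c_*/16)^i\gamma n^{-1/2}$, obtaining a per-block failure probability of order $\exp(-\Omega(\gamma^2(1+c_*/16)^{2i}))$; summing over $i$ gives $e^{-\Omega(\gamma^2)}$ (the $i=0$ term dominates). You do mention Lemma~\ref{lem:mtghittingtime} parenthetically as an alternative, but your concrete calculation commits to Chebyshev and therefore only proves the weaker $1-O(\gamma^{-2})$ bound. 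In the CM proof, Chebyshev is essentially forced because a single activation--percolation step can move $|\mathcal L_1|$ by $\Theta(\sqrt n)$ or more, so no useful $R$ exists; that obstruction vanishes for the single-site Glauber chain, and you must exploit it to get the stated rate.
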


\begin{proof}
    Lemma~\ref{lem:driftanalysis} implies that $c_*=D_\beta'(m_*)>0$. We set $s>0$ to be a sufficiently small constant to be chosen later.  
    
    We define several stopping times that will be useful: let $\tau_0 = 0$, 
    for $i\ge 0$, 
    $$\tau_{i+1} := \inf\left\{t\ge \tau_i : S_{t,1} < m_* - \Big(1 +\frac{c_*}{16} \Big)^{i+1}  \frac{\gamma}{\sqrt{n}} \right\},$$
    and 
    \[
        \tilde{\tau}_{i+1}:= \inf\left\{ t>\tau_i: S_{t,1} > m_* -  \Big(1 +\frac{c_*}{16} \Big)^{i}  \frac{\gamma}{2\sqrt{n}} \right\}. 
    \]
    Let $k=C_1 \log n$ be the least positive integer such that 
    $
        \gamma (1+\frac{c_*}{16})^{k}  > s\sqrt{n}.
    $
    Note that $\tau_{k+1} \ge \tau_k \ge \dots >\tau_1 \ge \tau_0$.
    If $\tau_k= \tau_0$, the lemma holds trivially. Thus we assume $\tau_k>\tau_0$ and
    will show that for all $i= 0,1,\dots, k-1$, 
    \begin{equation}
        \label{eq:UBtaui}
        \Pr\left( \tau_{i+1}<\tau_i + n\,,\, \tilde \tau_{i+1} > \min\{\tau_{k+1},\tau_i + n\} \mid S_{\tau_i}
        \right) 
        \ge 1 - 3r_i, 
    \end{equation}
    where $r_i := \exp(-C_2\gamma^2(1+c_*/16)^{2i})$ for some constant $C_2>0$.

    By averaging over $S_{\tau_i}$ and taking a union bound over $i=0,1,\dots, k$, 
    \[
        \Pr\Big( \bigcap_{i=0}^k \{\tau_i \le i\cdot n\} \Big)
        \ge 1 - \sum_{i=0}^{k-1}3r_i 
        \ge 1 - e^{-\Omega(\gamma^2)}.
    \]
    Since $S_{\tau_k,1} < m_* -s$, the lemma follows.
    
    Now we proceed to prove \eqref{eq:UBtaui} by showing that the two inequalities hold with probability $1-2r_i$ and $1-r_i$ respectively.
    First we show that 
    \begin{equation}
    \label{eq:LBtildetaui}
                \Pr(\tilde{\tau}_{i+1} \le \tau_{k+1} \wedge (\tau_i + n) \mid S_{\tau_i}) \le 2r_i.
    \end{equation}
    Consider the process $\{ Z_t\}_{t\ge 0}$ given by 
    \[
        Z_t:=S_{(t+\tau_i)\wedge\tilde{\tau}_{i+1} \wedge \tau_{k+1}, 1} - S_{\tau_i,1}\,.
    \]
    It can be verified that $Z_0 = 0$ and
    $|Z_{t+1} - Z_t| \le n^{-1}$ for all $t\ge 0$.
    We will also show that  
    \begin{equation}
        \label{eq:Ztdrift}
                \E_{0}[Z_{t+1} - Z_t \mid \mathcal{F}_t] \le 0\,,
    \end{equation}
    so  $\{ Z_t\}_{t\ge 0}$ satisfies all conditions of Lemma~\ref{lem:mtghittingtime}.
    We defer the proof of \eqref{eq:Ztdrift} momentarily and conclude the proof of \eqref{eq:LBtildetaui} using Lemma~\ref{lem:mtghittingtime}.
    By the second part of Lemma~\ref{lem:mtghittingtime}, 
    \begin{align*}
        \Pr(\tilde{\tau}_{i+1} \le (\tau_i + n)\wedge\tau_{k+1})
        \le \Pr(\tilde{\tau}_{i+1} \le \tau_i + n) & \le 2\exp\Bigg(
            - \frac{[(1+\frac{c_*}{16})^{i} \cdot \frac{\gamma}{2\sqrt{n}}  - \frac{1}{n}]^2}{8n\cdot n^{-2}}
        \Bigg)\\
        &\le 2\exp\Big( 
            -\frac{\gamma^2}{33} \cdot (1+\frac{c_*}{16})^{2i}
        \Big) \\ &\le \exp(-C_2\gamma^2(1+c_*/16)^{2i}) \\
        &= 2r_i\,,
    \end{align*}
    where the last inequality holds for a small $C_2>0$.
    
    To see \eqref{eq:Ztdrift}, we recall the drift function $d_\beta$.
    If $t+\tau_i \ge \tilde{\tau}_{i+1} \wedge \tau_{k+1}$, then $Z_{t+1} = Z_t$ and \eqref{eq:Ztdrift} clearly holds.
    It suffices to consider $t\ge 0$ such that $t+\tau_i < \tilde{\tau}_{i+1} \wedge \tau_{k+1}$.
    In this case, $Z_{t+1} - Z_t = S_{t+1+\tau_i,1} - S_{t+\tau_i,1}$, and so by~\eqref{eq:proportions-drift}--\eqref{eq:driftD} and Taylor expansion, 
    \begin{align}\label{eq:drift-step1}
                \E_0[Z_{t+1} - Z_t \mid S_{t+\tau_i}] 
                &
                \le  \frac{1}{n}D_\beta(S_{t+\tau_i,1}) + O(n^{-2})  \\
                &= \frac{1}{n}\cdot [D_\beta(m_*) + (S_{t+\tau_i,1} - m_*)\cdot c_* + O(|S_{t+\tau_i,1}- m_*|^2)] + O(n^{-2}) \nonumber \\
                & \le \frac{S_{t+\tau_i,1} - m_*}{2n} \cdot c^* <0  \label{eq:negativedrift7},
    \end{align}
    where the steps in the last line of \eqref{eq:negativedrift7} hold since $-s (1+\frac{c_*}{16})^2 < S_{t+\tau_i,1} - m_* <0$, and $s$ is sufficiently small in terms of $c_*$ that the second order term is at most $c_* (S_{t+\tau_i,1}- m_*)/2$, say. 

    Next we show the other inequality of \eqref{eq:UBtaui} using an auxiliary process.
    Consider the process ${S}_t'$ defined with  $S_{\tau_i} = {S}_{\tau_i}'$ but such that at any step $t\ge \tau_i$, ${S}_t'$ rejects the update at time $t+1$ if the resulting state would be such that
    ${S}_{t+1,1}' > m_* - (1+\frac{c_*}{16})^i\cdot \frac{\gamma}{2\sqrt{n}}$.
    We have ${S}_t' = S_t$ for all $t \in [\tau_i, \tilde{\tau}_{i+1})$.
    For ${S}_t'$ we use $\tau_{i}'$ for its corresponding analog of $\tau_{i}$.
    Let $\{W_t\}_{t\ge0}$ be the process given by 
    \[
        W_t:={S}_{t+\tau_i,1}' - \Big[m_* - \Big(1 +\frac{c_*}{16} \Big)^{i+1} \cdot \frac{\gamma}{\sqrt{n}} \Big].
    \]
    In addition, $W_0 = w_0>0$ and $|W_{t+1} - W_t|\le n^{-1}$ for all $t\ge 0$.
    Moreover, for all $t\ge0$ on $\{W_t\ge 0\}$,  
    by a bound analogous to \eqref{eq:negativedrift7}, we have
    \begin{align}\label{eq:drift-step2}
        \E_{w_0}[W_{t+1} - W_t\mid \mathcal{F}_t] 
        \le \frac{{S}_{t+\tau_i,1}' - m_*}{2n} \cdot c_* 
        \le  -(1+\frac{c_*}{16})^i\cdot \frac{\gamma}{2\sqrt{n}} \cdot \frac{c_*}{2n} =: -\delta.
    \end{align}
    Item (1) of Lemma~\ref{lem:mtghittingtime} implies that
    \begin{equation}
        \label{eq:UBtauiprime}
        \Pr(\tau_{i+1}' > \tau_i' + n \mid S_{\tau_i}')\le
         \exp\Big(-\frac{(\delta n - w_0)^2}{8n\cdot n^{-2}} \Big)
        \le \exp\Bigg(-\frac{C_3[(1+\frac{c_*}{16})^i\cdot \frac{\gamma}{\sqrt{n}}]^2}{8 n^{-1}} \Bigg)
        \le r_i\,,
    \end{equation}
    holds for suitable $C_2$ and $C_3$.
    In addition, for $i<k$, we know $\Pr( \tau_{i+1}' >  (\tau_i' + n)\wedge \tau_{k+1}' \mid S_{\tau_i}) = \Pr( \tau_{i+1}' >  \tau_i' + n \mid S_{\tau_i})$.
    
    Finally, \eqref{eq:UBtaui} follows from a union bound of 
    \eqref{eq:UBtauiprime} and \eqref{eq:LBtildetaui}. 
\end{proof}

By the same reasoning as in the proof of Lemma~\ref{lem:sdecayPotts}, replacing the drift function by a uniform bound that holds between $m_* - s$ and $\frac{1}{q}+\rho_0$ (by continuity and the fact that $\frac{1}{q},m_*$ are the only two zeros of $D_\beta$), we arrive at the following. (Since the proof is otherwise completely analogous, we omit it.)

\begin{lemma}
    \label{lem:sdecayPottsstrong}
    Let $q >2$ and $\beta \in (\betau, \betas)$.
      Suppose $\|S_0 - (m, \frac{1-m}{q-1},...,\frac{1-m}{q-1})\|_1 = O(n^{-1/2} \log n)$ for some $m\le m_* - s$ where $s=\Omega(1)$.
      Then with probability $1- e^{-\Omega(n)}$, the followings hold:
      \begin{enumerate}
          \item     For all $t = O(n \log n)$, $S_{t,1} \le m_* - \frac{s}{2}$.
          \item     For any constant $\rho_0 >0$, there exists $T=O(n\log n)$ such that
          $S_{T,1} \le \frac{1}{q}+\rho_0$.
      \end{enumerate}
\end{lemma}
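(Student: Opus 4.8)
The plan is to reduce both assertions — which only concern the first coordinate $S_{t,1}$ — to a one-dimensional supermartingale analysis driven entirely by the scalar drift $D_\beta$. The only probabilistic input beyond Lemma~\ref{lem:mtghittingtime} is the elementary bound, valid \emph{regardless} of the non-dominant coordinates,
\[
\E[S_{t+1,1}-S_{t,1}\mid\mathcal F_t]=\tfrac1n d_\beta(S_t)+O(n^{-2})\le \tfrac1n D_\beta(S_{t,1})+O(n^{-2})\,,\qquad |S_{t+1,1}-S_{t,1}|\le \tfrac1n\,,
\]
where the inequality is immediate from $D_\beta(x)=\max_{s:s_1=x}d_\beta(s)$ (and convexity of $t\mapsto e^{\beta t}$). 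In particular, unlike in the proof of Theorem~\ref{thm:PottsHitLeftTarget}, no analogue of Lemma~\ref{lem:nondominantcoordinates} is needed here. The structural input is Lemma~\ref{lem:driftanalysis}: $D_\beta$ has no zero in $(1/q,m_*)$ and $D_\beta'(1/q)<0$, $D_\beta'(m_*)>0$, so $D_\beta<0$ strictly on $(1/q,m_*)$, and hence $D_\beta\le-\eta(a,b)<0$ on every compact $[a,b]\subset(1/q,m_*)$. Write $\eta_1:=-\max_{[m_*-3s/4,\,m_*-s/2]}D_\beta>0$ and, for the given $\rho_0$, $\eta_0:=-\max_{[1/q+\rho_0,\,m_*-s/2]}D_\beta>0$; note also that the hypothesis forces $S_{0,1}\le m_*-3s/4$ for $n$ large.

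\textbf{Part (1): an upward barrier at $m_*-s/2$.} Fix a constant $C$ and let $\tau^{\uparrow}:=\inf\{t:S_{t,1}\ge m_*-s/2\}$; I would show $\Pr(\tau^{\uparrow}\le Cn\log n)\le e^{-\Omega(n)}$ (with the rate allowed to depend on $C$). On $\{\tau^{\uparrow}\le Cn\log n\}$ let $r\le\tau^{\uparrow}$ be the last time with $S_{r,1}\le m_*-3s/4$ (which exists since $S_{0,1}\le m_*-3s/4$), so that on $(r,\tau^{\uparrow}]$ the chain stays in $(m_*-3s/4,\,m_*-s/2]$, where the drift is $\le-\eta_1/(2n)$. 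Since steps are bounded by $1/n$ and the net upward displacement over the block is at least $s/4$, necessarily $\ell:=\tau^{\uparrow}-r\ge sn/4$. Applying a maximal Azuma--Hoeffding inequality to the bounded-increment supermartingale $\big(S_{(r+j),1}+\tfrac{\eta_1}{2n}j\big)_j$, the probability that, from a fixed $r$ and any $S_{r,1}\le m_*-3s/4$, the chain reaches $m_*-s/2$ after $\ell\ge sn/4$ further steps is at most $\exp\!\big(-c(s/4+\eta_1\ell/n)^2/(\ell/n^2)\big)\le e^{-c'n}$ for constants $c,c'>0$, uniformly over such $\ell$. Summing over $\ell$ and over the at most $Cn\log n$ values of $r$ keeps the bound $e^{-\Omega(n)}$, proving part (1).

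\textbf{Part (2): reaching $1/q+\rho_0$.} Work on the event $E_1$ of part (1) that $S_{t,1}\le m_*-s/2$ for all $t\le Cn\log n$, which has probability $1-e^{-\Omega(n)}$ once $C$ is large. Let $\tau^{\downarrow}:=\inf\{t:S_{t,1}\le 1/q+\rho_0\}$. On $E_1$, whenever $1/q+\rho_0\le S_{t,1}\le m_*-s/2$ the drift is $\le-\eta_0/(2n)$. Split time into consecutive blocks of $n$ steps. Within a block the chain either already satisfies $S_{\cdot,1}\le 1/q+\rho_0$ (so $\tau^{\downarrow}$ has occurred), or it remains in $[1/q+\rho_0,\,m_*-s/2]$ for the whole block, in which case, by Azuma--Hoeffding applied to $\big(S_{(\cdot),1}+\tfrac{\eta_0}{2n}(\cdot)\big)$ over $n$ steps (expected decrease $\eta_0/2$, sum of squared increments $\le 1/n$), it decreases by at least $\eta_0/4$ except with probability $e^{-\Omega(n)}$. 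Since $S_{0,1}-(1/q+\rho_0)=O(1)$, after $N_0:=\lceil 4(m_*-1/q)/\eta_0\rceil=O(1)$ successful blocks the chain must have crossed below $1/q+\rho_0$; a union bound over the $N_0$ blocks gives $\Pr(\tau^{\downarrow}>N_0 n)\le e^{-\Omega(n)}$, and $N_0n=O(n)=O(n\log n)$ as required. (Alternatively, one may apply Lemma~\ref{lem:mtghittingtime}(1) directly — with $\delta=\eta_0/(2n)$, $R=1/n$, $x_0=O(1)$, $t_1=Cn\log n$ — after freezing the chain upon leaving $[1/q+\rho_0,m_*-s/2]$ to make the drift hypothesis global; this yields the even stronger $e^{-\Omega(n\log n)}$.)

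The whole argument mirrors that of Lemma~\ref{lem:sdecayPotts}, the Taylor-expanded drift near $m_*$ being replaced by the uniform bound from compactness. The one genuinely delicate point is obtaining the $e^{-\Omega(n)}$ tail rather than a $1-o(1)$ statement: the favorable drift $D_\beta\le-\eta_0$ is only available inside the window $[1/q+\rho_0,m_*-s/2]$, which is why part (1) must be set up first and part (2) then conditioned on $E_1$, with any upward window-exit classified as an event contradicting $E_1$. I expect this bookkeeping — coordinating the two parts and tracking which window-exits count as ``success'' — to be the main (though still routine) obstacle.
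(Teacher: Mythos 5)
Your proposal is correct and takes essentially the route the paper intends: the paper omits the proof, stating only that it follows Lemma~\ref{lem:sdecayPotts} with the Taylor-expanded drift near $m_*$ replaced by a uniform negative bound on $D_\beta$ over a compact sub-interval of $(1/q,m_*)$, and your write-up supplies exactly that, via the one-sided inequality $d_\beta(S_t)\le D_\beta(S_{t,1})$ plus martingale concentration. Your observation that no analogue of Lemma~\ref{lem:nondominantcoordinates} is required is accurate and consistent with how Lemma~\ref{lem:sdecayPotts} itself uses $D_\beta$; the only minor imprecision is the parenthetical ``alternative'' in part~(2), where freezing the chain at $m_*-s/2$ does not literally make the drift hypothesis of Lemma~\ref{lem:mtghittingtime}(1) global (the frozen state has zero drift), though your primary block argument, combined with part~(1) to rule out upward exits, handles this correctly.
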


Finally, it is known from the essential mixing results of~\cite{CDLLPS-mean-field-Potts-Glauber} that contraction for the distance to equiproportionality holds when a configuration starts close enough to being equiproportional and that mixing to the disordered phase follows from the equiproportionality. 

\begin{lemma}[Lemma 4.1 in \cite{CDLLPS-mean-field-Potts-Glauber}]
    \label{lem:contractionPotts}
    Suppose $q>2$ and $\beta < \betas$.
    There exists $\rho_0 = \rho_0(\beta, q)>0$ such that for all $r>0$:
    if $\|S_{0}\|_\infty \le \frac{1}{q} + \rho_0$,
    there exists a constant $\alpha>0$ such that 
    \[
        \Pr\Big(
            \big\|S_{{\alpha n \log n}} - (\tfrac{1}{q}, \dots, \tfrac{1}{q}) \big\|_1 > \frac{r}{\sqrt{n}}
        \Big) = O(r^{-1}),
    \]
\end{lemma}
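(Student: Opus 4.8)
The plan is to prove that the uniform point $u:=(\tfrac1q,\dots,\tfrac1q)$ is a strictly attracting fixed point of the proportions dynamics and that $S_t$ contracts toward it geometrically per $\Theta(n)$ steps, so that after $\Theta(n\log n)$ steps it has reached the equilibrium scale $n^{-1/2}$. First I would linearize. Writing $v_t:=S_t-u$, which always lies in the tangent space $T:=\{w\in\mathbb R^q:\sum_i w_i=0\}$ of the simplex, and $F:=g_\beta-\mathrm{id}$, a direct computation gives $\partial_{s_j}g_{\beta,k}(u)=\tfrac\beta q(\delta_{jk}-\tfrac1q)$, so $Dg_\beta(u)=\tfrac\beta q I-\tfrac\beta{q^2}J$ with $J$ the all-ones matrix; since the rank-one part annihilates $T$, $DF(u)$ restricted to $T$ equals $-c\,I$ with $c:=1-\beta/q$, which is strictly positive precisely because $\beta<\betas=q$. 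As $g_\beta$ has uniformly bounded second derivatives on $\mathcal S$, this gives $\|F(s)+c(s-u)\|_2\le C_1\|s-u\|_2^2$ for $s$ in a fixed neighborhood of $u$, with $C_1=C_1(\beta,q)$.

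Next I would run a quadratic Lyapunov function $\Phi_t:=\|v_t\|_2^2$. By \eqref{eq:proportions-drift} and the Taylor bound, $\E[v_{t+1}\mid\mathcal F_t]=(1-\tfrac cn)v_t+\eta_t$ with $\|\eta_t\|_2=O(\tfrac1n\|v_t\|_2^2)+O(n^{-2})$; and since one Glauber step changes at most two color counts by $\pm1$, we have $\|v_{t+1}-v_t\|_2\le\sqrt2/n$, hence $\mathrm{tr}\,\mathrm{Cov}(v_{t+1}\mid\mathcal F_t)\le\E[\|v_{t+1}-v_t\|_2^2\mid\mathcal F_t]=O(n^{-2})$. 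Expanding $\E[\Phi_{t+1}\mid\mathcal F_t]=\|\E[v_{t+1}\mid\mathcal F_t]\|_2^2+\mathrm{tr}\,\mathrm{Cov}(v_{t+1}\mid\mathcal F_t)$ and using $\|v_t\|_2^3\le\rho_1\|v_t\|_2^2$ on $\mathcal N:=\{\|v\|_2\le\rho_1\}$, I would fix $\rho_1=\rho_1(\beta,q)$ small (so the cubic error is absorbed into $\tfrac c{2n}\Phi_t$) and conclude the one-step contraction
\[\E[\Phi_{t+1}\mid\mathcal F_t]\le\Big(1-\tfrac cn\Big)\Phi_t+\tfrac K{n^2}\qquad\text{whenever }\|v_t\|_2\le\rho_1,\]
for a constant $K=K(\beta,q)$. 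Then I would take $\rho_0$ small enough that $\|S_0\|_\infty\le\tfrac1q+\rho_0$ forces $\|v_0\|_2\le\rho_1/2$ (it yields $|S_{0,i}-\tfrac1q|\le(q-1)\rho_0$ for all $i$).

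Two points remain. (i) \emph{Localization}: let $\tau:=\inf\{t:\Phi_t\ge\rho_1^2\}$ and $T:=\alpha n\log n$. On $\{t<\tau\}$ the increments of $\Phi$ are $O(1/n)$ and, by the recursion, their conditional means are $\le K/n^2$, so over $t\le T$ the drift sums to $O(\tfrac{\log n}n)=o(1)$, while Azuma--Hoeffding for the martingale part of $\Phi_{t\wedge\tau}$ (increments $O(1/n)$, $T$ of them) gives $\sup_{t\le T}\Phi_{t\wedge\tau}\le\|v_0\|_2^2+o(1)+\rho_1^2/2<\rho_1^2$ except with probability $e^{-\Omega(n/\log n)}$; hence $A_T:=\{\tau>T\}$ has $\Pr(A_T^c)\le e^{-\Omega(n/\log n)}$. (ii) \emph{Contraction}: since $A_{t+1}\subseteq A_t$ is $\mathcal F_t$-measurable and the recursion holds on $A_t$, the quantities $\psi_t:=\E[\Phi_t\mathbf{1}_{A_t}]$ obey $\psi_{t+1}\le(1-\tfrac cn)\psi_t+\tfrac K{n^2}$, so $\psi_T\le e^{-c\alpha\log n}\|v_0\|_2^2+\tfrac K{cn}$; choosing $\alpha=\alpha(\beta,q)>1/c$ makes this $O(n^{-1})$. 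Finally $\|v_T\|_1\le\sqrt{q\,\Phi_T}$, so by Markov
\[\Pr\Big(\|v_T\|_1>\tfrac r{\sqrt n}\Big)\le\Pr(A_T^c)+\Pr\Big(\Phi_T>\tfrac{r^2}{qn},A_T\Big)\le e^{-\Omega(n/\log n)}+\frac{qn\,\psi_T}{r^2}=O(r^{-2})+e^{-\Omega(n/\log n)}.\]
Since $\|v_T\|_1\le2$ deterministically the event is empty once $r\ge2\sqrt n$, and for $r\le2\sqrt n$ the exponential term is $\le O(1/r)$ while $O(r^{-2})\le O(r^{-1})$ for $r\ge1$ (the bound is trivial for $r\le1$); this yields the claimed $O(r^{-1})$.

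The main obstacle is the one-step contraction estimate: one must check that the nonlinear (quadratic) part of the drift and the $O(n^{-2})$ discretization error in \eqref{eq:proportions-drift} are genuinely dominated by the linear contraction $-c\Phi_t/n$ on the chosen neighborhood, and—crucially—that the per-step noise of Glauber dynamics enters $\E[\Phi_{t+1}\mid\mathcal F_t]$ only at order $n^{-2}$ rather than $n^{-1}$, which is exactly what pins the equilibrium scale of $\Phi$ at $n^{-1}$ and hence of $\|v\|_1$ at $n^{-1/2}$. The localization step is routine given bounded increments, and the whole argument uses only $\beta<q$ (no lower bound on $\beta$).
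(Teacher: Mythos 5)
Your proposal is correct, and it is worth noting that the paper itself does not prove this statement: it is imported verbatim as Lemma 4.1 of \cite{CDLLPS-mean-field-Potts-Glauber}, so the only "in-paper proof" is a citation. Your blind argument is a legitimate self-contained alternative: linearizing $g_\beta$ at equiproportionality to get $DF(u)\vert_T=-(1-\beta/q)I$ on the zero-sum tangent space (this computation is right, and $\beta<\betas=q$ is exactly what makes $c=1-\beta/q>0$), then running the quadratic Lyapunov function $\Phi_t=\|S_t-u\|_2^2$ with the one-step recursion $\E[\Phi_{t+1}\mid\mathcal F_t]\le(1-\tfrac{c}{n})\Phi_t+K n^{-2}$ on a small neighborhood, a stopping-time localization to stay in that neighborhood, and Markov's inequality on $\E[\Phi_T\mathbf 1_{A_T}]$ to land at the $n^{-1/2}$ scale. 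The key quantitative points you flag are handled correctly: the per-step noise enters $\E[\Phi_{t+1}\mid\mathcal F_t]$ only through $\mathrm{tr}\,\mathrm{Cov}(v_{t+1}\mid\mathcal F_t)=O(n^{-2})$ because a single Glauber update moves at most two coordinates by $1/n$, the cubic error is absorbed by shrinking $\rho_1$, and the $\|S_0\|_\infty$ hypothesis does force $\|v_0\|_\infty\le(q-1)\rho_0$. The original proof in \cite{CDLLPS-mean-field-Potts-Glauber} takes a different route, working with drift/hitting-time estimates of the type recorded here as Lemma~\ref{lem:mtghittingtime} applied to distances from equiproportionality rather than a one-step $L^2$ contraction; both arguments ultimately rest on the same linear contraction rate $1-\beta/q$, but yours is arguably more streamlined and, as you note, uses no lower bound on $\beta$. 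Two cosmetic points: the sup-over-time control in your localization step needs the maximal form of Azuma--Hoeffding (or a union bound over the $T=\alpha n\log n$ times, which is harmless against $e^{-\Omega(n/\log n)}$), and your contraction rate drifts between $c$ and $c/2$ in the bookkeeping; neither affects the conclusion, and your final case analysis in $r$ correctly delivers the stated $O(r^{-1})$ (indeed $O(r^{-2})$ in the nontrivial range).
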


\begin{lemma}[Corollary 4.4 and  Lemma 4.5 in \cite{CDLLPS-mean-field-Potts-Glauber}]
    \label{lem:subcriticalcouple}
    Suppose $q>2$, $r>0$ and $\beta < \betas$.
    Let $\{\sigma_t\}_{t\ge 0}$ and  $\{\sigma_t'\}_{t\ge 0}$ be two instances of Potts Glauber dynamics satisfying 
    \[
        \big\|S(\sigma_0) -(\tfrac{1}{q}, \dots, \tfrac{1}{q}) \big\|_1 \le  \frac{r}{\sqrt{n}}, \qquad \text{and} \qquad
        \big\|S(\sigma_0') - (\tfrac{1}{q}, \dots, \tfrac{1}{q}) \big\|_1 \le  \frac{r}{\sqrt{n}}.
    \]
    Then there exist a coupling $\Pp$ of $\{(\sigma_t, \sigma_t')\}_{t\ge 0}$ and $T=O(n)$ such that
    $\Pp(S(\sigma_T)  = S(\sigma_T')) = 1-o(1)$.
\end{lemma}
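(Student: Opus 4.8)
The proof has two phases. \emph{Phase 1} confines both proportion chains to a small window around the equiproportional vector $(\tfrac1q,\dots,\tfrac1q)$. Since $\beta<\betas=q$, the drift of $S_t$ is contractive there: the Jacobian of $g_\beta$ at $(\tfrac1q,\dots,\tfrac1q)$ equals $\tfrac{\beta}{q}\big(I-\tfrac1q\mathbf 1\mathbf 1^{\top}\big)$, which acts as $(\tfrac{\beta}{q}-1)I<0$ on the sum-zero subspace, so by~\eqref{eq:proportions-drift} the quantity $\|S_t-(\tfrac1q,\dots,\tfrac1q)\|_1$ decreases in conditional expectation by a factor $1-\Theta(1/n)$ up to $O(1/n)$ fluctuations. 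Starting from $\|S_0-(\tfrac1q,\dots,\tfrac1q)\|_1\le r/\sqrt n$, a standard supermartingale/Lyapunov argument (the same one underlying Lemma~\ref{lem:contractionPotts}), together with Lemma~\ref{lem:mtghittingtime}, shows that with probability $1-o(1)$ both chains satisfy $\|S_t-(\tfrac1q,\dots,\tfrac1q)\|_\infty=O(\sqrt{\log n/n})$ for all $t=\mathrm{poly}(n)$. In particular, writing $N_t,N_t'\in\Z_{\ge0}^{q}$ for the color-count vectors of $\sigma_t,\sigma_t'$ and $\delta_t:=N_t-N_t'$ (so $\sum_i\delta_{t,i}=0$ and $\|\delta_0\|_1\le 2r\sqrt n$), every color is represented by $(1-o(1))n/q$ vertices in both configurations, $\|\delta_t\|_1=O(\sqrt{n\log n})$ throughout, and the one-step color-resampling law at any vertex is within $O(\sqrt{\log n/n})$ in total variation of the uniform law on $[q]$.

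\emph{Phase 2} designs a coupling under which $\|\delta_t\|_1$ is, essentially, a supermartingale with a constant negative drift away from $0$. If $\delta_t=0$ we pick the same vertex in both chains and maximally couple the two resampling laws (which differ only through the colors of that vertex, hence by $O(1/n)$ in total variation), so that $\delta=0$ is left with probability only $O(1/n)$ per step. If $\delta_t\neq0$, fix a color $i^*$ with $\delta_{t,i^*}\ge1$ and a color $j^*$ with $\delta_{t,j^*}\le-1$. With probability $\pi=\Theta(1)$ — calibrated so each chain's vertex marginal stays exactly uniform, which is possible because $N_{t,i^*},N'_{t,j^*}=(1-o(1))n/q$ leaves room to bias the complementary branch — we \emph{attempt a correction}: $\sigma_t$ resamples a uniformly chosen $i^*$-colored vertex to a color $c$, and $\sigma_t'$ resamples a uniformly chosen $j^*$-colored vertex to a color $c'$, with $c,c'$ coupled maximally. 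Then $\delta_{t+1}=\delta_t-e_{i^*}+e_{j^*}+e_c-e_{c'}$, the forced moves contribute $-2$ to $\|\delta_t\|_1$ (as $\delta_{t,i^*}\ge1$, $\delta_{t,j^*}\le-1$), while $e_c-e_{c'}$ contributes at most $+2$ but with conditional expectation $\le -4/q+o(1)$ less, since each of $c,c'$ is uniform up to $o(1)$ and the excess/deficient coordinate sets are nonempty; hence $\mathbb{E}[\|\delta_{t+1}\|_1-\|\delta_t\|_1\mid\mathcal F_t,\text{correct}]\le-c_1(q)<0$. On the complementary event we instead \emph{mirror}: $\sigma_t$ resamples a uniform vertex of some color $a$ to a new color $c$, and $\sigma_t'$ resamples a uniform vertex of the \emph{same} color $a$ (which exists by Phase 1) to the same color $c$ whenever possible; coupling the two resampling laws maximally they agree with probability $1-O(\|\delta_t\|_1/n)=1-O(\sqrt{\log n/n})$, on which $\delta_t$ is unchanged, and an $O(\sqrt{\log n/n})$-probability correction restores exact uniformity of the vertex marginals. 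Thus $\mathbb{E}[\|\delta_{t+1}\|_1-\|\delta_t\|_1\mid\mathcal F_t,\text{mirror}]=O(\sqrt{\log n/n})$, and averaging the two branches gives $\mathbb{E}[\|\delta_{t+1}\|_1-\|\delta_t\|_1\mid\mathcal F_t]\le-c<0$ whenever $\delta_t\neq0$, with increments bounded by $4$.

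\emph{Conclusion.} Applying (a variant of) part~(1) of Lemma~\ref{lem:mtghittingtime} to $X_t=\|\delta_t\|_1$ — negative drift $-c$ off the origin, bounded increments, $\|\delta_0\|_1=O(\sqrt{n\log n})$ — the hitting time of $\{\delta_t=0\}$ is $O(\sqrt{n\log n})=O(n)$ with probability $1-o(1)$; moreover from $\|\delta_t\|_1=2$ a single corrective step reaches $0$ with probability $1-o(1)$. Since $\delta=0$ is escaped at rate $O(1/n)$ and, once escaped, returns in $O(1)$ expected steps, the process sits at $0$ for a $1-O(1/n)$ fraction of the time once it has first arrived, so at the fixed time $T=O(n)$ one has $\Pr(\delta_T=0)=1-o(1)$, i.e.\ $S(\sigma_T)=S(\sigma_T')$ with probability $1-o(1)$. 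The hypothesis $\beta<\betas$ is used precisely in Phase 1, to get the contractive drift keeping both chains near equiproportionality; this in turn makes the resampling laws near-uniform and the corrective move available with constant probability. The main obstacle is the construction and bookkeeping of the corrective/mirror coupling in Phase 2: one must simultaneously keep each chain's vertex choice exactly uniform and ensure the mirror moves perturb $\|\delta_t\|_1$ by only $o(1)$ in expectation, so that the $\Theta(1)$-rate decrease from the corrective moves is not washed out — this is where the quantitative near-equiproportionality estimates of Phase 1 are invoked.
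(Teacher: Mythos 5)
First, note that the paper does not prove this lemma at all: it is imported wholesale from \cite{CDLLPS-mean-field-Potts-Glauber} (their Corollary 4.4 and Lemma 4.5), so the comparison here is between your construction and the known argument, and your construction has a genuine flaw in Phase~2. The central claim — that one can couple the two chains so that $\E[\|\delta_{t+1}\|_1-\|\delta_t\|_1\mid\mathcal F_t]\le -c<0$ for a constant $c$ whenever $\delta_t\neq 0$ — is impossible. Whatever coupling you use, $\E[\delta_{t+1}-\delta_t\mid\mathcal F_t]$ is determined by the two marginal laws alone, and by \eqref{eq:proportions-drift} (applied to each chain and subtracted) it has $\ell_1$-norm $O(\|\delta_t\|_1/n+1/n)=O(\sqrt{\log n/n})$ in the regime you confined the chains to in Phase~1. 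Since $\|\cdot\|_1$ is convex, $\|\delta_{t+1}\|_1\ge \|\delta_t\|_1+\langle g,\delta_{t+1}-\delta_t\rangle$ for a subgradient $g$ with $\|g\|_\infty\le 1$, so under \emph{any} coupling $\E[\|\delta_{t+1}\|_1-\|\delta_t\|_1\mid\mathcal F_t]\ge -O(\sqrt{\log n/n})=o(1)$. Concretely, the bookkeeping error is in the mirror branch: to keep each chain's vertex marginal uniform while the correction branch selects an $i^*$-colored vertex in chain $1$ and a $j^*$-colored vertex in chain $2$ with probability $\pi=\Theta(1)$, the mirror branch's distributions over the color of the selected vertex must differ between the two chains by $\Theta(\pi)$ (chain $1$'s mirror law underweights $i^*$, chain $2$'s underweights $j^*$), not by $O(\sqrt{\log n/n})$. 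On the resulting $\Theta(\pi)$-probability mismatch the forced removals are $-e_{j^*}$ in chain $1$ and $-e_{i^*}$ in chain $2$, i.e.\ $\delta\mapsto\delta+e_{i^*}-e_{j^*}$, which \emph{increases} $\|\delta\|_1$ by $2$; this cancels, to leading order, the $\Theta(\pi)$-rate gain from the corrective branch, exactly as the convexity argument says it must. The downstream conclusion (applying Lemma~\ref{lem:mtghittingtime}(1) with drift $-c$, and the ``return in $O(1)$ steps, so $\delta_T=0$ at a fixed $T$'' local-time argument) therefore does not stand.

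The actual mechanism in \cite{CDLLPS-mean-field-Potts-Glauber} is different in kind: since no coupling can manufacture an order-one drift for the count difference, coalescence in $O(n)$ steps from an $O(\sqrt n)$ discrepancy is obtained by exploiting \emph{fluctuations} rather than drift — a multi-stage coupling that matches the coordinates of the proportions vector successively, in which the relevant difference processes are (super)martingale-like with step variance bounded below and increments that cannot jump over $0$, combined with hitting-time and anticoncentration estimates of the type recorded here as Lemmas~\ref{lem:mtghittingtime} and~\ref{lem:supermtganticoncen}, and with a separate argument to preserve already-matched coordinates. Your Phase~1 confinement (stability of the equiproportional point for $\beta<\betas$, since $D_\beta'(1/q)=\beta/q-1<0$) is fine and is indeed an ingredient, but it cannot substitute for this variance-driven coalescence step, which is precisely the nontrivial content of the cited results.
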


Combining the above results, we are ready to prove the first part of Theorem~\ref{thm:PottsHitLeftTarget} regarding the disordered phase.

\begin{proof}[\textbf{\emph{Proof of Theorem~\ref{thm:PottsHitLeftTarget}}: the $m\le m_* - \gamma n^{-1/2}$ case}]
  Suppose $\sigma_0$ is a configuration such that
    $S_{0,1} = m$ for $m\le m_* - \gamma n^{-1/2}$
    and $\|S_0 - (m, \frac{1-m}{q-1},...,\frac{1-m}{q-1})\|_1 = O(n^{-1/2}\log n)$.
    Let $\rho_0$ be as in Lemma~\ref{lem:contractionPotts}.
    It follows from combining Lemma~\ref{lem:nondominantcoordinates},  Lemma~\ref{lem:sdecayPotts}, Lemma~\ref{lem:sdecayPottsstrong} that with probability $1- e^{-\Omega(\gamma^2)}$
    there exists $T_1=O(n\log n)$ such that $\|S_{T_1}\|_\infty \le \frac{1}{q} + \rho_0$.
    Then Lemma~\ref{lem:contractionPotts} implies that 
    there exists $T_2=T_1+ \alpha n \log n$ such that with probability $1- e^{-\Omega(\gamma^2)}$, $\big\|S(\sigma_{T_2}) -(\tfrac{1}{q}, \dots, \tfrac{1}{q}) \big\|_1= O(n^{-1/2})$.
    The same holds for $\sigma_{T_2}'$, and therefore 
we can now conclude the first part of the theorem from that point by Lemma~\ref{lem:subcriticalcouple}. 
\end{proof}

In order to handle the case $m\le m_* - \gamma n^{-1/2}$, we require the following four lemmas. 

\begin{lemma}
\label{lem:superdecayPotts}
Let $q>2$ and $\beta \in (\betau, \betas)$.
  For a large constant $\gamma>0$, suppose there exists $m\ge m_* + \gamma n^{-1/2}$ such that $S_{0,1} = m$  and 
    $\|S_0 - (m, \frac{1-m}{q-1},...,\frac{1-m}{q-1})\|_1 =O(n^{-1/2} \cdot \log n )$. 
    Then there exists $T= O(n \log n)$ and $s=\Omega(1)$ such that         
    $S_{T,1} \ge m_* + s$ and 
    $\|S_T - (S_{T,1}, \frac{1-S_{T,1}}{q-1},...,\frac{1-S_{T,1}}{q-1})\|_1 =O(n^{-1/2} \cdot \log n )$
    with probability $ 1 - e^{-\Omega(\gamma^2)}$.
\end{lemma}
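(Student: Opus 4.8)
The plan is to mirror, in the "ordered" direction, the strategy used for Lemma~\ref{lem:sdecayPotts}, exploiting that $D_\beta'(m_*)=c_*>0$ so that once $S_{t,1}$ is a distance $\gamma n^{-1/2}$ to the \emph{right} of $m_*$, the drift pushes it further right (toward $\mr$). First I would fix a small constant $s>0$ (to be chosen depending on $c_*$ and on the distance from $m_*$ to $\mr$), and set up the geometrically-spaced stopping times exactly as in Lemma~\ref{lem:sdecayPotts} but reflected: for $i\ge 0$ let $\tau_{i+1}:=\inf\{t\ge\tau_i: S_{t,1} > m_* + (1+\tfrac{c_*}{16})^{i+1}\tfrac{\gamma}{\sqrt n}\}$ and let $\tilde\tau_{i+1}:=\inf\{t>\tau_i: S_{t,1} < m_* + (1+\tfrac{c_*}{16})^{i}\tfrac{\gamma}{2\sqrt n}\}$, stopping the doubling once $(1+\tfrac{c_*}{16})^k\gamma>s\sqrt n$, i.e. $k=\Theta(\log n)$ stages. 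The content is the analogue of \eqref{eq:UBtaui}: at each stage, with probability $1-3r_i$ (with $r_i=\exp(-C_2\gamma^2(1+c_*/16)^{2i})$) the chain advances to the next threshold within $n$ steps without ever dropping back below the halfway threshold. Summing the $r_i$ over the $k$ stages gives a total failure probability $e^{-\Omega(\gamma^2)}$, and a union bound over the $k$ stages shows the hitting time of $\{S_{t,1}\ge m_*+s\}$ is $O(n\log n)$.

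For the drift estimates that feed Lemma~\ref{lem:mtghittingtime}, I would use \eqref{eq:proportions-drift}--\eqref{eq:driftD} together with the Taylor expansion of $D_\beta$ about $m_*$: for $S_{t,1}-m_*\in(0,s(1+\tfrac{c_*}{16})^2)$ one gets $\E[S_{t+1,1}-S_{t,1}\mid\mathcal F_t]\ge \tfrac{1}{2n}c_*(S_{t,1}-m_*)+O(n^{-2})>0$, provided $s$ is small enough that the quadratic remainder is dominated by half the linear term. There is one subtlety not present in Lemma~\ref{lem:sdecayPotts}: \eqref{eq:driftD} only bounds $d_\beta$ by $D_\beta$ along the ``balanced'' slice $(x,\tfrac{1-x}{q-1},\dots)$, so to turn the one-coordinate drift into a genuine statement I need to run Lemma~\ref{lem:nondominantcoordinates} in parallel — this controls $\max_{i,j\ne1}|S_{t,i}-S_{t,j}|=O(n^{-1/2}\log n)$ for all $t=O(n\log n)$ (note its proof only uses $\beta<\betas$ and a lower bound $S_{t,1}\ge 1/q+\epsilon$, which holds here since $S_{t,1}\ge m_*>1/q$ throughout the relevant window, so one can take $\epsilon$ a fixed constant below $m_*-1/q$). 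On the event that the non-dominant coordinates stay within $O(n^{-1/2}\log n)$ of one another, the actual drift of $S_{t,1}$ differs from $\tfrac1n D_\beta(S_{t,1})$ by only $O(n^{-3/2}\log^2 n)$ (as in \eqref{eq:g2derivative}), which is negligible against the $\Omega(n^{-3/2}\gamma)$ signal. This simultaneously delivers the second conclusion of the lemma, $\|S_T-(S_{T,1},\tfrac{1-S_{T,1}}{q-1},\dots)\|_1=O(n^{-1/2}\log n)$: it is exactly the output of Lemma~\ref{lem:nondominantcoordinates} applied up to time $T\wedge\tau_\epsilon$, and here $\tau_\epsilon$ is never reached because $S_{t,1}$ is increasing away from $m_*$.

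The two ingredients from Lemma~\ref{lem:mtghittingtime} are used as in Lemma~\ref{lem:sdecayPotts}: item (1) (applied to the process $W_t = S'_{t+\tau_i,1}-[m_*+(1+\tfrac{c_*}{16})^{i+1}\tfrac{\gamma}{\sqrt n}]$ for a version $S'_t$ that rejects moves which would drop it below the halfway threshold, which has uniformly negative \emph{downward} drift, i.e. one applies item~(1) to $-W_t$ appropriately shifted) gives that $\tau_{i+1}<\tau_i+n$ except with probability $r_i$; item (2), applied to $Z_t=S_{(t+\tau_i)\wedge\tilde\tau_{i+1}\wedge\tau_{k+1},1}-S_{\tau_i,1}$ run "backwards" (bounding the probability the decreasing excursion reaches the halfway threshold), gives $\tilde\tau_{i+1}>\min\{\tau_{k+1},\tau_i+n\}$ except with probability $2r_i$. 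The increments are bounded by $1/n$ and the process is adapted to $(\mathcal F_t)$, so the hypotheses of Lemma~\ref{lem:mtghittingtime} are met. I expect the main obstacle — really just a bookkeeping obstacle — to be verifying that the error from using the balanced-slice drift $D_\beta$ in place of the true $q$-dimensional drift $d_\beta(S_t)$ stays subdominant \emph{uniformly over the $O(n\log n)$ time horizon and all $k=\Theta(\log n)$ stages simultaneously}; this requires intersecting the $e^{-\Omega(\gamma^2)}$-probability good events of Lemmas~\ref{lem:nondominantcoordinates}, \ref{lem:sdecayPotts}-type stage bounds, and the $O(n^{-2})$-probability bound of Lemma~\ref{lem:nondominantcoordinates}, and checking the union bound still leaves $1-e^{-\Omega(\gamma^2)}$. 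Everything else is a direct transcription of the Lemma~\ref{lem:sdecayPotts} argument with signs reversed.
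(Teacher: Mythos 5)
Your proposal follows essentially the same route as the paper's proof: geometrically spaced thresholds $\tau_{i+1},\tilde\tau_{i+1}$ reflected to the right of $m_*$, a Taylor expansion of the drift about $m_*$ to second order, appeals to Lemma~\ref{lem:mtghittingtime}(1)--(2) for the excursion bounds, and running Lemma~\ref{lem:nondominantcoordinates} in parallel (the paper tracks this via an explicit third stopping time $\tau^+_{i+1}$ and a rejecting auxiliary process) so that the true $q$-dimensional drift $d_\beta(S_t)$ can be replaced by its balanced-slice value $D_\beta(S_{t,1})$. You correctly flag that the extra ingredient over Lemma~\ref{lem:sdecayPotts} is that $D_\beta$ only \emph{upper} bounds $d_\beta$, so a two-sided approximation is required; the paper supplies this (as in your sketch) by Taylor-expanding $d_\beta$ about $\hat S_t$ and using that $\langle S_t - \hat S_t, \nabla d_\beta(\hat S_t)\rangle = 0$, since $\nabla d_\beta(\hat S_t)$ is constant in the non-dominant coordinates and $\sum_{j\ne 1}(S_{t,j}-\hat S_{t,j})=0$.

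One arithmetic slip worth correcting: with $\|S_t-\hat S_t\|_2 = O(n^{-1/2}\log n)$, the quadratic remainder of $d_\beta(S_t)=D_\beta(S_{t,1})+O(\|S_t-\hat S_t\|_2^2)$ is $O(n^{-1}\log^2 n)$, so after dividing by $n$ the one-step drift error is $O(n^{-2}\log^2 n)$, not the $O(n^{-3/2}\log^2 n)$ you state. This matters for self-consistency: an error of order $n^{-3/2}\log^2 n$ would actually \emph{dominate} the $\Omega(n^{-3/2}\gamma)$ signal for large $n$ (since $\gamma$ is fixed), so your claim that it is "negligible" would be false as written. Fortunately the true bound $O(n^{-2}\log^2 n)$ is subdominant, and the argument goes through exactly as in the paper's \eqref{eq:recall-this-later}--\eqref{eq:recall-this-later-2}.
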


\begin{lemma}
    \label{lem:moveforwardright}
    Let $q>2$ and $\beta \in (\betau, \betas)$.
      Suppose $\|S_0 - (m, \frac{1-m}{q-1},...,\frac{1-m}{q-1})\|_1 = O(n^{-1/2}\log n)$ for some $m\ge m_* + s$ where $s=\Omega(1)$.
      Then with probability $1- n^{-\Omega(1)}$, the followings hold:
      \begin{enumerate}
          \item     For all $t = O(n \log n)$, $S_{t,1} \ge m_* + \frac{s}{2}$.
          \item     For any constant $\rho_1 >0$, there exists $T=O(n\log n)$ such that
          $S_{T,1} \ge \mr  - \rho_1$.
      \end{enumerate}
\end{lemma}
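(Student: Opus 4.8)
\textbf{Proof plan for Lemma~\ref{lem:moveforwardright}.} The structure mirrors the disordered-side argument (Lemmas~\ref{lem:sdecayPotts}--\ref{lem:sdecayPottsstrong}), but run in the increasing direction toward $\mr$. The plan is to combine the exponential-escape-from-the-saddle argument (an analogue of Lemma~\ref{lem:sdecayPotts}, but on the other side of $m_*$) with a uniform-drift argument valid on the interval $[m_* + s/2, \mr - \rho_1]$.

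First I would treat part (1), the statement that $S_{t,1}$ does not fall back below $m_* + s/2$ for $t = O(n\log n)$. Using~\eqref{eq:proportions-drift}--\eqref{eq:driftD}, for any configuration with $S_{t,1} = x$ and $\|S_t - \hat S_t\|_1 = O(n^{-1/2}\log n)$, the drift of the first coordinate is $\frac 1n d_\beta(S_t) = \frac 1n D_\beta(x) + O(n^{-3/2}\log n)$ (the correction coming from the Taylor expansion of $d_\beta$ around $\hat S_t$, exactly as in~\eqref{eq:g2derivative}). Since by Lemma~\ref{lem:driftanalysis} $D_\beta > 0$ strictly on $(m_*, \mr)$ and is continuous, on any compact sub-interval $[m_* + s/2, \mr - \rho_1/2]$ we have $D_\beta \ge \delta_0 = \Omega(1)$; meanwhile near $\mr$, $D_\beta$ is negative, so there is a genuine attracting fixed point. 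To get (1) I would apply the sub/super-martingale estimate Lemma~\ref{lem:mtghittingtime}(1) to the process $W_t := (m_* + s)n^{-1}\cdot(\text{something}) - \dots$; more precisely, let $\tau^- = \inf\{t: S_{t,1} \le m_* + s/2\}$ and on $\{t < \tau^-\}$ run $-(S_{t,1} - (m_* + s/2))$ as a process with negative drift $-\delta_0/n$ and bounded increments $1/n$; Lemma~\ref{lem:mtghittingtime}(1) (or rather its symmetric version, plus a union bound over $O(n\log n)$ steps) gives $\Pr(\tau^- \le Cn\log n) \le e^{-\Omega(n)}$, hence $1 - n^{-\Omega(1)}$ with room to spare. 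One must also carry along the control $\|S_t - \hat S_t\|_1 = O(n^{-1/2}\log n)$, which is exactly what Lemma~\ref{lem:nondominantcoordinates} supplies (its hypotheses apply here with the roles on the far side of $m_*$, since $S_{t,1}$ stays bounded away from $1/q$).

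Next, part (2): I would mimic the dyadic escape argument of Lemma~\ref{lem:sdecayPotts} but pushing $S_{t,1}$ upward. Define $\tau_0 = 0$ and $\tau_{i+1} = \inf\{t \ge \tau_i : S_{t,1} > m_* + (1 + c_*'/16)^{i+1}\,\gamma'/\sqrt n\}$ for an appropriate starting scale, together with the guard stopping times $\tilde\tau_{i+1}$ preventing a return below a halved threshold; here $c_*' = D_\beta'(m_*) > 0$ by Lemma~\ref{lem:driftanalysis}. Because the Taylor expansion $D_\beta(x) = c_*'(x - m_*) + O((x-m_*)^2)$ gives a drift proportional to $+(x - m_*)/n$ when $m_* < x < m_* + s$, exactly the estimate~\eqref{eq:negativedrift7} applies with the sign reversed, and Lemma~\ref{lem:mtghittingtime}(1)--(2) yield~\eqref{eq:UBtaui}-type bounds $1 - 3r_i$ with $r_i = \exp(-C\gamma'^2(1+c_*'/16)^{2i})$. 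Iterating $O(\log n)$ times gets $S_{t,1}$ to $m_* + s$ for a fixed small $s$ with probability $1 - e^{-\Omega(\gamma'^2)} = 1 - n^{-\Omega(1)}$ once $\gamma'$ is allowed to be a slowly growing function, or simply with probability $1 - O(\gamma^{-2})$ if $\gamma$ is the large constant in the hypothesis. From $m_* + s$ onward, the uniform lower bound $D_\beta \ge \delta_0$ on $[m_* + s/2, \mr - \rho_1]$ and Lemma~\ref{lem:mtghittingtime} (now used for an upper bound on a hitting time from below, via the second part) give $T = O(n\log n)$ with $S_{T,1} \ge \mr - \rho_1$ with probability $1 - n^{-\Omega(1)}$. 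Throughout, Lemma~\ref{lem:nondominantcoordinates} preserves $\|S_t - \hat S_t\|_1 = O(n^{-1/2}\log n)$. A union bound over the $O(\log n)$ phases and the two parts finishes the proof.

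The main obstacle I anticipate is bookkeeping the coupled control of the non-dominant coordinates simultaneously with the one-dimensional drift argument: Lemma~\ref{lem:nondominantcoordinates} is stated with a stopping time $\tau_\epsilon$ at which $S_{t,1}$ drops near $1/q$, which is harmless here since part~(1) already shows $S_{t,1}$ stays well above $m_* + s/2 > 1/q$, but one must check the $O(n^{-1/2}\log n)$ bound on $\|S_t - \hat S_t\|_1$ is maintained uniformly up to time $O(n\log n)$, not merely until the first saddle-escape, and that the $O(n^{-3/2}\log n)$ error terms in the drift never overwhelm the $\Theta(n^{-1})$ or $\Theta(\gamma n^{-3/2})$ main drift terms at the relevant scales — this is exactly the same delicacy handled in~\eqref{eq:negativedrift7} and requires $s$ (and the per-scale thresholds) chosen small enough in terms of $c_*'$. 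Everything else is a sign-flipped transcription of the already-established disordered-side analysis.
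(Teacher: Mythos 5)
Your overall approach---Taylor-expand $d_\beta$ around $\hat S_t$, control the non-dominant coordinates via Lemma~\ref{lem:nondominantcoordinates}, and feed the resulting uniformly positive drift $D_\beta \ge \delta_0$ on a compact interval $[m_*+s/2,\mr-\rho_1]$ into the martingale hitting-time estimates of Lemma~\ref{lem:mtghittingtime}---is exactly the paper's plan, which treats this lemma as the sign-flipped analogue of Lemma~\ref{lem:sdecayPottsstrong}, and your part (1) is on target.

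However, the dyadic-escape phase you insert at the start of part (2) is misplaced: the hypothesis of Lemma~\ref{lem:moveforwardright} is $m\ge m_*+s$ with $s=\Omega(1)$, already macroscopically away from the saddle, so there is no near-saddle geometric expansion to perform. You appear to have conflated the hypotheses with those of Lemma~\ref{lem:superdecayPotts} (where $m\ge m_*+\gamma n^{-1/2}$). This matters for the final probability bound: if you keep $\gamma$ as a fixed constant, the dyadic phase only yields $1-O(\gamma^{-2})$, which is weaker than the lemma's $1-n^{-\Omega(1)}$; and there is no $\gamma$ in this lemma's hypotheses to amplify. The correct move is to skip the escape entirely: part (1) already pins $S_{t,1}\in[m_*+s/2,\mr]$ for $t\le O(n\log n)$ (except with $e^{-\Omega(n)}$ failure, since the drift $\delta_0/n$ beats the $1/n$ increments by Lemma~\ref{lem:mtghittingtime}(1)), and on that interval the uniform drift plus Lemma~\ref{lem:mtghittingtime}(1) reaches $\mr-\rho_1$ in $O(n)$ steps at exponential confidence; the only polynomial loss comes from the $O(n^{-2})$ failure probability of Lemma~\ref{lem:nondominantcoordinates} carried over $O(n\log n)$ steps, which is why the stated probability is $1-n^{-\Omega(1)}$ rather than $1-e^{-\Omega(n)}$. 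Dropping the dyadic stage and stating this probability accounting cleanly would make the write-up match the paper's argument.
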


\begin{lemma}
    \label{lem:converge2mr}
    Suppose $q>2$ and $\beta > \betau$.
    There exists $\rho_1 = \rho_1(\beta, q)>0$ such that 
    if 
    $$\|S_{0} - (\mr, \tfrac{1-\mr}{q-1},...,\tfrac{1-\mr}{q-1} ) \|_\infty \le \rho_1 ,$$ 
    then for every $r>0$ there exists $T=O(n \log n)$ such that
    $$
        \mathbb P\Big(\big\|S_{T} - \big(\mr, \tfrac{1-\mr}{q-1},...,\tfrac{1-\mr}{q-1} \Big) \big\|_1 > \frac{r}{\sqrt{n}}\Big)  = O(r^{-1})\,.
    $$
\end{lemma}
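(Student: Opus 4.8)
The plan is to prove this as the direct analogue of Lemma~\ref{lem:contractionPotts} (Lemma~4.1 of~\cite{CDLLPS-mean-field-Potts-Glauber}), which is the identical statement with the disordered fixed point $(\tfrac1q,\dots,\tfrac1q)$ in place of the ordered one $\bar m := (\mr, \tfrac{1-\mr}{q-1},\dots,\tfrac{1-\mr}{q-1})$. The one model-specific ingredient in that proof is that the linearized drift at the fixed point is a strict contraction on the tangent space $T_0 := \{x\in\R^q : \sum_i x_i = 0\}$ of the simplex, so the first and main step is to verify this at $\bar m$.

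For that I would set $h(s) := g_\beta(s) - s$, so that by~\eqref{eq:proportions-drift} the proportions chain has drift $\E[S_{t+1} - S_t \mid \mathcal F_t] = \tfrac1n h(S_t) + O(n^{-2})$, and $h(\bar m) = 0$ since $D_\beta(\mr) = 0$ and $\bar m$ is invariant under permutations of coordinates $2,\dots,q$. Write $J := Dh(\bar m) = Dg_\beta(\bar m) - I$. Using $\partial_{s_l} g_{\beta,k}(s) = \beta g_{\beta,k}(s)(\delta_{kl} - g_{\beta,l}(s))$ with $g_{\beta,1}(\bar m) = \mr$ and $g_{\beta,k}(\bar m) = b := \tfrac{1-\mr}{q-1}$ for $k\ge 2$, and the $S_{q-1}$-symmetry among the non-dominant coordinates, $J$ leaves invariant the two mutually orthogonal subspaces of $T_0$: the line spanned by $v := (1,-\tfrac1{q-1},\dots,-\tfrac1{q-1})$, and the $(q-2)$-dimensional ``spread'' subspace $\{x : x_1 = 0,\ \sum_{i\ge2} x_i = 0\}$, on which it acts by the scalars $D_\beta'(\mr)$ and $\beta b - 1$ respectively; the first eigenvalue is identified by noting that $h_1$ restricted to the line through $\bar m$ in direction $v$ is precisely $D_\beta$, cf.~\eqref{eq:driftD}. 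Lemma~\ref{lem:driftanalysis} gives $D_\beta'(\mr) < 0$ for $\beta\in(\betau,\betas)$; for $\beta\ge\betas$ the same sign persists, by the identity $D_\beta'(\mr) = \tfrac{q\beta\mr(1-\mr)}{q-1} - 1$ (obtained by substituting $1+(q-1)e^{\beta(1-q\mr)/(q-1)} = 1/\mr$ into~\eqref{eq:driftderivative}) together with an elementary check that $\mr$ is a simple zero of $D_\beta$ at which $D_\beta$ crosses from positive to negative (using $D_\beta'(\tfrac1q)=\tfrac\beta q-1\ge0$ and $D_\beta(1)<0$). The same identity moreover gives $\beta b - 1 = \tfrac{D_\beta'(\mr)+1}{q\mr} - 1 < \tfrac1{q\mr} - 1 < 0$ since $\mr>\tfrac1q$, so the spread eigenvalue is automatically negative. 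Hence $\langle x, Jx\rangle = D_\beta'(\mr)\|x_v\|_2^2 + (\beta b-1)\|x_{\mathrm{spread}}\|_2^2 \le -2c\|x\|_2^2$ on $T_0$ for some $c = c(\beta,q) > 0$.

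With that in hand, the rest is the potential-function argument of~\cite{CDLLPS-mean-field-Potts-Glauber} transplanted to $\bar m$. I would fix $\rho_1>0$ small enough — in terms of $c$, the modulus of continuity of $Dh$ near $\bar m$, and $\tfrac12(\mr-b)$ (the last so that coordinate $1$ stays strictly largest and no relabelling occurs) — that $\langle s-\bar m, h(s)\rangle \le -c\|s-\bar m\|_2^2$ throughout the $\ell_\infty$-ball $B:=B_\infty(\bar m,\rho_1)$, then (after shrinking $\rho_1$ by a factor $2$ in the hypothesis) take $S_0\in B_\infty(\bar m,\rho_1/2)$. Setting $\Phi_t := \|S_t-\bar m\|_2^2$ and $\tau := \inf\{t: S_t\notin B\}$, a single Glauber step changes two coordinates by $\pm\tfrac1n$, so $\|S_{t+1}-S_t\|_2\le\tfrac{\sqrt2}{n}$, and expanding the square gives $\E[\Phi_{t+1}-\Phi_t\mid\mathcal F_t] \le -\tfrac{2c}{n}\Phi_t + O(n^{-2})$ for $t<\tau$. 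Since the drift is inward of order $n^{-1}$ near $\partial B$ while increments are $O(n^{-1})$, an Azuma/supermartingale estimate (equivalently Lemma~\ref{lem:mtghittingtime}) yields $\Pr(\tau\le T) = e^{-\Omega(n/\log n)}$ for $T=O(n\log n)$; iterating the drift inequality for the stopped chain $\Phi_{t\wedge\tau}$ and plugging this in gives $\E[\Phi_T] = O(1/n)$ once $T$ is a sufficiently large constant multiple of $n\log n$. Then $\|S_T-\bar m\|_1\le\sqrt q\,\|S_T-\bar m\|_2$, so Markov's inequality gives $\Pr(\|S_T-\bar m\|_1 > r/\sqrt n) \le \Pr(\Phi_T > r^2/(qn)) = O(1/r)$ (indeed $O(1/r^2)$), as required.

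I expect the only genuinely new work to be the spectral step: Lemma~\ref{lem:driftanalysis} addresses only the dominant direction and only for $\beta<\betas$, so one must carry the sign of $D_\beta'(\mr)$ through the whole range $\beta>\betau$ and observe, via the displayed identity, that this single inequality also controls the spread directions. Everything downstream is the one-dimensional potential estimate already executed at the disordered fixed point in~\cite{CDLLPS-mean-field-Potts-Glauber}.
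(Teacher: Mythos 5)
Your proposal is correct and matches the paper's intent: the paper explicitly dispatches this lemma (together with Lemmas~\ref{lem:superdecayPotts}, \ref{lem:moveforwardright}, \ref{lem:supercriticalcouple}) by saying the proofs ``closely follow those of their analogs, Lemmas~\ref{lem:sdecayPotts}--\ref{lem:subcriticalcouple}'', i.e.\ one reruns the potential-function / drift-contraction argument of~\cite{CDLLPS-mean-field-Potts-Glauber} at the ordered fixed point rather than the disordered one. Your spectral step --- diagonalizing $Dg_\beta(\bar m)-I$ on $T_0$ into the $v$-direction with eigenvalue $D_\beta'(\mr)$ and the spread subspace with eigenvalue $\beta\tfrac{1-\mr}{q-1}-1$, and verifying both are negative for all $\beta>\betau$ --- is exactly the model-specific verification the paper leaves implicit, and your computations are correct.
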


\begin{lemma}
    \label{lem:supercriticalcouple}
    Suppose $q>2$, $r>0$ and $\beta > \betau$.
    Let $\{\sigma_t\}_{t\ge 0}$ and  $\{\sigma_t'\}_{t\ge 0}$ be two instances of Potts Glauber dynamics satisfying 
    \[
        \big\|S(\sigma_0) - \big(\mr, \tfrac{1-\mr}{q-1},...,\tfrac{1-\mr}{q-1} \Big) \big\|_1 \le  \frac{r}{\sqrt{n}} \quad \text{and} \quad
        \big\|S(\sigma_0') - \big(\mr, \tfrac{1-\mr}{q-1},...,\tfrac{1-\mr}{q-1} \Big) \big\|_1 \le  \frac{r}{\sqrt{n}}\,.
    \]
    Then there exist a coupling $\Pp$ of $\{(\sigma_t, \sigma_t')\}_{t\ge 0}$ and $T=O(n \log n)$ such that
    $\Pp(S(\sigma_T) = S(\sigma_T')) = 1-o(1)$.
\end{lemma}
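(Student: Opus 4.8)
The plan is to reduce Lemma~\ref{lem:supercriticalcouple} to the disordered-phase coupling already invoked in Lemma~\ref{lem:subcriticalcouple} (namely Corollary~4.4 and Lemma~4.5 of~\cite{CDLLPS-mean-field-Potts-Glauber}), running the same two-stage argument but anchored at the ordered fixed point $\hat m := (\mr, \tfrac{1-\mr}{q-1}, \dots, \tfrac{1-\mr}{q-1})$ in place of the equiproportional point $(\tfrac1q,\dots,\tfrac1q)$. One couples $(\sigma_t,\sigma_t')$ by picking the same uniformly random vertex $v$ at each step and maximally coupling its two resampled colors, whose laws are governed by $S(\sigma_t)$ and $S(\sigma_t')$; the goal is to show that the $\ell_1$ discrepancy $\Phi_t := \|S(\sigma_t)-S(\sigma_t')\|_1$ first contracts geometrically (at rate $1-\Theta(1/n)$ per step) down to the lattice scale $O(1/n)$, after which a hitting-time argument drives the remaining bounded integer discrepancy of the count vectors to $0$ in $O(n)$ further steps.

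The one step that needs a genuinely new check is the \emph{local contraction of the mean-field drift map $g_\beta$ at $\hat m$} (the analogue at $(\tfrac1q,\dots,\tfrac1q)$ is what powers the disordered argument). First I would exhibit a neighborhood $B$ of $\hat m$ in the simplex and a constant $c=c(\beta,q)\in(0,1)$ with $\|g_\beta(s)-g_\beta(s')\|_1 \le (1-c)\|s-s'\|_1$ for all $s,s'\in B$, by bounding the Jacobian $Dg_\beta(\hat m)$ on the tangent space of the simplex. Splitting that tangent space into the radial/symmetric direction and the directions spanned by $e_i-e_j$ with $i,j\neq 1$: along the radial direction the contraction factor equals $1+D_\beta'(\mr)$, which lies in $[0,1)$ since $D_\beta'(\mr)\in[-1,0)$ by Lemma~\ref{lem:driftanalysis} and the explicit formula~\eqref{eq:driftderivative}; along the non-dominant difference directions the relevant factor is $\tfrac{\beta e^{\beta\hat m_i}}{\sum_k e^{\beta\hat m_k}}<1$, where the strict inequality is the same computation used in the proof of Lemma~\ref{lem:nondominantcoordinates} and remains valid for $\beta>\betas$ because there $\mr$ is close to $1$ while each $\hat m_i$ ($i\neq1$) is small, so the term $e^{\beta\mr}$ dominates the denominator. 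Hence $Dg_\beta(\hat m)$ has spectral radius strictly below $1$ on the simplex tangent space, giving the desired contraction on a neighborhood $B$.

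With this ingredient the remainder is routine. By Lemma~\ref{lem:moveforwardright}(1) together with the martingale control of the non-dominant coordinate spread (the argument of Lemma~\ref{lem:nondominantcoordinates}, now carried out around $\hat m$), both chains remain within $O(n^{-1/2}\log n)$ of $\hat m$ --- hence inside $B$ --- for all $t=O(n\log n)$ with probability $1-n^{-\Omega(1)}$. On that event, combining the contraction of $g_\beta$ with the one-step drift identity~\eqref{eq:proportions-drift} and the appropriate potential-function bookkeeping (as in Lemma~4.5 of~\cite{CDLLPS-mean-field-Potts-Glauber}) yields $\mathbb E[\Phi_{t+1}-\Phi_t \mid \mathcal F_t] \le -\tfrac{c}{n}\Phi_t + O(n^{-2})$, so on a dyadic schedule of scales $\Phi_t$ is driven to $O(n^{-1})$ within $T_1=O(n\log n)$ steps with probability $1-o(1)$; once $n\Phi_t=O(1)$, the bounded-increment integer discrepancy of the count vectors has drift toward $0$ and, by the single-step variance lower bound, enough local noise, so Lemmas~\ref{lem:mtghittingtime}--\ref{lem:supermtganticoncen} force it to $0$ in $O(n)$ more steps with probability $1-o(1)$. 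A union bound over the two stages and the good events gives a coupling with $S(\sigma_T)=S(\sigma_T')$ at $T=T_1+O(n)=O(n\log n)$ with probability $1-o(1)$. The part I expect to be most delicate is getting the drift-map contraction at $\hat m$ to hold \emph{uniformly over all tangent directions and all $\beta>\betau$} (in particular in the low-temperature regime $\beta>\betas$, which the disordered analysis never touches), and, relatedly, making the final hitting-time step upgrade $\ell_1$-closeness of order $n^{-1/2}$ all the way to exact coincidence of the proportion vectors.
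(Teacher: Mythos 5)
Your proposal is correct and takes essentially the same route as the paper, which dispatches this lemma in a single sentence by noting that the proof of Lemma~4.5 in~\cite{CDLLPS-mean-field-Potts-Glauber} ``only relies on the estimates of variance and drift around a stable fixed point, and it is not specific to the disordered phase.'' You supply the verification the paper leaves implicit: that $\hat m=(\mr,\tfrac{1-\mr}{q-1},\dots)$ is genuinely a stable fixed point of $g_\beta$ on the simplex tangent space for all $\beta>\betau$, by splitting (via the residual $S_{q-1}$ permutation symmetry) into the radial eigendirection with factor $1+D_\beta'(\mr)=\tfrac{q\beta\mr(1-\mr)}{q-1}\in[0,1)$ and the $e_i-e_j$ ($i,j\neq1$) eigendirections with factor $\beta\tfrac{1-\mr}{q-1}$. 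Your worry about the latter factor in the regime $\beta>\betas$ is warranted but turns out fine: writing $u=\tfrac{(q-1)\mr}{1-\mr}>1$, the fixed-point equation gives $\beta\tfrac{1-\mr}{q-1}=\tfrac{\log u}{u-1}<1$, so the contraction holds uniformly over $\beta>\betau$. With that ingredient confirmed, the rest of your two-stage argument (geometric $\ell_1$ contraction of $\Phi_t$ to the lattice scale, then a hitting-time coalescence of the bounded integer discrepancy) follows the~\cite{CDLLPS-mean-field-Potts-Glauber} template exactly as the paper intends.
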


The proofs of these four lemmas closely follow those of their analogs, Lemmas~\ref{lem:sdecayPotts}--\ref{lem:subcriticalcouple}.
For Lemma~\ref{lem:supercriticalcouple}, we note that the proof for Lemma~\ref{lem:subcriticalcouple} in~\cite{CDLLPS-mean-field-Potts-Glauber} only relies on the estimates of variance and drift around a stable fixed point, and it is not specific the disordered phase.
The main change to note is that for Lemma~\ref{lem:superdecayPotts} instead of maximizing the drift for the first coordinate by $D_\beta$ as done in~\eqref{eq:drift-step1}--\eqref{eq:drift-step2}, the drift is simply approximated by $D_\beta$ using its Taylor expansion and the fact that the other coordinates are roughly proportional to each other by Lemma~\ref{lem:nondominantcoordinates}. 
Due to this change, we include below the details for the proof of Lemma~\ref{lem:superdecayPotts}.

\begin{proof}[\textbf{\emph{Proof of Lemma~\ref{lem:superdecayPotts}}}]
    The proof is analogous to that of Lemma~\ref{lem:sdecayPotts}. 
    We borrow the definition of $c_*, s, \tau_0, k$ and $r_i$ from Lemma~\ref{lem:sdecayPotts}.
    and explain the main differences here.
    First, we introduce the notations that are new or different from the previous proof.
    For $i\ge 0$, 
    $$\tau_{i+1} := \inf\left\{t\ge \tau_i : S_{t,1} > m_* + \Big(1 +\frac{c_*}{16} \Big)^{i+1}  \frac{\gamma}{\sqrt{n}} \right\},$$
    \[
        \tilde{\tau}_{i+1}:= \inf\left\{ t>\tau_i: S_{t,1} < m_* +  \Big(1 +\frac{c_*}{16} \Big)^{i}  \frac{\gamma}{2\sqrt{n}} \right\}, 
    \]
    and
    \[
        \tau_{i+1}^+:=
        \inf\left\{t\ge \tau_i : \max_{j,l\neq 1} |S_{t,j} - S_{t,l}| > \frac{A\log n}{\sqrt{n}} \right\},
    \]
    where $A>0$ is a large constant depending on $S_{\tau_i}$.
    Note that $\tau_{k+1} \ge \tau_k \ge \dots \ge \tau_0$, and $\tilde{\tau}_i<\tau_\epsilon$, 
    where $\tau_{\epsilon}$ is as in Lemma~\ref{lem:nondominantcoordinates} with sufficiently small $\epsilon$.

    Assume $\tau_k>\tau_0$ again.
    In lieu of \eqref{eq:UBtaui},
    we shall prove that for all $i=0,\dots, k-1$, if $ \max_{j,l\neq 1} |S_{\tau_i,j} - S_{\tau_i,l}| \le \frac{K\log n}{\sqrt{n}}$, then there exists a constant $A>K$ such that 
    \begin{align}
        \Pr\left( 
            \tau_{i+1} \le \tau_{i}+n, 
            \tilde{\tau}_{i+1} > \min\{\tau_{k+1}, \tau_i +n \},
            \tau^+_{i+1} >  \min\{ \tau_{k+1}, \tau_i +n \}
            \mid  S_{\tau_i}
        \right) & \nonumber \\
        & \!\!\!\!\!\!\!\!\!\!\!\!\!\!\!\!\!\!\! \ge 1 - 3r_i - O(n^{-2})\,.  \label{eq:maingoal123}
    \end{align}
    Then the lemma follows from $\eqref{eq:maingoal123}$.
    We show \eqref{eq:maingoal123} in three steps:
    \begin{equation}
        \label{eq:decaysubgoal1}
        \Pr(\tau_{i+1}^+ \le (\tau_i +n)\wedge \tilde{\tau}_{i+1} | S_{\tau_i}) = O(n^{-2}),
    \end{equation}
    \begin{equation}
        \label{eq:decaysubgoal2}
        \Pr(  \tilde{\tau}_{i+1} \le \tau_{k+1} \wedge (\tau_i + n) \wedge \tau_{i+1}^+  | S_{\tau_i}) \le 2 r_i,
    \end{equation}
    and 
    \begin{equation}
        \label{eq:decaysubgoal3}
        \Pr(\tau_{i+1}' > \tau_i' + n | S_{\tau_i}') \le r_i,
    \end{equation}
    where the stopping time $\tau_i'$ is the analog of $\tau_i$ for each $i$ with regard to the auxiliary process $S_t'$ that agrees with $S_t$ except that
    any any step $t\ge \tau_i$, $S_t'$ rejects the update at $t+1$ if $S_{t+1,1}' < m_*+( (1+\frac{c_*}{16})^i\cdot \frac{\gamma}{2\sqrt{n}})$ or $\max_{j,l\neq 1} |S_{t+1,j} - S_{t+1,l}| > \frac{A\log n}{\sqrt{n}}$.
    Observe that \eqref{eq:maingoal123} follows from a union bound over \eqref{eq:decaysubgoal1}, \eqref{eq:decaysubgoal2} and \eqref{eq:decaysubgoal3}.
    Moreover, \eqref{eq:decaysubgoal1} is a consequence of Lemma~\ref{lem:nondominantcoordinates}; 
    \eqref{eq:decaysubgoal2} and \eqref{eq:decaysubgoal3} are themselves analogs of \eqref{eq:LBtildetaui} and \eqref{eq:UBtauiprime} respectively.

    To illustrate the main difference in the current proof, it suffices for us to prove \eqref{eq:decaysubgoal2}.
    Define the process $\{Z_t\}_{t\ge 0}$ given by
    \[
        Z_t = S_{\tau_i, 1} - S_{(t+\tau_i)\wedge \tilde{\tau}_{i+1}\wedge \tau_{k+1} \wedge \tau^+_{i+1} , 1}. 
    \]
    Clearly we have $Z_0 = 0$, $| Z_{t+1} - Z_t|\le  n^{-1}$ for all $t\ge 0$. 
    Once we show in addition that
    \begin{equation}
        \label{eq:drifttowardsright}
                  \E_{0}[Z_{t+1} - Z_t \mid \mathcal{F}_t] \le 0,
    \end{equation}
    we can conclude \eqref{eq:decaysubgoal2} by Lemma~\ref{lem:mtghittingtime}(2).
    Since $Z_{t+1} = Z_t$ when $t+\tau_i \ge \tilde{\tau}_{i+1}\wedge \tau_{k+1} \wedge \tau^+_{i+1}$,
    it remains to show \eqref{eq:drifttowardsright} for this process 
    when $0\le t$ such that $t+\tau_i < \tilde{\tau}_{i+1}\wedge \tau_{k+1} \wedge \tau^+_{i+1}$.
    In this case, $Z_{t+1} - Z_t = -S_{t+1+\tau_i, 1} + S_{t+\tau_i, 1}$.
    Recall that for any $S_t$ we set  $\hat{S}_t:=(S_{t,1}, \frac{1-S_{t,1}}{q-1}, \dots,  \frac{1-S_{t,1}}{q-1})$.
    By \eqref{eq:proportions-drift}--\eqref{eq:driftdbeta} and Taylor expansion we have
    \begin{align}\label{eq:recall-this-later}
            &\E_0[Z_{t+1} - Z_t \mid S_{t+\tau_i}] = -\frac{1}{n}d_\beta(S_{t+\tau_i}) + O(n^{-2}) \nonumber \\ 
            &=-\frac{1}{n}\left[  
                d_\beta(\hat{S}_{t+\tau_i}) + 
                    \langle S_{t+\tau_i} - \hat{S}_{t+\tau_i}, \nabla d_\beta(\hat{S}_{t+\tau_i}) \rangle 
                    + O(\| S_{t+\tau_i} - \hat{S}_{t+\tau_i}\|_2^2)
            \right] + O(n^{-2})
    \end{align}
    Note that $ \langle S_{t+\tau_i} - \hat{S}_{t+\tau_i}, \nabla d_\beta(\hat{S}_{t+\tau_i}) \rangle =0$ 
    and $\| S_{t+\tau_i} - \hat{S}_{t+\tau_i}\|_1 = O\big(\tfrac{\log n}{\sqrt{n}} \big)$.
    Hence, by \eqref{eq:driftD} and \eqref{eq:drift-step1}--\eqref{eq:negativedrift7}
    \begin{align}\label{eq:recall-this-later-2}
            \E_0[Z_{t+1} - Z_t \mid S_{t+\tau_i}] &= 
             -\frac{1}{n}D_\beta(S_{t+\tau_i, 1}) + O\big( \frac{\log^2 n}{n^2}\big) \le - \frac{S_{t+\tau_i, 1} - m_*}{2n}\cdot c_* < 0,
    \end{align}
    concluding the proof.
\end{proof}

\begin{proof}[\textbf{\emph{Proof of Theorem~\ref{thm:PottsHitLeftTarget}}: the $m\ge m_* + \gamma n^{-1/2}$ case}]
On the other side of $m_*$, 
if 
       $S_{0,1} = m$ for $m\ge m_* + \gamma n^{-1/2}$ and 
    $\|S_0 - (m, \frac{1-m}{q-1},...,\frac{1-m}{q-1})\|_1 = O(n^{-1/2}\log n)$,
then the coalescence of the proportion vectors 
can be proved analogously to the $m\le m_* + \gamma n^{-1/2}$ case, with 
Lemmas~\ref{lem:superdecayPotts}--\ref{lem:supercriticalcouple} in place of Lemmas~\ref{lem:sdecayPotts}--Lemma~\ref{lem:subcriticalcouple}.
\end{proof}

\subsection{Getting away from the saddle point when \texorpdfstring{$\beta = \beta_c$}{beta = betac}}

In order to handle the critical point itself, we need to also show that in $\Omega(n)$ number of steps the proportions chain gets $\gamma n^{-1/2}$ away from $(m_*, \frac{1-m_*}{q-1},...,\frac{1-m_*}{q-1})$ with high probability, and furthermore it does so to the right and to the left with the correct relative probabilities.

To upper bound the exit time of the $O(n^{-1/2})$ window around the saddle point $(m_*, \frac{1-m_*}{q-1},...,\frac{1-m_*}{q-1})$, our proof goes by considering a batch of $\gamma^2 n$ updates, after which there is a constant chance that the process gained exited the $\gamma n^{-1/2}$ window using the variance alone (even taking a worst-case bound on the drift functions). Iterating this ensures that in $O(e^{\gamma^4}\cdot \gamma^3 n)$ time the process will likely have escaped. 

We use the following notations in this section.   
For a constant $\gamma$, let $\tau_\gamma^-=\inf\{ t>0: S_{t,1}<m_* -\gamma/ \sqrt{n} \}$ and 
    $\tau_\gamma^+=\inf\{ t>0: S_{t,1}> m_* +\gamma/ \sqrt{n} \}$.

\begin{lemma}
    \label{lem:Pottsexittime}
    If $\|S_0 - (m_*, \frac{1-m_*}{q-1},...,\frac{1-m_*}{q-1})\|_1 = O(n^{-1/2})$, 
    then for all large $\gamma$, after $T= \gamma^3 e^{O(\gamma^4)} n$ many steps,  $S_{T,1} \notin  [m_* - \gamma/\sqrt{n}, m_* + \gamma/\sqrt{n}]$ with probability $1-O(\gamma^{-1})$.
\end{lemma}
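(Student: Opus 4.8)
The plan is to reduce to the one-dimensional analysis of the dominant coordinate. Set $X_t := n(S_{t,1}-m_*)$, an integer-valued process with $X_{t+1}-X_t\in\{-1,0,1\}$, and recall that since $\betac\in(\betau,\betas)$, Lemma~\ref{lem:driftanalysis} gives $c_*:=D_\betac'(m_*)>0$, so $m_*$ is a \emph{repulsive} fixed point with $D_\betac(x)=c_*(x-m_*)+O((x-m_*)^2)$ near $m_*$. We must show that $(X_t)$ leaves the window $[-\gamma\sqrt n,\gamma\sqrt n]$ within $T=\gamma^3e^{O(\gamma^4)}n$ steps, using the anti-concentration estimate of Lemma~\ref{lem:supermtganticoncen}: the point is that over a batch of $K:=\gamma^2 n$ steps the accumulated variance is $\Theta(K/n^2)=\Theta(\gamma^2/n)$, comparable to the square of the window width, so even taking a worst-case bound on the drift there is an $n$-independent chance of escaping per batch.

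First I would record the per-step estimates in force while $X_t$ stays in the window. Introduce the stopping time $\tau^{\mathrm{eq}}:=\inf\{t:\max_{i\neq j,\,i,j\neq 1}|S_{t,i}-S_{t,j}|>A\log n/\sqrt n\}$ for a suitable constant $A$; since $m_*-\tfrac1q=\Omega(1)$, the coordinate $S_{t,1}$ cannot reach $\tfrac1q+\tfrac{\epsilon}{2}$ while $X_t$ is in the window, so Lemma~\ref{lem:nondominantcoordinates} gives $\Pr(\tau^{\mathrm{eq}}\le T)=O(n^{-2})$. On the event $\{t<\tau^{\mathrm{eq}}\}\cap\{X_t\in[-\gamma\sqrt n,\gamma\sqrt n]\}$, Taylor-expanding $g_\betac$ exactly as in \eqref{eq:g2derivative}--\eqref{eq:recall-this-later-2} yields $\E[X_{t+1}-X_t\mid\mathcal F_t]=D_\betac(S_{t,1})+O(\log^2 n/n)$, hence $|\E[X_{t+1}-X_t\mid\mathcal F_t]|\le\delta_\gamma:=C\gamma/\sqrt n$ for $n$ large; moreover $\E[(X_{t+1}-X_t)^2\mid\mathcal F_t]=\Pr(X_{t+1}\neq X_t\mid\mathcal F_t)\ge 2m_*(1-m_*)(1+o(1))$ while $(\E[X_{t+1}-X_t\mid\mathcal F_t])^2=o(1)$, so $\var(X_{t+1}\mid\mathcal F_t)\ge a$ for a constant $a>0$.

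Next I would run a batching argument. Partition $[0,T]$ into $N:=T/K$ batches. At the start of a batch, assuming the process has not yet exited and $\tau^{\mathrm{eq}}$ has not occurred, let $x\in[-\gamma\sqrt n,\gamma\sqrt n]$ be its value, and (say $x\ge 0$) aim at the nearer boundary $\gamma\sqrt n$: feed into Lemma~\ref{lem:supermtganticoncen} the process $\hat Y_t := X_{\,\cdot\,+t}-x$ stopped at $\tau^{\mathrm{top}}\wedge\tau^{\mathrm{bot}}\wedge\tau^{\mathrm{eq}}$ and extended afterward by a lazy symmetric walk with $\var\ge a$. This process has $\hat Y_0=0$, increments in $\{-1,0,1\}$, drift $\ge-\delta_\gamma$, and variance $\ge a$ throughout; with $r:=\gamma\sqrt n-x\in[0,\gamma\sqrt n]$ and $t=K$ we have $r/\sqrt t\le 1$ and $\delta_\gamma\sqrt K=C\gamma^2$, so Lemma~\ref{lem:supermtganticoncen} gives $\Pr(\tau_r^+(\hat Y)\le K)\ge C_1\exp(-C_2(1+C\gamma^2)^2)+O((\gamma^2n)^{-1/2})\ge c_\gamma:=\tfrac{1}{2}C_1e^{-C_3\gamma^4}$ for $n$ large. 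Since $\{\tau_r^+(\hat Y)\le K\}$ forces either a top exit, a bottom exit, or $\{\tau^{\mathrm{eq}}\le\text{batch end}\}$, the batch produces a window exit with probability at least $c_\gamma-O(n^{-2})\ge c_\gamma/2$; the case $x<0$ is identical because both the bound $|\E[X_{t+1}-X_t\mid\mathcal F_t]|\le\delta_\gamma$ and the variance bound are symmetric under $X_t\mapsto-X_t$. Chaining over batches via the Markov property, the probability that $X$ never exits the window while $\tau^{\mathrm{eq}}>T$ is at most $(1-c_\gamma/2)^N\le e^{-c_\gamma N/2}$; taking $N=\lceil 4(\log\gamma)/c_\gamma\rceil$ makes this $\le\gamma^{-1}$, and then $T=NK\le(\log\gamma)\,e^{C_3\gamma^4}\gamma^2 n\le\gamma^3 e^{O(\gamma^4)}n$ for all large $\gamma$. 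Adding $\Pr(\tau^{\mathrm{eq}}\le T)=O(n^{-2})$ gives failure probability $O(\gamma^{-1})$, as claimed.

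The main obstacle is the careful bookkeeping of the three stopping times — exit through the top of the window, exit through the bottom, and loss of equiproportionality $\tau^{\mathrm{eq}}$ — so that the drift/variance hypotheses of Lemma~\ref{lem:supermtganticoncen} genuinely hold over an entire batch; this is the reason for linearizing $D_\betac$ only up to $\tau^{\mathrm{eq}}$ (controlled by Lemma~\ref{lem:nondominantcoordinates}) and for replacing the true chain past its exit time by a process to which the lemma applies verbatim, exactly the bookkeeping style already used in Lemmas~\ref{lem:sdecayPotts} and~\ref{lem:superdecayPotts}. A secondary point, and the source of the $e^{O(\gamma^4)}$ in $T$, is that we deliberately discard the favorable repulsive sign of the drift and use only the worst-case bound $|D_\betac|\le C\gamma/\sqrt n$ on the window, so the term $(\delta_\gamma\sqrt K)^2=(C\gamma^2)^2$ appears in the exponent of the anti-concentration estimate.
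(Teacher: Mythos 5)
Your proof is correct, and takes essentially the same batching-and-anticoncentration strategy as the paper: partition time into blocks of length $\Theta(\gamma^2 n)$, verify the hypotheses of Lemma~\ref{lem:supermtganticoncen} on each block to get a per-block escape probability $\ge c\,e^{-O(\gamma^4)}$, and chain over $e^{O(\gamma^4)}$ blocks. The one genuine difference is where the drift hypothesis comes from. You establish a \emph{two-sided} bound $|\E[X_{t+1}-X_t\mid\mathcal F_t]|\le C\gamma/\sqrt n$ by tracking the non-dominant coordinates via the stopping time $\tau^{\mathrm{eq}}$ and Lemma~\ref{lem:nondominantcoordinates}; this lets you aim at the nearer boundary each batch. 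The paper avoids all of that by aiming at the \emph{bottom} boundary in every batch, which needs only the one-sided inequality $-d_\beta(S_t)\ge -D_\beta(S_{t,1})$, and this holds for \emph{every} configuration $S_t$ without any equiproportionality hypothesis, since $D_\beta$ is defined in~\eqref{eq:driftD} as the maximum of $d_\beta$ over proportion vectors with fixed first coordinate; together with $D_\beta(m_*)=0$ and Taylor expansion one gets $-D_\beta(S_{t,1})\ge -C\gamma/\sqrt n$ on the window. This sidesteps $\tau^{\mathrm{eq}}$ and Lemma~\ref{lem:nondominantcoordinates} entirely, at the modest cost of the batch target distance being $r\le 2\gamma\sqrt n$ rather than your $r\le\gamma\sqrt n$ — a difference absorbed into $e^{O(\gamma^4)}$ anyway. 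Your variance lower bound and final chaining match the paper's.
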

\begin{proof}
    Suppose $S_{0,1} = x_0 \in [m_* - \gamma/ \sqrt{n}, m_* + \gamma/\sqrt{n}]$.
    Denote by $\{W_t\}_{t\ge 0}$ the simple random walk on $\mathbb{Z}$;
    define $\{Z_t\}_{t\ge 0}$ to be a process given by 
    \[
        Z_t:=(m_*n+\gamma \sqrt{n}) - S_{t,1}n \cdot \mathbf{1}\{t<\tau_\gamma^+ \} 
        + W_{t- \tau_{\gamma}^+} \cdot \mathbf{1}\{t\ge \tau_{\gamma}^+\}.
    \]
    We now verify that $\{Z_t\}_{t\ge 0}$ satisfies the conditions in Lemma~\ref{lem:supermtganticoncen}.
    Indeed,  by definition,  
    $Z_0 = (m_*n + \gamma\sqrt{n}) -  x_0n \ge 0$, 
    $Z_{t+1} - Z_t\in \{-1,0,1\}$.
    Moreover, when $t<\tau_\gamma^+$, there exist constants $C_1>0$ and $C_2>0$ such that
    \begin{align*}
           \E[Z_{t+1} - Z_t \mid \mathcal{F}_t] &= n \E[S_{t, 1} - S_{t+1, 1}\mid \mathcal{F}_t] \\ 
        &= -d_\beta(S_t) +O(n^{-1})\ge -D_\beta(S_{t,1}) +O(n^{-1}) \\ &\ge - C_1 \gamma n^{-1/2},
    \end{align*}
    and $\var(Z_{t+1} \mid \mathcal{F}_t) \ge C_2$;
    when $t\ge \tau_\gamma^+$, $  \E[Z_{t+1} - Z_t \mid \mathcal{F}_t] =  \E[W_{t+1} - W_t \mid \mathcal{F}_t] =0$ and $  \var(Z_{t+1} \mid \mathcal{F}_t) = \var[W_{t+1} \mid \mathcal{F}_t] = 1$.
    Define $\tau:=\inf\{t:Z_t > 2\gamma \sqrt{n}\}$ and note that $\min\{\tau^+,\tau^-\} \le \tau$. 
    Lemma~\ref{lem:supermtganticoncen} implies that
    \begin{align*}
        \Pr(\tau \le \gamma^2 n) &\ge C_3 \cdot \exp\Big( 
            -C_4 \Big(
                \frac{2\gamma\sqrt{n}}{\sqrt{\gamma^2 n}} + C_1\gamma n^{-1/2} \cdot \sqrt{\gamma^2 n} \
            \Big)^2 \Big) + O(1/\sqrt{\gamma^2 n})  \\
        & \ge C_3  \exp\Big( 
            -C_4 \Big(
             2 + C_1\gamma^2 \
            \Big)^2 \Big)\,,
    \end{align*}
    where the constants $C_3>0$ and $C_4>0$ depend only on $C_2$. 
    
    On the event that $\tau > \gamma^2 n$, then we can restart the process from the value of $S_{\gamma^2 n,1} \in [m_* - \gamma n^{-1/2}, m_* + \gamma n^{-1/2}]$ whence there is a fresh attempt of probability at least $e^{ - \Omega(\gamma^4)}$ of exiting the window in the next $\gamma^2 n$ steps. Repeating this argument $K = \gamma^3 e^{ \Omega(\gamma^4)}$ many times, each consisting of $\gamma^2 n$ steps, boosts the probability of having exited up to $1-O(\gamma^{-1})$. 
\end{proof}

The next lemma ensures there exists an initialization perturbation of $m_*$ by order $n^{1/2}$ such that from there, we get the correct relative probabilities for exiting the saddle to the right vs.\ the left to ensure convergence. 
\begin{lemma}
    \label{lem:existcstar}
    There exists a unique constant $\hat c_*$ such that initialized from $\hat \nu^{\otimes}(m_0)$ with $m_0 = m_* + \hat c_* n^{-1/2} + o(n^{-1/2})$, we have  
    $$\Pr(\tau^-_{\gamma} < \tau^+_{\gamma}) = \xi - o_{\gamma,n}(1)\,.$$
\end{lemma}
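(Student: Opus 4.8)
The plan is to mirror the argument of Section~\ref{subsec:escaping-saddle-SWCM}, replacing ``one step of the CM dynamics'' by ``a batch of $\Theta(n)$ consecutive Glauber updates'' and the giant-component process $(L_t)_t$ by the dominant-coordinate process $(nS_{t,1})_t$. The first and main step is to show that near the saddle --- i.e., while $S_{t,1}\in[m_* - \gamma n^{-1/2}, m_* + \gamma n^{-1/2}]$ --- the rescaled process $\bar L_t := \sqrt{n}\,(S_{\lfloor tn\rfloor, 1} - m_*)$ is well approximated, uniformly over macroscopic time windows $[0,T]$ with $T=O(1)$, by a $1$-dimensional Gaussian Markov chain $(\hat Z_t)_{t\ge 0}$ with linear drift of slope $D_\beta'(m_*) = c_* > 0$ (positive by Lemma~\ref{lem:driftanalysis}) and an explicit, $n$- and state-independent (although possibly time-dependent) increment variance --- the precise analog of the process of Definition~\ref{def:Z-t-process} and of the statement of Theorem~\ref{thm:1dapproxZ}. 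I would build this from: the a priori estimate of Lemma~\ref{lem:nondominantcoordinates}, which plays the role of the good set $\mathcal G_T$ by keeping the $q-1$ non-dominant coordinates within $\tilde{O}(\sqrt{n})$ of one another (so that the drift of $nS_{t,1}$ equals $D_\beta(S_{t,1})(1+o(1))$ after a Taylor expansion of $g_\beta$ about a balanced configuration, as in~\eqref{eq:g2derivative}); a martingale decomposition of $(nS_{t,1})_t$ combined with the sub/super-martingale concentration estimates of Lemmas~\ref{lem:mtghittingtime}--\ref{lem:supermtganticoncen}; and a quantitative martingale central limit theorem for the increment accumulated over $n$ consecutive Glauber steps, which identifies the limiting increment variance from the birth-and-death structure of a single Glauber update at a near-balanced configuration (and may require tracking a deterministic variance profile playing the role of $\sigma_t^2$ in~\eqref{eq:sigmas}). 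Taylor expanding $D_\beta$ at $m_*$, using $D_\beta(m_*)=0$, turns the batched drift into $c_*(S_{t,1}-m_*) + O((S_{t,1}-m_*)^2)$; on the window the quadratic correction is $O(\gamma^2/n)$ per batch and hence negligible.

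Granting this approximation, I would conclude exactly as in the CM case. First, $(\hat Z_t)_t$ is monotone in its initialization: coupling two copies with the same driving randomness, an idle batch preserves the gap and a non-idle batch multiplies it by $1 + c_* + o(1) > 0$, so the order is preserved --- this is the analog of Lemma~\ref{lem:monotonicity-of-Z-process}. Second, $\hat\nu^{\otimes}(m_0)$ with $m_0 = m_* + c\,n^{-1/2} + o(n^{-1/2})$ makes $nS_{0,1}\sim\mathrm{Bin}(n,m_0)$, so by the central limit theorem $\bar L_0 \Rightarrow c + \mathcal N(0, m_*(1-m_*))$, hence the initialization of $\hat Z$ is $\mathcal N(c, m_*(1-m_*))$, stochastically increasing in $c$. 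Writing $p(c)$ for the probability that $(\hat Z_t)_t$ leaves $[-\gamma,\gamma]$ to the left (toward the disordered phase) first, monotonicity makes $p$ continuous and nonincreasing with $p(c)\to 1$ as $c\to-\infty$ and $p(c)\to 0$ as $c\to+\infty$; comparing the window $[-\gamma,\gamma]$ with $[-(1+c_*/2)\gamma,(1+c_*/2)\gamma]$, as in the proof of Lemma~\ref{lem:monotonicity-of-Z-process}, shows the limiting value of $p$ is independent of $\gamma$ up to $o_\gamma(1)$. The intermediate value theorem then produces a unique $\hat c_*$ with $p(\hat c_*) = \xi - o_\gamma(1)$, where $\xi$ from~\eqref{eq:xi} is the disordered-phase weight appearing in Corollary~\ref{cor:equilibrium-estimates-Potts}.

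Finally I would transfer the exit probabilities from $(\hat Z_t)_t$ back to $(S_{t,1})_t$. Lemma~\ref{lem:Pottsexittime} shows $\min\{\tau^+_\gamma,\tau^-_\gamma\} = O(\gamma^3 e^{O(\gamma^4)} n)$, i.e., $O(1)$ on the macroscopic time scale, so the approximation window covers the exit with probability $1-o(1)$; then, exactly as in Lemma~\ref{lem:Z-X-hitting-probability-comparison}, choosing $\gamma$ large makes the events ``$\hat Z$ has hit $\pm\gamma$'' and ``$\bar L$ has hit (or hits in the next batch) the same boundary'' agree up to $\varepsilon + o(1)$, using that the batched drift near the boundary grows linearly in $\gamma$ while the batched variance stays uniformly bounded. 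This gives $|\,\mathbb P(\tau^-_\gamma < \tau^+_\gamma) - p(\hat c_*)\,| = o_{\gamma,n}(1)$, and combined with the previous paragraph yields $\mathbb P(\tau^-_\gamma < \tau^+_\gamma) = \xi - o_{\gamma,n}(1)$ for $m_0 = m_* + \hat c_* n^{-1/2} + o(n^{-1/2})$, with $\hat c_*$ unique. I expect the first step --- the batched Gaussian approximation --- to be the main obstacle: unlike the CM dynamics, where a single step is already macroscopic and the random-graph limit theorems (Theorems~\ref{thm:giantLLT}--\ref{thm:CLT}) apply directly, here one must aggregate $\Theta(n)$ local updates into one Gaussian increment, control the conditional increment variances (which depend on the entire proportions vector, hence on Lemma~\ref{lem:nondominantcoordinates} and the variance profile), and ensure that all approximation errors accumulate to $o(\sqrt{n})$ over the whole window, even though the per-batch drift is only of order $\gamma/\sqrt{n}$ and so competes on the same scale with the fluctuations.
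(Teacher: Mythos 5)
Your proposal is correct in spirit but routes through a genuinely different technical ingredient than the paper. You set out to mirror the CM analysis by establishing an explicit discrete-time Gaussian Markov-chain approximation of the batched proportions process (analogous to Theorem~\ref{thm:1dapproxZ}), complete with a deterministic variance profile, martingale CLTs for $\Theta(n)$-step increments, and a drift Taylor expansion. The paper instead invokes a standard functional limit theorem for discrete stochastic systems: the rescaled process $\bar S_t = \sqrt{n}\,(S_{tn} - (m_*,\tfrac{1-m_*}{q-1},\dots))$ converges to an SDE, and by the first-order cancellation you correctly identified, the first coordinate alone solves $dZ_t^1 = D_{\beta_c}(Z_t^1)\,dt + A\,dB_t^1$ with constant volatility $A(q)$; one then works with the exit-probability function $f^Z$ of this one-dimensional diffusion, which is $n$-independent, continuous, monotone, and ranges over $(0,1)$, and concludes by the intermediate value theorem. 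What the SDE black box buys is that all the batching, martingale-CLT, Lindeberg, and variance-profile bookkeeping you correctly anticipate as the hard step are subsumed under a single citation, requiring only an $O(1)$-Lipschitz bound on the drift (which $g_\beta - s$ satisfies on the compact simplex) and uniformly bounded step increments. Your hands-on route would also work, and indeed your remark that the increment variance is nearly state-independent near the saddle (when other coordinates are near equiproportion) is exactly why the paper's limiting volatility is constant; you are slightly overcautious in anticipating a time-dependent variance profile. One small imprecision: your monotonicity argument speaks of an ``idle batch'' versus a ``non-idle batch,'' importing the CM activation/non-activation dichotomy that has no analog for a block of $n$ Glauber updates. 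The conclusion still holds, since coupling two copies of the linearized chain by sharing the Gaussian noise gives $\hat Z'_{t+1} - \hat Z_{t+1} = (1+c_*)(\hat Z'_t - \hat Z_t)$ with $c_* = D_\beta'(m_*)>0$, which preserves order; for the SDE this is the elementary fact that solutions of a one-dimensional SDE with Lipschitz drift and constant volatility are monotone in the initial condition. The remaining steps --- binomial CLT for the initialization, comparison of windows $[-\gamma,\gamma]$ and $[-(1+c_*/2)\gamma,(1+c_*/2)\gamma]$ to argue $\gamma$-independence of $\hat c_*$, and transfer of exit probabilities via Lemma~\ref{lem:Pottsexittime} --- match the paper's.
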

\begin{proof}
    Consider the time and space rescaled process \[\bar S_t = \sqrt{n}(S_{tn} - (m_*, \frac{1-m_*}{q-1},..., \frac{1-m_*}{q-1})).\] By standard results regarding limits of discrete stochastic dynamics as stochastic differential equations (see, e.g.,~\cite{kloeden2011numerical}), $\bar S_t$ converges to the solution of a stochastic differential equation (SDE) $Z_t$ on $\mathcal S$ with  drift and volatility functions with bounded Lipschitz constants. On a compact space, like $\mathcal S$, such convergence results only require $O(1)$-Lipschitz bounds on the drift function (which hold in our setting for $g_{\beta} - s$) and moment estimates on the step-wise increments (for which ours are uniformly bounded by $\pm \frac{1}{\sqrt{n}}$). 

    In our context, the limiting SDE we end up with for the first coordinate of the rescaled process, $\bar S_{t,1}$, is a 1-dimensional SDE $Z_t^1$. This is because the effect of the other coordinates $(\bar S_{t,2},...,\bar S_{t,q})$ cancels to first order when they are in a neighborhood of equiproportionality, as seen in the cancellation of the first order dependence in between~\eqref{eq:recall-this-later}--\eqref{eq:recall-this-later-2}. The limiting volatility is constant because the effect of corrections to the vector $(m_*, \frac{1-m_*}{q-1},...,\frac{1-m_*}{q-1})$ on the variance are vanishing. In total, we get that $\bar S_{t,1}$ converges to an SDE $Z_{t}^1$ solving 
    \begin{align*}
        dZ_{t}^1 = D_{\beta_c}(Z_t^1) dt + A dB_t^1\,,
    \end{align*}
    where $B_t^1$ is a standard Brownian motion, $A = A(q)$ is a constant, and this is 
    initialized from $Z_0^1 \sim \mathcal N(\hat c_*, d^2)$ for some variance $d^2(\beta,q)$. 

    If for each initial $\hat c_*$ we define $f(\hat c_*)$ to be the probability that $\bar S_{t,1}$ hits $-\gamma$ before $\gamma$ and we define $f^Z(\hat{c}_*)$ to be the same probability for $Z_t^1$< then  by the convergence described above, $$|f(\hat c_*) - f^Z(\hat c_*)| = o_n(1)$$ for all $\hat c_*$. 
    Furthermore, $f^Z$ is easily checked to be $n$-independent, continuous, monotone, and going to $0$ as $\hat c_* \to \gamma$ and $1$ as $\hat c_* \to -\gamma$. Therefore, there is a unique $\hat c_*$ where $f^Z$ is the desired $\xi$. Finally, to see that the choice of $\hat c_*$ is $\gamma$-independent up to $o_\gamma(1)$, note that if $\tau_{\gamma}^-<\tau_{\gamma}^+$, then with probability $1-o_\gamma(1)$, also $\tau_{2\gamma}^- <\tau_{2\gamma}^+$ (by the argument in Lemma~\ref{lem:sdecayPotts}). 
\end{proof}

\subsection{Escaping the unstable fixed point at low temperatures \texorpdfstring{$\beta>\betas$}{beta>betas}}

In the regime of $\beta>\betas$, the saddle becomes at $1/q$ and the landscape's geometry changes somewhat so that this saddle is separating all $q$ ordered phases (the disordered phase no longer being metastable). This introduces some additional complications, particularly because the other coordinates besides the first one do not drift towards equiproportionality as they do in Lemma~\ref{lem:nondominantcoordinates}. This necessitates more understanding of the full $q$-dimensional landscape. We establish the following quasi-equilibration result. 

\begin{theorem}
    \label{thm:lowtempcoupling}
        Let $q > 2$ and $\beta > \betas$.
    Suppose $\sigma_0$ is a configuration such that 
$S_{0,1} > S_{0,i} + \gamma n^{-1/2}$ for every $i=2,\dots, q$ and
for a large constant $\gamma >0$.
    Then there exists $T=O(n \log n)$ such that 
    \begin{align*}
        \|\Pr_{\sigma_0} (S(\sigma_T) \in \cdot) - \Pr_{\pi^{\ord,1}} (S(\sigma_T') \in \cdot) \|_{\TV} = o_\gamma(1)\,.
    \end{align*}
\end{theorem}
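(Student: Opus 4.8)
The plan is to follow the same two-step structure used in Theorem~\ref{thm:PottsHitLeftTarget} (escape from the saddle, then quasi-equilibrate within a phase), but with the key difference that at $\beta>\betas$ the unstable fixed point is $(\tfrac1q,\dots,\tfrac1q)$ and it is unstable in \emph{all} $q-1$ non-trivial directions simultaneously, so the 1-dimensional reduction of Lemma~\ref{lem:nondominantcoordinates} is not available. First I would set up the $q$-dimensional picture: recall from~\eqref{eq:proportions-drift} that the drift of $S_t$ is $\tfrac1n(g_\beta(S_t)-S_t)+O(n^{-2})$, and establish (extending Lemma~\ref{lem:driftanalysis}-type analysis to the full simplex) that for $\beta>\betas$ the only fixed points of $g_\beta-s$ are $(\tfrac1q,\dots,\tfrac1q)$ and the $q$ permutations of $(\mr,\tfrac{1-\mr}{q-1},\dots)$; that $\nabla(g_\beta-s)$ at $(\tfrac1q,\dots)$ is $\tfrac{\beta}{q}P - I$ restricted to the tangent space of $\mathcal S$ where $P$ projects off the all-ones direction, which (since $\beta>q$) has a single positive eigenvalue $\tfrac{\beta}{q}-1>0$ on the span of $(1,-\tfrac{1}{q-1},\dots)$ and negative eigenvalues on the orthogonal complement within the tangent space; and that the basins of the $q$ ordered fixed points partition $\mathcal S$ up to a measure-zero separatrix. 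This is the ``refined $q$-dimensional drift landscape'' mentioned in the proof outline.

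Next I would carry out the escape step. Assuming $S_{0,1}>S_{0,i}+\gamma n^{-1/2}$ for all $i\ge2$, permute so coordinate $1$ is dominant; the initialization lies in the basin of $\Omega^{\ord,1}$ with the gap $S_{t,1}-\max_{i\ge2}S_{t,i}$ playing the role of the distance-from-saddle coordinate. I would track the process $M_t := n(S_{t,1}-\tfrac1q)$ and, by Taylor-expanding $g_{\beta,1}$ about $(\tfrac1q,\dots,\tfrac1q)$ and using that the sub-dominant coordinates stay within $\tilde O(\sqrt n)$ of one another for $O(n\log n)$ steps (the analogue of the a priori good-set estimate, proved with the martingale concentration of Lemma~\ref{lem:mtghittingtime} applied to the differences $S_{t,i}-S_{t,j}$ for $i,j\ge2$ — here they do \emph{not} contract to zero but their pairwise gaps grow only diffusively), obtain a positive-drift estimate $\E[M_{t+1}-M_t\mid\mathcal F_t]\ge \tfrac{c_*}{n}M_t + O(n^{-1})$ whenever $M_t>0$ and the good-set holds, with $c_* = \tfrac{\beta}{q}-1>0$. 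A dyadic/geometric-growth argument identical in spirit to Lemma~\ref{lem:sdecayPotts} (iterating over windows $M_t\ge (1+\tfrac{c_*}{16})^i\gamma\sqrt n$, each taking $O(n)$ steps, $O(\log n)$ windows in total) then shows that with probability $1-e^{-\Omega(\gamma^2)}$, in $O(n\log n)$ steps $S_{t,1}\ge \tfrac1q + s$ for a constant $s>0$ while the subdominant coordinates remain within $O(\sqrt n\log n)$ of each other.

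Then I would carry out the quasi-equilibration step: once $S_{t,1}\ge\tfrac1q+s$ with subdominant coordinates comparable, the process is well inside the basin of $(\mr,\tfrac{1-\mr}{q-1},\dots)$, and I would invoke the analogues of Lemmas~\ref{lem:moveforwardright}--\ref{lem:supercriticalcouple} (drift towards $\mr$ once macroscopically away from $\tfrac1q$; then the contraction Lemma~\ref{lem:converge2mr} within $\rho_1$ of the ordered fixed point; then the coupling Lemma~\ref{lem:supercriticalcouple}, which as noted only uses variance/drift estimates near a \emph{stable} fixed point and applies verbatim). Since $\pi^{\ord,1}$ puts $S$ within $O(\sqrt n)$ of $(\mr,\tfrac{1-\mr}{q-1},\dots)$ by Corollary~\ref{cor:equilibrium-estimates-Potts}, the same coupling applies to $\sigma_0'\sim\pi^{\ord,1}$, giving $\|\Pr_{\sigma_0}(S(\sigma_T)\in\cdot)-\Pr_{\pi^{\ord,1}}(S(\sigma_T')\in\cdot)\|_\TV = o_\gamma(1)$ for $T=O(n\log n)$.

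The main obstacle will be the escape step, specifically controlling the sub-dominant coordinates: unlike the $\beta<\betas$ regime where Lemma~\ref{lem:nondominantcoordinates} forces them together, here I only know they separate diffusively, so I must show that the dominant gap $S_{t,1}-\max_{i\ge2}S_{t,i}$ grows under the positive eigenvalue $c_*$ \emph{faster} than any sub-dominant coordinate can catch up — i.e.\ rule out that a second coordinate overtakes and the process escapes into the wrong basin. This requires a careful comparison using the full Jacobian: the difference $S_{t,1}-S_{t,j}$ has drift $\approx \tfrac{c_*}{n}(S_{t,1}-S_{t,j})$ to leading order for \emph{every} $j\ge2$ (since $\nabla(g_{\beta,1}-g_{\beta,j})$ at equiproportionality also has the relevant positive eigenvalue), so the initial ordering $S_{0,1}>S_{0,i}+\gamma n^{-1/2}$ is preserved with probability $1-e^{-\Omega(\gamma^2)}$ by a union of $q-1$ applications of the supermartingale escape estimate of Lemma~\ref{lem:mtghittingtime}, each showing the gap $n(S_{t,1}-S_{t,j})$ cannot drop below, say, $\tfrac\gamma2\sqrt n$ before the overall escape completes. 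Once that ordering-preservation is in hand the rest reduces to the already-developed machinery.
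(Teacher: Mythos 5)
Your escape step is correct and matches the paper's Lemma~\ref{lem:movefaraway}: tracking each gap $S_{t,1}-S_{t,j}$, $j\ge 2$, and showing dyadic exponential growth using the one-step drift estimate, which near equiproportionality has coefficient $\beta/q-1>0$ for every $j$. But there is a genuine error in your characterization of the linearization at $(\tfrac1q,\dots,\tfrac1q)$ that ultimately invalidates your quasi-equilibration step. The Jacobian of $g_\beta$ at equiproportionality is $\tfrac{\beta}{q}\bigl(I-\tfrac1q J\bigr)$ ($J$ the all-ones matrix), and on the tangent space $\{\sum u_j=0\}$ the $J$-term vanishes, so $\nabla(g_\beta-s)$ restricted there is $\bigl(\tfrac{\beta}{q}-1\bigr)I$. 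For $\beta>\betas=q$ this means \emph{all} $q-1$ tangential eigenvalues equal $\tfrac{\beta}{q}-1>0$; there is no stable complement. Your claim of ``a single positive eigenvalue on the span of $(1,-\tfrac{1}{q-1},\dots)$ and negative eigenvalues on the orthogonal complement'' is what holds at the saddle $\bigl(m_*,\tfrac{1-m_*}{q-1},\dots\bigr)$ when $\beta\in(\betau,\betas)$, not here.

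The downstream consequence is that the sub-dominant gaps $S_{t,i}-S_{t,j}$ ($i,j\ge 2$) do \emph{not} merely drift diffusively: they have the same multiplicative instability $\tfrac{\beta/q-1}{n}$ and can reach macroscopic size in the same $O(n\log n)$ horizon as the escape. So after the escape step you only know $S_{T,1}\ge S_{T,k}+\delta$ for all $k\ge 2$; the vector $(S_{T,2},\dots,S_{T,q})$ can be far from equiproportional. Lemma~\ref{lem:moveforwardright} requires $\|S_0-(m,\tfrac{1-m}{q-1},\dots)\|_1=O(n^{-1/2}\log n)$, so you cannot invoke it. This is precisely why the paper inserts a separate ingredient here, Lemma~\ref{lem:ODE-gets-you-to-mr}: once one coordinate dominates by $\delta=\Omega(1)$, the proportions chain is approximated (via a Doob--Gronwall stochastic-approximation bound) by the gradient-flow ODE for $F_\beta$, whose fixed points are classified by Lemma~\ref{lem:drift-fixpoint-analysis-low-temp}, and the basin argument takes the ODE to $(\mr,\tfrac{1-\mr}{q-1},\dots)$ in $O(1)$ macroscopic time regardless of the sub-dominant coordinates' configuration. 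Only then do Lemmas~\ref{lem:converge2mr}--\ref{lem:supercriticalcouple} apply. You did mention the basin-decomposition observation in your opening paragraph, so the fix is to actually use it via an Euler-discretization/Gronwall argument (or cite an analogue of Lemma~\ref{lem:ODE-gets-you-to-mr}) rather than trying to reach Lemma~\ref{lem:moveforwardright}.
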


The proof of Theorem~\ref{thm:lowtempcoupling} breaks up into several parts. We begin with some preliminary lemmas about the drift function's behavior, and classification of its fixed points and their attractive/repulsive directions. Using that, in Lemma~\ref{lem:ODE-gets-you-to-mr}, we show that as soon as one coordinate has a macroscopically larger fraction than the other coordinates, (even in the absense of equiproportionality of the other coordinates), the Glauber dynamics quickly quasiequilibrates to the corresponding ordered phase. Lemma~\ref{lem:movefaraway} shows that if the dominant coordinate has $\omega(n^{-1/2})$ larger proportion than all the others, this gets boosted to a macroscopic bias. 
Finally, we use anti-concentration of the proportions vector to argue that if the initialization is the fully uniform-at-random initialization, i.e., $m_0 = \frac{1}{q}$, after $O(n)$ steps,  with high probability, one coordinate is $\omega(n^{-1/2})$ larger than all the others. 

\subsubsection{Fixed point analysis of the \texorpdfstring{$q$}{q}-dimensional drift function}
We first provide a lemma that characterizes the 1-dimensional drift function $D_\beta$ in this regime.

\begin{lemma}
    \label{lem:lowtempdrift}
    Suppose $q>2$ and $\beta > \betas$. 
    Then $D_\beta(\frac{1}{q}) = 0$ and there is a unique root for $D_\beta(x) = 0$ in $(\frac{1}{q}, 1)$, denoted $\mr$.
    Moreover, $D_\beta'(1/q)>0$ and $D_\beta'(\mr) < 0$.
\end{lemma}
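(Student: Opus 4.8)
The plan is to mirror the structure of the proof of Lemma~\ref{lem:driftanalysis}, using the explicit formulas \eqref{eq:driftD2} for $D_\beta$ and \eqref{eq:driftderivative} for $D_\beta'$, but now exploiting $\beta > \betas = q$ rather than $\beta < q$. First I would note that $D_\beta(1/q) = 0$ follows from a direct substitution into \eqref{eq:driftD2} exactly as before, since $\frac{1-q\cdot(1/q)}{q-1} = 0$ and $\frac{1}{1+(q-1)} = \frac1q$. Likewise $D_\beta(1) = \frac{1}{1+(q-1)e^{-\beta}} - \frac1q < 0$ since the first term is strictly less than $\frac1q$ (as $(q-1)e^{-\beta} < q-1$).

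Next I would compute the sign of $D_\beta'$ at the endpoint $x = 1/q$: from \eqref{eq:driftderivative},
\[
D_\beta'(1/q) = \frac{q\beta}{(1+(q-1))^2} - 1 = \frac{\beta}{q} - 1,
\]
which is strictly positive precisely because $\beta > q = \betas$. Hence for small $\epsilon > 0$ we have $D_\beta(1/q + \epsilon) > 0$. Combined with $D_\beta(1) < 0$ and continuity of $D_\beta$, the intermediate value theorem produces at least one root $\mr \in (1/q, 1)$ with $D_\beta'(\mr) \le 0$; to upgrade this to $D_\beta'(\mr) < 0$ I would invoke the uniqueness argument below, which forces the root to be a transversal crossing (if $D_\beta'(\mr) = 0$ there, $\mr$ would be a double root, contradicting that there are at most two roots total and $1/q$ already accounts for one with $D_\beta'(1/q) \ne 0$).

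For uniqueness, I would reuse the reparametrization from the end of the proof of Lemma~\ref{lem:driftanalysis}: the roots of $D_\beta(x) = 0$ coincide with the solutions of $\frac{1}{1-\psi(x)} = x$ where $\psi(x) = (1-q)\exp\big(\beta \cdot \tfrac{1-qx}{q-1}\big)$. Since $\psi$ is strictly monotone in $x$ (the exponent is linear in $x$ with nonzero slope and $1-q \ne 0$), the function $x \mapsto \frac{1}{1-\psi(x)} - x$ can be shown to have at most two zeros — indeed $\frac{1}{1-\psi(x)}$ is monotone and one checks its second derivative has constant sign, so it crosses the line $y = x$ at most twice. As $x = 1/q$ is already one such zero, there is at most one more in $(1/q, 1)$, which is the $\mr$ just produced. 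The main obstacle here is the small bookkeeping needed to conclude $D_\beta'(\mr) < 0$ strictly rather than $\le 0$: one must rule out a tangency. This follows because a tangency at $\mr$ would make it a root of multiplicity $\ge 2$, so together with the simple root at $1/q$ we would have $\ge 3$ roots counted with multiplicity for the analytic function $D_\beta$ on $(0,1)$, while the $\psi$-reparametrization bounds the number of roots (with multiplicity) by two. This contradiction gives $D_\beta'(\mr) < 0$ and completes the proof.
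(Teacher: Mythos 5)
Your opening computations are correct and match the paper exactly: $D_\beta(1/q)=0$, $D_\beta(1)<0$, $D_\beta'(1/q)=\beta/q-1>0$, and the intermediate value theorem produces a root $\mr\in(1/q,1)$. The problem is in your uniqueness argument, which rests on the claim that $g(x):=\frac{1}{1-\psi(x)}=\bigl(1+(q-1)e^{\beta(1-qx)/(q-1)}\bigr)^{-1}$ has a second derivative of constant sign and hence crosses the line $y=x$ at most twice. That claim is false: a direct computation gives
\[
g''(x) \;=\; \frac{\beta q\, h'(x)}{q-1}\cdot\frac{1-h(x)}{(1+h(x))^3}, \qquad h(x):=(q-1)e^{\beta(1-qx)/(q-1)},
\]
so $\mathrm{sgn}\,g''(x)=\mathrm{sgn}\bigl(h(x)-1\bigr)$, and $h(1/q)=q-1>1$ while $h(1)=(q-1)e^{-\beta}<1$ for $\beta>\betas>\log(q-1)$. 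Thus $g$ is a sigmoid with an inflection point inside $(1/q,1)$, not a convex or concave function, and a sigmoid can cross a line three times. In fact any bound ``at most two roots counted with multiplicity'' would contradict the paper's own Lemma~\ref{lem:driftanalysis}, which for $\beta\in(\betau,\betas)$ exhibits \emph{three} distinct roots $1/q<m_*<\mr$. This breaks both your uniqueness argument and your derivation of $D_\beta'(\mr)<0$, since the latter leans on the same multiplicity bound.

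The paper's proof takes a different route that sidesteps this. It bounds the number of zeros of $D_\beta'$ (not $D_\beta$) by two --- the equation $D_\beta'(x)=0$ reduces to the quadratic $q\beta v=(1+(q-1)v)^2$ in $v=e^{\beta(1-qx)/(q-1)}$, and $v$ is a strictly monotone function of $x$ --- and then performs a case analysis on where the second critical point $x^{++}$ falls relative to $\mr$, combined with a direct verification (using both \eqref{eq:driftD2} and \eqref{eq:driftderivative} simultaneously) that $D_\beta(x)=0$ and $D_\beta'(x)=0$ cannot hold together on $(1/q,1]$. If you want to repair your approach, that is the argument to import: work with critical points of $D_\beta$ rather than trying to establish global convexity/concavity of $g$.
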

\begin{proof}
    We recall the following facts regarding $D_\beta$ from the proof of Lemma~\ref{lem:driftanalysis}, 
    \[
    D_\beta(1/q)=0, \qquad D_\beta(1)<0 \qquad \text{and} \qquad D_\beta'(1/q)=-1+\frac{\beta}{q}.
    \]
    Since $\beta>\betas = q$, we have $D_\beta'(1/q)>0$. 
    Thus, for small enough $\epsilon>0$, $D_\beta(1/q+\epsilon)>0$, and it follows from continuity 
    that there exists a zero of $D_\beta$ between $1/q+\epsilon$ and $1$.
    Let $\mr$ be the smallest zero that is greater than $1/q$. 
    Using \eqref{eq:driftD2} and \eqref{eq:driftderivative}, one can verify that if $D_\beta(x)=0$ then $D_\beta'(x)\neq 0$ for $x\in(1/q,1]$ so $D_\beta'(\mr)<0$.
    By continuity of $D_\beta'$, there exists a point $x^+\in(1/q, \mr)$ such that $D_\beta'(x^+)=0$ and $D_\beta(x^+)>0$.

    Now, by~\eqref{eq:driftderivative}, $D_\beta'(x)=0$ has at most two zeros.
    Aside from $x^+$, let $x^{++}$ denote the other zero (if it exists).
    If $x^{++}$ does not exist or $x^{++}\le \mr$, then $D_\beta'(x)<0$ for all $x>\mr$ so $\mr$ is the unique zero in $(1/q,1)$.
    Now assume $x^{++}>\mr$.
    By continuity of $D_\beta'$, $D_\beta'(x)<0$ for all $x\in (\mr,x^{++})$,
    so by integration $D_\beta(x)<0$ for $x\in(\mr,x^{++}]$.
    Moreover, note that $D_\beta$ is a monotone function in $(x^{++},1]$ since $D_\beta'$ does not longer change sign in this interval.
    Hence, $D_\beta$ has no zero on $[x^{++},1]$ and $\mr$ is the unique root in $(1/q,1]$.
\end{proof}

Unlike the $\beta \in (\betau,\betas)$ regime, though, we need a more refined understanding of the full $q$-dimensional landscape, establishing that the system driven by $d_{\beta}$ has its only stable fixed points at $(\mr, \frac{1-\mr}{q-1},...,\frac{1-\mr}{q-1})$. This will be used to show that as long as the proportions vector is $\Omega(1)$ away from a fixed point of the system, it rapidly reaches a small neighborhood of a fixed point of the form $(\mr, \frac{1-\mr}{q-1},...,\frac{1-\mr}{q-1})$. The following classifies the fixed points of the dynamical system $ds_t = d_\beta(s_t)dt$.

\begin{lemma}\label{lem:drift-fixpoint-analysis-low-temp}
    The set of solutions of $d_\beta(s) = 0$, or equivalently $g_\beta(s) = s$ are classified (up to permutations) as the following. If $g_\beta(s) = s$, then for some $k \in \{1,...,q\}$, the vector $s$ must be $$(\underbrace{a,...,a}_{k}, \underbrace{\tfrac{1-k a}{q-k},...,\tfrac{1-ka}{q-k}}_{q-k})$$ where $a(\beta,k)\ge 1/q$.  

    Moreover, when $\beta>\betas$, the only stable solution is the one where $k=1$, and $a = \mr$, and the other fixed points are specifically unstable in the direction of increasing the first coordinate and decreasing another. 
\end{lemma}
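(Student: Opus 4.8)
## Proof proposal for Lemma~\ref{lem:drift-fixpoint-analysis-low-temp}

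\textbf{Proof proposal.} The statement splits into a classification part and a stability part, which I would treat separately.

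\emph{Classification.} I would start from a fixed point $g_\beta(s)=s$. Since $g_{\beta,k}(s)>0$ for every $k$, all coordinates are strictly positive, and writing $s_k = e^{\beta s_k}/Z$ with $Z=\sum_j e^{\beta s_j}>0$ gives $\psi(s_k):=s_k e^{-\beta s_k} = 1/Z$ for every $k$. Now $\psi'(x)=e^{-\beta x}(1-\beta x)$, so $\psi$ is strictly increasing on $(0,1/\beta)$ and strictly decreasing on $(1/\beta,\infty)$; in particular each level set of $\psi$ on $(0,\infty)$ has at most two elements. Hence the coordinates of $s$ take at most two distinct values, say $a$ with multiplicity $k$ and $b\le a$ with multiplicity $q-k$; since $\sum_j s_j=1$ this forces $b=(1-ka)/(q-k)$ (or $a=1/q$ if $k=q$), and since $a=\max_j s_j\ge \tfrac1q\sum_j s_j=\tfrac1q$ we get $a\ge 1/q$. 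This is exactly the claimed form, with $a=a(\beta,k)$ any value $\ge 1/q$ solving the resulting consistency equation.

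\emph{Stability.} I would linearize the flow $\dot s=g_\beta(s)-s$ on the tangent space $T=\{v\in\mathbb{R}^q:\mathbf 1^\top v=0\}$ of $\mathcal S$. Using $\partial_{s_l}g_{\beta,k}=\beta(g_{\beta,k}\delta_{kl}-g_{\beta,k}g_{\beta,l})$ and $g_\beta(s)=s$ at a fixed point, the Jacobian is $J=\beta(\mathrm{diag}(s)-ss^\top)-I$, and one checks $JT\subseteq T$. Two families of eigenvectors are visible: (i) for any pair $i\neq j$ with $s_i=s_j$, the vector $e_i-e_j\in T$ satisfies $s^\top(e_i-e_j)=0$ and $\mathrm{diag}(s)(e_i-e_j)=s_i(e_i-e_j)$, so $J(e_i-e_j)=(\beta s_i-1)(e_i-e_j)$; (ii) at a fixed point of the form $(a,b,\dots,b)$, the line $\{(x,\tfrac{1-x}{q-1},\dots,\tfrac{1-x}{q-1}):x\in[0,1]\}$ is flow-invariant (since $g_\beta$ preserves equality of coordinates $2,\dots,q$), so its tangent $w=(1,-\tfrac1{q-1},\dots,-\tfrac1{q-1})$ is an eigenvector whose eigenvalue, read off the first coordinate of $Jw=\tfrac{d}{dx}\big(g_\beta(s(x))-s(x)\big)$, equals $D_\beta'(a)$. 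At $s=(\mr,\tfrac{1-\mr}{q-1},\dots,\tfrac{1-\mr}{q-1})$ families (i) (with $i,j\ge 2$) and (ii) give a full eigenbasis of $T$: one eigenvalue $D_\beta'(\mr)$, and the eigenvalue $\tfrac{\beta(1-\mr)}{q-1}-1$ with multiplicity $q-2$.

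\emph{Conclusion.} I would show all eigenvalues at $(\mr,\dots)$ are negative: $D_\beta'(\mr)<0$ is Lemma~\ref{lem:lowtempdrift}, and substituting the fixed-point relation $e^{\beta(1-q\mr)/(q-1)}=\tfrac{1-\mr}{(q-1)\mr}$ into \eqref{eq:driftderivative} yields $D_\beta'(\mr)=\tfrac{q\beta\mr(1-\mr)}{q-1}-1$, so $D_\beta'(\mr)<0$ together with $\mr>1/q$ gives $\tfrac{\beta(1-\mr)}{q-1}=\tfrac{1}{q\mr}\cdot\tfrac{q\beta\mr(1-\mr)}{q-1}<\tfrac1{q\mr}<1$; hence $(\mr,\dots)$ is linearly (asymptotically) stable. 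Conversely, every fixed point other than the permutations of $(\mr,\tfrac{1-\mr}{q-1},\dots)$ has at least two coordinates equal to a common value $c\ge 1/q$: this holds for the disordered point $(\tfrac1q,\dots,\tfrac1q)$ and for any type-$k$ point with $2\le k\le q$ (the $a$-block has $\ge 2$ entries and $a\ge 1/q$), while a type-$1$ point $(a,b,\dots,b)$ with $a>1/q$ would make $a$ a root of $D_\beta$ in $(1/q,1)$, forcing $a=\mr$ by Lemma~\ref{lem:lowtempdrift}. Permuting so that coordinates $1$ and $2$ both equal $c$, the eigenvector $e_1-e_2$ then has eigenvalue $\beta c-1\ge \tfrac{\beta}{q}-1>0$ since $\beta>\betas=q$, which is the asserted unstable direction (increasing the first coordinate while decreasing another). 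The only point needing genuine care is the bookkeeping: verifying that families (i)–(ii) really exhaust $T$ at $(\mr,\dots)$, and invoking Lemma~\ref{lem:lowtempdrift} to rule out stray type-$1$ fixed points, so that ``every non-$\mr$ fixed point is unstable'' has no gaps.
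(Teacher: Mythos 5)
Your proof is correct and, for the classification, essentially identical to the paper's: you both reduce to the observation that the coordinates of a fixed point all satisfy a single one-variable equation ($e^{\beta x}=Cx$ in the paper, $x e^{-\beta x}=1/Z$ in yours) whose level sets have at most two elements because the relevant function is unimodal. For the stability part your route is genuinely more thorough. The paper only computes one directional derivative, $\langle \nabla d_{\beta,1}(\bar s),(1,-1,0,\dots,0)\rangle = \beta\bar a - 1>0$, at each $k\ge 2$ fixed point, which establishes the claimed instability but says nothing about the fixed point $(\mr,\tfrac{1-\mr}{q-1},\dots,\tfrac{1-\mr}{q-1})$ itself (its stability is implicitly relied on later, e.g.\ in the gradient-flow argument of Lemma~\ref{lem:ODE-gets-you-to-mr}, but never verified). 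You instead write down the full Jacobian $J=\beta(\mathrm{diag}(s)-ss^\top)-I$ on the tangent space $T$, exhibit a complete eigenbasis ($e_i-e_j$ for pairs of equal coordinates, plus the ``radial'' direction $w$ whose eigenvalue is $D_\beta'(a)$), and check all $q-1$ eigenvalues are negative at the ordered fixed point — including the step of turning $D_\beta'(\mr)<0$ into $\tfrac{\beta(1-\mr)}{q-1}<\tfrac{1}{q\mr}<1$ via the fixed-point relation, which is a nice closing of the loop. Your instability computation ($\beta c-1>0$ with $c\ge 1/q$, $\beta>\betas=q$) agrees exactly with the paper's. The one place you should tighten the wording: the paper writes ``$\bar a>1/q$'' for the $k\ge2$ case, but the disordered point has $\bar a=1/q$; your phrasing ``$c\ge 1/q$'' together with the strict inequality $\beta>q$ correctly covers this edge case, so you have it right, just be aware that that is the degenerate instance.
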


\begin{proof}
    We first reason that all solutions are of the form of permutations of \[(a,...,a, \frac{1-ka}{q-k},...,\frac{1-ka}{q-k}).\] In order to see this, we suppose by way of contradiction there exists a solution $\bar s$ having three distinct values $a,b,c$ appearing in its proportions vector (wlog as $\bar s_1=a, \bar s_2 = b$ and $\bar s_3 =c$; let $\bar Z = \sum_{j} e^{\beta \bar s_j}$). Then, $e^{\beta a}= a \bar Z$ and similarly for $b$ and $c$, and $a\ne b\ne c$. But this is impossible because for any $C,\beta>0$, the equation $e^{\beta x} = C x$ has at most two solutions on $[0,1]$. When $k=1$, the only possible such solution is the one which has $D_{\beta}=0$ (because the other coordinates are equal, which is where $D_\beta = d_\beta$), and we can apply Lemma~\ref{lem:lowtempdrift} to read off that in that case $a=\mr$. 

    Now fix any vector $\bar s$ of the form $(a,...,a, \frac{1-ka}{q-k},...,\frac{1-ka}{q-k})$ for $k\ge 2$ with $g_\beta(\bar s) = \bar s$. We wish to show that 
    \begin{align*}
        \langle \nabla d_{\beta,1}(\bar s), (1,-1,0,...,0)\rangle > 0
    \end{align*}
    as that would say that such a fixed point is unstable with a drift towards the first coordinate increasing if we perturb in the $(1,-1,0,...,0)$ direction. To see this, differentiating $g_{\beta,1}$ and plugging in $g_{\beta,1}(\bar s) = \bar s$ and $\bar s_1 = \bar s_2 = \bar a$, 
    \begin{align*}
        \frac{d}{d x_1} d_{\beta,1} (\bar s) - \frac{d}{d x_2} d_{\beta,1}(\bar s) & = \beta (\bar s_1 - \bar s_1^2) - 1  + \beta \bar s_1 \bar s_2   = \beta \bar a - 1\,.
    \end{align*}
    Since $\bar a>1/q$ and $\beta>\betas = q$, this is strictly positive as claimed. 
\end{proof}

\subsubsection{Getting to the stable fixed point once one coordinate dominates}

Using the above fixed point analysis, we can show that initialized macroscopically away from the fixed points of the drift function, the dynamics rapidly equilibrates to the right phase. 

\begin{lemma}\label{lem:ODE-gets-you-to-mr}
    Suppose $S_0$ is such that $S_{0,1}\ge S_{0,i} + \delta$ for all $i\ge 2$ for some $\delta>0$. Then, for every $\rho>0$, there exists a time $T = O(n)$ such that $\|S_T - (\mr,\frac{1-\mr}{q-1},...,\frac{1-\mr}{q-1})\|_1 \le \rho$ with probability $1-o(1)$.
\end{lemma}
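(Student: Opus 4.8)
The strategy is to approximate the rescaled proportions chain $(S_{tn})_{t\ge 0}$ by the deterministic gradient flow $\dot s_t = d_\beta(s_t)$ and show the latter converges from $S_0$ to the fixed point $(\mr,\frac{1-\mr}{q-1},\dots,\frac{1-\mr}{q-1})$, then transfer this to the chain via martingale concentration. First I would set up the following tracking estimate: using~\eqref{eq:proportions-drift}, the drift of $S_{t+1}-S_t$ is $\frac1n d_\beta(S_t) + O(n^{-2})$ and the per-step increments are $O(1/n)$ with $O(1/n^2)$ conditional variance; a standard Azuma/Doob argument (of the type used repeatedly above and borrowed from~\cite{CDLLPS-mean-field-Potts-Glauber}) gives that over $T = O(n)$ steps, $\sup_{k\le T}\|S_k - s_{k/n}\|_1 = O(n^{-1/2}\log n)$ with probability $1-o(1)$, where $(s_u)_{u\ge 0}$ is the solution of the ODE $\dot s = d_\beta(s)$ started at $S_0$ (this uses that $d_\beta$ is Lipschitz on the compact simplex $\mathcal S$ and Gr\"onwall). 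Thus it suffices to prove the purely deterministic claim: the flow of $d_\beta$ started at any $s_0$ with $s_{0,1}\ge s_{0,i}+\delta$ for all $i\ge 2$ enters the $\rho$-ball (in $\ell_1$) around $\mathbf{m}_* := (\mr,\frac{1-\mr}{q-1},\dots,\frac{1-\mr}{q-1})$ in time $O(1)$.

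For the ODE claim, the key structural input is Lemma~\ref{lem:drift-fixpoint-analysis-low-temp}, which enumerates the fixed points of $d_\beta$ (up to permutation, the vectors with $k$ equal ``large'' coordinates $a\ge 1/q$ and $q-k$ equal ``small'' ones) and shows that for $\beta>\betas$ the only stable one is $\mathbf{m}_*$ (the $k=1$ fixed point); all others are unstable in the direction of increasing the largest coordinate and decreasing another. The plan is to show that the open region $\mathcal R := \{s\in\mathcal S : s_1 > s_i \text{ for all } i\ge 2\}$ is forward-invariant under the flow and contains in its closure only the fixed points $\mathbf{m}_*$ and the unstable ones with $s_1$ strictly the largest; moreover, the flow started in $\mathcal R$ cannot converge to any of the unstable fixed points because they are unstable precisely in a direction tangent to (or into) $\mathcal R$. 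Forward-invariance of $\mathcal R$ follows because on its relative boundary $\{s_1 = s_j\}$ one computes $\frac{d}{dt}(s_{t,1}-s_{t,j}) = g_{\beta,1}(s_t) - g_{\beta,j}(s_t) = 0$ when $s_{t,1}=s_{t,j}$ (both coordinates then have the same exponential weight), and the second-order term pushes $s_1-s_j$ back up when $s_1$ started strictly larger — more carefully, one argues $s_{t,1}-s_{t,j}$ is nondecreasing (indeed strictly increasing until a fixed point) along the flow in $\mathcal R$, which both keeps the flow in $\mathcal R$ and rules out convergence to the non-maximal fixed points. To actually conclude convergence to $\mathbf{m}_*$, I would invoke a Lyapunov/LaSalle argument: $d_\beta$ is a gradient (up to a positive diagonal change of metric, this is the standard mean-field free-energy functional whose critical points are the Gibbs-consistency solutions), so every trajectory converges to the set of fixed points; combined with the monotonicity of $s_1 - s_j$ and the instability analysis, the only possible limit in $\overline{\mathcal R}$ is $\mathbf{m}_*$. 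A compactness argument then upgrades this to a uniform time bound: the set of initial conditions $\{s_0 : s_{0,1}\ge s_{0,i}+\delta\}$ is compact, the hitting time of the $\rho$-ball is finite and upper semicontinuous (actually continuous by continuous dependence on initial data away from the target ball's boundary) on it, hence bounded by some $T_0 = T_0(\delta,\rho,\beta,q)$, giving $T = T_0 n = O(n)$.

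\textbf{Main obstacle.} The delicate point is the deterministic ODE analysis near the unstable fixed points with $k\ge 2$ large coordinates: a priori a trajectory in $\mathcal R$ could spend a long (though finite) time shadowing an unstable fixed point before the instability kicks it away, and one needs to ensure (i) it genuinely escapes rather than limiting onto it, and (ii) the escape happens in time bounded uniformly over the compact set of admissible initializations. Item (i) is handled by the monotonicity of $s_1 - s_j$ — this quantity is bounded below by $\delta>0$ along the whole trajectory and strictly increasing except at fixed points, and at a $k\ge 2$ fixed point the two largest coordinates are equal, contradicting $s_1-s_j\ge\delta$; so no such fixed point can be a limit. Item (ii) then follows from the LaSalle convergence (trajectory converges to the fixed-point set, which inside $\{s_1-s_j\ge\delta\}$ is just $\{\mathbf{m}_*\}$ up to the choice of dominant coordinate) together with the compactness/upper-semicontinuity argument above. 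A secondary technical nuisance is making the martingale concentration step fully rigorous over $T = \Theta(n)$ steps simultaneously with the ODE shadowing; this is routine given Lemmas~\ref{lem:mtghittingtime} and the increment bounds, and is exactly the ``treat $n$ steps like one step of a continuous process'' philosophy already invoked for $\beta\in(\betau,\betas)$, so I do not expect genuine difficulty there, only bookkeeping.
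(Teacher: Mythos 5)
Your overall route --- approximate the rescaled proportions chain by the Euler discretization of the gradient flow $\dot s = d_\beta(s)$, track it via Lipschitz continuity, Gr\"onwall, and Azuma over $O(n)$ steps, argue the flow converges to $\mathbf m_* := (\mr,\frac{1-\mr}{q-1},\dots,\frac{1-\mr}{q-1})$ from every starting point in $K_\delta := \{s : s_1 \ge s_i + \delta\}$, and invoke compactness of $K_\delta$ for a uniform time bound --- is the same as the paper's. The gap is in your deterministic basin-of-attraction step, where you are actually more explicit than the paper but the added detail is wrong.

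Specifically, the claim that $s_{t,1}-s_{t,j}$ is ``nondecreasing (indeed strictly increasing until a fixed point) along the flow in $\mathcal R$'' and therefore ``bounded below by $\delta>0$ along the whole trajectory'' is false. Writing
\[
\frac{d}{dt}(s_1-s_j) \;=\; \frac{e^{\beta s_1}-e^{\beta s_j}}{\sum_k e^{\beta s_k}} - (s_1-s_j)\,,
\]
the linearization of this quantity at $\mathbf m_*$ in the $e_1-e_j$ direction is precisely the negative eigenvalue that makes $\mathbf m_*$ a sink: if $s_1-s_j$ overshoots its equilibrium value $\mr-\frac{1-\mr}{q-1}$, it strictly decreases. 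In particular, taking $\delta > \mr-\frac{1-\mr}{q-1}$ gives an initial condition from which the gap \emph{must} eventually fall below $\delta$, so your proposed invariant is not one. What actually works (and is what the paper leans on via Lemma~\ref{lem:drift-fixpoint-analysis-low-temp}) is: (a) the hyperplanes $\{s_1=s_j\}$ are flow-invariant by symmetry of $g_\beta$, so by uniqueness of ODE solutions a trajectory with $s_{0,1}>s_{0,j}$ keeps $s_{t,1}>s_{t,j}$ (but with no uniform lower bound); and (b) any $k\ge 2$ boundary fixed point $p$ cannot attract an orbit from the open region $\mathcal R$, because the unstable directions $e_i - e_j$ ($i,j\le k$) force the local stable manifold of $p$ --- by uniqueness and the swap symmetry of the flow --- to lie inside the invariant set $\{s_1=\cdots=s_k\}$, which is disjoint from $\mathcal R$. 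With that, the gradient-flow LaSalle principle gives convergence to $\mathbf m_*$ from all of $K_\delta$, and your compactness step then does yield the uniform $T$. You should replace the monotonicity claim with this stable-manifold/instability-direction argument.
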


\begin{proof}
We argue that away from the fixed points of the drift function $d_{\beta}(s) = g_\beta(s) -s$, the evolution of the proportions vector is well-approximated by the deterministic process defined by
\begin{align}\label{eq:deterministic-ODE-low-temp}
    \mathbf{S}_t = \mathbf{S}_0  + \frac{1}{n} \sum_{s\le t} d_\beta(\mathbf{S}_s)\,.
\end{align}
By convergence of Euler discretization for ODEs (see e.g.,~\cite{griffiths2010numerical}), since $d_\beta$ has bounded Lipschitz constant, as $n \to\infty$, $\mathbf{S}_{tn}$ converges in $C^\infty[0,T]$ to the solution to the ODE system $d \mathbf{S}_t = d_\beta(\mathbf{S}_t)dt$ initialized from $\mathbf{S}_0$.  This latter process is easily checked to be a gradient dynamical system, i.e., the gradient flow for $$F_\beta(\mathbf{S}) =  - \frac{1}{\beta} \log \sum_{i\in \{1,...,q\}} e^{ \beta \mathbf{S}_i}  + \frac{1}{2}\|\mathbf{S}\|_2^2\,;$$
therefore it has no closed orbits, and by the bounded Lipschitz constant of $d_\beta$, converges exponentially fast to the fixed point in whose basin it is initialized. Moreover, by Lemma~\ref{lem:drift-fixpoint-analysis-low-temp}, if initialized with $\mathbf{S}_{0,1} \ge \mathbf{S}_{0,i}+ \delta$ for all $i = 2,...,q$ for some $\delta>0$, then it is in the basin of attraction of the stable fixed point $(\mr, \frac{1-\mr}{q-1},...,\frac{1-\mr}{q-1})$. In particular, for every $\delta>0$, and $\rho>0$, there is a $T = O(1)$ such that under the assumptions on $\mathbf{S}_0$ of the lemma, the system of~\eqref{eq:deterministic-ODE-low-temp} attains 
\begin{align}\label{eq:ODE-gets-to-stable-fixpoint}
    \|\mathbf{S}_{Tn} - (\mr,\tfrac{1-\mr}{q-1},...,\tfrac{1-\mr}{q-1})\|_1\le \rho\,.
\end{align}
It remains to show that for linear times $t=O(n)$, we have  $\|S_t - \mathbf{S}_t\|_1 = o(1)$ with high probability. For this, we can write 
\begin{align}\label{eq:stochastic-approximation}
    \|S_{tn} - \mathbf{S}_{tn}\|_1 \le \frac{1}{n} \sum_{l < tn} \|d_{\beta}(S_l) - d_{\beta}(\mathbf{S}_l)\|_1 + \Big\|\sum_{l< tn} \Big((S_{l+1}-S_l) - \frac{1}{n}d_\beta (S_l)\Big)\Big\|_1\,.
\end{align}
Since $d_\beta$ has bounded Lipschitz coefficient (by some $C_{\beta,q}$), the first sum is at most $\frac{C_{\beta,q}}{n} \sum_{l < tn} \|S_l - \mathbf{S}_l\|_1$. By~\eqref{eq:proportions-drift}, the second sum is a sum of martingale increments (up to an error of $n\cdot O(n^{-2}) = O(n^{-1})$), each of which take values in $\{-\frac{1}{n},0,\frac{1}{n}\}$, so by standard martingale concentration estimates (Doob's maximal inequality and Azuma--Hoeffding bound), with probability $1-o(1)$, the maximum over all $t\le Tn$ of the second term above is $O(n^{-1/2}\log n)$. Applying the discrete Gronwall inequality, we get 
\begin{align}\label{eq:discrete-Gronwall}
    \sup_{t\le T} \|S_{tn} - \mathbf{S}_{tn}\|_1 \le O(n^{-1/2}\log n) \cdot e^{ C_{\beta,q}T}\,.
\end{align}
Combined with~\eqref{eq:ODE-gets-to-stable-fixpoint}, and reparametrizing $Tn$ to  $T=O(n)$, we deduce the lemma. 
\end{proof}

 \subsubsection{Getting one coordinate to dominate}
Once one of the color classes has a bias of at least $\gamma/\sqrt{n}$, we can call it the dominant color class and without loss of generality, label it the first coordinate. From there, the Potts Glauber dynamics gradually shifts 
 $\Omega(1)$ away from a fixed point.

\begin{lemma}
    \label{lem:movefaraway}
    Let $q >2$ and $\beta> \betas$.
    Suppose $\sigma_0$ is a configuration such that 
$S_{0,1} > S_{0,i} + \gamma n^{-1/2}$ for every $k=2,\dots, q$ and a large $\gamma>0$.
    Then there exist $T = O(n \log n)$ and $\delta =\Omega(1)$ such that with probability $1 - e^{-\Omega(\gamma^2)}$, 
    \[
        S_{T,1} \ge S_{T,k} + \delta, \qquad \text{ for all } k=2,\dots, q. 
    \]
\end{lemma}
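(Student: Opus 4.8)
The plan is to adapt the geometric–doubling scheme used for Lemma~\ref{lem:sdecayPotts} and Lemma~\ref{lem:superdecayPotts}, but since the non-dominant coordinates need not be close to equiproportionality here (so that Lemma~\ref{lem:nondominantcoordinates} is unavailable), I would track each pairwise gap $\Delta_t^{(k)} := S_{t,1}-S_{t,k}$ separately, for $k=2,\dots,q$, rather than the scalar drift $D_\beta$. The crucial new input — and the place where the low-temperature hypothesis $\beta>\betas=q$ enters — is the gap-drift estimate: there are constants $\delta_0=\delta_0(\beta,q)\in(0,1)$ and $c=c(\beta,q)>0$ such that whenever $S_{t,1}=\max_j S_{t,j}$ and $0<\Delta_t^{(k)}\le\delta_0$,
\begin{align*}
    \mathbb E\big[\Delta_{t+1}^{(k)} - \Delta_t^{(k)} \mid \mathcal F_t\big] \ \ge\ \frac{c}{n}\,\Delta_t^{(k)} + O(n^{-2})\,.
\end{align*}
This follows from~\eqref{eq:proportions-drift}, which gives $\mathbb E[\Delta_{t+1}^{(k)}-\Delta_t^{(k)}\mid\mathcal F_t]=\frac1n\big[(g_{\beta,1}(S_t)-g_{\beta,k}(S_t))-\Delta_t^{(k)}\big]+O(n^{-2})$, combined with the mean-value identity $g_{\beta,1}(S_t)-g_{\beta,k}(S_t)=\tfrac{\beta e^{\beta\xi}}{\sum_j e^{\beta S_{t,j}}}\,\Delta_t^{(k)}$ for some $\xi\in[S_{t,k},S_{t,1}]$; when $S_{t,1}$ is maximal and $\Delta_t^{(k)}\le\delta_0$ one has $e^{\beta\xi}\ge e^{\beta S_{t,k}}$ and $\sum_j e^{\beta S_{t,j}}\le e^{\beta S_{t,k}}\big((q-1)e^{\beta\delta_0}+1\big)$, so the prefactor is at least $\beta/((q-1)e^{\beta\delta_0}+1)$, which exceeds $1$ for $\delta_0$ small since $\beta/q>1$. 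Note this invariant ``$S_{t,1}$ is the maximal coordinate'' is automatically maintained by the argument below, since it is equivalent to all gaps staying positive.

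With this estimate, I would fix $k$ and run the doubling argument of Lemma~\ref{lem:superdecayPotts} essentially verbatim, with $\Delta_t^{(k)}$ in the role of $S_{t,1}-m_*$, the value $0$ in the role of $m_*$, and the target window $[\gamma n^{-1/2},\delta_0/2]$: since a step moves $\Delta_t^{(k)}$ by at most $2/n$ and $\Delta_t^{(k)}\ge\gamma n^{-1/2}$ dominates the $O(n^{-2})$ error, Lemma~\ref{lem:mtghittingtime} yields, for the stopping times $\tau_{i+1}=\inf\{t\ge\tau_i:\Delta_t^{(k)}>(1+\tfrac c8)^{i+1}\gamma n^{-1/2}\}$ and matching barriers $\tilde\tau_{i+1}$, that each doubling occurs within $O(n)$ steps and the process does not fall back, except with probability $r_i=e^{-\Omega(\gamma^2(1+c/8)^{2i})}$. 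Summing the $O(\log n)$ doublings needed to reach $\delta_0/2$ and taking a union bound over $i$ shows $\Delta_t^{(k)}$ hits $\delta_0/2$ by some time $T_0=O(n\log n)$ with probability $1-e^{-\Omega(\gamma^2)}$. I would then add a ``stays above'' step, analogous to Lemma~\ref{lem:moveforwardright}(1): once $\Delta_t^{(k)}\ge\delta_0/2$, on the band $\Delta_t^{(k)}\in[\delta_0/4,\delta_0]$ the gap-drift estimate gives a restoring drift $\ge\tfrac{c\delta_0}{4n}$, so by Lemma~\ref{lem:mtghittingtime}(1) traversing this $\Omega(1)$-wide band downward within the next $2T_0$ steps has probability $e^{-\Omega(n)}$ (and the event $\Delta_t^{(k)}>\delta_0$ only helps the conclusion). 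Taking a union bound over the $q-1$ indices $k$ and over the (at most $q$) passes, at the single deterministic time $T:=2T_0=O(n\log n)$ all gaps satisfy $\Delta_T^{(k)}\ge\delta_0/4=:\delta$, with probability $1-e^{-\Omega(\gamma^2)}-e^{-\Omega(n)}=1-e^{-\Omega(\gamma^2)}$ since $\gamma$ is an $n$-independent constant.

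I expect the main obstacle to be the gap-drift estimate itself: unlike the $\beta<\betas$ regime one cannot reduce to the one-dimensional function $D_\beta$, and one must check that the pairwise comparison $g_{\beta,1}-g_{\beta,k}$ has the correct sign \emph{uniformly} over all proportion vectors with $\Delta_t^{(k)}$ small — which is exactly what forces $\beta>q$. The remaining difficulty is bookkeeping: both the doubling step and the ``stays above'' step are naturally phrased for a fixed index $k$ and for random hitting times, so some care (taking the ``stays'' window at least as long as the common deterministic bound on all $q-1$ hitting times) is needed to conclude that all the gaps are macroscopic \emph{simultaneously} at one deterministic time $T$, as the statement requires.
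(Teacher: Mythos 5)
Your proposal is correct and follows essentially the same scheme as the paper's proof: a geometric doubling argument applied to each pairwise gap $S_{t,1}-S_{t,k}$ via the supermartingale hitting-time bounds of Lemma~\ref{lem:mtghittingtime}, keyed on the fact that $\beta>\betas=q$ makes the prefactor $\beta e^{\beta S_{t,k}}/\sum_j e^{\beta S_{t,j}}$ exceed $1$ whenever $S_{t,1}$ is maximal and the gap is small, with the maximality invariant tracked via the event that all gaps remain positive. The only (cosmetic) difference is in the packaging of this prefactor bound: you prove it directly by bounding the denominator by $e^{\beta S_{t,k}}\bigl((q-1)e^{\beta\delta_0}+1\bigr)$, whereas the paper argues it contrapositively via the stopping time $\tau_k^-$ (the first time the prefactor drops to $1+\epsilon$), observing that this cannot happen before the gap exceeds $4\delta$ or $S_{t,1}$ loses maximality.
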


\begin{proof}
    We first make several useful definitions.
    Set $\delta$ and $\epsilon$ be sufficiently small positive constants such that $q(1+\epsilon)^2 \le \beta$ and $4\delta \beta < \ln (1+\epsilon)$, 
    and set $L=O(\log n)$ be the least integer such that $\gamma \big(1+\frac{\epsilon}{16} \big)^L > 2\delta \sqrt{n}$.
    For every $k\ge 2$, we define $\tau_{0,k}= 0$,
    \[
        \tau_{i+1,k} := \inf\left\{t\ge \tau_{i,k} : S_{t,1} > S_{t,k} + \Big(1 +\frac{\epsilon}{16} \Big)^{i+1}  \frac{\gamma}{\sqrt{n}} \right\},
    \]
    and 
    \[
          \tilde{\tau}_{i+1,k}:= \inf\left\{ t>\tau_{i,k}: S_{t,1} < S_{t,k} +  \Big(1 +\frac{\epsilon}{16} \Big)^{i}  \frac{\gamma}{2\sqrt{n}} \right\}, 
    \]
    for every $i\ge 0$. 
    Moreover, define $
        \tau_{k}^-:=
        \inf\left\{t\ge 0 : \frac{\beta e^{\beta S_{t,k}}}{\sum_{j=1}^q e^{\beta S_{t,j}}} \le 1+\epsilon \right\},
    $
    and $\tau_1^+ :=\inf\{t\ge 0: S_{t,1}\le \max_{k\neq 1} S_{t,k} \}$.
    Note that by the choice of $\delta$ and $\epsilon$, $\tau_k^- \ge \tau_{L+1,k}\wedge \tau_1^+$ for each $k$, and that
    $\tau_1^+ >\bigwedge_{i,k} \tilde{\tau}_{i,k}$.
    Define $S_t'$ to be the auxiliary process such that $S_t = S_t'$ for $t < \bigwedge_{i,k} \tilde{\tau}_{i,k}$.
    Define $\{\tau_{i,k}'\}$ as the stopping times for $S_t'$ in place of $\{\tau_{i,k}\}$. 
    For $i\ge0$, set $r_i = \exp\Big[- \Omega(\gamma^2 (1+\tfrac{\epsilon}{16})^{2i} ) \Big]$.
    We will show that 
    \begin{align}
         \Pr&\left(  
            \bigcap_{k=2}^{q} 
            \left\{
                \bigcap_{i=0}^{L-1} \{ \tau_{i+1,k}'\le \tau_{i,k}' + n, 
                \tilde{\tau}_{i+1,k} > \min\{\tau_{i,k} + n, \tau^+_1, \tau_{L+1,k} \}  \} 
                \cap
                \{
                    \tilde{\tau}_{L+1,k} > \tau_L +Ln
                \}
            \right\}
        \right)  \nonumber \\
         & \qquad \qquad\qquad  \ge 1 - \sum_{i=0}^{L-1} 3q r_i - o(1)\,.  \label{eq:dreamgoal}
    \end{align}
    The events in \eqref{eq:dreamgoal} together imply that at time $T=Ln$, 
    $S_{T,1} \ge S_{T,k} + \delta$ for all $k = 2, \dots, q$.

    First we show that for each $k$ and each $i\in [0,L-1]$, 
    $ \tau_{i+1,k}'\le \tau_{i,k}' + n$ with probability $1-r_i$.
    Define the process $\{W_t\}_{t\ge 0}$ given by
    \[
        W_t=\big(1+\frac{\epsilon}{16} \big)^{i+1} \frac{\gamma}{\sqrt{n}} - S_{t,1}' + S_{t,k}'.
    \]
    Clearly, $|W_{t+1} - W_{t}| \le 2/n$ for $t\ge 0$.
    We will show that on $\{W_t \ge 0\}$, 
    \begin{equation}
        \label{eq:dreamstep1}
            \E[W_{t+1} - W_t \mid \mathcal{F}_t] \le -\frac{\epsilon}{2n} \cdot (1+\frac{\epsilon}{16})^i\cdot \frac{\gamma}{2\sqrt{n}},
    \end{equation}
    and obtain $\Pr( \tau_{i+1,k}'> \tau_{i,k}' + n \mid \tau_{i,k}') \le r_i$ following Lemma~\ref{lem:mtghittingtime}(1). 
    To show \eqref{eq:dreamstep1},
    recalling \eqref{eq:gcomparisons}, we have
    \begin{align*}
           \E[W_{t+1} - W_t \mid \mathcal{F}_t] &= 
           -\frac{1}{n}[ g_{\beta,1}(S_t') - g_{\beta,k}(S_t') - (S_{t,1}' - S_{t,k}') ] + O(n^{-2}) \\
           &=-\frac{1}{n} \left[  
            \frac{e^{\beta S_{t,1}'} - e^{\beta S_{t,k}'} }{\sum e^{\beta S_{t,j}'}}
            - (S_{t,1}' - S_{t,k}') 
           \right]+ O(n^{-2}) \\
           & \le -\frac{1}{n} 
           \big( \frac{\beta e^{\beta S_{t,k}'} }{\sum e^{\beta S_{t,j}'}} - 1\big)
             (S_{t,1}' - S_{t,k}') 
           + O(n^{-2})\,.
    \end{align*}
    Observe that $\frac{\beta e^{\beta S_{t,k}'}}{\sum_{j=1}^q e^{\beta S_{t,j}'}} > 1+\epsilon$ for $t\le \tau_{L+1,k}'$, since otherwise 
    \[
        e^{\beta S_{t,k}'}\le  \frac{1 + \epsilon}{\beta}\sum_{j=1}^q e^{\beta S_{t,j}'}
        \le \frac{1}{(1+\epsilon) q} \sum_{j=1}^q e^{\beta S_{t,j}'}\le \frac{1}{1+\epsilon} e^{\beta S_{t,1}'},
    \] 
    and $S_{t,k}'\le S_{t,1}' - \beta^{-1}\ln (1+\epsilon) < S_{t,1}' - 4\delta$.
    Hence,  we establish \eqref{eq:dreamstep1} by further noting that $S_{t,1}' - S_{t,k}' \ge  \Big(1 +\frac{\epsilon}{16} \Big)^{i}  \frac{\gamma}{2\sqrt{n}}$.

    Moreover, 
    using positive drift of the process $S_{t+\tau_{i,k}, 1} - S_{t+\tau_{i,k}, k}$ and analyzing an associated process as done in the proof for \eqref{eq:LBtildetaui},  
    we get that $\Pr(\tilde{\tau}_{i+1,k} \le \min\{\tau_{i,k} + n, \tau^+_1, \tau_{L+1,k} \} \mid \tau_{i,k}) \le 2r_i$.
    Finally, when the positive drift is constantly large, the probability that $  \tilde{\tau}_{L+1,k} \le \tau_L +Ln$
    is diminishing. 
    Therefore, we obtain \eqref{eq:dreamgoal} by a union bound and conclude the proof.
\end{proof}

\begin{proof}[\textbf{\emph{Proof of Theorem~\ref{thm:lowtempcoupling}}}]
By 
Lemma~\ref{lem:movefaraway}, after $T= O(n\log n)$ steps, with probability $1-o_\gamma(1)$, the proportions chain has $S_{T,1} \ge S_{T,k}+\delta$ for all $k=2,...,q$. From there, 
Lemma~\ref{lem:ODE-gets-you-to-mr} ensures that in a further $O(n)$ steps, the proportions vector is within an arbitrarily small $\rho$ distance from the fixed point corresponding to that phase, $(\mr, \frac{1-\mr}{q-1},...,\frac{1-\mr}{q-1})$. 
Finally, Lemma~\ref{lem:converge2mr} and Lemma~\ref{lem:supercriticalcouple} imply coupling of the proportions chain from there to that of a dynamics initialized from $\pi^{\ord,1}$. 
\end{proof}

Let us finally describe how one obtains the case of $m_0 = 1/q$, where we are using a fully uniform at random initialization for the low-temperature Potts model. 

\begin{lemma}\label{lem:1/q-initialization}
    Suppose $\beta>\betas$ and $\|S_0 - (\frac{1}{q},...,\frac{1}{q})\|_1 = O(n^{-1/2})$. Then for all $\gamma$ large, there exists a $C_{\gamma}>0$ such that the hitting time to having $S_{t,i}>S_{t,j}+ \gamma n^{-1/2}$ for some $i$ and all $j\ne i$ is at most $C_{\gamma} n$ except with probability $1-o_{\gamma}(1)$. 
\end{lemma}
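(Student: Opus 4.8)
I would run a diffusion-approximation argument around the unstable equiproportional fixed point, in the same spirit as the proofs of Lemma~\ref{lem:existcstar} and Lemma~\ref{lem:ODE-gets-you-to-mr}, but now exploiting the \emph{instability} of the fixed point. Under the time/space rescaling $\bar S_t := \sqrt n\,\big(S_{\lfloor tn\rfloor} - (\tfrac1q,\dots,\tfrac1q)\big)$, which takes values on the hyperplane $\mathcal H := \{v\in\mathbb R^q : \sum_i v_i = 0\}$, I would show that near $(\tfrac1q,\dots,\tfrac1q)$ the process $\bar S_{\cdot n}$ converges weakly on $C([0,T],\mathcal H)$ to the \emph{explosive} Ornstein--Uhlenbeck diffusion
\[
dZ_t = \lambda Z_t\,dt + \Sigma^{1/2}\,dB_t\,,\qquad \lambda := \tfrac\beta q - 1 > 0\,,
\]
where $\Sigma$ is the covariance of one Glauber increment at equiproportionality, which on $\mathcal H$ equals $\tfrac2q I$ and is therefore nondegenerate. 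Indeed, by~\eqref{eq:proportions-drift} and a Taylor expansion the per-unit-time drift of $\bar S_{\cdot n}$ is $J\bar S_t + O_\gamma(n^{-1/2})$, where $J$ is the Jacobian of $g_\beta - \mathrm{id}$ at $(\tfrac1q,\dots,\tfrac1q)$; a direct computation gives $J|_{\mathcal H} = \lambda I$ with $\lambda = D_\beta'(1/q) > 0$ by~\eqref{eq:driftderivative} and $\beta>\betas$ (cf.\ Lemma~\ref{lem:lowtempdrift}), and the quadratic and higher-order terms of $d_\beta$ are $O(n^{-1/2})$ once $\bar S$ is confined to a fixed ball. The approximation (via Azuma/Doob plus discrete Gr\"onwall, exactly as in Lemma~\ref{lem:ODE-gets-you-to-mr}, the positive exponent $\lambda$ only inflating the Gr\"onwall constant by $e^{\lambda T}$) is to be run up to the exit time of a large fixed ball $B(0,\rho)\subset\mathcal H$, and a Doob-inequality bound shows this exit does not happen before time $T$ except with probability $o_\rho(1)$. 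Since the initialization has $\|\bar S_0\|_1 = O(1)$, all estimates below will be uniform over admissible $Z_0$, so no convergence of $\bar S_0$ itself is needed (pass to subsequences otherwise).

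\textbf{Analysis of the limit.} The linear SDE above is explicitly $Z_t = e^{\lambda t} Z_\infty - \eta_t$, where $Z_\infty := Z_0 + \int_0^\infty e^{-\lambda s}\Sigma^{1/2}\,dB_s$ is well defined (the martingale $\int_0^t e^{-\lambda s}\Sigma^{1/2}\,dB_s$ has bounded bracket) and is a \emph{nondegenerate} Gaussian vector on $\mathcal H$ with mean $Z_0$ and covariance $\tfrac1{2\lambda}\Sigma$, and $\eta_t := \int_t^\infty e^{-\lambda(s-t)}\Sigma^{1/2}\,dB_s$ has, for every $t$, the fixed centered-Gaussian law of covariance $\tfrac1{2\lambda}\Sigma$, so $\|\eta_t\|_\infty$ is tight uniformly in $t$ and $Z_0$. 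Writing $g(v) := v^{(1)} - v^{(2)}$ for the gap between the largest and second-largest entries, and using that order statistics are $1$-Lipschitz in $\|\cdot\|_\infty$,
\[
g(Z_t)\ \ge\ e^{\lambda t}\,g(Z_\infty)\ -\ 2\|\eta_t\|_\infty\,.
\]
The crucial point is an \emph{anti-concentration} fact: since $Z_\infty$ is nondegenerate, each difference $Z_\infty^i - Z_\infty^j$ ($i\ne j$) has a density bounded by a constant depending only on $q,\beta$ (not on $Z_0$), whence $\Pr(g(Z_\infty)\le\delta)\le p(\delta)$ with $p(\delta)\downarrow 0$ as $\delta\downarrow 0$, uniformly in $Z_0$; likewise $\Pr(\|\eta_t\|_\infty > R)\le \hat q(R)$ with $\hat q(R)\downarrow 0$, uniformly in $t,Z_0$. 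I would then fix $\delta_\gamma,R_\gamma$ with $p(\delta_\gamma),\hat q(R_\gamma)\le 1/\gamma$ and choose $C_\gamma$ so large that $e^{\lambda C_\gamma}\delta_\gamma\ge 2\gamma + 2R_\gamma$; on the event $\{g(Z_\infty)\ge\delta_\gamma\}\cap\{\|\eta_{C_\gamma}\|_\infty\le R_\gamma\}$, of probability at least $1-2/\gamma$, this gives $g(Z_{C_\gamma})\ge 2\gamma$, and after enlarging this event by a $1-o_{\rho_\gamma}(1)$-probability bound on $\sup_{t\le C_\gamma}\|Z_t\|$ the whole path stays in a fixed ball $B(0,\rho_\gamma)$.

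\textbf{Transfer and conclusion.} The set $\{w : \sup_{t\le C_\gamma} g(w_t) > \gamma\}$ is open in $C([0,C_\gamma],\mathcal H)$, so by the portmanteau theorem and the weak convergence above (valid up to the exit of $B(0,\rho_\gamma)$, which occurs after $C_\gamma$ with probability $1-o_\gamma(1)$),
\[
\liminf_{n\to\infty}\Pr\big(\sup\nolimits_{t\le C_\gamma} g(\bar S_{tn}) > \gamma\big)\ \ge\ \Pr\big(\sup\nolimits_{t\le C_\gamma} g(Z_t) > \gamma\big)\ \ge\ 1 - \tfrac2\gamma\,.
\]
Undoing the rescaling, the left-hand event is precisely that at some step $t\le C_\gamma n$ there is a color $i$ with $S_{t,i} > S_{t,j} + \gamma n^{-1/2}$ for all $j\ne i$; hence for all $\gamma$ large (and $n$ large) this hitting time is at most $C_\gamma n$ with probability at least $1-3/\gamma = 1-o_\gamma(1)$, which is the lemma (and it feeds directly into Theorem~\ref{thm:lowtempcoupling}).

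\textbf{Main obstacle.} The conceptual crux — and what turns ``escape with constant probability'' into ``escape with probability $1-o_\gamma(1)$'' — is the uniform anti-concentration of the \emph{top-two gap} of the explosive OU limit near zero: the factor $e^{\lambda t}$ amplifies any fixed positive separation of the two leading coordinates up to size $\gamma$, so the only failure mode is to remain near the codimension-one locus of ties between the leaders, whose probability under the nondegenerate Gaussian $Z_\infty$ is controllably small, uniformly in the starting point. The technical crux is making the diffusion approximation rigorous up to (and including) the $\gamma$-dependent but $O(1)$ escape window — i.e., up to the exit of a large ball around the unstable fixed point — which is of the same flavor as, but slightly more delicate than, Lemma~\ref{lem:ODE-gets-you-to-mr}, since the approximation is now centered at an unstable rather than a stable fixed point and one must check that the higher-order terms of $d_\beta$ remain negligible throughout that window.
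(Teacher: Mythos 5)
Your proposal is correct and reaches the same conclusion, but it follows a genuinely different route from the paper's proof. The paper argues without passing to a continuum limit: by permutation symmetry and a boosting step, it reduces to showing that from \emph{any} starting point within $O(n^{-1/2})$ of equiproportionality, a window of $n$ Glauber steps opens a $\gamma n^{-1/2}$ gap for some coordinate with uniformly positive probability $c_\gamma>0$; repeating the window $C_\gamma$ times then drives the failure probability to $o_\gamma(1)$. The $c_\gamma$ bound itself is obtained by casework on whether the noise-free evolution of~\eqref{eq:deterministic-ODE-low-temp} already opens a $2\gamma n^{-1/2}$ gap after $n$ steps; in the complementary case, anti-concentration of the martingale corrections is applied coordinate by coordinate, comparing them to random walks with variance bounded away from zero and exploiting independence of updates that touch different color classes. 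Your approach instead passes all the way to the diffusive scaling limit: you identify the rescaled process near the saddle as an explosive OU diffusion $dZ_t=\lambda Z_t\,dt+\Sigma^{1/2}dB_t$ with $\lambda = D_\beta'(1/q)=\beta/q-1>0$, exploit the explicit decomposition $Z_t=e^{\lambda t}Z_\infty-\eta_t$, and read off the $1-o_\gamma(1)$ bound directly from anti-concentration of the top-two gap of the nondegenerate Gaussian $Z_\infty$ (uniform over the starting point on the zero-sum hyperplane), the factor $e^{\lambda t}$ amplifying a fixed separation to size $\gamma$ within $O_\gamma(1)$ rescaled time. This eliminates the iteration and makes the role of the instability transparent; the cost is invoking a functional CLT together with a portmanteau/weak-convergence argument --- machinery the paper already brings in for Lemma~\ref{lem:existcstar}, so it is consistent with the paper's toolkit, but the paper's own proof of this particular lemma stays entirely at the discrete level. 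Neither route is strictly stronger; yours is conceptually cleaner, the paper's is more self-contained locally.
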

\begin{proof}
    It is sufficient to show that in some $O(n)$ steps possibly depending on $\gamma$, the process attains $S_{t,1} >S_{t,j} + \gamma n^{-1/2}$ for all $j>1$ (up to permutation of the coordinates). In order to show this, we show that uniformly over any initialization, one obtains some coordinate which is $\gamma n^{-1/2}$ larger than the rest in $n$ steps, with some $c_\gamma>0$ probability. This then gets boosted to $1-o_\gamma(1)$ probability after $C_\gamma n$ steps. There are two cases of initial proportions vector $S_0$ to consider: 
    \begin{enumerate}
        \item Starting from $S_0$ and evolving $S_{t}$ according only to the drift $d_{\beta}$ as in~\eqref{eq:deterministic-ODE-low-temp} for $n$ steps (taking the martingale increments to be zero), one of the coordinates becomes $2\gamma n^{-1/2}$ larger than all the others; 
        \item The complement, in which case there are at least 1 coordinate $i>1$ which is within $2\gamma n^{-1/2}$ of the maximal coordinate (assumed to be the first coordinate) under the drift of~\eqref{eq:deterministic-ODE-low-temp} after $n$ steps. 
    \end{enumerate}
    In the first case, by Azuma--Hoeffding's inequality, after $n$ steps, the second term in the right-hand side of ~\eqref{eq:stochastic-approximation} is at most  $\gamma \sqrt{n}$ with probability $1-e^{ - \Omega(\gamma^2)}$, whence following the reasoning between~\eqref{eq:stochastic-approximation}--\eqref{eq:discrete-Gronwall}, we have $\|S_{t} - \mathbf{S}_{t}\|_1\le \gamma n^{-1/2}$ for all $t\le n$ and we will have attained a configuration having $S_{T,1} >S_{T,i} + \gamma n^{-1/2}$ after $T= n$ steps. 
    
    Now consider the second case and let $\mathbf{S}_t$ be defined as in~\eqref{eq:deterministic-ODE-low-temp}. Firstly, by comparison of 
    \begin{align*}
        S_{t,1} - \mathbf{S}_{t,1} = \sum_{l<t}(S_{l+1,1} -S_{l,1} - \frac{1}{n}d_\beta(S_l))\,, 
    \end{align*}
    to a random walk with a variance strictly bounded away from zero, taking $n$ steps that are $O(1/n)$ sized, there is a uniformly positive probability $p_{\gamma,1}$ that this is in $[2q\gamma n^{-1/2},3q\gamma n^{-1/2}]$. 
    Also, conditionally on a typical realization of such a sequence, $S_{t,1}$ only changed $n/q + o(n)$ many times. On the remaining $(1-\frac{1}{q})n- o(n)$ steps, the increments of $S_{t,2} - \mathbf{S}_{t,2}$ still have a uniformly lower bounded variance. This leads to another uniformly positive probability $p_{\gamma,2}$ that it has $S_{t,2} - \mathbf{S}_{t,2} \in [-\gamma n^{-1/2},\gamma n^{-1/2}]$ (the conditioning on the event for $S_{t,1}$ only has a negligible effect on the drift for $S_{t,2}$ because of Lipschitz continuity of $d_{\beta}$). Repeating this for the next $q-1$ coordinates, with the very last one having no variance remaining but deterministically having decreased because of what happened for the other coordinates, we find that there is at least probability $p_{\gamma,1} \cdots p_{\gamma,q-1}$ such that 
    \begin{align*}
        S_{t,1} - \mathbf{S}_{t,i} \ge -2\gamma n^{-1/2} - (q-1) \gamma n^{-1/2} + 2q\gamma n^{-1/2} = (q-1) \gamma n^{-1/2} \ge \gamma n^{-1/2}\,,
    \end{align*} 
    for all $i\ge 2$. Importantly, by continuity and compactness of the space, the variances are uniformly bounded from above and below, and so the above lower bound $p_{\gamma,1} \cdots p_{\gamma,q-1}$ is independent of the initial proportions vector. This argument can thus be repeated some $C_{\gamma}$ times to ensure a probability $1-o_\gamma(1)$ that one coordinate has $\gamma n^{-1/2}$ larger proportion than any of the other coordinates. The permutation symmetry of the initialization ensures the $q$ coordinates are equally likely to become this dominant one.  
\end{proof}

\subsection{Lower bound on the mixing time with different choices of \texorpdfstring{$m_0$}{m0}}

By combining the above quasi-equilibration results with metastability of the ordered and disordered phases, we show that if the initialization is the product measure with parameters not satisfying the conditions of Theorem~\ref{thm:Potts-Glauber}, then mixing is exponentially slow.

\begin{theorem}\label{thm:Potts-slow}
    For every $q>2$ and $\beta \in (\betau, \betas)$, if $m_*(\beta,q)$ and $\hat c_*(q)$ are as in Theorem~\ref{thm:Potts-Glauber}, then the Potts Glauber dynamics initialized from $\hat \nu^\otimes(m_0)$ with 
    \begin{enumerate}
        \item $\beta \in (\betau,\betac)$ and $m_0  > m_*(\beta,q)  - O(n^{-1/2})$,
        \item $\beta = \beta_c$ and $m_0 \ne m_*(\beta,q) + \hat c_*(q) n^{-1/2} + o(n^{-1/2})$,
        \item $\beta\in (\betac,\betas)$ and $m_0 < m_*(\beta,q) + O(n^{-1/2})$,
    \end{enumerate}
    takes $\exp(\Omega(n))$ time to reach $o(1)$ total-variation distance to stationarity. 
\end{theorem}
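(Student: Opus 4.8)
\textbf{Proof plan for Theorem~\ref{thm:Potts-slow}.} The plan is to reproduce, for the Potts Glauber dynamics (the chain in question being the one of Theorem~\ref{thm:Potts-Glauber}, initialized from $\hat\nu^{\otimes}(m_0)$), the architecture of the proof of Theorem~\ref{thm:slow-mixing-CM}: whichever side of the critical window $m_*$ the initialization lies on, the proportions chain $\{S_t\}_t$ quasi-equilibrates with constant probability to the ``wrong'' Potts phase, and then stays trapped there for exponential time, while that phase has $o(1)$ mass under $\pi$ (items~1 and~3) or the wrong relative mass (item~2). I would present item~1 in detail and treat items~2--3 by the same template, with the only new input being the metastability (bottleneck-confinement) of the relevant Potts phase under the Glauber dynamics.

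For item~1, fix $\beta\in(\betau,\betac)$ and $m_0>m_*-Kn^{-1/2}$ for some constant $K>0$. First I would note that if $\sigma_0\sim\hat\nu^{\otimes}(m_0)$ then, by concentration of multinomials, with probability $1-o(1)$ one (by symmetry, uniformly random) coordinate of $S_0$ is within $O(n^{-1/2})$ of $m_0$ and the rest within $O(n^{-1/2})$ of $\tfrac{1-m_0}{q-1}$; relabel it the first coordinate. Then I would use the SDE-rescaling comparison from Lemma~\ref{lem:existcstar}, together with strict monotonicity of the limiting hitting probability in the initial condition, to extract a constant $c_K>0$ and a large $\gamma$ with $\Pr(\tau_\gamma^+<\tau_\gamma^-)\ge c_K$; Lemma~\ref{lem:Pottsexittime} bounds this exit time by $O(n)$ except with probability $O(\gamma^{-1})$, and Lemma~\ref{lem:nondominantcoordinates} keeps the non-dominant coordinates $O(n^{-1/2}\log n)$-close to equiproportional at the exit time. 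Thus the ``$m\ge m_*+\gamma n^{-1/2}$'' case of Theorem~\ref{thm:PottsHitLeftTarget} together with spin symmetry gives $T_0=O(n\log n)$ with $\|\Pr_{\hat\nu^{\otimes}(m_0)}(\sigma_{T_0}\in\cdot)-\pi^\ord\|_\TV\le 1-c_K+o_{\gamma,n}(1)$. Using the optimal coupling on this pair followed by the identity coupling of Glauber chains transfers the bound to $\|\Pr_{\hat\nu^{\otimes}(m_0)}(\sigma_{t+T_0}\in\cdot)-\Pr_{\pi^\ord}(\sigma_t\in\cdot)\|_\TV\le 1-c_K+o_{\gamma,n}(1)$ for all $t$.

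Next I would invoke metastability of the ordered phase, which I extract from the exponentially slow worst-case mixing of~\cite{CDLLPS-mean-field-Potts-Glauber} for $\beta\in(\betau,\betac)$: there is a bottleneck set $B$ (largest color-count at least $(m_*+\epsilon)n$, the other counts within $\rho n$ of each other) from inside which the Glauber dynamics takes $e^{\Omega(n)}$ steps to escape, and $\pi(\cdot\mid B)$ is $e^{-\Omega(n)}$-close to $\pi^{\ord,1}$ by Corollary~\ref{cor:equilibrium-estimates-Potts} and Observation~\ref{obs:tv-distance-conditioning}. Taking $B':=\bigcup_{i=1}^q B_i$ over the $q$ color choices and combining with the coupling bound yields $\Pr_{\hat\nu^{\otimes}(m_0)}(\sigma_{t+T_0}\in B')\ge c_K-o_{\gamma,n}(1)-te^{-\Omega(n)}$, while Corollary~\ref{cor:equilibrium-estimates-Potts} gives $\pi(B')=o(1)$ for $\beta<\betac$; choosing $\gamma$ large and $t=e^{\Omega(n)}$ forces the TV-distance to $\pi$ above $c_K/2$, giving the $e^{\Omega(n)}$ mixing time. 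Item~3 ($\beta\in(\betac,\betas)$, $m_0<m_*+Kn^{-1/2}$) is the mirror image, using the ``$m\le m_*-\gamma n^{-1/2}$'' case of Theorem~\ref{thm:PottsHitLeftTarget} to quasi-equilibrate to $\pi^\dis$, the disordered region as a bottleneck for $\beta>\betac$, and $\pi(\Omega^\dis)=o(1)$. Item~2 ($\beta=\betac$, $m_0\ne m_*+\hat c_*n^{-1/2}+o(n^{-1/2})$) uses the strict monotonicity in Lemma~\ref{lem:existcstar} to produce $\delta>0$ with $|\Pr(\tau_\gamma^-<\tau_\gamma^+)-\xi|\ge\delta$ (after $n\to\infty$ then $\gamma\to\infty$); Theorem~\ref{thm:PottsHitLeftTarget} then drives the chain to a $(\hat\xi,1-\hat\xi)$-mixture of $\pi^\dis,\pi^\ord$ with $|\hat\xi-\xi|\ge\delta-o(1)$, and since $\pi$ is $o(1)$-close to the $(\xi,1-\xi)$-mixture and both $\Omega^\dis$ and $\Omega^\ord$ are bottlenecks at $\betac$, the TV-distance stays at least $\delta/2$ for $e^{\Omega(n)}$ steps.

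The main obstacle I anticipate is packaging the metastability inputs from~\cite{CDLLPS-mean-field-Potts-Glauber} in exactly the form needed: a bottleneck set contained in a slight enlargement of $\Omega^{\ord,i}$ (resp.\ $\Omega^\dis$), with $e^{\Omega(n)}$ confinement starting from $\pi^{\ord,i}$ (resp.\ $\pi^\dis$), and in particular confirming that at $\beta=\betac$ both the ordered and the disordered regions are simultaneously bottlenecked by the free-energy barrier separating them in Figure~\ref{fig:Potts-phase_structure}. A secondary care point, exactly as in the CM analysis, is that these couplings cannot be restarted; but we only ever need a constant-probability event (the chain landing in, and then remaining in, the wrong phase), so a single bad move during quasi-equilibration costs at most $o(1)$ and does not threaten the argument.
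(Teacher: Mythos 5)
Your proposal is correct in outline and matches the paper's architecture for the first half exactly: quasi-equilibration to the ``wrong'' phase with constant probability $c_K$ via Lemma~\ref{lem:existcstar}, Lemma~\ref{lem:Pottsexittime}, Lemma~\ref{lem:nondominantcoordinates} and Theorem~\ref{thm:PottsHitLeftTarget}, followed by the observation that this phase carries $o(1)$ (or the wrong relative) mass under $\pi$. Where you diverge is the confinement step. You import a bottleneck set $B$ from the slow-mixing proof of~\cite{CDLLPS-mean-field-Potts-Glauber}, mirroring how the paper's own Theorem~\ref{thm:slow-mixing-CM} uses the bottleneck of~\cite{GLP} for the CM dynamics; the paper instead argues confinement directly from the drift: by Corollary~\ref{cor:equilibrium-estimates-Potts} a sample from $\pi^\ord$ has proportions vector within $O(n^{-1/2})$ of $(\mr,\tfrac{1-\mr}{q-1},\dots,\tfrac{1-\mr}{q-1})$, which by Lemma~\ref{lem:driftanalysis} is a stable fixed point of $d_\beta$ for $\beta>\betau$, and the Taylor-expansion/supermartingale machinery of Lemma~\ref{lem:nondominantcoordinates} together with Lemma~\ref{lem:mtghittingtime} (with details as in the proof of Theorem~3 of~\cite{CDLLPS-mean-field-Potts-Glauber}) shows the chain takes $e^{\Omega(n)}$ steps to leave an $\epsilon$-neighborhood of that fixed point. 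The distinguishing event is then simply $\{|\max_i S_{t,i}-\mr|\le\epsilon\}$, which has $o(1)$ stationary mass when $\beta<\betac$. The paper's route buys self-containedness: it reuses lemmas already proved and sidesteps exactly the obstacle you flag, namely verifying that~\cite{CDLLPS-mean-field-Potts-Glauber} supplies a bottleneck set of the required shape (contained in an enlargement of $\Omega^{\ord,i}$, resp.\ $\Omega^\dis$, with confinement from the phase-restricted measure, including at $\beta=\betac$ for both phases simultaneously). Your route would work if that packaging is confirmed, but note that the drift argument handles $\beta=\betac$ for free, since both $1/q$ and $\mr$ are stable fixed points of $D_\beta$ throughout $(\betau,\betas)$, so you may prefer it for item~2 as well.
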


\begin{proof}
    We provide the details for the proof of item 1, the other cases following by similar reasoning. For any initialization parameter $m_0 > m_* - K n^{-1/2}$ for some $K = O(1)$, by Lemma~\ref{lem:existcstar} and~\ref{lem:Pottsexittime}, there is a positive probability $c_K>0$ that the process $\{\max_i S_{t,i}\}$ hits $m_* + \gamma n^{-1/2}$ before $m_* - \gamma n^{-1/2}$ (for sufficiently large $\gamma$) in some $t\le C_\gamma n$ many steps. By Lemma~\ref{lem:nondominantcoordinates}, with probability $1- o(1)$, at exit, the configuration satisfies the necessary conditions to apply the second part of Theorem~\ref{thm:PottsHitLeftTarget} and quasi-equilibrate to the ordered phase $\pi^\ord$. Putting these together, we find that for some $T_0 = O(n \log n)$,
    \begin{align}
        \|\mathbb P(X_{T_0} \in \cdot ) - \pi^\ord \|_\TV  \le 1- c_K + o_{\gamma,n}(1)\,. 
    \end{align}
    Next, we claim that a Potts dynamics chain initialized from $\pi^\ord$ retains total-variation distance $1-o(1)$ for exponentially many steps to $\pi$ when $\beta \in (\betau,\betac)$. To see this, we use that by Corollary~\ref{cor:equilibrium-estimates-Potts} the initialization from $\pi^\ord$ has proportions vector within $O(n^{-1/2})$ distance of $(\mr, \frac{1-\mr}{q-1},...,\frac{1-\mr}{q-1})$ (up to permutations). This is a stable fixed point of the drift function $d_\beta$ when $\beta>\betau$ by Lemma~\ref{lem:driftanalysis}. Following the Taylor expansion and martingale argument used in Lemma~\ref{lem:nondominantcoordinates}, in particular the application of Lemma~\ref{lem:mtghittingtime}, it takes $\exp(\Omega(n))$ time to leave an $\epsilon$-neighborhood of the stable fixed point $(\mr, \frac{1-\mr}{q-1},...,\frac{1-\mr}{q-1})$. (Note that the details of this last stage of reasoning are provided in the proof of \cite[Theorem 3]{CDLLPS-mean-field-Potts-Glauber}.) Combining the above, we find that for some $C>0$, for all $ T_0\le t \le e^{ n/C}$, 
    \begin{align*}
        \mathbb P(|\max_i S_{t,i}  - \mr| \le \epsilon) \ge c_K - o_{\gamma,n}(1)\,.
    \end{align*}
    By Lemma~\ref{lem:equilibrium-estimates}, when $\beta \in (\betau, \betac)$, since $\mr \ne 1/q$, one has $\mu(|\max_i S_{t,i}  - \mr|<\epsilon) =o(1)$ for small $\epsilon$, so the above bound implies that for $\gamma$ large, the total-variation to stationarity is at least $c_K/2$. 
\end{proof}

\subsection{Proof of Theorem~\ref{thm:Potts-Glauber}}
With all the above ingredients at hand, we are in position to complete the proof of Theorem~\ref{thm:Potts-Glauber}. 

\begin{proof}[\textbf{\emph{Proof of Theorem~\ref{thm:Potts-Glauber}}}]
    For item 1 of the theorem, fix $\beta\in (\betau,\betac)$, and suppose $m_0 = m_* - \omega(n^{-1/2})$, where $m_*$ is specified in Lemma~\ref{lem:driftanalysis}. 
   If $\sigma_0$ is generated according to $\hat \nu^\otimes(m_0)$, with probability $1-o(1)$, by concentration of multinomial random variables, $S_0$ has one coordinate that is within $O(n^{-1/2})$ of $m_0$ and the other coordinates are all within $O(n^{-1/2})$ of $\frac{1-m_0}{q-1}$. Without loss of generality, permute the coordinates so that it is the first coordinate that is close to $m_0$. Then, 
    by Theorem~\ref{thm:PottsHitLeftTarget},  
   $\|\Pr_{\sigma_0} (S(\sigma_T) \in \cdot) -\Pr_{\sigma_0'} (S(\sigma_T') \in \cdot) \|_{\TV} = o(1)$, where $\sigma_0'\sim \pi^{\dis}$. Furthermore, by Lemma~\ref{cor:equilibrium-estimates-Potts}, when $\beta<\betac$, we have $\|\pi^\dis - \pi\|_{\TV} = o(1)$, so it follows from the triangle inequality that
   \begin{align*}
          & \|\Pr_{\sigma_0\sim \hat \nu^\otimes(m_0)} (S(\sigma_T) \in \cdot) - \pi(S(\sigma) \in \cdot) \|_{\TV}  \\
    &\le  \|\Pr_{\sigma_0\sim \hat \nu^\otimes(m_0)} (S(\sigma_T) \in \cdot) - \Pr_{\sigma_0'\sim \pi^{\dis}} (S(\sigma_T') \in \cdot) \|_{\TV} + \|\pi^\dis - \pi\|_{\TV}\\
    & = o(1).
   \end{align*}
   Since $\hat \nu^\otimes(m_0)$ and $\pi$ are invariant under permutation of vertices, we obtain that 
   \[   \|\Pr_{\hat \nu^{\otimes}(m_0)}(\sigma_T \in \cdot) - \pi \|_{TV} = o(1). \]
For item 3, the proof also follows from Theorem~\ref{thm:PottsHitLeftTarget} and Lemma~\ref{cor:equilibrium-estimates-Potts} by a symmetrical argument, upon noticing that  $\|\pi - \pi^\ord\|_{\TV} = o(1)$, and $\pi^{\ord}$ is a $(1/q,...,1/q)$ mixture of $(\pi^{\ord,i})_{i\in [q]}$, and each of the $q$ coordinates are equally likely under the initialization $\hat \nu^{\otimes}(m_0)$ to dominate.  

We proceed to item 2 where $\beta =\betac$.
For any target $\epsilon>0$, we take $\gamma=\gamma(\epsilon)$ to be sufficiently large.
By Lemma~\ref{cor:equilibrium-estimates-Potts}, the stationary distribution $\pi$ is within $o(1)$ total-variation distance of a $(\xi,1-\xi)$ mixture of $\pi^\dis$ and $\pi^\ord$ for $\xi$ defined in~\eqref{eq:xi}. By Lemma~\ref{lem:existcstar}, there is a constant $\hat{c}_{*}(\xi)$ such that if $m_0 = m_* + \hat c_* n^{-1/2} + o(n^{-1/2})$, 
then $\|\Pr_{\hat \nu^{\otimes}(m_0)}(\tau_{\gamma}^- <\tau_{-\gamma}^+)  - \xi\|_{\TV} \le \epsilon$. 
Moreover, by Lemma~\ref{lem:Pottsexittime}, the minimum of $\tau_{\gamma}^-$ and $\tau_{\gamma}^+$ is $O(n)$ with probability $1-O(\gamma^{-1})$; 
by Lemma~\ref{lem:nondominantcoordinates}, if $T_1 = \tau_{\gamma}^- \wedge \tau_{\gamma}^+$ and $m_1 = S_{T_1,1}$ then $\| S_{T_1}- (m_1, \frac{1-m_1}{q-1},...,\frac{1-m_1}{q-1})\|_1 = O(n^{-1/2}\log n)$ with probability $1-O(n^{-2})$.
Finally, applying Theorem~\ref{thm:PottsHitLeftTarget} and Lemma~\ref{cor:equilibrium-estimates-Potts} to the configuration at time $T_1$ as in the off-critical case, together with the spin symmetry for convergence to ordered phases,  implies that there exists $T=T_1+ O(n\log n ) $ such that 
\begin{align*}
    \|\Pr_{\hat \nu^{\otimes}(m_0)}(\sigma_T \in \cdot) - ((1-\xi) \pi^\ord + \xi\pi^\dis)\|_{\TV} \le \exp(-\Omega(\gamma^2)) + \epsilon + O(\gamma^{-1}) + O(n^{-2})\,,
\end{align*}
which will be less than $2\epsilon$ for $\gamma$ large. 

It remains to discuss Item 4. If $m_0 \ge \frac{1}{q} + \omega(n^{-1/2})$, then together with spin symmetry and the vertex-permutation invariant of the initialization, 
Theorem~\ref{thm:lowtempcoupling}
implies that
$\|\Pr_{\hat \nu^{\otimes}(m_0)} (\sigma_T \in \cdot) - \pi^{\ord}\|_{\TV} = o(1)$ for  $T=O(n\log n)$. If $\frac{1}{q} \le m_0 \le \frac{1}{q} + \omega(n^{-1/2})$ then we first apply Lemma~\ref{lem:1/q-initialization} to get a coordinate (by symmetry a uniform at random one) to obtain $\gamma n^{-1/2}$ separation from the rest, then apply Theorem~\ref{thm:lowtempcoupling} to get the same. 
On the other hand, by 
Lemma~\ref{cor:equilibrium-estimates-Potts} we have $\|\pi^\ord - \pi\|_{\mathrm{TV}} = o(1)$, and thus item 4 follows from the triangle inequality. 

The slow mixing results are exactly the statement of Theorem~\ref{thm:Potts-slow}.
\end{proof}

We also include a proof of Theorem~\ref{thm:simulated-annealing-Potts} on initializations from the Potts Gibbs measure at a different temperature, to justify why it follows from the above arguments.

\begin{proof}[\textbf{\emph{Proof of Theorem~\ref{thm:simulated-annealing-Potts}}}]
    It is evident at this point that the only properties of the initialization used in our proof of Theorem~\ref{thm:Potts-Glauber} were on the size of its largest color class relative to $m_* n$, and the differences between its non-dominant color counts being $O(\sqrt{n})$. 
    The requisite properties for samples from $\pi_{\beta_0}$ are given by Corollary~\ref{cor:equilibrium-estimates-Potts}. 
    For the choice of $d_*(\beta,q)$, in item 1 when $\beta \in (\betau,\betac)$, 
    we choose $d_*(\beta,q)= \betac$, 
    and in item 3, the choice is $d_*(\beta,q)= \inf\{d: \frac{\thetar(d)(q-1)+1}{q} >m_*(\beta)\}$, as the typical size of the largest coordinate in a sample from $\pi_d$ is $\frac{1}{q} (\thetar(d) (q-1)+1)$ per Corollary~\ref{cor:equilibrium-estimates-Potts}. 
    Finally note that any $\pi_{\beta_0}$ could work for item 4 since the largest coordinate of a sample is always at least $1/q$.
\end{proof}

\bibliographystyle{plain} 
\bibliography{random_start}

\end{document}